\documentclass[a4paper,reqno,10pt]{amsart}
    \DeclareMathSizes{12}{12}{7}{6}
\usepackage{geometry}
    \geometry{tmargin=2.5cm,bmargin=2.5cm,lmargin=2.2cm,rmargin=2.2cm}

\usepackage{amssymb,amsmath,mathrsfs,bm}
\usepackage{paralist}
\usepackage[usenames]{color}
\usepackage[all]{xy}
\usepackage{url}
\usepackage{braket}
\usepackage[all]{xy}
\usepackage{graphicx}
\usepackage{transparent}
\usepackage{xcolor}
\usepackage[shortlabels]{enumitem}
\usepackage{mathtools}
\usepackage{tikz}
\usepackage{tikz-cd}

\usepackage{wasysym} 
\usepackage{stmaryrd} 
\makeatletter

\@addtoreset{equation}{section}
\makeatother
\usepackage{amsthm}
\usepackage{aliascnt}
\newtheorem{theorem}{Theorem}[section]
\newaliascnt{lemma}{theorem}
\newtheorem{lemma}[lemma]{Lemma}
\aliascntresetthe{lemma}

\newaliascnt{proposition}{theorem}
\newtheorem{proposition}[proposition]{Proposition}
\aliascntresetthe{proposition}

\newaliascnt{corollary}{theorem}
\newtheorem{corollary}[corollary]{Corollary}
\aliascntresetthe{corollary}

\theoremstyle{definition}
\newaliascnt{definition}{theorem}
\newtheorem{definition}[definition]{Definition}
\aliascntresetthe{definition}

\theoremstyle{definition}
\newaliascnt{remark}{theorem}
\newtheorem{remark}[remark]{Remark}
\aliascntresetthe{remark}

\newaliascnt{setup}{theorem}
\theoremstyle{definition}
\newtheorem{setup}[setup]{Setup}
\aliascntresetthe{setup} 

\newaliascnt{example}{theorem}
\newtheorem{example}[example]{Example}
\aliascntresetthe{example}

\newaliascnt{condition}{theorem}

\aliascntresetthe{condition}

\newaliascnt{construction}{theorem}

\aliascntresetthe{construction}

\newaliascnt{question}{theorem}

\aliascntresetthe{question}

\newaliascnt{conjecture}{theorem}

\aliascntresetthe{conjecture}

\newtheorem{claim}{Claim}
\newtheorem{mainthm}{Theorem}

\usepackage[hyperfootnotes=false, hidelinks]{hyperref}
\usepackage[capitalise,noabbrev]{cleveref}
\crefname{theorem}{Theorem}{Theorems}
\crefname{lemma}{Lemma}{Lemmas}
\crefname{proposition}{Proposition}{Propositions}
\crefname{corollary}{Corollary}{Corollaries}
\crefname{definition}{Definition}{Definitions}
\crefname{remark}{Remark}{Remarks}
\crefname{example}{Example}{Examples}
\crefname{condition}{Condition}{Conditions}
\crefname{construction}{Construction}{Constructions}
\crefname{claim}{Claim}{Claims}
\crefname{mainthm}{Theorem}{Theorems}
\crefname{maincor}{Corollary}{Corollaries}
\crefname{setup}{Setup}{Setups}

\def\Mor{\operatorname{Mor}}

\def\Ext{\operatorname{Ext}}

\def\Hom{\operatorname{Hom}}
\def\mod{\operatorname{\mathsf{mod}}}
\def\proj{\operatorname{\mathsf{proj}}}
\def\Ab{\mathsf{Ab}}
\def\add{\operatorname{\mathsf{add}}}

\newcommand{\op}{\mathsf{op}}

\newcommand{\Ker}{\operatorname{Ker}}
\renewcommand{\Im}{\operatorname{Im}}

\newcommand{\id}{\mathsf{id}}
\newcommand{\xto}{\xrightarrow}

\newcommand{\HOM}{{\mathcal H}om}
\newcommand{\NAT}{{\mathcal N}at}
\newcommand{\dg}{{\rm dg}}
\newcommand{\tr}{\mathsf{tr}}
\newcommand{\pretr}{\mathsf{pretr}}
\newcommand{\coh}{\textnormal{coh}}
\newcommand{\rep}{\textnormal{rep}}
\newcommand{\pvd}{\mathsf{pvd}}
\newcommand{\per}{\mathsf{per}}

\newcommand{\lra}{\longrightarrow}
\newcommand{\dra}{\dashrightarrow}
\def\cone{\operatorname{\mathsf{Cone}}}
\def\cocone{\operatorname{\mathsf{CoCone}}}
\newcommand{\inc}{\mathsf{inc}}
\newcommand{\can}{\mathsf{can}}
\renewcommand{\sp}{\mathsf{sp}}

\newcommand{\Sn}{\mathcal{S}_{\CN}}

\newcommand\Inf{\operatorname{\mathsf{Inf}}}
\newcommand\Def{\operatorname{\mathsf{Def}}}

\newcommand{\wtil}[1]{\widetilde{#1}}
\newcommand{\ovl}[1]{\overline{#1}}


\def\cof{\operatorname{\mathsf{cof}}}

\def\Ho{\operatorname{\mathsf{Ho}}}
\def\Int{\operatorname{\mathsf{Int}}}

\newcommand\Cat{\mathsf{Cat}}
\newcommand\ET{\mathsf{ETcat}}

\newcommand\TR{\mathsf{TRcat}}
\newcommand\EX{\mathsf{EXcat}}
\newcommand\AB{\mathsf{ABcat}}
\newcommand\dgcat{\mathsf{dgcat}}
\newcommand\Hqe{\mathsf{Hqe}}

\newcommand{\Db}{\mathcal{D}^{\rm b}}

\newcommand{\Hb}{\mathcal{H}^{\rm b}}

\newcommand{\Ck}{\mathcal{C}(k)}
\newcommand{\Cdg}{\mathcal{C}_{\rm dg}}
\newcommand{\Ddg}{\mathcal{D}_{\rm dg}}
\newcommand{\thick}{\operatorname{\mathsf{thick}}}
\newcommand{\Tria}{\operatorname{\mathsf{Tria}}}

	\newcommand{\deff}{\coloneqq}
	\newcommand{\sse}{\subseteq}

    \newcommand{\fs}{\mathfrak{s}}
    \newcommand{\ft}{\mathfrak{t}}

	\newcommand{\BE}{\mathbb{E}}
	\newcommand{\BF}{\mathbb{F}}

	\newcommand{\BZ}{\mathbb{Z}}

	\newcommand{\CA}{\mathcal{A}}
	\newcommand{\CB}{\mathcal{B}}
	\newcommand{\CC}{\mathcal{C}}
	\newcommand{\CD}{\mathcal{D}}

	\newcommand{\CH}{\mathcal{H}}

	\newcommand{\CK}{\mathcal{K}}
	
	\newcommand{\CM}{\mathcal{M}}
	\newcommand{\CN}{\mathcal{N}}
	
	\newcommand{\CP}{\mathcal{P}}

	\newcommand{\CS}{\mathcal{S}}
	\newcommand{\CT}{\mathcal{T}}
	\newcommand{\CU}{\mathcal{U}}
	\newcommand{\CV}{\mathcal{V}}
	\newcommand{\CW}{\mathcal{W}}
	\newcommand{\CX}{\mathcal{X}}

    \newcommand{\A}{\mathscr{A}}
	\newcommand{\B}{\mathscr{B}}
    \newcommand{\C}{\mathscr{C}}
	\newcommand{\D}{\mathscr{D}}
	\newcommand{\N}{\mathscr{N}}
    \renewcommand{\P}{\mathscr{P}}
	\renewcommand{\SS}{\mathscr{S}}
    \newcommand{\T}{\mathscr{T}}

\usetikzlibrary{matrix,arrows,decorations.pathmorphing,positioning,decorations.pathreplacing}
\tikzset{commutative diagrams/.cd, 
mysymbol/.style = {start anchor=center, end anchor = center, draw = none}}
\tikzset{
labl/.style={anchor=north, rotate=90, inner sep=1mm}
}

\let\amph=& 
	
\tikzcdset{every label/.append style = {font = \footnotesize}}

\newcommand{\wPO}[1]{\arrow[mysymbol]{#1}[description]{\mathrm{(wPO)}}}

\newcommand{\wPB}[1]{\arrow[mysymbol]{#1}[description]{\mathrm{(wPB)}}}


\begin{document}
\setlength{\baselineskip}{15pt}
\title
[An enhanced extriangulated subquotient]{An enhanced extriangulated subquotient}

\author[Mochizuki]{Nao Mochizuki}
	\address{Graduate School of Mathematics, Nagoya University, Furocho, Chikusaku, Nagoya 464-8602, Japan}
	\email{mochizuki.nao.n8@s.mail.nagoya-u.ac.jp} %

\author[Ogawa]{Yasuaki Ogawa}
	\address{Faculty of Engineering Science, Kansai University, Suita-shi, Osaka 564-8680, Japan}
	\email{y\underline{ }ogawa@kansai-u.ac.jp} %

\keywords{%
Algebraic extriangulated category, 
exact dg category, 
bounded dg derived category, 
universal embedding, 
Drinfeld dg quotient}

\subjclass[2020]{%
Primary 18G35; Secondary 18G80, 18E35, 16E30}

\begin{abstract}
Bondal-Kapranov's notion of enhanced triangulated categories behaves well in the framework of localization theory, in the sense that the Verdier quotient of triangulated categories can be lifted to the Drinfeld dg quotient of pretriangulated dg categories. In this paper, we develop a parallel enhancement for Nakaoka-Palu's notion of extriangulated categories, which unifies exact and triangulated categories. The enhancement of extriangulated categories was recently initiated by Xiaofa Chen under the name exact dg categories. Moreover, it is known that certain ideal quotients of extriangulated categories remain extriangulated, and that such ideal quotients admit dg enhancements via the dg quotient of the corresponding connective exact dg category. Motivated by Chen's framework of enhanced extriangulated categories, we introduce the concept of a cohomological envelope of an exact dg category and generalize his construction of the enhanced ideal quotient. We show that the dg quotient of exact dg categories, when passing to cohomological envelopes and their substructures---referred to as exact dg subquotients---is compatible with a broad class of extriangulated quotients in the sense of Nakaoka-Ogawa-Sakai. To further clarify the scope of our approach, we formulate the notion of an extriangulated subquotient, which enables the localization of any extriangulated category by extension-closed subcategories. This construction encompasses not only ideal and Verdier quotients, but also the quotient of an exact category by a biresolving subcategory. Notably, the extriangulated subquotient admits a natural lifting to the exact dg subquotient.

\end{abstract}
\maketitle
\tableofcontents

\section*{Introduction}\label{sec:intro}
The localization theory in the context of homological algebra including abelian/triangulated category theory can be traced back to the localization of commutative rings.
Following a general mathematical principle of decomposing a large object to smaller pieces, it has been gradually extended to localizations of spaces in topology, schemes in algebraic geometry and other things in many mathematical branches.
The abelian/triangulated category theory serves as the foundation to do homological algebra and their appropriate localizations, the so-called Serre/Verdier quotients \cite{Gab62,Ver96}, have been playing central roles and possess satisfactory generalities within their respective scopes.
In practice, we often need to move back and forth between abelian and triangulated categories: Given an abelian category $\CA$, a construction of the derived category $\CD(\CA)$ leads us to a triangulated category;
The (projectively) stable category of a Frobenius abelian/exact category \cite{Hap88} also carries a natural triangulated structure;
Conversely, a $t$-structure $(\CT^{\leq 0},\CT^{\geq 0})$ of a triangulated category $\CT$ was introduced in an attempt to recover an abelian origin category from which $\CT$ can be naturally obtained \cite{BBD}.
The authors proved that the intersection $\CH\deff\CT^{\leq 0}\cap\CT^{\geq 0}$, called the heart, is always abelian and is equipped with a cohomological functor $H\colon \CT\to\CH$, which brings us back from the triangulated circumstance to an abelian one.
As a unifying notion, Nakaoka-Palu defined extriangulated category \cite{NP19} which contains a more general notion of Quillen's exact category.
It not only provides a convenient setting in which to formulate arguments that simultaneously apply to both exact and triangulated categories, but also strengthens the growing evidence that many concepts and constructions in homological algebra are most naturally understood through the perspective of extriangulated categories, such as
\begin{itemize}
\item 
cotorsion pairs and their hearts \cite{LN19,HS20,AET23},
\item 
Grothendieck groups \cite{ZZ21,Hau21,ES22,PPPP23,OS24a},
\item 
tensor (ex)triangulated categories and Balmer spectrum \cite{BGLS25},
\item 
Auslander-Reiten theory \cite{INP24, Jin20, MP25},
\item 
silting objects and mutation \cite{GNP23,Bor24,Iit24,PZ25},
\item 
$n$-(ex)angulated categories and $n$-cluster tilting subcategories \cite{HLN21,HLN22,JS23} and
\item 
exact $\infty$-enhancement \cite{NP20,BT21}.
\end{itemize}
Most relevant to the present work is the localization theory of extriangulated categories with respect to certain classes of morphisms, established in \cite{NOS22}, which contains the Serre/Verdier quotients.
We will develop the extriangulated localization theory within the framework of exact dg enhancements, as introduced by Chen \cite{Che23}.

The use of differential graded categories (= dg categories) to enhance triangulated categories was initiated by Bondal-Kapranov \cite{BK90}.
A triangulated category $\CT$ is said to be algebraic if it admits a dg enhancement by a pretriangulated dg category $\T$, that is, if $\CT\simeq H^0\T$.
It is widely known that triangulated categories do not behave well to compute some well-established invariants, nor to describe triangle equivalences between derived categories over algebras.
In addition, triangulated categories do not reproduce, for instance, the functor category over $\CT$ is not expected to carry triangulated structures (see \cite{Nee18, Mat24} for further developments).
Bondal-Kapranov’s notion of enhanced triangulated categories precisely compensates for such missing information inherent in triangulated categories.
We mention a few notable examples here:
Under the notion of a Frobenius pair (= a localization pair), the $K$-theory and the cyclic homology were made compatible with localizations of algebraic triangulated categories in \cite{TT90,Sch06} and \cite{Kel99} respectively;
In \cite{Kel94}, Keller leveraged Rickard's Morita theory \cite{Ric89} to the setting of derived categories over dg categories, providing a concrete description of derived equivalences between algebras;
In a similar spirit to Rickard, but with a distinctly different purpose, To{\"e}n initiated the Morita theory for dg algebras \cite{Toe07}.
A synthesis of To{\"e}n's Morita theory and Auslander-Reiten theory was studied as well \cite{HI22};
In contrast to the triangulated categories, the (derived) functor category $\rep_{\dg}(\A,\B)$ remains pretriangulated if $\A$ is pretriangulated.
This rich interplay between dg and triangulated categories motivates us to expand the coverage of the dg enhancement from triangulated categories to extriangulated categories, which has been done under the name of exact dg category \cite{Che23}.
We still say such an extriangulated category to be algebraic.
Our particular focus is the Drinfeld dg quotient of dg categories $\A$ by its dg full subcategory $\N$ \cite{Dri04, Tab10}, a significant feature of which provides an enhancement of the Verdier quotient of the associated homotopy category $H^0\A$ by the thick closure of $H^0\N$ provided $\A$ is pretriangulated.
In light of the current situation, one may naturally ask the following question: ``Can any extriangulated localization be understood as a dg quotient?''
Our results give a partial answer to this question and reveal intricate interactions between the quotient of extriangulated categories and the dg quotient of their exact dg enhancements.
We also note that there has been a growing body of recent work on exact dg categories \cite{Che23b,Che23c,Che24c,Bor24,Moc25,MP25}, to which the present paper contributes.

Now we give a brief summary of the main results presented in the current paper.
Our first theorem shows that a weakly idempotent complete (WIC) extriangulated category can be localized by any extension-closed subcategory which is an extriangulated version of the main result in \cite{Oga24}.

\begin{mainthm}[\cref{thm:subquotient_of_extri_cat}]\label{mainthm:A}
Let $(\CA,\BE,\fs)$ be a WIC extriangulated category together with an extension-closed subcategory $(\CN,\BE|_\CN,\fs|_\CN)$ which is closed under direct summands.
Then, there exists a natural extriangulated substructure $(\CA_\CN,\BE_\CN,\fs_\CN)$ such that the extriangulated quotient of $\CA_\CN$ by $\CN$ fits into the following diagram: 
\begin{equation}\label{eq:A}
\begin{tikzcd}
(\CN,\BE|_\CN,\fs|_\CN)\arrow[hook]{r}{}
&
(\CA_\CN,\BE_\CN,\fs_\CN)\arrow{r}{(Q,\mu)}
&
(\CA_\CN/\CN,\wtil{\BE_\CN},\wtil{\fs_\CN}).
\end{tikzcd}
\end{equation}
\end{mainthm}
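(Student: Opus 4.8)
The plan is to exhibit \eqref{eq:A} as an instance of the extriangulated localization theory of \cite{NOS22}, carried out for the class of $\CN$-admissible weak isomorphisms, thereby lifting to the extriangulated setting the exact-categorical argument of \cite{Oga24}, whose main theorem this statement generalizes. Let $\CS_\CN$ be the class of morphisms of $\CA$ admitting a finite factorization into $\BE$-inflations with cokernel in $\CN$ and $\BE$-deflations with kernel in $\CN$; it contains all isomorphisms and is closed under composition because $\CN$ is extension-closed. Define $\BE_\CN(C,A)\sse\BE(C,A)$ to consist of those extensions $\delta$ whose realization $A\rtail B\onto C$ remains exact after applying $\Hom_\CA(N,-)$ and $\Hom_\CA(-,N)$ for every $N\in\CN$; equivalently, $g^{*}\delta=0$ and $f_{*}\delta=0$ for all $g\colon N\to C$ and $f\colon A\to N$ with $N\in\CN$. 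It is immediate that $\BE_\CN$ is an additive sub-bifunctor of $\BE$ and that every object of $\CN$ is both $\BE_\CN$-projective and $\BE_\CN$-injective.

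The first substantial step is to check that $\BE_\CN$ is a \emph{closed} sub-bifunctor in the sense of the relative theory of \cite{NP19,HLN21}, so that $(\CA_\CN,\BE_\CN,\fs_\CN)\deff(\CA,\BE_\CN,\fs|_{\BE_\CN})$ is again an extriangulated category and a substructure of $(\CA,\BE,\fs)$. By the relevant criterion it suffices to see that $\BE_\CN$-inflations and, dually, $\BE_\CN$-deflations are closed under composition: given $\BE_\CN$-inflations $A\rtail B$ and $B\rtail D$, one forms their composite by (ET4) in $(\CA,\BE,\fs)$ and reads off from the octahedral data and the naturality of connecting maps that the cokernel extension of $A\rtail D$ again annihilates all morphisms into and out of $\CN$.

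Next I would verify that $\CS_\CN$ satisfies, relative to the substructure $\BE_\CN$, the hypotheses of the localization theorem of \cite{NOS22}: that $\CS_\CN$ is a multiplicative system — the left and right Ore conditions and the cancellation properties being obtained by forming pushouts and pullbacks of $\BE$-(de)inflations along objects of $\CN$, as in \cite{Oga24} — and that it is compatible with $\fs_\CN$, in the sense that the pushout (resp.\ pullback) of a morphism of $\CS_\CN$ along an $\BE_\CN$-inflation (resp.\ $\BE_\CN$-deflation) again lies in $\CS_\CN$, together with the accompanying ``$\CN$-analogue of the short five lemma''. I expect this last compatibility to be the principal obstacle: it forces one to recognize, over and over, formal sections and retractions issuing from or landing in $\CN$ as honest $\BE_\CN$-inflations and $\BE_\CN$-deflations, and it is precisely here that the WIC hypothesis on $\CA$ is used essentially; this step is the extriangulated counterpart of the delicate part of \cite{Oga24}.

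Granting these verifications, \cite{NOS22} produces an extriangulated category $(\CA_\CN[\CS_\CN^{-1}],\wtil{\BE_\CN},\wtil{\fs_\CN})$ together with an exact localization functor $(Q,\mu)$, where the conflations of $\wtil{\BE_\CN}$ are induced from those of $\BE_\CN$; setting $\CA_\CN/\CN\deff\CA_\CN[\CS_\CN^{-1}]$ gives the right-hand arrow of \eqref{eq:A}. Finally one records that $(Q,\mu)$ annihilates $\CN$ — indeed $N\xrightarrow{\,\id\,}N\onto 0$ is a split $\BE_\CN$-conflation, so $N\to 0$ belongs to $\CS_\CN$ and hence $Q(N)\cong 0$ — and that in the two extreme cases, where $\CN$ consists of $\BE$-projective-injective objects and where $(\CA,\BE,\fs)$ is triangulated, the construction recovers respectively the ideal quotient and the Verdier quotient, as announced in the introduction.
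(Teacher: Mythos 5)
Your overall strategy---define a closed sub-bifunctor $\BE_\CN$ of $\BE$, form the class $\CS_\CN$ of finite composites of inflations with cone in $\CN$ and deflations with cocone in $\CN$, verify the conditions \ref{MR1}--\ref{MR4} of \cite{NOS22}, and invoke the extriangulated localization theorem---is exactly the route the paper takes. The gap is in your definition of $\BE_\CN$, and it is not a cosmetic one. You take $\BE_\CN(C,A)$ to consist of the $\delta$ with $f_*\delta=0$ and $g^*\delta=0$ for all $f\colon A\to N$ and $g\colon N\to C$ with $N\in\CN$; as you yourself note, this makes every object of $\CN$ projective--injective for the substructure. But then every $\fs_\CN$-conflation with an end term in $\CN$ splits (take $f=\id_N$), so $\CN$ becomes a class of projective--injectives and the resulting extriangulated quotient is forced to be the ideal quotient $\CA/[\CN]$. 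In particular your construction can never recover the Verdier quotient when $\CA$ is triangulated and $\CN$ is thick, contrary to what you assert in your last sentence and to the whole point of the theorem (the quotient $\CA/[\CN]$ of a triangulated category by a thick subcategory is in general not even triangulated). The correct sub-bifunctor, following \cref{prop:modified_Quillen_substructure}, is much larger: $\delta\in\BE_\CN^R(C,A)$ requires only that each pushout $a_*\delta$ along $a\colon A\to N$ lie in $\BE_{[\CN]}(C,N)=\sum_{h\colon C\to N'}\Im\bigl(\BE(N',N)\to\BE(C,N)\bigr)$, i.e.\ be a sum of pullbacks of extensions between objects of $\CN$, and dually for $\BE_\CN^L$; one sets $\BE_\CN=\BE_\CN^R\cap\BE_\CN^L$. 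This is precisely what guarantees condition $(\spadesuit)$ of \cref{setup:spade}: an $\fs_\CN$-inflation $N_1\to B$ can be completed by some $B\to N_2$ to an $\fs|_\CN$-inflation, which is the engine behind the factorization $\Sn=\Def_\CN^{\sp}\circ\Inf_\CN$ (\cref{lem:factorization_of_Sn}) and hence behind all of \ref{MR1}--\ref{MR4}.

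With the correct $\BE_\CN$ your remaining steps line up with the paper's: closedness of $\BE_\CN$ is \cref{prop:modified_Quillen_substructure} (quoted from Chen), the identity $(\CN,\BE_\CN|_\CN,\fs_\CN|_\CN)=(\CN,\BE|_\CN,\fs|_\CN)$ and condition $(\spadesuit)$ are \cref{thm:subquotient_of_extri_cat}(0)(1), and the verification of the multiplicative-system and compatibility conditions, where WIC enters to recognize the relevant sections and retractions as (de)flations, is carried out once and for all under $(\spadesuit)$ in \cref{prop:extriangulated_quotient_under_spade}. So the fix is localized to the definition, but as written the proposal proves a different (and much weaker) statement.
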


We call this construction the \emph{extriangulated subquotient} of $\CA$ by $\CN$, where the prefix `sub' and the notation `$(-)_\CN$' indicate we first pass to the substructure of $\CA$ determined by $\CN$.
This notion has a broad spectrum of connections to many known localizations, including the Verdier quotient of triangulated categories, the ideal quotient of extriangulated categories and the quotient of exact categories by biresolving subcategories in the sense of \cite{Rum21}.
Moreover, in these examples, taking the substructure of $\CA$ is unnecessary, that is, we see $\CA=\CA_\CN$.
This formulation of the extriangulated subquotient is further justified by the following result.

\begin{mainthm}[\cref{cor:algebraic_subquotient}]\label{mainthm:B}
The extriangulated subquotient $(\CA_\CN/\CN,\wtil{\BE_\CN},\wtil{\fs_\CN})$ in \cref{mainthm:A} is still algebraic.
\end{mainthm}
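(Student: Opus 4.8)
\emph{Proof proposal.}
The plan is to exhibit an exact dg category whose associated extriangulated category (its $H^0$) is $(\CA_\CN/\CN,\wtil{\BE_\CN},\wtil{\fs_\CN})$; concretely, to realize the extriangulated subquotient of \cref{mainthm:A} as the \emph{exact dg subquotient} constructed in the body of the paper. So I would first fix, using the standing hypothesis that $(\CA,\BE,\fs)$ is algebraic, a (connective) exact dg enhancement $\A$ of $\CA$ together with the full dg subcategory $\N\sse\A$ lifting $\CN$. Since $\CN$ is extension-closed and closed under direct summands, $\N$ inherits from $\A$ exactly the exact dg data it needs in order to play the role of the subcategory to be quotiented out.

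Next I would pass to the cohomological envelope of $\A$ and to the substructure it determines relative to $\N$, obtaining an exact dg category $\A_\N$ with $H^0\A_\N\simeq(\CA_\CN,\BE_\CN,\fs_\CN)$; this mirrors, on the enhanced level, the passage $\CA\rightsquigarrow\CA_\CN$ of \cref{mainthm:A}. Then the Drinfeld dg quotient $\A_\N/\N$ is formed. The crucial input, supplied by the cohomological-envelope machinery, is that this dg quotient is again an exact dg category: the envelope is designed precisely to absorb the failure of connectivity (and of Chen's exact dg axioms) that a naive Drinfeld quotient of an exact dg category would otherwise exhibit. This $\A_\N/\N$ is, by definition, the exact dg subquotient.

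It then remains to identify $H^0(\A_\N/\N)$ with the extriangulated subquotient of \cref{mainthm:A}. Here I would invoke the compatibility of the exact dg subquotient with the Nakaoka--Ogawa--Sakai extriangulated quotients established earlier: the localization data on $\CA_\CN$ cut out by $\CN$ belong to the class treated there, so one obtains an extriangulated equivalence $H^0(\A_\N/\N)\simeq(\CA_\CN/\CN,\wtil{\BE_\CN},\wtil{\fs_\CN})$. As $\A_\N/\N$ is exact dg, the right-hand side is algebraic, which is the assertion of \cref{cor:algebraic_subquotient}.

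The main obstacle lies in the verification underlying that last identification rather than in the present corollary itself. One must check that $H^0$ of the Drinfeld quotient inverts exactly the morphisms that become invertible in the extriangulated localization of \cref{mainthm:A} --- no more and no fewer --- and that the exact dg structure inherited by $\A_\N/\N$ restricts to $\wtil{\BE_\CN}$ and realizes $\wtil{\fs_\CN}$; equivalently, that passing to the cohomological envelope and then to the $\N$-substructure does not perturb the extriangulation in an uncontrolled way. Once the compatibility theorem is available in the stated generality, \cref{cor:algebraic_subquotient} reduces to matching these hypotheses, the substantive work having been carried out upstream.
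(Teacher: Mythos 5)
Your outline inverts the paper's logical order and, in doing so, defers the entire substantive content of the statement to a step you treat as already available. The paper does \emph{not} prove \cref{cor:algebraic_subquotient} by constructing the exact dg subquotient and taking $H^0$; it works purely at the triangulated level. Concretely: via \cref{prop:bijection_between_substructures} one gets an exact dg enhancement $(\A_\N,\SS_\N)$ of the substructure $(\CA_\CN,\BE_\CN,\fs_\CN)$, the universal embedding realizes $\CA_\CN$ as an extension-closed subcategory of $\Db(\A_\N)$, one forms the Verdier quotient $\Db(\A_\N)/\thick\CN$, and \cref{prop:algebraic_subquotient} shows that the induced exact functor $(G,\psi)\colon \CA_\CN/\CN\to \Db(\A_\N)/\thick\CN$ is fully faithful with extension-closed image. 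Algebraicity then follows from \cref{prop:characterization_for_alg_ET}(1), not from item (3) as in your plan. The dg quotient never enters.

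The gap in your version is the assertion that ``the Drinfeld dg quotient $\A_\N^\coh/\N$ is again an exact dg category'' because ``the envelope is designed precisely to absorb'' the relevant failures. The cohomological envelope by itself does not deliver this: what must be shown is that the quasi-essential image of $\A_\N^\coh$ under $Q$ is \emph{extension-closed} in $\Db_{\dg}(\A_\N)/\thick_{\dg}\N$, and in the paper this is exactly \cref{prop:algebraic_subquotient} again --- the proof of \cref{thm:dg_subquotient} explicitly cites it for this point. So deducing \cref{mainthm:B} from \cref{mainthm:C} is circular relative to the paper's architecture unless you independently supply the hard analysis: the left-fraction representation of morphisms $A\to B[k]$ with denominators in $\CN[k]$ (\cref{lem:description_for_extensions}), which in turn rests on the finer filtration $\thick\CN=\bigcup\CN[n]*\cdots*\CN[m]$ obtained from condition \ref{spade} and on the effaceability of $\Ext^n_{\Db(\A_\N)}$ supplied by the connective universal embedding. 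None of that is touched by your proposal; it is precisely the ``verification underlying that last identification'' you flag as an obstacle, and it is where the proof actually lives.
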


Note that this generalizes the prominent fact that algebraic triangulated categories are closed under taking Verdier quotients \cite[\S 3.6]{Kel06}.
As mentioned before, the dg quotient gives an enhancement of the Verdier quotient.
Our second aim, therefore, is to construct an enhanced extriangulated subquotient, which we achieve by introducing the notion of a \emph{cohomological envelope}.
Before describing our construction, let us recall that, for any exact dg category $(\A,\SS)$, taking its connective cover $(\tau_{\leq 0}\A,\SS)$ (= truncation at the degree zero) does not alter the homotopy category: $H^0\A=H^0(\tau_{\leq 0}\A)$.
In line with this philosophy of discarding `irrelevant' positive degrees, Chen has successfully formulated an enhanced ideal quotient of extriangulated categories by projective-injective objects which is realized as a dg quotient of $\tau_{\leq 0}\A$ \cite[\S 3.5]{Che24b}.
Our approach extends this direction further: an \emph{enhanced extriangulated subquotient} can be viewed as obtained by adding the `appropriate' positive degrees to $\tau_{\leq 0}\A$.
We emphasize that our construction relies on the existence of the bounded dg derived category $\Db(\tau_{\leq 0}\A)$ of the connective cover of $\A$ due to Chen, see \cref{thm:universal_embedding}.
Indeed, the cohomological envelope $\A^\coh$ is defined to be the quasi-essential image of $\tau_{\leq 0}\A$ in $\Db(\tau_{\leq 0}\A)$, see \cref{def:cohomological_envelope}.
We can now state our second main theorem, which establishes the compatibility of extriangulated subquotients with certain dg quotients.

\begin{mainthm}[\cref{thm:dg_subquotient}]\label{mainthm:C}
Let $(\A,\SS)$ be an exact dg category with an extension-closed subcategory $(\N,\SS|_\N)$.
Then, there exists a natural exact dg substructure $(\A_\N,\SS_\N)$ of $(\A,\SS)$ such that we have a dg quotient
\[
\begin{tikzcd}
(\N,\SS|_\N)\arrow[hook]{r}{}
&
(\A_\N^\coh,\SS_\N)\arrow{r}{Q}
&
(\A_\N^\coh/\N,\wtil{\SS_\N})
\end{tikzcd}
\]
which enhances an extriangulated subquotient as in \eqref{eq:A}, where $\A_\N^\coh$ denotes the cohomological envelope of $(\A_\N,\SS_\N)$.
\end{mainthm}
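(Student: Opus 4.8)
The plan is to reproduce the construction of \cref{thm:subquotient_of_extri_cat} at the dg level, then pass to the cohomological envelope so as to restore the ``correct positive degrees'', take a Drinfeld dg quotient, and finally compare on homotopy categories. Write $\CA\deff H^0\A$ and $\CN\deff H^0\N$. First I would build the exact dg substructure $(\A_\N,\SS_\N)$ of $(\A,\SS)$ by keeping the underlying dg category $\A$ and declaring $\SS_\N\sse\SS$ to consist of those conflations whose relevant terms satisfy the $\CN$-membership condition that cuts out the substructure in \cref{thm:subquotient_of_extri_cat}; the same verification which shows the sub-bifunctor $\BE_\CN\sse\BE$ to be closed shows that $\SS_\N$ is again an exact dg structure, that the assignment $(\A,\N)\mapsto(\A_\N,\SS_\N)$ is natural, and that $\SS_\N$ induces the extriangulated structure $(\CA_\CN,\BE_\CN,\fs_\CN)$ on $H^0\A_\N=\CA$. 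Next, by \cref{thm:universal_embedding} the connective cover $(\tau_{\leq 0}\A_\N,\SS_\N)$ has a (pretriangulated) bounded dg derived category $\Db(\tau_{\leq 0}\A_\N)$ together with a quasi-fully faithful dg functor from $\tau_{\leq 0}\A_\N$ whose image is extension-closed; by \cref{def:cohomological_envelope} the quasi-essential image of this functor is the cohomological envelope $\A_\N^\coh$, a full dg subcategory of $\Db(\tau_{\leq 0}\A_\N)$ which, being extension-closed there, carries an exact dg structure, again denoted $\SS_\N$, restricting to the original one on $\tau_{\leq 0}\A_\N$. Since passage through $\tau_{\leq 0}$ and through the quasi-essential image changes nothing on $H^0$, the embedding induces an equivalence $H^0\A_\N^\coh\simeq H^0\tau_{\leq 0}\A_\N=\CA_\CN$ of extriangulated categories, while the positive-degree parts of the Hom-complexes of $\A_\N^\coh$ now carry exactly the $\Ext$-information the connective truncation had discarded.

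Second, regard $(\N,\SS|_\N)$ inside $(\A_\N^\coh,\SS_\N)$ as the quasi-essential image of $\tau_{\leq 0}\N$; it is extension-closed there. Form the Drinfeld dg quotient $Q\colon\A_\N^\coh\to\A_\N^\coh/\N$ \cite{Dri04} and equip the target with the quotient exact dg structure $\wtil{\SS_\N}$, namely the images under $Q$ of $\SS_\N$-conflations, by the exact dg subquotient construction of the paper, whose standing hypothesis is precisely the extension-closedness of $\N$; then $Q$ is an exact morphism of exact dg categories, which gives the displayed diagram.

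Third --- and here lies the content --- I would identify the induced diagram on $H^0$ with an extriangulated subquotient as in \eqref{eq:A}. By the universal property of the Drinfeld dg quotient, $H^0Q\colon\CA_\CN\simeq H^0\A_\N^\coh\to H^0(\A_\N^\coh/\N)$ kills $\CN$ and inverts exactly the class $\CS$ of morphisms of $\CA_\CN$ whose $\BE_\CN$-(co)cone lies in $\CN$ --- the class that defines the extriangulated subquotient --- so the universal property of that localization (\cite{NOS22}; cf.\ \cref{thm:subquotient_of_extri_cat}) yields a canonical extriangle functor $F\colon\CA_\CN/\CN\to H^0(\A_\N^\coh/\N)$ through which $H^0Q$ factors. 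To see $F$ is an equivalence I would pass to the pretriangulated picture: as $\N\sse\A_\N^\coh\sse\Db(\tau_{\leq 0}\A_\N)$ are full dg subcategories, $\A_\N^\coh/\N$ is a full dg subcategory of the Drinfeld quotient $\Db(\tau_{\leq 0}\A_\N)/\N$, whose homotopy category is, by the classical theory \cite[\S 3.6]{Kel06}, the Verdier quotient $\Db(\CA_\CN)/\thick(\CN)$ of the bounded derived category of the exact category $(\CA_\CN,\BE_\CN)$; thus $H^0(\A_\N^\coh/\N)$ is the full subcategory on the objects of $\CA_\CN$, and one identifies it, together with the extriangulated structure it inherits from this triangulated quotient, with $(\CA_\CN/\CN,\wtil{\BE_\CN},\wtil{\fs_\CN})$, so that $F$ is an exact equivalence carrying $\wtil{\SS_\N}$ onto $(\wtil{\BE_\CN},\wtil{\fs_\CN})$. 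With the first step, this exhibits the displayed dg quotient as an enhancement of \eqref{eq:A}.

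The main obstacle is precisely this last identification, where one must reconcile the Drinfeld dg quotient --- intrinsically a ``triangulated-flavoured'' localization, which on a pretriangulated enhancement kills the \emph{thick} closure of $\N$ --- with the extriangulated subquotient, which localizes at the generally non-thick, merely extension-closed $\N$ after restriction to the substructure $\SS_\N$. The cohomological envelope is the device that makes the two coincide: it supplies exactly the positive degrees needed so that, when $\Db(\tau_{\leq 0}\A_\N)/\N$ is cut back down to $\A_\N^\coh/\N$, the degree-zero cohomology of the Hom-complexes computes $\Hom_{\CA_\CN/\CN}$ via the calculus of fractions available in the WIC setting (\cref{thm:subquotient_of_extri_cat}, \cite{Oga24,NOS22}), the image of $\CA_\CN$ in the Verdier quotient is extension-closed, and the $\wtil{\SS_\N}$-conflations match the $\wtil{\fs_\CN}$-conflations. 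Making this Hom-level and conflation-level comparison precise, and verifying the naturality of all the constructions, is where the effort concentrates.
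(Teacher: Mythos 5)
Your overall strategy coincides with the paper's: pass to the substructure $(\A_\N,\SS_\N)$ via the bijection of \cref{prop:bijection_between_substructures}, take the cohomological envelope, form the Drinfeld dg quotient, and compare with the extriangulated subquotient by embedding everything into the Verdier quotient $\Db(\A_\N)/\thick\CN$ of the pretriangulated hull $\pretr(\A_\N^\coh)\cong\Db_{\dg}(\A_\N)$. The skeleton is right, and you correctly locate the difficulty. But the proposal stops exactly where the proof begins: you assert that the full subcategory of $\Db(\CA_\CN)/\thick\CN$ on the objects of $\CA_\CN$ ``is identified'' with $(\CA_\CN/\CN,\wtil{\BE_\CN},\wtil{\fs_\CN})$ and is extension-closed, and you declare that $H^0Q$ ``inverts exactly'' the localizing class $\Sn$. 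Neither of these is automatic, and neither follows from the universal property of the Drinfeld quotient alone; they constitute the main theorem's content (\cref{prop:algebraic_subquotient} in the paper).

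Concretely, what is missing is the mechanism that reconciles the thick closure $\thick\CN$ (which the dg/Verdier quotient inevitably kills) with the merely extension-closed $\CN$ (which the extriangulated subquotient localizes at). In the paper this is done by exploiting the condition \ref{spade} enjoyed by the substructure $\CA_\CN$ (cf.\ \cref{thm:subquotient_of_extri_cat}(1)) together with the effaceability of $\Ext^i_{\Db(\A)}(?,-)$ coming from the universal embedding (\cref{prop:higher_extension_via_univ_emb}): these yield the d\'evissage $\thick\CN=\bigcup_{n\leq m}\CN[n]*\cdots*\CN[m]$ (\cref{lem:finer_description_of_thickN}) and the fraction calculus of \cref{lem:description_for_extensions}, from which full faithfulness of $\CA_\CN/\CN\to\Db(\A_\N)/\thick\CN$, extension-closedness of the image, and the matching of $\wtil{\fs_\CN}$-conflations with triangles are deduced. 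Without an argument of this kind your comparison functor $F$ could a priori fail to be faithful (a morphism of $\CA_\CN$ could factor through $\thick\CN$ without factoring through $\CN$) or full (a fraction in the Verdier quotient need not have denominator in $\Sn$), and the image need not be extension-closed, so the quotient exact dg structure $\wtil{\SS_\N}$ would not even be well defined. Naming the obstacle is not the same as removing it; as written, the proposal has a genuine gap at its central step.
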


We refer to this the \emph{exact dg subquotient} of $\A$ by $\N$ following the name in \cref{mainthm:A}.
It includes the known dg enhancements as summarized in \cref{fig:summary}.

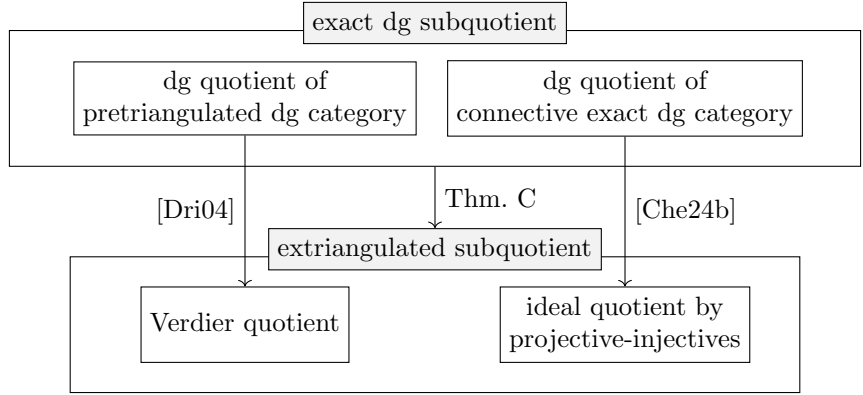
\begin{figure}[ht]
\begin{tikzpicture}
    \node[rectangle, draw=black, text centered,align=center] at (0,3.0) (pretr) 
    {dg quotient of\\
    pretriangulated dg category};
    \node[rectangle, draw=black, text centered, minimum height=1.0cm, minimum width=1.0cm,align=center] at (0,0) (Verdier) {
    Verdier quotient};
  \node[rectangle, draw=black, text centered, minimum height=1.0cm, minimum width=1cm,align=center] at (5,3.0) (Chen) {
    dg quotient of\\
    connective exact dg category};
  \node[rectangle, draw=black, text centered, minimum height=1.0cm, minimum width=1cm,align=center] at (5,0) (ideal) {
    ideal quotient by\\
    projective-injectives};
    \node[rectangle,draw=black, minimum height=1.8cm, minimum width=11.2cm] at (2.5,3.0) (B) {};
  \node[rectangle, draw=black, fill=lightgray!20, text centered, minimum height=0.5cm, minimum width=1cm,align=center] at (2.5,4.0) (dg subquot) {exact dg subquotient};
    \node[rectangle,draw=black, minimum height=1.8cm, minimum width=9.6cm] at (2.5,0) (A) {};
    \node[rectangle, draw=black, fill=lightgray!20, text centered, minimum height=0.5cm, minimum width=1cm,align=center] at (2.5,1.0) (ET) {extriangulated subquotient};
  \draw[->] (pretr) -- node[left] {\cite{Dri04}} (Verdier);
  \draw[->] (Chen) -- node[right] {\cite{Che24b}} (ideal);
  \draw[->] (B) -- node[right] {Thm.~\ref{mainthm:C}} (ET);
\end{tikzpicture}
\caption{Connections to other dg enhancements.}
\label{fig:summary}
\end{figure}

This article is organized as follows.
The sections \S 1 and \S 2 are devoted to recall needed foundations
of the dg category theory. We focus on the exact dg categories in \S 2. The key notion of bounded dg derived category is reviewed there.
In \S 3, we formulate the extriangulated subquotient $\CA_\CN/\CN$ of $(\CA,\BE,\fs)$ with respect to any extension-closed subcategory $(\CN,\BE|_\CN,\fs|_\CN)$.
We also show that if $\CA$ is algebraic, then so is the subquotient $\CA_\CN/\CN$ by proving \cref{mainthm:B}.
In \S 4, we introduce the notion of cohomological envelope and place the extriangulated subquotient in the exact dg context such as \cref{mainthm:C}.
Last, in \S\ref{sec:enhanced_heart_construction}, as an application, we understand Liu-Nakaoka's heart construction of cotorsion pairs in extriangulated categories as an exact dg subquotient.
In turn, we have an enhanced cohomological functor to the associated enhanced heart.

\medskip
\noindent
{\bf Notation and convention.}
Throughout the article, $k$ denotes a commutative ring with unity and all the categories are considered to be $k$-linear.
We will work with both differential graded (=dg) categories and ordinary additive categories.
Depending on which framework is under consideration, the same notation may be used with slightly different meanings, and we will follow the corresponding conventions in each context.

Let $\CA$ be an additive category.
An \emph{additive subcategory} $\CB$ of $\CA$ means a full subcategory which is closed under finite direct sums and isomorphisms.
The \emph{ideal quotient} is denoted by $\CA/[\CB]$.
For any additive functor $F\colon \CA\to \CC$, its (essential) \emph{image} $\Im F$ and the \emph{kernel} $\Ker F$ are defined as
\begin{eqnarray*}
\Im F &=& \{C\in\CC\mid FA\cong C\textnormal{\ for some\ }A\in\CA\},\\
\Ker F &=& \{A\in\CA\mid FA=0\textnormal{\ in\ }\CC\}
\end{eqnarray*}
respectively.

Let $\A$ be a dg category.
Slightly different from the above, it is said to be \emph{additive} if the homotopy category $H^0\A$ is additive in the usual sense.
For a dg functor $F\colon \A\to\B$, the symbol $\Im F$ denotes the \emph{quasi-essential image} of $F$, namely, the canonical enhancement of the class $\{FA\in\B\mid A\in\A\}$.
Throughout the paper, we assume that a dg category is small to avoid the set theoretical obstruction.

\section{Elements of dg categories}
\label{sec:dg_cat}
The section \S \ref{sec:dg_cat} will be devoted to just recalling needed foundations of dg category which is used in this article.
Those readers who are familiar with the dg category may skip this section.
In \S \ref{subsec:monoidal_cat}, we will review some monoidal structures closely related to dg categories.
The subsection \S \ref{subsec:Der_cat_and_pretr} will refer to basic structures concerning dg enhancements of triangulated categories.
In more details, we continue reviewing in \S \ref{subsec:Hqe}  the category $\Hqe$ of dg categories and the Drinfeld dg quotient in $\Hqe$ as a dg enhancement of the Verdier quotient.

\subsection{Monoidal structures associated to dg categories}
\label{subsec:monoidal_cat}
The reason why we start the section with the monoidal structure is coming from that it is a convenient framework to understand dg categories: Actually they are categories enriched by the monoidal category $\CC(k)$ of chain complexes of $k$-modules; In addition, both categories $\dgcat$ and $\Hqe$ consisting of dg categories admit monoidal structures respectively, see \S\ref{subsec:Hqe} for $\Hqe$.
In this article, a monoidal category $\CK$ is always assumed to be \emph{symmetric}, i.e., there exists a (bi-)functorial isomorphism $X\otimes Y\xto{\sim} Y\otimes X$ in any $X,Y\in\CK$.
We refer to \cite{Kel82} for details of monoidal categories.

\begin{definition}\label{def:monoidal_cat}
Let $\CK$ be a symmetric monoidal category.
Suppose that, for any object $Y$, the functor $-\otimes Y\colon \CK\to\CK$ admits a right adjoint $[Y,-]$.
We thus have a natural isomorphism
\[
[X\otimes Y, Z]\xto{\sim} [X,[Y,Z]]
\]
in any objects $X,Y,Z\in\CK$.
In this case, the monoidal category $\CK$ is said to be \emph{closed} and the object $[X,Y]\in\CK$ is called \emph{internal hom} of $\CK$.
\end{definition}

The category $\CC(k)$ of chain complexes admits a monoidal structure with respect to the tensor product $A\otimes B$ for chain complexes $A$ and $B$.
Recall that the \emph{tensor complex} $A\otimes B$ is structured by
$
(A\otimes B)^n = \bigoplus_{i+j=n}A^i\otimes_k B^j
$
together with the differential
\[
d_{A\otimes B}(x\otimes y)=d_A(x)\otimes y+(-1)^{|x|}x\otimes d_B(y)
\]
for homogeneous elements $x\in A^n$ and $y\in B^m$, where $|x|=n$ denotes the degree of $x$.
The right adjoint of tensor functor $-\otimes B$ is the internal hom written as $\HOM(B,-)$.
It is defined by
$
\HOM(B,C)^n = \prod_{i\in\BZ}\Hom(B^i,C^{i+n}),
$
namely, a set $\{f^i\colon B^i\to C^{i+n}\}_{i\in\BZ}$ of $k$-module homomorphisms of degree $n$.
The differential $d=d_{\HOM(B,C)}$ is determined by $df=d_Cf-(-1)^nfd_B$ for homogeneous elements $f\in\HOM(B,C)^n$.
It turns out that they make $\CC(k)$ to be a closed monoidal category.

A monoidal category $\CK$ permits us to define the enriched category whose morphism space belongs to $\CK$, called the \emph{$\CK$-enriched} category.
The \emph{differential graded category} (\emph{dg category}, for short) is defined to be just the $\CC(k)$-enriched category, which is unpacked explicitly as below, e.g. \cite[\S 2.3]{Toe11}.

\begin{definition}\label{def:dg_cat}
A $k$-linear dg category $\A$ is a category equipped with the following datum:
\begin{enumerate}[label=\textup{(\arabic*)}]
\item
For any pair $(A,B)$ of objects in $\A$, the $\Hom$-space forms a chain complex of $k$-modules $\A(A,B)$;
\item 
For any $A\in\A$, a chain map $k\to\A(A,A)$ exists and represents the identity, where $k$ is regarded as a stalk complex of degree $0$; and
\item 
The composition is defined as a chain map $\A(B,C)\otimes \A(A,B)\to \A(A,C)$ in $\C(k)$ and satisfies the obvious associativity and unity.
\end{enumerate}

A morphism $f\in\A(A,B)$ is said to be \emph{closed} if $df=0$.
By the definition of the tensor complex, we deduce from the item (3) the following equation called the \emph{Leibniz rule}:
\[
d(gf)=(dg)f+ (-1)^{n}g(df)
\]
for homogeneous elements $g\in \A(B,C)^n$ and $f\in \A(A,B)^m$.

Also, we associate the following categories to a given dg category $\A$:
\begin{itemize}
    \item 
    The \emph{strict category} $Z^0\A$ is the category with the same objects as $\A$ and whose $\Hom$-spaces are $(Z^0\A)(A,B)=Z^0(\A(A,B))$, where $Z^0$ is the kernel of the differential $d\colon \A(A,B)^0\to \A(A,B)^1$.
    \item 
    Similarly, the \emph{homotopy category} $H^0\A$ is the category with the same objects as $\A$ and whose $\Hom$-spaces are $(H^0\A)(A,B)=H^0(\A(A,B))$, where $H^0$ is the $0$-th cohomology of $\A(A,B)$.
\end{itemize}
Two objects in $\A$ are said to be \emph{isomorphic} (resp. \emph{homotopy equivalent}) if they are isomorphic in $Z^0\A$ (resp. $H^0\A$).
\end{definition}

\begin{example}\label{ex:dg_cat_of_chain_cpx}
As we have already mentioned, the category $\CC(k)$ is a closed monoidal category.
The internal $\HOM$ with respect to the structure on $\CC(k)$ makes it into a dg category $\CC_\dg(k)$.
In this case, we see that its strict category recovers the abelian category $\CC(k)$ and its homotopy category is the ordinary homotopy category $\CH(k)$, the ideal quotient of $\CC(k)$ by the contractible complexes.
\end{example}

In the remainder, we will see that the category of small dg categories admits a natural monoidal structure.
A morphism of dg categories is defined to be a functor between $\CC(k)$-enriched categories.
Explicitly, we have this:

\begin{definition}
\label{def:dg_functor}
Let $\A$ and $\B$ be dg categories.
A functor $F\colon \A\to\B$ is called a \emph{dg functor} if, for any $A,B\in\A$, the morphism
\[
F_{A,B}\colon \A(A,B)\to \B(FA,FB)
\]
is a morphism in $\Ck$.
We denote by $\dgcat$ the category of small dg categories and dg functors.
\end{definition}

The category $\dgcat$ is closed under many categorical operations:
The \emph{opposite category} $\A^{\op}$ of a dg category $\A$ is naturally equipped with a dg category structure;
A full subcategory $\B\sse\A$ possesses a dg structure as well.
With this view, we call $\B$ a \emph{dg full subcategory} of $\A$;
The category $\dgcat$ is closed under taking the tensor product.

\begin{definition}
\label{def:tensor_of_dg_cat}
Let $\A$ and $\B$ be dg categories.
The \emph{tensor product} $\A\otimes\B$ is the dg category defined by the following datum:
\begin{itemize}
\item 
The objects are the pairs $(A,B)$ of $A\in\A$ and $B\in\B$.
\item 
The hom-space is defined as
\[
\A\otimes\B((A,B),(A',B'))=\A(A,A')\otimes\B(B,B')
\]
together with the composition
\[
(f'\otimes g')\circ (f\otimes g)=(-1)^{nm}(f'\circ f)\otimes (g'\circ g)
\]
for homogeneous elements $f'\in\A(A',A'')^{n}, f\in\A(A,A'), g'\in\B(B',B'')^{m}$ and $g\in\B(B,B')$.
\end{itemize}
\end{definition}

Also, the corresponding internal $\HOM$ exists:
we need to consider the complex of the natural transformations between dg functors.

\begin{definition}\label{def:graded_natural_trans}
Let us consider dg functors  $F,G\colon \A\to\B$ between dg categories.
We define the complex $\NAT_{\dg}(F,G)$ of (\emph{graded}) \emph{natural transformations} in such a way:
\begin{itemize}
\item 
For each object $A\in\A$, it consists of a family of morphisms $\varphi_A\in\B(FA,GA)^n$ in degree $n$;
\item 
For any morphism $f\in\A(A,B)^m$, it requires the diagram
\[
\begin{tikzcd}[row sep=0.6cm]
F(A) \arrow{d}[swap]{F(f)}\arrow{r}{\varphi_A}& G(A)\arrow{d}{G(f)} \\
F(B) \arrow{r}{\varphi_B} &G(B)
\end{tikzcd}
\]
to be commutative up to sign $(-1)^{nm}$ which we call the \emph{graded naturality condition}; The differential is defined objectwise.
We can check that $d\varphi_A\in\B(FA,GA)^{n+1}$ fulfills the graded naturality condition as well.
Thus, $\NAT_{\dg}(F,G)$ actually sits in $\CC(k)$.
\end{itemize}
Furthermore, the above construction permits us to think of $\NAT_{\dg}(F,G)$ as the $\Hom$-space of the dg category of dg functors from $\A$ to $\B$.
It actually equips the class $\HOM(\A,\B)$ of dg functors with a dg category structure.
\end{definition}

\begin{lemma}\label{lem:Hom-tensor_adjunction_on_dgcat}
For any $\A,\B,\C\in\dgcat$, there exists a natural isomorphism in $\dgcat$.
\[
\HOM(\A\otimes\B,\C)\cong\HOM(\A,\HOM(\B,\C))
\]
In turn, the category $\dgcat$ admits a closed monoidal structure.
\end{lemma}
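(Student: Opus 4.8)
The plan is to construct the adjunction isomorphism explicitly at the level of dg functors and then check it respects the dg structure. First I would unwind what a dg functor $\A\otimes\B\to\C$ amounts to: by \cref{def:tensor_of_dg_cat} its object assignment is a map on pairs $(A,B)\mapsto F(A,B)$, and on hom-complexes it is a chain map $\A(A,A')\otimes\B(B,B')\to\C(F(A,B),F(A',B'))$ compatible with the signed composition. By the tensor-hom adjunction in $\CC(k)$ recalled in \S\ref{subsec:monoidal_cat} (the natural isomorphism $[X\otimes Y,Z]\xrightarrow{\sim}[X,[Y,Z]]$ for the closed monoidal category $\CC(k)$), giving such a chain map is the same as giving, for each fixed $A$, a chain map $\A(A,A')\to\HOM(\B(B,B'),\C(F(A,B),F(A',B')))$. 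Assembling these over $B,B'$ shows that $F$ determines a dg functor $\widehat{F}\colon\A\to\HOM(\B,\C)$ sending $A$ to the dg functor $\widehat{F}(A)\colon\B\to\C$, $B\mapsto F(A,B)$, and sending a morphism in $\A(A,A')$ to a graded natural transformation $\widehat{F}(A)\Rightarrow\widehat{F}(A')$ in $\NAT_\dg$. Conversely, a dg functor $\A\to\HOM(\B,\C)$ evaluates to give a dg functor $\A\otimes\B\to\C$. The two constructions are visibly mutually inverse on underlying data.

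The key steps, in order, are: (i) verify that $\widehat F$ as defined is genuinely a dg functor, i.e. that the induced maps on hom-complexes are chain maps and respect composition and units — this reduces via \cref{def:graded_natural_trans} to the fact that the graded naturality condition for $\widehat F(f)$ (a commutativity up to the sign $(-1)^{nm}$) is precisely the statement that the original $F$ is compatible with the signed composition law of $\A\otimes\B$; (ii) verify the reverse assignment is a dg functor, which is the same computation read backwards; (iii) check these assignments are mutually inverse, which is immediate since both sides are determined by the same underlying collection of $k$-linear maps $\A(A,A')^p\otimes\B(B,B')^q\to\C(\cdot,\cdot)^{p+q}$; and (iv) upgrade the bijection $\Hom_{\dgcat}(\A\otimes\B,\C)\cong\Hom_{\dgcat}(\A,\HOM(\B,\C))$ to an isomorphism of \emph{dg categories} $\HOM(\A\otimes\B,\C)\cong\HOM(\A,\HOM(\B,\C))$, i.e. compare the complexes of graded natural transformations on both sides. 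For (iv) one checks that a graded natural transformation between two functors $\A\otimes\B\to\C$ corresponds, under the above dictionary, to a graded natural transformation between the curried functors $\A\to\HOM(\B,\C)$, where on the target side a graded natural transformation is itself valued in graded natural transformations $\NAT_\dg$; this is again a bookkeeping of indices and signs using the associativity of $\otimes$ in $\CC(k)$. Finally, naturality of the isomorphism in $\A,\B,\C$ is routine, and the concluding sentence — that $\dgcat$ is closed monoidal with internal hom $\HOM(-,-)$ — is then formal: the required coherences are inherited from those of $(\CC(k),\otimes)$.

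The main obstacle is \textbf{sign bookkeeping}. The Koszul signs enter in three places that must be made to cohere: the differential on the tensor complex $A\otimes B$, the signed composition $(f'\otimes g')\circ(f\otimes g)=(-1)^{nm}(f'\circ f)\otimes(g'\circ g)$ in $\A\otimes\B$, and the sign $(-1)^{nm}$ in the graded naturality condition of \cref{def:graded_natural_trans}. The content of the lemma is really that these three sign conventions are mutually compatible; a careless choice would produce an isomorphism only up to an unwanted sign, or break the chain-map property of the comparison. I would organize the computation by fixing homogeneous elements $f\in\A(A,A')^n$, $g\in\B(B,B')^m$ and tracking the image of $f\otimes g$ through both sides, and by verifying the Leibniz rule is preserved — the differential on $\NAT_\dg(F,G)$ being defined objectwise means one only needs the objectwise differentials to match, which they do by construction of $\HOM$ in $\CC(k)$. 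Everything else (functoriality, units, the final monoidal-closedness statement) is formal once the sign conventions are pinned down, so I would state those routine verifications briefly and concentrate the write-up on exhibiting the correspondence and checking it is a chain isomorphism.
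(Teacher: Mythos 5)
The paper states this lemma without proof, treating it as the classical tensor--hom adjunction for $\CC(k)$-enriched categories (cf.\ Keller and To\"en), and your currying argument --- matching the signed composition in $\A\otimes\B$ with the graded naturality condition in $\NAT_{\dg}$, then extending the bijection on dg functors to an isomorphism of the $\HOM$ dg categories --- is exactly the standard proof the authors are implicitly invoking. Your plan is correct, and you rightly identify the sign coherence as the only place where real verification is needed.
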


\subsection{Derived categories and pretriangulated hulls}
\label{subsec:Der_cat_and_pretr}
For a dg category $\A$, we provide a quick review on a construction of its derived category $\CD(\A)$ which is indeed a localization of the category $\CC(\A)$ of dg $\A$-module.
Localizations of categories are very difficult to understand in general.
The model category is one of the most powerful tool to describe them.
The first model category will be $\CC(k)$.
This is structured by taking the fibrations to be degree-wise surjective morphisms, and the weak equivalence to be the quasi-isomorphism.
The homotopy category $\Ho\CC(k)$ is the usual derived category $\CD(k)$, see \cite{Hov98} for more details.

Now we will recall that $\CC(\A)$ naturally carries the model structure from $\CC(k)$.

\begin{definition}
\label{def:dg_module}
For a small dg category $\A$, we introduce the following notions.
\begin{enumerate}[label=\textup{(\arabic*)}]
\item 
A \emph{dg $\A$-module} $M$ is a dg functor $M\colon \A^{\op}\to\Cdg(k)$.
\item 
A dg $\A$-module is said to be \emph{representable} (resp. \emph{quasi-representable}) if it is isomorphic (resp. quasi-isomorphic) to a dg $\A$-module of the form $A^{\wedge}\deff \A(-,A)$ for some $A\in\A$.
\end{enumerate}
We denote by $\Cdg(\A)\deff \HOM(\A^{\op},\CC_{\dg}(k))$ the dg category of dg $\A$-modules.
Following \cref{def:dg_cat}, we have the strict category $\CC(\A)\deff Z^0\Cdg(\A)$.
\end{definition}

We can define a model structure on $\CC(\A)$ by defining fibrations (resp. weak equivalences) to be the morphisms $f\colon M\to M'$ such that the induced morphism $f(A)\colon M(A)\to M'(A)$ is a fibration (resp. a weak equivalence) in $\CC(k)$ for each $A\in\A$, see \cite[\S 3]{Toe07}.
Thus, the category $\CC(\A)$ admits not only a dg structure but also a model structure.
Under these notions, we define the derived category of $\A$ equipped with a dg enhancement.

\begin{theorem}
\label{thm:derived_category}
The \emph{derived category} $\CD(\A)$ of a dg category $\A$ is the homotopy category of $\CC(\A)$, that is, $\CD(\A)\deff \Ho\CC(\A)$.
In addition, we denote by $\Ddg(\A)$ the dg full subcategory of cofibrant complexes in $\Cdg(\A)$ and call it the \emph{dg derived category} of $\A$.
The following linkage from $\Ddg(\A)$ to $\CD(\A)$ explains the name:
\[
H^0(\Ddg(\A))\simeq \CD(\A).
\]
\end{theorem}

\begin{remark}\cite[Ch. 4]{Hov98}
There is a notion of \emph{$\CC(k)$-enriched model category} which was introduced to deal with the model category ``enriched'' by $\CC(k)$ such as the category $\CC(\A)$, see also \cite[\S 3]{Toe07}.

Let $\C$ be a $\CC(k)$-enriched model category and $\Int\C$ the dg full subcategory of fibrant-cofibrant objects of $\C$.
Then, by the general model category theory, we have $H^0(\Int\C)=\Ho\C$.
This explains the above linkage, since any object in $\CC(\A)$ is fibrant.
\end{remark}

\begin{example}
If we consider a usual $k$-algebra $A$ as a dg category, then the model category $\CC(A)$ in \cref{thm:derived_category} is known as the projective model structure \cite{Hov02}.
It's homotopy category $\Ho\CC(A)$ is equivalent to the usual derived category $\CD(A)$ by definition.
\end{example}

We will recall other relevant structures in connection with the derived category $\CD(\A)$, which provides an elementary understanding to it.
In the same fashion as $\CC(k)$, the strict category $\CC(\A)$ is an abelian category.
By tweaking the structure, we can impose a Frobenius exact category on $\CC(\A)$, see \cite[\S 2.2]{Kel94} for details, in which the class of contractible dg $\A$-modules forms the full subcategory of all projective-injective objects.
Hence, the ideal quotient $\CH(\A)$ of $\CC(\A)$ with respect to the contractible objects is equipped with a natural triangulated structure.
Notice that the homotopy category $H^0\Cdg(\A)$ coincides with this ideal quotient $\CH(\A)$.

We denote by $M[1]$ the shifted $\A$-module of $M$ (i.e., $(M[1])(A)\deff M(A)[1]$) and by $C(f)$ the \emph{cone} of $f$.
The associated inclusion $N(A)\xto{\iota}C(f)(A)$ and the projection $C(f)(A)\xto{p}M(A)[1]$ are both chain maps, which yield an exact sequence $0\to N\xto{\iota} C(f)\xto{p} M[1]\to 0$ in $\CC(\A)$.
In a similar manner, the \emph{cocone} $C(f)[-1]$ of $f$ is defined as well.
The following explains how the triangulated structure on $\CH(\A)$ is determined.

\begin{lemma}\label{lem:triangulated_str_on_HA}
The homotopy category $\CH(\A)$ admits a triangulated structure in the following way:
the translation $[1]$ is the objectwise shifts as complexes and the triangles are isomorphic to a sequence of the form
$
M\xto{f}N\xto{\iota}C(f)\xto{p}M[1]
$.
\end{lemma}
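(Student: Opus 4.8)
The plan is to exhibit $\CH(\A)=H^0\Cdg(\A)$ as the stable category of the Frobenius exact category $\CC(\A)$ recalled above — whose conflations are the short exact sequences of dg $\A$-modules that split in each degree as sequences of graded $\A$-modules, and whose projective-injective objects are exactly the contractible dg $\A$-modules, see \cite[\S 2.2]{Kel94} — and then to invoke Happel's theorem \cite{Hap88}, by which the ideal quotient of a Frobenius exact category by its projective-injective objects is canonically triangulated. Since that ideal quotient is precisely $\CH(\A)$, this already produces a triangulated structure on $\CH(\A)$; it then remains only to recognise its suspension and its distinguished triangles concretely.

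For the suspension, I would use the degreewise split conflation
\[
0\lra M\xto{\iota}C(\id_M)\xto{p}M[1]\lra 0
\]
supplied by \cref{def:strict_cone}. Since $C(\id_M)$ is contractible, hence injective for the above exact structure, this conflation exhibits $M[1]$ as the cokernel of an inflation of $M$ into an injective object; by the very definition of Happel's suspension $\Sigma$ this yields an isomorphism $\Sigma M\cong M[1]$ in $\CH(\A)$, natural in $M$ because the assignment $M\mapsto C(\id_M)$ and the displayed conflation are functorial. From now on I identify $\Sigma$ with $[1]$ via this isomorphism.

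Next, for any morphism $f\colon M\to N$ in $\CC(\A)$, the sequence $0\to N\xto{\iota}C(f)\xto{p}M[1]\to 0$ of \cref{def:strict_cone} is split exact in each degree, hence a conflation, and therefore gives a distinguished triangle $N\xto{\iota}C(f)\xto{p}M[1]\xto{\partial}N[1]$. Unwinding the connecting morphism in Happel's construction, using the evident graded splitting of $p$, identifies $\partial$ with $-f[1]$; rotating this triangle once to the left then yields a distinguished triangle isomorphic to $M\xto{f}N\xto{\iota}C(f)\xto{p}M[1]$, the sign being absorbed by an automorphism of the triangle. Conversely, every distinguished triangle of $\CH(\A)$ is by construction isomorphic to the triangle attached to some conflation $0\to L\xto{i}M\xto{p}N\to 0$, namely $L\xto{i}M\xto{p}N\xto{\partial}L[1]$; the canonical chain map $C(i)\to N$ induced by $p$ admits a homotopy inverse built from the degreewise splitting, so it is an isomorphism in $\CH(\A)$ and transports the conflation triangle to the cone triangle $L\xto{i}M\to C(i)\to L[1]$. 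Hence the distinguished triangles of $\CH(\A)$ are precisely the sequences isomorphic to cone triangles $M\to N\to C(f)\to M[1]$, as claimed.

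The main obstacle I anticipate is the sign and naturality bookkeeping concentrated in the last two steps: verifying that Happel's abstractly defined suspension agrees with the degreewise shift $[1]$ as a \emph{functor} (not merely objectwise), and that the connecting morphism of the cone conflation equals $-f[1]$ on the nose rather than being only homotopic to it through a non-canonical homotopy. Once these points are pinned down, everything else is a verbatim transcription of the classical computation for complexes over an additive category, carried out one object of $\A$ at a time.
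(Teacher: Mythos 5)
Your proposal is correct and follows exactly the route the paper itself sets up: the text immediately preceding the lemma recalls the Frobenius exact structure on $\CC(\A)$ with the contractible modules as projective-injectives (citing \cite[\S 2.2]{Kel94}), and the lemma is stated as the resulting Happel stable-category triangulation, which is what you reconstruct. The paper gives no written proof, and your identification of the suspension via $C(\id_M)$ and of the distinguished triangles with cone triangles, together with the sign caveats you flag, is the standard and complete argument.
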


The derived category $\CD(\A)$ is defined to be a Verdier quotient of $\CH(\A)$ as well and hence it is triangulated:
A dg $\A$-module $M\in\CC(\A)$ is said to be \emph{acyclic} if $M(A)$ is an acyclic complex for any $A\in\A$.
We denote by $\CH_{\mathsf{ac}}(\A)$ the full subcategories of acyclic dg $\A$-modules in $\CH(\A)$.
Then $\CH_{\mathsf{ac}}(\A)$ is a (co)localizing subcategory of $\CH(\A)$ which gives rise to the following Verdier quotient admitting a left adjoint:
\[
\begin{tikzcd}
\CH_{\mathsf{ac}}(\A)\arrow[hook]{r}
&\CH(\A)\arrow{r}{q}\arrow[bend right = 50]{l}
&\CD(\A)\arrow[bend right = 50]{l}[swap]{\lambda}
\end{tikzcd}
\]
We have thus defined as $\CD(\A)\deff\CH(\A)/\CH_{\mathsf{ac}}(\A)$ and see that the quotient restricts to a triangle equivalence $\CH_{\mathsf{prj}}(\A)\deff\Im\lambda\xto{\sim}\CD(\A)$.
Therefore, the dg derived category $\Ddg(\A)$ is a canonical enhancement of the corresponding dg full subcategory of $\Cdg(\A)$.

We recall the Yoneda embedding for dg categories, where the notion of algebraic triangulated category arises.

\begin{theorem}[derived Yoneda embedding]
\label{thm:dg_Yoneda_embedding}
Let $\A$ be a dg category.
Then we have an isomorphism
\[
\CD(\A)(A^{\wedge},M)\cong H^0(M(A))
\]
in $\Ck$ which is functorial in $A\in\A$ and $M\in\CD(\A)$.
Moreover, the embedding $\A\hookrightarrow \Cdg(\A)$ induces a fully faithful functor $H^0\A\hookrightarrow\CD(\A)$.
\end{theorem}

We recall the notion of \emph{pretriangulated hull} $\pretr(\A)$ of a dg category $\A$.
It is defined to be the smallest dg full subcategory of $\CC_{\dg}(\A)$ which contains $\A$ and is closed under taking cones and shifts.
A dg category $\A$ is \emph{pretriangulated} if it is quasi-equivalent to the pretriangulated hull of a dg category.
A triangulated category is \emph{algebraic} if it is triangle equivalent to $H^0\A$ for some pretriangulated dg category $\A$.
We put $\tr(\A)\deff H^0(\pretr(\A))$ as usual.

\begin{lemma}\label{lem:pretriangulated_hll}
The triangulated category $\tr(\A)$ is the smallest triangulated subcategory of $\CD(\A)$ which contains $H^0\A$.
In particular, the derived Yoneda embedding $H^0\A\hookrightarrow \CD(\A)$ extends to the fully faithful embedding $\tr(\A)\hookrightarrow \CD(\A)$.
\end{lemma}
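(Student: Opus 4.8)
The plan is to realize $\pretr(\A)$ not merely as a dg full subcategory of $\CC_{\dg}(\A)$, but as a dg full subcategory of the dg derived category $\CD_{\dg}(\A)$, and then to transport everything across the triangle equivalence $H^0(\CD_{\dg}(\A))\xto{\sim}\CD(\A)$ recalled above. Concretely, I would first check that every object of $\pretr(\A)$ is a cofibrant dg $\A$-module. By the dg Yoneda embedding (\cref{lem:Yoneda}) the representable modules $A^{\wedge}$ are cofibrant, and the full subcategory of $\CH(\A)$ spanned by the cofibrant modules is a triangulated subcategory, hence closed under the shifts $[{\pm}1]$ and under taking the strict mapping cones of \cref{def:strict_cone} --- which are exactly the operations generating $\pretr(\A)$ from $\A$. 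An induction on the number of such operations then shows that $\Ob(\pretr(\A))$ consists of cofibrant modules, so $\pretr(\A)$ is a dg full subcategory of $\CD_{\dg}(\A)$.

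Granting this, applying $H^0$ and composing with the equivalence produces a functor
\[
\Phi\colon \tr(\A)=H^0(\pretr(\A))\hookrightarrow H^0(\CD_{\dg}(\A))\xto{\ \sim\ }\CD(\A),
\]
which is fully faithful because $\pretr(\A)\hookrightarrow\CD_{\dg}(\A)$ is a dg full subcategory inclusion. It is triangulated: the translation on $\tr(\A)$ is the objectwise shift and, by \cref{lem:triangulated_str_on_HA}, its distinguished triangles are the strict cone sequences, which match the triangulated structure of $\CD(\A)$ under the equivalence. Restricting $\Phi$ along the dg full subcategory inclusion $\A\sse\pretr(\A)$ recovers precisely the derived Yoneda embedding $H^0\A\hookrightarrow\CD(\A)$ of \cref{thm:dg_Yoneda_embedding}; thus $\Phi$ is the asserted fully faithful extension. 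Being fully faithful and triangulated, $\Phi$ identifies $\tr(\A)$ with a triangulated subcategory $\CT\deff\Im\Phi$ of $\CD(\A)$, and $\CT\supseteq H^0\A$.

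It then remains to see that $\CT$ is the smallest such subcategory. Let $\CS\sse\CD(\A)$ be any triangulated subcategory with $H^0\A\sse\CS$. Each object of $\pretr(\A)$ is obtained from objects of $\A$ by finitely many applications of $[{\pm}1]$ and the cone construction, applied to closed degree-$0$ morphisms between previously constructed objects; by \cref{lem:triangulated_str_on_HA} each such cone sits in a distinguished triangle, whose image in $\CD(\A)$ is again a distinguished triangle. Since $\CS$ is closed under shifts, under cones of its morphisms, and under isomorphism, an induction on the length of the construction yields $\Phi(M)\in\CS$ for every $M\in\pretr(\A)$, i.e. $\CT\sse\CS$.

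The one genuinely technical point is the first step --- that closure under shifts and strict cones preserves cofibrancy (equivalently, $h$-projectivity) of dg modules, so that $\pretr(\A)$ really does land inside $\CD_{\dg}(\A)$ and the localization $\CH(\A)\to\CD(\A)$ collapses no $\Hom$-space between objects of $\pretr(\A)$. Once that is in place, the rest is a formal induction using \cref{lem:triangulated_str_on_HA} and the universal property of the Verdier quotient.
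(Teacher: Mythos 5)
Your argument is correct. The paper states this lemma without proof (it is recalled as a standard fact about pretriangulated hulls), so there is no in-text argument to compare against; your route --- showing the objects of $\pretr(\A)$ are cofibrant, hence that $\pretr(\A)$ sits inside $\CD_{\dg}(\A)$ as a full dg subcategory, and then an induction on the cone/shift construction for minimality --- is the standard one. The ``genuinely technical point'' you isolate does go through: the inclusion $N\to C(f)$ is a degreewise split monomorphism with cofibrant cokernel $M[1]$, hence a cofibration in the projective model structure, so $C(f)$ is cofibrant as the composite $0\to N\to C(f)$ of cofibrations; alternatively, it suffices to work with the weaker property of h-projectivity ($\CH(\A)(-,K)=0$ for all acyclic $K$), whose closure under shifts and cones is immediate from the long exact sequence attached to the triangle $N\to C(f)\to M[1]\to N[1]$ in $\CH(\A)$, and which already guarantees $\CH(\A)(M,N)\xto{\sim}\CD(\A)(M,N)$ for $M\in\pretr(\A)$.
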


\subsection{The Drinfeld dg quotient}
\label{subsec:Hqe}
The homotopy category $H^0(\A)$ is considered as the genuine category for a given dg category $\A$, so an appropriate framework to deal with morphisms in $\dgcat$ is the category $\Hqe$ that we will explain in \cref{def:quasi-equivalence}.
The aim of this subsection is to recall the notion of Drinfeld dg quotient, a dg version of Verdier quotient in the sense of \cref{prop:enhancement_of_Verdier_quotient}.
We also record some easy observations for the dg quotient which will be used in \S\ref{sec:subquotient}.

\begin{definition}\label{def:quasi-equivalence}
A dg functor $F\colon \A\to\B$ is called a \emph{quasi-equivalence} if the following conditions are fulfilled:
\begin{enumerate}[label=\textup{(\arabic*)}]
\item 
\emph{quasi-fully faithful}, namely, the morphism
\[
F_{A,B}\colon \A(A,B)\to \B(FA,FB)
\]
is a quasi-isomorphism in $\CC(k)$ for $A,B\in\A$; and
\item 
\emph{quasi-essentially surjective}, namely, the induced functor $H^0(F)\colon H^0(\A)\to H^0(\B)$ is essentially surjective.
\end{enumerate}
Note, if the above conditions are satisfied, we have an equivalence $H^0F\colon H^0\A\xto{\sim}H^0\B$.
The category $\Hqe$ is defined as the localization of $\dgcat$ at the quasi-equivalences.
\end{definition}

\begin{remark}\label{rem:quasi-equivalence}
If $\A$ is pretriangulated, an equivalence $H^0F\colon H^0\A\xto{\sim}H^0\B$ implies that $F$ is quasi-fully faithful.
In particular, we have the quasi-equivalence $\A\hookrightarrow \pretr(\A)$ for any pretriangulated dg category $\A$.
\end{remark}

Thanks to the Dwyer-Kan model structure on $\dgcat$ whose weak equivalences are quasi-equivalences, we can understand $\Hqe$ as the homotopy category of $\dgcat$.

\begin{theorem}\label{thm:Hqe}
\cite{Tab05, Toe07}
We have a model structure on $\dgcat$ by defining the weak equivalences to be the quasi-equivalences and the fibrations to be the morphisms $F\colon \A\to\B$ satsifying the following two properties:
\begin{enumerate}[label=\textup{(\arabic*)}]
\item 
For any objects $A,B\in\A$, the induced morphism
\[
F_{A,B}\colon \A(A,B)\to \B(FA,FB)
\]
is a fibration in $\CC(k)$.
\item 
For any isomorphism $u'\colon A'\to B'$ in $H^0\A$ and $B\in H^0\A$ such that $F(B)=B'$, there is an isomorphism $u\colon A\to B$ in $H^0(\A)$ such that $H^0(F)(u)=u'$.
\end{enumerate}
The homotopy category of $\dgcat$ is denoted by $\Hqe$.
\end{theorem}

We know $\Hqe$ has small hom-spaces by the general model category theory.
Since the fibrations $\CC(k)$ are surjective morphisms, we see that any object in $\dgcat$ is fibrant.
Thus a morphism $\A\to\B$ in $\Hqe$ is represented as a roof in $\dgcat$
\[
\A\xleftarrow{\sigma}\A_{\cof}\xto{\alpha}\B,
\]
where $\A\xleftarrow{\sigma}\A_{\cof}$ is the cofibrant replacement for $\A$.
In particular, we have a morphism $H^0(\alpha)H^0(\sigma)^{-1}\colon H^0\A\to H^0\B$.
Also we notice that, if $\A$ is flat (e.g. cofibrant), each morphism $\A\to\B$ in $\Hqe$ can be taken as a morphism in $\dgcat$.

The cofibrant replacement plays an essential role to equip $\Hqe$ with a monoidal structure in the same spirit as \S\ref{subsec:monoidal_cat}.
Note that the tensor product $-\otimes -$ over $\dgcat$ does not preserve quasi-equivalences, so such a naive approach does not work well.
This leads us to pass to the cofibrant replacement.
Actually, a cofibrant dg category $\B$ is \emph{flat} in the sense that $-\otimes\B$ preserves the quasi-equivalences.
Thus putting $\A\otimes^\mathbf{L}\B\deff \A\otimes(\B_{\cof})$, we have a bifunctor $-\otimes^\mathbf{L}-\colon \Hqe\times \Hqe\to \Hqe$ which moreover defines a monoidal structure on $\Hqe$.
The tensor product $-\otimes^{\mathbf{L}}-$ admits an internal $\HOM$ which makes $\Hqe$ to be a closed monoidal category.
Note that the dg categories $\A\otimes\B$ and $\A\otimes^\mathbf{L}\B$ are \emph{Morita equivalent}, that is, their dg derived categories $\Ddg(\A\otimes\B)$ and $\Ddg(\A\otimes^\mathbf{L}\B)$ are quasi-equivalent.

\begin{definition}\label{def:internal_Hom_in_Hqe}
Let us consider the derived category $\CD(\A^{\op}\otimes^\mathbf{L} \B)$  of $\A$-$\B$-bimodules.
We denote by $\rep(\A,\B)$ the full subcategory of bimodules $X$ such that $X(-,B)$ is quasi-representable for each object $B\in\B$, namely, the tensor functor
\[
-\otimes_{\B}^\mathbf{L} X\colon \CD(\B)\to \CD(\A)
\]
takes the representable $\B$-modules to objects isomorphic to representable $\A$-modules.
Also, we denote by $\rep_{\dg}(\A,\B)$ the canonical dg enhancement which is defined as the full dg subcategory of $\CD_{\dg}(\A^{\op}\otimes^\mathbf{L} \B)$ consisting of the objects in $\rep(\A,\B)$.
\end{definition}

We thus have a closed monoidal structure on $\Hqe$ in the sense that there exists a natural bijection $\Hqe(\A\otimes^\mathbf{L}\B,\C)\cong \Hqe(\A,\rep_{\dg}(\B,\C))$ proved in \cite[Thm.~1.3]{Toe07}, see also \cite[Thm.~1.1]{CS15}.

In the remainder, we recall the notion of the Drinfeld dg quotient and collect some basic properties.
The flatness of dg categories is essential there too.
\emph{In the rest, we assume that any dg category is {\rm additive} in the sense that its homotopy category is additive, but nothing is lost for our purpose.}
Let now $\A$ be a dg category and $\N\sse\A$ a full dg subcategory (or just a class of objects in $\A$).
We say that a morphism $F\colon \A\to\C$ in $\Hqe$ \emph{annihilates $\N$} if the induced functor $H^0F$ vanishes on $H^0\N$.

\begin{definition}\label{def:dg_quotient}
For the given pair $(\A,\N)$, the \emph{dg quotient} $\A/\N$ is a dg category equipped with a morphism $Q\colon \A\to\A/\N$ in $\Hqe$ which is universal with respect to the property that annihilates $\N$.
We often present such a situation by
\[
\N\lra \A\overset{Q}{\lra} \A/\N .
\]
\end{definition}

We know from \cite{Dri04} that the dg quotient $\A/\N$ exists and the induced functor $H^0Q$ is essentially surjective.
Furthermore, the dg quotient of a flat dg category still remains flat.
Due to the existence of the pretriangulated hull, the defining universality of the dg quotient slightly reduces to the following.

\begin{lemma}\label{lem:universality_of_dg_quotient}
For the given pair $(\A,\N)$, we consider a dg category $\B$  equipped with a morphism $Q\colon \A\to\B$ in $\Hqe$.
Then the following are equivalent.
\begin{enumerate}[label=\textup{(\arabic*)}]
\item 
$Q\colon \A\to\B$ is a dg quotient.
\item 
If there exists a morphism $F\colon \A\to\T$ to a pretriangulated category which annihilates $\N$, then we have a unique morphism $F'\colon \B\to\T$ such that $F=F'\circ Q$.
\end{enumerate}
\end{lemma}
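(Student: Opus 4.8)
The plan is to prove the equivalence by unwinding the definition of the dg quotient together with the universal property of the pretriangulated hull. The only non-formal ingredient is \cref{lem:pretriangulated_hll}, which tells us that $\pretr(\A)$ realizes the smallest triangulated subcategory of $\CD(\A)$ containing $H^0\A$, together with \cref{rem:quasi-equivalence} giving the quasi-equivalence $\A\hookrightarrow\pretr(\A)$.

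First I would establish $(1)\Rightarrow(2)$, which is almost immediate: if $Q\colon\A\to\B$ is a dg quotient and $F\colon\A\to\T$ annihilates $\N$ with $\T$ pretriangulated, then in particular $\T$ is a dg category, so the defining universality of $\A/\N$ in \cref{def:dg_quotient} yields a unique $F'\colon\B\to\T$ in $\Hqe$ with $F=F'\circ Q$. No pretriangulatedness is used in this direction; it is the restricted universal property that is the substantive claim.

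For $(2)\Rightarrow(1)$ I would argue that the restricted universal property already forces $Q$ to be a dg quotient. Given any dg category $\CC$ and any morphism $F\colon\A\to\CC$ in $\Hqe$ annihilating $\N$, I would post-compose with the quasi-equivalence $\CC\hookrightarrow\pretr(\CC)$ (which is an isomorphism in $\Hqe$), obtaining $\widetilde{F}\colon\A\to\pretr(\CC)$ to a pretriangulated dg category; since $H^0(\CC\hookrightarrow\pretr(\CC))$ is fully faithful, $\widetilde{F}$ still annihilates $\N$. By hypothesis $(2)$ there is a unique $\widetilde{F}'\colon\B\to\pretr(\CC)$ with $\widetilde{F}=\widetilde{F}'\circ Q$. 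To descend this to $\CC$ I need the essential image of $\widetilde{F}'$ on objects to land (up to homotopy equivalence) in the quasi-essential image of $\CC$; this follows because $H^0Q$ is essentially surjective (a property of any dg quotient morphism, which $Q$ is being candidate for — more precisely, one first handles the case where $\B$ is already a dg quotient via $(1)\Rightarrow(2)$ to get a comparison functor, or one argues directly that objects of $\B$ are homotopy equivalent to images of objects of $\A$). Thus $\widetilde{F}'$ factors through $\CC\hookrightarrow\pretr(\CC)$ up to quasi-equivalence, giving the desired $F'\colon\B\to\CC$; uniqueness is inherited from the uniqueness in $(2)$ together with the fact that $\CC\to\pretr(\CC)$ is a monomorphism in $\Hqe$.

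The main obstacle is the descent step in $(2)\Rightarrow(1)$: showing that a factorization through $\pretr(\CC)$ can be pushed back down to $\CC$ itself. This hinges on controlling the objects in the image of the comparison functor $\B\to\pretr(\CC)$, and the cleanest route is to first observe that a dg quotient in the sense of \cref{def:dg_quotient} exists (by \cite{Dri04}) with $H^0$ of its structure functor essentially surjective, then use $(1)\Rightarrow(2)$ applied to that model to obtain mutually inverse comparisons with $\B$, from which essential surjectivity of $H^0Q$ and the descent both follow formally. Everything else is a routine diagram chase through the $2$-categorical structure of $\Hqe$.
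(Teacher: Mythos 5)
Your argument follows the paper's proof essentially step for step: pass to the pretriangulated hull $\pretr(\C)$ of the target, apply (2) to obtain the comparison morphism, descend back to $\C$ using the essential surjectivity of $H^0Q$, and get uniqueness from the fully faithfulness of $\C\hookrightarrow\pretr(\C)$. One slip to correct: for a general dg category $\C$ the inclusion $\C\hookrightarrow\pretr(\C)$ is \emph{not} a quasi-equivalence and hence not an isomorphism in $\Hqe$ (it is only quasi-fully faithful; $H^0\pretr(\C)=\tr(\C)$ is usually much larger than $H^0\C$) --- if it were an isomorphism, your descent step would be vacuous. Since the remainder of your argument treats the inclusion correctly as a fully faithful, non-essentially-surjective embedding, this parenthetical does not damage the proof. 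In one respect you are more careful than the paper: where the paper simply invokes the essential surjectivity of $H^0Q$, you observe that this must itself be extracted from a comparison of $\B$ with Drinfeld's model of the quotient, which is the right way to close that gap.
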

\begin{proof}
The implication $(1)\Rightarrow (2)$ is obvious.
To show the converse, we take a morphism $F\colon \A\to\C$ which annihilates $\N$ together with the pretriangulated hull $\C\xto{\inc}\T\deff\pretr(\C)$.
By (2), we have a unique morphism $F'\colon \B\to\T$ depicted as a dotted arrow in the following commutative diagram.
\[
\begin{tikzcd}[column sep=1.0cm, row sep=0.5cm]
    {}
    &{\T}
    &{}
    \\
    {}
    &\C\arrow{u}{\inc}
    &{}
    \\
    \N \arrow{r}
    &\A\arrow{r}{Q}\arrow{u}{F}
    &\B\arrow[dotted]{uul}[swap]{F'}
\end{tikzcd}
\]
Thus it suffices to show that the quasi-essential image $\Im F'$ of $F'$ is contained in the quasi-essential image $\Im\inc$.
Note however that this follows from the essential surjectivity of $H^0Q$.
Since the $\inc\colon\C\to\T$ is a fully faithful dg functor, the uniqueness also follows.
\end{proof}

As already mentioned, the dg quotient has an aspect as an enhancement of the Verdier quotient.

\begin{proposition}\label{prop:enhancement_of_Verdier_quotient}
Let us consider an algebraic triangulated category $\CA$ and a triangulated subcategory $\CN\sse\CA$.
If we consider a pretriangulated dg enhancement $\A$ of $\CA$ together with the canonical dg enhancement $\N$ of $\CN$,
then the dg quotient $\A/\N$ induces the Verdier quotient $\CA/\CN$ under $H^0$ which is indicated as below.
\[
\begin{tikzcd}[column sep=1.0cm]
    \N \arrow{r}&
    \A\arrow{r}{Q'} &
    \A/\N
\end{tikzcd}
\qquad
\overset{H^0}{\rightsquigarrow}
\qquad
\begin{tikzcd}[column sep=1.0cm]
    \CN \arrow{r}&
    \CA\arrow{r}{H^0Q'} &
    \CA/\CN
\end{tikzcd}
\]
\end{proposition}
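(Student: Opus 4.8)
The plan is to construct a comparison functor $\CA/\CN\to H^0(\A/\N)$ from the universal property of the Verdier quotient and then to show it is a triangle equivalence by a $\Hom$-space computation carried out in the dg module categories. Since $\A$ is pretriangulated, so is $\A/\N$ by \cite{Dri04} (this will also reappear from the computation below), so $H^0(\A/\N)$ underlies a triangulated category and $H^0Q\colon\CA=H^0\A\to H^0(\A/\N)$ is a triangle functor. As $Q$ annihilates $\N$, the functor $H^0Q$ kills $\CN=H^0\N$, so the universal property of the Verdier quotient produces a unique triangle functor $\bar Q\colon\CA/\CN\to H^0(\A/\N)$ with $\bar Q\circ L=H^0Q$, where $L\colon\CA\to\CA/\CN$ is the localization; the assertion of the proposition is exactly that $\bar Q$ is a triangle equivalence. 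Being exact, $\bar Q$ is an equivalence as soon as it is essentially surjective and fully faithful, and essential surjectivity is immediate: $H^0Q$ is essentially surjective (recalled right after \cref{def:dg_quotient}), as is $L$, whence so is $\bar Q$.

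It remains to prove $\bar Q$ is fully faithful, which I regard as the main obstacle. Fix $X,Y\in\A$. By the calculus of fractions, $\Hom_{\CA/\CN}(LX,LY)$ is the filtered colimit of the modules $\Hom_\CA(X',Y)=H^0\bigl(\A(X',Y)\bigr)$ over morphisms $s\colon X'\to X$ of $\CA$ whose cone lies in $\CN$. On the other side, the dg Yoneda lemma (\cref{lem:Yoneda}, \cref{thm:dg_Yoneda_embedding}) gives
\[
\Hom_{H^0(\A/\N)}(QX,QY)=H^0\bigl((\A/\N)(QX,QY)\bigr)\cong\CD(\A/\N)\bigl((QX)^\wedge,(QY)^\wedge\bigr),
\]
and here I invoke the module-theoretic incarnation of the dg quotient (\cite{Dri04}, see also \cite{Kel94}): extension of scalars along $Q$ exhibits $\CD(\A/\N)$ as the Verdier quotient of $\CD(\A)$ by the localizing subcategory generated by the representables $N^\wedge$ $(N\in\N)$, and carries $X^\wedge$ to $(QX)^\wedge$; equivalently, restriction along $Q$ identifies $\CD(\A/\N)$ with the full subcategory of those $M\in\CD(\A)$ for which $M(N)$ is acyclic for every $N\in\N$. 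Since $\A$ is pretriangulated, $\CD(\A)$ is compactly generated by the representables, its compact objects form the idempotent completion of $H^0\A=\CA$, and $\{N^\wedge\mid N\in\N\}$ is a set of compact objects generating $\CN$ inside it. The Thomason--Neeman localization theorem \cite{TT90} then identifies the compact objects of $\CD(\A/\N)$ with the idempotent completion of the Verdier quotient $\CA/\CN$, compatibly with the localization functors; restricting this identification to the objects $(QX)^\wedge$, which are precisely the images of the objects of $\CA$ and hence lie in $\CD(\A)$ already before idempotent completion, recovers $\CA/\CN$ itself together with the identification of $H^0Q$ with $L$. In particular the canonical map $\Hom_{\CA/\CN}(LX,LY)\to\Hom_{H^0(\A/\N)}(QX,QY)$ induced by $\bar Q$ is bijective, so $\bar Q$ is fully faithful and therefore a triangle equivalence compatible with the localizations.

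The genuine difficulty is the module-theoretic input just used: that $\CD(\A/\N)$, together with the images of the representables, is the indicated Verdier/Bousfield localization of $\CD(\A)$. This is exactly where Drinfeld's construction (or, equivalently, the characterization of the dg quotient through module categories) does the real work; once it is granted, everything else is bookkeeping with adjunctions and the localization theorem. A more self-contained alternative bypasses module categories and computes $H^0\bigl((\A/\N)(X,Y)\bigr)$ directly from Drinfeld's explicit bar-type complex model for the hom-complexes of the dg quotient, matching it term by term with the calculus-of-fractions colimit computing $\Hom_{\CA/\CN}(LX,LY)$; this is more laborious but it is again that very matching which carries the content.
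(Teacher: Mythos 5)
The paper does not actually prove this proposition: it is quoted as Drinfeld's theorem (the corresponding arrow in Figure~\ref{fig:summary} is attributed to \cite{Dri04}), so there is no in-text argument to compare yours against. Your reconstruction is the standard module-theoretic proof and is essentially correct: reduce to full faithfulness of the comparison functor $\bar Q$ via the universal property of the Verdier quotient plus essential surjectivity of $H^0Q$, then compute $\Hom_{H^0(\A/\N)}(QX,QY)$ through the dg Yoneda embedding, the identification of $\CD(\A/\N)$ with the Bousfield/Verdier localization of $\CD(\A)$ at the localizing subcategory generated by the $N^\wedge$, and the Neeman--Thomason localization theorem for compact objects. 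Two small points deserve more care than you give them. First, the localization theorem in the generality you need (compactly generated $\CT$, compacts of $\CT/\CL$ as the idempotent completion of $\CT^c/\CL^c$) is Neeman's, not literally \cite{TT90}. Second, and more substantively, the theorem identifies $\Hom$ in $\CT^c/\thick_{\CT^c}(\CN)$ with $\Hom$ in $(\CT/\CL)^c$, where $\CT^c$ is the \emph{idempotent completion} of $\CA$; to conclude you still need that $\CA/\CN\to\CT^c/\thick_{\CT^c}(\CN)$ is fully faithful on objects of $\CA$. This is true but is not just "bookkeeping": given a roof $X\xleftarrow{s}Z\to Y$ with $Z$ only in the idempotent completion and $\cone(s)$ a summand $M$ of some $W$ in the triangulated closure of $\CN$ inside $\CA$, one replaces $Z$ by $\cocone(X\to W)\cong Z\oplus M'[-1]$, which lies in $\CA$ and dominates the original roof. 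With that cofinality step supplied (and with essential surjectivity disposing of the idempotent-completion discrepancy on objects), your argument goes through; the alternative you mention, computing directly with Drinfeld's explicit model for the hom-complexes, is closer to Drinfeld's own proof.
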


We proceed observations on the dg quotient $\N\to\A\xto{Q'}\A/\N$ for any pair $(\A,\N)$ in connection with their pretriangulated hulls.
Let us consider the pretriangulated hull $F\colon \A\hookrightarrow \pretr(\A)$ and the \emph{triangulated closure} $\Tria\CN$ of $\CN=H^0\N$ in $\tr(\A)$, namely, the smallest triangulated subcategory of $\tr(\A)$ containing $\CN$.
Also we denote by $\Tria_{\dg}\N$ its canonical dg enhancement in $\pretr(\A)$.

By \cref{prop:enhancement_of_Verdier_quotient}, we have the dg quotient which enhances the Verdier quotient of $\tr(\A)$ by $\Tria\CN$ as the following diagram indicates.
\begin{equation}\label{seq:ambient_Verdier_quotient}
\begin{tikzcd}[column sep=1.0cm]
    \Tria_{\dg}\N \arrow{r}&
    \pretr(\A)\arrow{r}{Q} &
    \dfrac{\pretr(\A)}{\Tria_{\dg}\N}
\end{tikzcd}
\quad
\overset{H^0}{\rightsquigarrow}
\quad
\begin{tikzcd}[column sep=1.0cm]
    \Tria\CN \arrow{r}&
    \tr(\A)\arrow{r}{H^0Q} &
    \dfrac{\tr(\A)}{\Tria\CN}
\end{tikzcd}
\end{equation}

Now, we restrict the above dg quotient $Q$ to $\A$ along the pretriangulated hull $F\colon \A\hookrightarrow \pretr(\A)$ as in \eqref{diag:restriction_of_dg_quotient}.
The next corollary shows that the dg quotient $\A/\N$ emerges as a subsequence associated to $\frac{\pretr(\A)}{\Tria_{\dg}(\N)}$.

\begin{corollary}\label{cor:restriction_of_dg_quotient}
Let us denote by $\Im(Q|_{\A})$ the quasi-essential image of $Q\circ F$ and consider the following commutative diagram in $\Hqe$.
\begin{equation}\label{diag:restriction_of_dg_quotient}
\begin{tikzcd}[column sep=1.5cm]
    \N\arrow[hook]{r}\arrow[hook]{d}[swap]{F|_{\N}} & \A \arrow{r}{Q|_{\A}}\arrow[hook]{d}[swap]{F}&
    \Im(Q|_{\A}) \arrow[hook]{d}{\inc} \\
    \Tria_{\dg}\N \arrow[hook]{r}&
    \pretr(\A)\arrow{r}{Q} &
    \dfrac{\pretr(\A)}{\Tria_{\dg}\N}
\end{tikzcd}
\end{equation}
Then the following assertions hold.
\begin{enumerate}[label=\textup{(\arabic*)}]
\item 
$\inc\colon \Im(Q|_{\A})\hookrightarrow\frac{\pretr(\A)}{\Tria_{\dg}\N}$ is the pretriangulated hull of $\Im(Q|_{\A})$.
\item 
The restricted quasi-functor $Q|_{\A}\colon \A\to \Im(Q|_{\A})$ is a dg quotient of $\A$ by $\N$.
In particular, we have an isomorphism $\Im(Q|_\A)\cong \A/\N$ in $\Hqe$.
\item 
$F|_\N\colon \N\hookrightarrow\Tria_{\dg}\N$ is the pretriangulated hull of $\N$.
\end{enumerate}
\end{corollary}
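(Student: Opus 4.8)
The plan is to reduce all three assertions to the ambient dg quotient $Q\colon\pretr(\A)\to\pretr(\A)/\Tria_{\dg}\N$ displayed in \eqref{seq:ambient_Verdier_quotient} together with the following reformulation of \cref{lem:pretriangulated_hll}: a dg full subcategory inclusion $\CC\hookrightarrow\T$ with $\T$ pretriangulated is the pretriangulated hull of $\CC$ precisely when $H^0\T$ is generated, as a triangulated category, by the essential image of $H^0\CC$. By \eqref{seq:ambient_Verdier_quotient} and \cref{prop:enhancement_of_Verdier_quotient}, the category $\pretr(\A)/\Tria_{\dg}\N$ is pretriangulated and $H^0\bigl(\pretr(\A)/\Tria_{\dg}\N\bigr)=\tr(\A)/\Tria\CN$, the Verdier quotient along $q\colon\tr(\A)\to\tr(\A)/\Tria\CN$; this is the only external input I would rely on beyond the foundations already set up.

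Assertions (1) and (3) are then both the same generation computation. For (1): by \cref{lem:pretriangulated_hll} the category $\tr(\A)=H^0\pretr(\A)$ is generated as a triangulated category by $H^0\A$; since $q$ is a triangle functor which is essentially surjective, the preimage under $q$ of the triangulated closure of $q(H^0\A)$ is a triangulated subcategory of $\tr(\A)$ containing both $H^0\A$ and $\Ker q=\Tria\CN$, hence it equals $\tr(\A)$, and therefore $\tr(\A)/\Tria\CN$ is generated by $q(H^0\A)$. By the commutativity of \eqref{diag:restriction_of_dg_quotient}, $q$ applied to the image of $H^0\A$ is exactly the essential image $H^0(\Im(Q|_{\A}))$, so the description above shows that $\inc\colon\Im(Q|_{\A})\hookrightarrow\pretr(\A)/\Tria_{\dg}\N$ is the pretriangulated hull of $\Im(Q|_{\A})$. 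For (3): the category $\Tria_{\dg}\N$ is pretriangulated, being the cone-and-shift closure of $\N$ inside the pretriangulated $\pretr(\A)$, and $H^0(\Tria_{\dg}\N)=\Tria\CN$ is by construction generated by $H^0\N$; the same description identifies $\N\hookrightarrow\Tria_{\dg}\N$ with the pretriangulated hull of $\N$. (Equivalently, one may apply $\pretr(-)$ to the quasi-fully faithful inclusion $\N\hookrightarrow\pretr(\A)$ coming from \cref{thm:dg_Yoneda_embedding} and \cref{rem:quasi-equivalence}, and use $\pretr(\pretr(\A))\simeq\pretr(\A)$.)

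For (2) I would verify condition (2) of \cref{lem:universality_of_dg_quotient} for $Q|_{\A}\colon\A\to\Im(Q|_{\A})$; first note that $Q|_{\A}$ annihilates $\N$, because $H^0F$ carries $H^0\N$ into $\Tria\CN$ and $H^0Q$ kills $\Tria_{\dg}\N$. So let $G\colon\A\to\T'$ be a morphism in $\Hqe$ to a pretriangulated dg category which annihilates $\N$. Extend $G$ along the pretriangulated hull $F\colon\A\hookrightarrow\pretr(\A)$ to $\widehat G\colon\pretr(\A)\to\T'$ with $\widehat G\circ F=G$. The kernel of the triangle functor $H^0\widehat G\colon\tr(\A)\to H^0\T'$ is a triangulated subcategory of $\tr(\A)$ containing $H^0\N$, hence it contains $\Tria\CN$, so $\widehat G$ annihilates $\Tria_{\dg}\N$ and, by \cref{def:dg_quotient}, factors uniquely as $\widehat G=\overline G\circ Q$. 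Setting $G'\deff\overline G\circ\inc$ gives $G'\circ Q|_{\A}=\overline G\circ Q\circ F=\widehat G\circ F=G$. For uniqueness, if $G''\colon\Im(Q|_{\A})\to\T'$ also satisfies $G''\circ Q|_{\A}=G$, then by (1) and the universal property of the pretriangulated hull $G''$ extends uniquely to $\overline{G''}\colon\pretr(\A)/\Tria_{\dg}\N\to\T'$; now $\overline G\circ Q$ and $\overline{G''}\circ Q$ both restrict to $G$ along $F$, hence coincide by uniqueness of the extension along the pretriangulated hull, and both annihilate $\Tria_{\dg}\N$, so $\overline G=\overline{G''}$ by the uniqueness in \cref{def:dg_quotient}; restricting along $\inc$ yields $G'=G''$. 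Thus $Q|_{\A}$ satisfies \cref{lem:universality_of_dg_quotient}(2), so it is a dg quotient of $\A$ by $\N$, and uniqueness of dg quotients gives the isomorphism $\Im(Q|_{\A})\cong\A/\N$ in $\Hqe$.

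The step I expect to be the main obstacle is the uniqueness clause in (2): it forces one to chain together the uniqueness of the extension along the pretriangulated hull $F$ with the uniqueness built into the dg quotient $Q$, and this is precisely the place where (1) is indispensable, since a competitor $G''$ initially lives only on the small category $\Im(Q|_{\A})$ and one has to know it already generates its own pretriangulated hull inside $\pretr(\A)/\Tria_{\dg}\N$ in order to lift it to the ambient quotient. A secondary delicate point is that ``$G$ annihilates $\N$'' only constrains $H^0G$ on $H^0\N$, so the enlargement to $\Tria\CN$ genuinely uses that kernels of triangle functors are triangulated subcategories; the analogous enlargement is what makes \eqref{diag:restriction_of_dg_quotient} commute in the first place.
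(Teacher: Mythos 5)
Your proof is correct, and for parts (2) and (3) it is essentially the paper's argument: the paper isolates as \cref{claim:restriction_of_dg_quotient} the statement that $Q\circ F$ is universal among morphisms to pretriangulated dg categories annihilating $\N$ (proved exactly as in your existence step: extend along $F$, observe the extension kills $\Tria_{\dg}\N$ because kernels of triangle functors are triangulated, then factor through $Q$), and your uniqueness chain for (2) — lift a competitor along $\inc$ using (1), compare the two lifts via the uniqueness of hull-extensions and of dg-quotient factorizations — is precisely how the paper deduces uniqueness from that Claim. The genuine difference is in (1): the paper constructs an explicit inverse to the comparison map $\pretr(\Im(Q|_{\A}))\to\pretr(\A)/\Tria_{\dg}\N$ by two applications of the Claim (producing $G_1$, $G_2$ and checking $G_2\circ G_1=\id$ from the universality of $Q\circ F$), whereas you verify directly that the Verdier quotient $\tr(\A)/\Tria\CN$ is triangle-generated by the image of $H^0\A$ (via the preimage of the triangulated closure containing both $H^0\A$ and $\Ker q$) and then invoke the generation criterion for being a pretriangulated hull — the same criterion the paper itself uses for part (3). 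Your route for (1) is more elementary and stays at the level of homotopy categories; the paper's is purely formal and reuses the Claim, which it needs anyway for (2). Both are valid, and your reformulated hull criterion is justified by \cref{lem:pretriangulated_hll} together with \cref{rem:quasi-equivalence} exactly as you indicate.
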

\begin{proof}
The following claim is a crucial observation to proceed our proof.

\begin{claim}\label{claim:restriction_of_dg_quotient}
The composite $Q\circ F$ is a morphism to a pretriangulated dg category which is universal with respect  the property of annihilating $\N$.
\end{claim}
\begin{proof}
Let us consider a morphism $\A\xto{G} \C$ to a pretriangulated dg category which annihilates $\N$.
By the universality of the pretriangulated hull $\A\hookrightarrow\pretr(\A)$, we have a morphism $\pretr(\A)\xto{G'}\C$ with $G=G'\circ F$.
Note that $H^0G'$ is a triangle functor and thus $G'$ annihilates $\Tria_{\dg}\N$.
Again by the universality of the dg quotient $Q$, we have the following commutative diagram in $\Hqe$.
\begin{equation*}
\begin{tikzcd}[column sep=1.5cm, row sep=0.8cm]
    {}
    &\A \arrow{r}{G}\arrow[hook]{d}[swap]{F}
    &\C
    \\
    \Tria_{\dg}\N\arrow{r}{}
    &\pretr(\A)\arrow{r}[swap]{Q}\arrow{ru}[description]{G'}
    &\frac{\pretr(\A)}{\Tria_{\dg}\N}\arrow{u}[swap]{G''}
\end{tikzcd}
\end{equation*}
We still have to show the uniqueness of $G''$.
However, it is straightforward from the universality of $Q$ and $F$.
\end{proof}

(1)
We put $\B\deff\Im(Q|_{\A})$ until we finish the proof.
We consider the pretriangulated hull $\B\xto{G_0} \pretr(\B)$.
Applying \cref{claim:restriction_of_dg_quotient} to the morphism $G_0\circ Q|_{\A}$, we have the following diagram of solid arrows
\begin{equation*}
\begin{tikzcd}[column sep=1.5cm]
    {}
    &\A \arrow{r}{Q|_{\A}}\arrow[hook]{d}[swap]{F}
    &\B \arrow[hook]{d}{\inc}\arrow{r}{G_0}
    &\pretr(\B)\arrow[dotted, bend left]{ld}{G_2}
    \\
    \Tria_{\dg}\N\arrow{r}{}
    &\pretr(\A)\arrow{r}{Q}
    &\frac{\pretr(\A)}{\Tria_{\dg}\N}\arrow{ru}{G_1}
    &{}
\end{tikzcd}
\end{equation*}
with $G_0\circ Q|_{\A}=G_1\circ Q\circ F$.
Furthermore by the universality of $G_0$, we get the fully faithful dotted arrow $G_2$ with $G_2\circ G_0=\inc$.
It remains to show that $H^0G_2$ is essentially surjective.
By the aforementioned commutativity, we can easily check an equality $G_2\circ G_1\circ Q\circ F=Q\circ F$ is true.
Again the universality stated in \cref{claim:restriction_of_dg_quotient} guarantees the identity $G_2\circ G_1=\id$.
Thus, we have a desired isomorphism $G_2$ in $\Hqe$.

(2)
We shall check the needed universality of $Q|_{\A}\colon \A\to\B$.
Let us consider a morphism $\A\xto{F_0}\C$ in $\Hqe$ to a pretriangulated dg category $\C$ which annihilates $\N$, see \cref{lem:universality_of_dg_quotient}.
Thanks to \cref{claim:restriction_of_dg_quotient}, the arguments end up with the following diagram
\begin{equation*}
\begin{tikzcd}[column sep=1.5cm, row sep=0.8cm]
    {}
    &\C
    \\
    \A \arrow{r}[swap]{Q|_{\A}}\arrow[hook]{d}[swap]{F}\arrow[bend left = 20]{ru}{F_0}
    &\B \arrow[hook]{d}[swap]{\inc}\arrow{u}{F_1\circ\inc}
    \\
    \pretr(\A)\arrow{r}{Q}
    &\dfrac{\pretr(\A)}{\Tria_{\dg}\N}\arrow[bend right = 50]{uu}[swap]{F_1}
\end{tikzcd}
\end{equation*}
with $F_0=F_1\circ Q\circ F$.
Thus, we get a desired commutativity $F_0=(F_1\circ \inc)\circ Q|_{\A}$.
The uniqueness of $F_1\circ \inc$ is also ensured by \cref{claim:restriction_of_dg_quotient}.
Actually, if there is another such morphism $\B\xto{F_2}\C$ with $F_0=F_2\circ Q|_{\A}$, we have $\frac{\pretr(\A)}{\Tria_{\dg}\N}\xto{F'_1}\C$ with $F_2=F'_1\circ \inc$ by the universality of $\inc$.
The commutativity $F_0=F'_1\circ Q\circ F$ is immediate and results in $F_1=F'_1$.

(3)
If we consider a pretriangulated hull $\N\overset{F'}{\hookrightarrow}\pretr(\N)$, we get a fully faithful dg functor $\pretr(\N)\overset{\iota}{\hookrightarrow}\Tria_{\dg}\N$.
Since $\Tria(\CN)$ is the smallest triangulated subcategory of $\tr(\A)$ containing $\CN$, the induced functor $H^0\iota$ is a triangle equivalence which forces $\iota$ to be a quasi-equivalence.
\end{proof}

Consequently, any dg quotient $\A/\N$ admits the ambient dg quotient of their pretriangulated hulls:
\begin{equation*}
\begin{tikzcd}[column sep=1.5cm]
    \N\arrow[hook]{r}\arrow[hook]{d}[swap]{} & \A \arrow{r}{Q|_{\A}}\arrow[hook]{d}[swap]{}&
    \A/\N \arrow[hook]{d}{} \\
    \pretr(\N) \arrow[hook]{r}&
    \pretr(\A)\arrow{r}{Q} &
    \pretr(\A/\N)
\end{tikzcd}
\end{equation*}
where a natural isomorphism $\frac{\pretr(\A)}{\pretr(\N)}\cong \pretr(\A/\N)$ exists and the vertical arrows denote the pretriangulated hulls (cf. \cite[Thm.~3.4]{Dri04}).
The discussions so far are well-known for specialists.
We do not find a proper reference though and have thus included details.

\section{Exact dg categories}
\label{sec:exact_dg}
In this section \S\ref{sec:exact_dg}, we recall several aspects of exact dg categories following \cite{Che24a,Che24b}; see also \cite{Che23}.
In \S\ref{subsec:alg_ET}, we collect some basic facts on exact dg categories and briefly remind the reader that an exact dg category $(\A,\SS)$ gives rise to an algebraic extriangulated category $H^0\A$.
For a more comprehensive treatment of extriangulated categories, we refer the reader to \cite{NP19, HLN21, NOS22, GNP23}; see also \S\ref{subsec:extriangulated_quotient}.
In \S\ref{subsec:universal_embedding}, we review the key notion of universal embedding which provides the `universal' ambient triangulated category of the extriangulated category $H^0\A$ compatible with their higher extensions.
Some notions concerning higher extensions are recalled in \S\ref{subsec:higher_extensions}.
Last, we review the relative theory of the algebraic extriangulated category in \S\ref{subsec:relative_theory}.

\subsection{Algebraic extriangulated categories}
\label{subsec:alg_ET}
We begin with recalling basic notions and terminology to define exact dg categories.
Let $M\in\CC(k)$ be a complex of $k$-module and put the ($0$-th) \emph{truncation} as
\[
\tau_{\leq 0}M=(\cdots \to M^{-2}\to M^{-1}\to Z^0M\to 0\to \cdots),
\]
where $Z^0M$ is the kernel of the differential $M^0\to M^1$.
Since a dg category $\A$ is enriched by $\CC(k)$, we naturally define the \emph{truncation} $\tau_{\leq 0}\A$ to be the dg category consisting of the same objects as $\A$ with the morphisms given by
\[
(\tau_{\leq 0}\A)(A,B)=\tau_{\leq 0}(\A(A,B)).
\]
A dg category $\A$ is said to be \emph{connective} if the cohomology of every Hom-space $\A(A,B)$ vanishes in all positive degrees.
Accordingly, the canonical (not full) inclusion $\tau_{\leq 0}\A\to \A$ is called the \emph{connective cover} of $\A$ \cite[p.4]{Che24a}.
Note that this truncation $\tau_{\leq 0}$ does not change the homotopy category at all, in the sense that the canonical functor $H^0(\tau_{\leq 0}\A)\xto{\sim}H^0(\A)$ is an equivalence of additive categories (Recall that $\A$ is always assumed to be additive!).

The notion of $3$-term homotopy complexes ($3$-term h-complexes, for short) is the key ingredient to define the exact dg category.
The conceptual definition of them is originally given as a certain $A_\infty$-functor \cite[Def.~3.14]{Che24a}.
However, since we do not deal with the axioms of exact dg categories, we only provide a more elementary description that we can find in \cite[p.20]{Che24a}.

The \emph{$3$-term h-complex} $X$ is a diagram in $\A$
\begin{equation}\label{diag:3-term_h-complex}
\begin{tikzcd}[row sep=0.6cm]
A_0 \arrow{r}{f}\arrow[bend right]{rr}[swap]{h}& A_1\arrow{r}{g} & A_2,
\end{tikzcd}
\end{equation}
where $|f|=|g|=0, |h|=-1$ and $d(f)=0, d(g)=0$ and $d(h)=-gf$.
A morphism from $X$ to another $3$-term h-complex $X'$ is an equivalence class which can be explained by using the following diagram in $\A$,
\[
\begin{tikzcd}[row sep=0.8cm]
A_0 \arrow{r}{f}\arrow[bend left]{rr}{h}\arrow{d}[swap]{r_0}\arrow[draw=red]{rd}[swap]{\textcolor{red}{s_1}}\arrow[draw=blue]{rrd}{\textcolor{blue}{t}}
& A_1\arrow{r}{g}\arrow{d}[swap]{r_1} \arrow[draw=red]{rd}{\textcolor{red}{s_2}}
& A_2\arrow{d}{r_2},
\\
A'_0 \arrow{r}[swap]{f'}\arrow[bend right]{rr}[swap]{h'}
& A'_1\arrow{r}[swap]{g'} 
& A'_2,
\end{tikzcd}
\]
where $X'$ corresponds to the bottom one:
It consists of the following morphisms
\begin{align*}
&r_i\colon A_i\to A'_i, |r_i|=0, d(r_i)=0,\\
&s_j\colon A_{j-1}\to A'_j, |s_j|=-1, d(s_1)=f'r_0-r_1f, d(s_2)=g'r_1-r_2g
\end{align*}
and
\[
t\colon A_0\to A'_2, |t|=-2, d(t)=r_2\circ h-h'\circ r_0-s_2\circ f-g'\circ s_1.
\]
We denote by $\CH_{{\rm 3}t}(\A)$ the class of certain equivalent classes of $3$-term h-complexes in $\A$, cf. \cite[\S 3.2]{Che24a}.

\begin{remark}\label{rem:comparison_cone_and_h-cokernel}
If we consider the Yoneda embedding $\A\to\CC_{\dg}(\A)$, we notice that a $3$-term h-complex yields canonical morphisms in $\CC(\A)$
\[
u\deff\begin{bsmallmatrix}
h^{\wedge}\amph g^{\wedge}
\end{bsmallmatrix}\colon
C(f^{\wedge})\to A_2^{\wedge},\quad
v\deff\begin{bsmallmatrix}
f^{\wedge}\\
h^{\wedge}
\end{bsmallmatrix}\colon
A_0^{\wedge} \to C(g^{\wedge})[-1],
\]
due to the existence of $A_0\xto{h}A_2$ and $d(h)=-gf$.
Also, we notice that a $3$-term h-complex \eqref{diag:3-term_h-complex} can be regarded as a diagram in $\tau_{\leq 0}\A$.
\end{remark}

\begin{definition}\label{def:homtopy_short_exact}
A $3$-term h-complex \eqref{diag:3-term_h-complex} is \emph{homotopy left exact} if $\tau_{\leq 0}(v)$ is a quasi-isomorphism in $\CC(\tau_{\leq 0}\A)$.
Dually, we define the notion of \emph{homotopy right exact sequence}.
A both right and left homotopy h-complex is called a \emph{homotopy short exact sequence}.
\end{definition}

To define the exact structure on a dg category $\A$, dg versions of pushout and pullback are necessary.
These concepts are also defined in the obvious fashion such as the homotopy short exact sequence.
A \emph{homotopy square} \cite[Def.~3.21]{Che24a} is a diagram in $\A$
\begin{equation}\label{diag:homotopy_square}
\begin{tikzcd}[row sep=0.8cm]
A \arrow{r}{a}\arrow{d}[swap]{b}\arrow{rd}{h}
&B \arrow{d}{c}
\\
B' \arrow{r}[swap]{d}
& C
\end{tikzcd}
\end{equation}
where $a,b,c$ and $d$ belong to $Z^0(\A)$ and $h$ is a morphism of degree $-1$ such that $d(h)=ca-db$ and the folded sequence
\begin{equation}\label{diag:3-term_h-complex_folded}
\begin{tikzcd}[row sep=0.6cm]
A \arrow{r}{\begin{bsmallmatrix}
a\\
b
\end{bsmallmatrix}}\arrow[bend right]{rr}[swap]{h}& B\oplus B'\arrow{r}{
\begin{bsmallmatrix}
c \amph
-d
\end{bsmallmatrix}
} & C
\end{tikzcd}
\end{equation}
is a $3$-term h-complex.
The homotopy square \eqref{diag:homotopy_square} is a \emph{homotopy pullback square} if \eqref{diag:3-term_h-complex_folded} is homotopy left exact.
Dually, we define the notion of \emph{homotopy pushout square}.

Exact dg categories are a dg analogue of Quillen's exact category, cf. \cite[\S A.1]{Kel90} and \cite[\S 2]{Buh10}.
Let us, in advance, consider an isomorphism-closed subclass $\SS\sse\CH_{{\rm 3}t}(\A)$ consisting of homotopy short exact sequences.
Borrowing the terminology from Quillen's exact category, a sequence \eqref{diag:3-term_h-complex} belonging to $\SS$ is called a \emph{conflation} as well as the morphisms $f$ and $g$ are called an \emph{inflation} and a \emph{deflation}, respectively.

\begin{definition}\label{def:exact_dg_cat}
\cite[Def.~4.1]{Che24a}
An \emph{exact dg category} is defined to be the pair $(\A,\SS)$ which satisfies the following axioms:
\begin{enumerate}[label=\textup{(Ex\arabic*)}]
\setcounter{enumi}{-1}
\item \label{Ex0}
$\id_0$ is a deflation.
\item \label{Ex1}
The class of deflations are closed under composition.
\item \label{Ex2}
A diagram $B\xto{p}C\xleftarrow{c}C'$ in $Z^0(\A)$ with $p$ being a deflation is completed into a homotopy pullback
\[
\begin{tikzcd}[row sep=0.8cm]
B' \arrow{r}{p'}\arrow{d}[swap]{b}\arrow{rd}{s}
&C' \arrow{d}{c}
\\
B \arrow{r}[swap]{p}
& C
\end{tikzcd}
\]
where $p'$ is a deflation.
\end{enumerate}
\begin{enumerate}[label=\textup{(Ex\arabic*$^{\op}$)}]
\setcounter{enumi}{1}
\item \label{Ex2op}
A diagram $A'\xleftarrow{a}A\xto{i}B$ in $Z^0(\A)$ with $i$ being an inflation is completed into a homotopy pushout
\[
\begin{tikzcd}[row sep=0.8cm]
A \arrow{r}{i}\arrow{d}[swap]{a}\arrow{rd}{s}
&B \arrow{d}{b}
\\
A' \arrow{r}[swap]{i'}
& B'
\end{tikzcd}
\]
where $i'$ is a inflation.
\end{enumerate}
If the exact structure is clear from the context, we simply denote it by $\A$.
\end{definition}

The following ensures that $\Hqe$ is an appropriate framework when dealing with exact dg categories.

\begin{lemma}\label{lem:quasi-equiv_preserve_ex_dg_str}
\cite[Rem.~4.5(b)]{Che24a}
Let $F\colon \A\to\B$ be a quasi-equivalence of dg categories.
Then the natural map $\CH_{{\rm 3}t}(\A)\to \CH_{{\rm 3}t}(\B)$ preserves and reflects the property of being homotopy short exact sequences.
Moreover, $F$ induces a bijection between exact dg structures on $\A$ and those on $\B$.
\end{lemma}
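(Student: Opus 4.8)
The statement splits into two parts: that the map $\CH_{{\rm 3}t}(\A)\to\CH_{{\rm 3}t}(\B)$ preserves and reflects homotopy short exactness, and that the induced correspondence on exact dg structures is a bijection; the second I expect to follow formally from the first. The plan is to begin by checking that the map itself is well defined. A dg functor $F$ carries the datum $(f,g,h)$ of a $3$-term h-complex \eqref{diag:3-term_h-complex} to $(Ff,Fg,Fh)$, and the relations $|Ff|=|Fg|=0$, $|Fh|=-1$, $d(Ff)=d(Fg)=0$, $d(Fh)=-Fg\circ Ff$ hold because $F$ commutes with the differential and with composition and preserves degrees; the same applies to the comparison data $(r_i,s_j,t)$, so $F$ descends to equivalence classes. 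It then remains to show, for $X\in\CH_{{\rm 3}t}(\A)$, that $X$ is homotopy left exact if and only if $FX$ is — the right exact case being strictly dual, and homotopy short exactness their conjunction.

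For this I would transport the defining condition — namely that $\tau_{\leq 0}(v_X)$ is a quasi-isomorphism, where $v_X\colon A_0^{\wedge}\to C(g^{\wedge})[-1]$ is the comparison morphism of \cref{rem:comparison_cone_and_h-cokernel} — across $F$ by restriction of dg modules. Restriction is a dg functor $F^{*}\colon\Cdg(\B)\to\Cdg(\A)$, $M\mapsto M\circ F^{\op}$, and it commutes with shifts, with the cone of \cref{def:strict_cone}, and with $\tau_{\leq 0}$ (all are pointwise operations). The key observation is that a morphism $\varphi$ of dg $\B$-modules is a quasi-isomorphism if and only if $F^{*}\varphi$ is one: the forward direction is clear since $(F^{*}M)(A)=M(FA)$, and the converse holds because, by quasi-essential surjectivity, every object of $\B$ is homotopy equivalent to some $FA$, and evaluation at homotopy-equivalent objects gives homotopy-equivalent complexes compatibly with $\varphi$. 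Next, quasi-full faithfulness supplies a natural quasi-isomorphism $\iota_A\colon A^{\wedge}\xrightarrow{\sim}F^{*}((FA)^{\wedge})$ induced by $F_{-,A}$; naturality of $\iota$ together with the compatibilities of $F^{*}$ above yields a commutative ladder identifying $\tau_{\leq 0}(F^{*}v_{FX})=F^{*}(\tau_{\leq 0}v_{FX})$ with $\tau_{\leq 0}(v_X)$ up to the quasi-isomorphisms coming from $\iota$. Hence $\tau_{\leq 0}(v_X)$ is a quasi-isomorphism $\iff\tau_{\leq 0}(F^{*}v_{FX})$ is $\iff$ (by the key observation applied to $\tau_{\leq 0}F$, again a quasi-equivalence since truncation preserves quasi-isomorphisms and $H^0(\tau_{\leq 0}\A)=H^0\A$) $\tau_{\leq 0}(v_{FX})$ is; that is, $X$ is homotopy left exact iff $FX$ is. The same computation with $u_X\colon C(f^{\wedge})\to A_2^{\wedge}$ settles the right exact case.

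For the bijection on exact dg structures, I would first record that, by the above, $F$ preserves and reflects homotopy short exact sequences, hence also — applying this to the folded sequences \eqref{diag:3-term_h-complex_folded}, with biproducts matched through the additive equivalence $H^0F$ — inflations, deflations, and homotopy pullback and pushout squares. Given an exact dg structure $\SS$ on $\A$, set $F_{*}\SS$ to be the isomorphism-closure in $\CH_{{\rm 3}t}(\B)$ of its image, and for a class $\T$ on $\B$ set $F^{*}\T=\{X\in\CH_{{\rm 3}t}(\A):FX\in\T\}$. By the first part these are isomorphism-closed classes of homotopy short exact sequences, and $F^{*}F_{*}\SS=\SS$, $F_{*}F^{*}\T=\T$ — the latter using that $H^0F$ is an equivalence, so that every $3$-term h-complex over $\B$ is, up to equivalence, the image of one over $\A$, and an isomorphism $FX\cong FX'$ lifts to one $X\cong X'$. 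Finally the axioms transport: \ref{Ex0} from $F0\cong0$, and \ref{Ex1}--\ref{Ex2op} since $F$ detects and reflects deflations, inflations and homotopy pullback/pushout squares while $H^0F$ lets one lift the required $Z^0$-cospans and spans from $\B$ to $\A$ up to homotopy, so that completions correspond.

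The hard part will be the bookkeeping in the middle paragraph: one must track precisely where $\tau_{\leq 0}$ is applied so that the reflection-of-quasi-isomorphisms statement is invoked for $\tau_{\leq 0}F$ on $\tau_{\leq 0}\A$-modules, and check that truncating does not disturb the $\iota$-ladder (the target $C(g^{\wedge})[-1]$ carries a nonzero degree-$1$ component even when $\A$ is connective, so the outer truncation is genuinely present). A secondary, purely technical point is that a dg functor need not preserve biproducts strictly, so the comparison of the folded sequences for $X$ and $FX$, and the verification of \ref{Ex2}/\ref{Ex2op}, must be routed through $H^0F$; I expect nothing conceptual there.
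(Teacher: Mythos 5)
The paper does not prove this lemma at all: it is imported verbatim from Chen's foundational paper (the citation \cite[Rem.~4.5(b)]{Che24a} in the statement is the "proof"), so there is no in-paper argument to compare yours against. On its own merits your proposal follows the standard route and I see no step that would fail. The core of the first part --- that restriction $F^{*}$ along a quasi-equivalence preserves and reflects quasi-isomorphisms of dg modules, that $\iota_A\colon A^{\wedge}\to F^{*}((FA)^{\wedge})$ is a natural quasi-isomorphism by quasi-full-faithfulness, and that cones, shifts and $\tau_{\leq 0}$ all commute with $F^{*}$ because they are objectwise --- is exactly what is needed, and your ladder argument correctly reduces "$\tau_{\leq 0}(v_{FX})$ is a quasi-isomorphism" to the same statement for $v_X$ via the quasi-equivalence $\tau_{\leq 0}F$. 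Two points in the second part carry more weight than your text gives them: (i) the identity $F^{*}F_{*}\SS=\SS$ needs that an equivalence $FX\simeq FX'$ in $\CH_{{\rm 3}t}(\B)$ lifts to one $X\simeq X'$, which is the quasi-fully-faithfulness of $F$ on the dg categories of $3$-term h-complexes --- true because the morphism data $(r_i,s_j,t)$ assemble into a finite totalization of Hom-complexes, so objectwise quasi-isomorphisms propagate, but this deserves to be said; and (ii) for \ref{Ex2}/\ref{Ex2op} the completion produced in $\B$ lands only in the quasi-essential image of $F$, so one must replace it by an equivalent homotopy square whose folded $3$-term h-complex lies in $F(\A)$, invoking the isomorphism-invariance of homotopy left/right exactness that you proved in the first part. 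Both are routine given what you have already established, so I would classify these as missing detail rather than a gap.
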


Thanks to \cref{lem:quasi-equiv_preserve_ex_dg_str}, the following notion of exact morphisms between exact dg categories is well-defined, that is, it does not depend on the choice of representatives for a given morphism in $\Hqe$:

\begin{definition}\label{def:exact_morphisms_in_Hqe}
\cite[Def.~4.3]{Che24a}
Let us consider a morphism $F\colon (\A,\SS)\to (\A',\SS')$ between exact dg categories.
It is said to be \emph{exact} if the induced functor $\CH_{{\rm 3}t}(\A)\to \CH_{{\rm 3}t}(\A')$ restricts to $\SS\to\SS'$.
We denote by $\Hqe_{\rm ex}$ the subcategory of $\Hqe$ consisting of small exact dg categories and exact morphisms.
Also, an exact morphism $F$ is called an \emph{exact quasi-equivalence} if it is an isomorphism in $\Hqe_{\rm ex}$.
\end{definition}

As the following examples show, an exact dg category is a unification of Quillen's exact category and the pretriangulated dg category.

\begin{example}\label{ex:Quillen_exact_cat_is_exact_dg}
\cite[Ex.~4.6]{Che24a}
Let $\A$ be an additive category and regard it as a dg category concentrated in degree $0$.
Then, an exact dg structure on $\A$ defines the Quillen's exact structure on $\A$ and vice versa.
Also, an exact functor in \cref{def:exact_morphisms_in_Hqe} performs in the usual sense.
\end{example}
\begin{proof}
Since any morphism in $\A$ of degree $-1$ is zero, the axioms in \cref{def:exact_dg_cat} is nothing other than that of Quillen's exact category.
Thus the assertion holds.
\end{proof}

\begin{example}\label{ex:pretri_is_exact_dg}
\cite[Ex.~4.7]{Che24a}
A pretriangulated dg category $\A$ admits a maximal exact dg structure, that is, putting $\SS$ to be the class of all homotopy short exact sequences, we have the corresponding exact dg structure $(\A,\SS)$.
\end{example}
\begin{proof}
To clarify what the homotopy short exact sequences are like, we include a detailed discussion.
Thanks to \cref{rem:quasi-equivalence}, it suffices to show that $\A'\deff\pretr(\A)$ is an exact dg category with respect to the class $\SS'$ of homotopy short exact sequences.
We shall use the following easy observation.

\begin{claim}\label{claim:pretri_is_exact_dg}
Let us consider a $3$-term h-complex in $\A'$ as below.
Then, it is homotopy right exact if and only if it is homotopy left exact.
\begin{equation}\label{seq:claim:pretri_is_exact_dg}
\begin{tikzcd}[row sep=0.6cm]
A \arrow{r}{f}\arrow[bend right]{rr}[swap]{h}& B\arrow{r}{g} & C
\end{tikzcd}
\end{equation}
\end{claim}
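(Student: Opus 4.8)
The plan is to translate the claim about $3$-term h-complexes in $\A' = \pretr(\A)$ into a statement about cones in $\CC_{\dg}(\A)$ via the Yoneda embedding, where both homotopy left and right exactness become statements about certain morphisms being quasi-isomorphisms, and to exploit the fact that $\pretr(\A)$ is pretriangulated so these morphisms fit into genuine triangles. First I would recall from \cref{rem:comparison_cone_and_h-cokernel} that the $3$-term h-complex \eqref{seq:claim:pretri_is_exact_dg} produces canonical morphisms $u = [h^{\wedge}\ g^{\wedge}]\colon C(f^{\wedge}) \to C^{\wedge}$ and $v = \begin{bsmallmatrix} f^{\wedge} \\ h^{\wedge}\end{bsmallmatrix}\colon A^{\wedge} \to C(g^{\wedge})[-1]$ in $\CC(\A)$. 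By \cref{def:homtopy_short_exact}, the sequence is homotopy right exact iff $\tau_{\leq 0}(u)$ is a quasi-isomorphism, and homotopy left exact iff $\tau_{\leq 0}(v)$ is a quasi-isomorphism, working in $\CC(\tau_{\leq 0}\A')$; but since $\A'$ is pretriangulated it is in particular connective is \emph{not} automatic — instead one uses \cref{thm:dg_Yoneda_embedding} to rephrase: $u$ (resp. $v$) is a quasi-isomorphism in $\CC(\A')$ iff $u^{\wedge}$ (resp. $v^{\wedge}$) becomes an isomorphism after applying $H^0$ of the derived Yoneda embedding, i.e. iff the cone of $u$ (resp. $v$) is acyclic.

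The key step is to observe that, because $\pretr(\A)$ is pretriangulated, the objects $A, B, C$ together with the closed degree-$0$ morphism $f$ and the nullhomotopy $h$ of $gf$ assemble into an honest distinguished triangle in $\tr(\A') = H^0(\pretr(\A'))$: the h-complex data is exactly the data of a map $f$ together with a factorization of $gf$ through the cone, so $C(f)$ exists in $\pretr(\A)$ and one gets a canonical morphism $C(f) \to C$ in $\tr(\A)$ (this is $H^0(u)$ under Yoneda) and dually a morphism $A \to C(g)[-1]$ (this is $H^0(v)$). Next I would set up the octahedral/rotation bookkeeping: the cone of $u\colon C(f) \to C$ and the cone of $v\colon A \to C(g)[-1]$ are related by a shift. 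Concretely, in the triangulated category $\tr(\A)$ one has the standard triangles on $f$ and on $g$, and the morphism $h$ gives a morphism of triangles whose third component is precisely $u$; applying the octahedral axiom to the composable pair relating $C(f)$, $C(g)$, $C$ shows $\cone(u) \cong \cone(v)[1]$ (up to sign). Hence $\cone(u)$ is acyclic iff $\cone(v)$ is acyclic.

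Finally I would descend this from $\CC(\A')$ back to $\CC(\tau_{\leq 0}\A')$: the truncation functor $\tau_{\leq 0}$ does not change $H^0$ of Hom-complexes, so a morphism of dg $\A'$-modules supported in the image of representables is a quasi-isomorphism after $\tau_{\leq 0}$ iff it is a quasi-isomorphism before — here one must be slightly careful and note that for representable modules $A'^{\wedge}$ we have $(\tau_{\leq 0}\A')(-, A') = \tau_{\leq 0}(\A'(-,A'))$ and that the relevant cones $C(f^{\wedge}), C(g^{\wedge})$ are built from these, so acyclicity of $\tau_{\leq 0}(\cone u)$ and $\tau_{\leq 0}(\cone v)$ is governed by the same cohomology in nonpositive degrees, which by the triangle rotation above coincide up to a shift. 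I expect the main obstacle to be exactly this last compatibility: tracking how the truncation interacts with the shift $\cone(u) \simeq \cone(v)[1]$, since a shift by one can move cohomology across the truncation threshold in degree $0$. The resolution is that $f, g$ are closed of degree $0$ and $h$ has degree $-1$, so the relevant connecting data lives in nonpositive degrees and the cones in question have cohomology concentrated where the truncation is an isomorphism; I would verify this by writing out $C(f^{\wedge})$ and $C(g^{\wedge})[-1]$ explicitly in low degrees and checking the boundary map induced by $v$ versus $u$ directly, rather than relying on the triangulated formalism alone.
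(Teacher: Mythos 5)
Your overall route is the paper's: use the pretriangulated structure of $\A'=\pretr(\A)$ to turn the h-complex into an honest triangle and deduce that $u\colon C(f^{\wedge})\to C^{\wedge}$ is a quasi-isomorphism exactly when $v\colon A^{\wedge}\to C(g^{\wedge})[-1]$ is. The paper compresses this into one line (left exactness makes the sequence a triangle in $\CD(\A')$, hence $u$ is a quasi-isomorphism), whereas you spell out the same rotation as $\cone(u)\cong\cone(v)[1]$ via the octahedron applied to $f = p\circ v$ with $p\colon C(g)[-1]\to B$; that bookkeeping is correct.

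The one genuine gap is in your final paragraph. Your proposed justification for why the $\tau_{\leq 0}$-truncation in \cref{def:homtopy_short_exact} is harmless --- that ``the cones in question have cohomology concentrated where the truncation is an isomorphism'' --- is false: for a morphism in a pretriangulated dg category, $\A'(X,C(f))$ can have cohomology in all degrees. Nor does a direct check of the boundary maps in low degrees close it: homotopy left exactness gives $H^i(\cone(v)(X))=0$ only for $i\le 0$, while via $\cone(u)\cong\cone(v)[1]$ you need vanishing through degree $1$ to conclude that $\tau_{\leq 0}(u)$ is a quasi-isomorphism, so the degree that the shift pushes across the truncation threshold is precisely the one not controlled by your hypothesis. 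The correct resolution --- and what the paper's terse citation of \cref{rem:quasi-equivalence} is standing in for --- is that $\A'$ is closed under shifts: for a morphism between representable $\A'$-modules, $\tau_{\leq 0}$ of it is a quasi-isomorphism if and only if the morphism itself is, because testing $H^0$ against all shifted objects $X[n]$ recovers $H^i$ against $X$ in every degree. With that substitution (upgrade $\tau_{\leq 0}(v)$ quasi-isomorphism to $v$ quasi-isomorphism first, then rotate, then truncate back), your argument goes through.
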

\begin{proof}
As we have seen in \cref{rem:comparison_cone_and_h-cokernel}, a natural morphism $v\deff\begin{bsmallmatrix}
f^{\wedge}\\
h^{\wedge}
\end{bsmallmatrix}\colon
A^{\wedge} \to C(f^{\wedge})[-1]$ in $\CC(\A')$ exists.
If \eqref{seq:claim:pretri_is_exact_dg} is homotopy left exact, by \cref{rem:quasi-equivalence}, we see $v$ is a quasi-isomorphism.
Since \eqref{seq:claim:pretri_is_exact_dg} thus falls into a triangle in $\CD(\A')$, another natural morphism $u\colon
C(f^{\wedge})\to C^{\wedge}$ in \cref{rem:comparison_cone_and_h-cokernel} is a quasi-isomorphism as well, which proves \eqref{seq:claim:pretri_is_exact_dg} is a homotopy short exact sequence.
\end{proof}

We will see that any morphism $B\xto{g}C$ in $\A'$ is a deflation.
Recall that the inclusion $\A'\hookrightarrow \CC_{\dg}(\A')$ preserves taking (co)cones and $H^0\A'$ is a triangulated subcategory of $\CD(\A')$.
Since the canonical diagram
\[
\begin{tikzcd}[row sep=0.6cm]
C(g)[-1] \arrow{r}{p}\arrow[bend right]{rr}[swap]{
\begin{bsmallmatrix}
0\\
\id_C
\end{bsmallmatrix}
}& B\arrow{r}{g} & C,
\end{tikzcd}
\]
in $\A'$ is a homotopy short exact sequence by \cref{claim:pretri_is_exact_dg}, we see $g$ is a deflation.
Hence the conditions \ref{Ex0} and \ref{Ex1} are obvious.
To show \ref{Ex2}, we consider a cospan $B\xto{p}C\xleftarrow{c}C'$ in $Z^0\A'$ of morphisms of degree $0$.
Taking the cocone of the induced morphism $B\oplus C'\xto{
\begin{bsmallmatrix}
c \amph
-d
\end{bsmallmatrix}
} C$, we get a desired homotopy pullback such as the diagram in \ref{Ex2}.
Besides, the obtained morphism $p$ is a deflation as we have mentioned.
We can proceed the dual argument for \ref{Ex2op}.

Thanks to \cref{lem:quasi-equiv_preserve_ex_dg_str}, it turns out that $\A$ is an exact dg category with respect to the class $\SS$ of homotopy short exact sequences.
\end{proof}

An exact dg category $(\A,\SS)$ provides an enhancement of an extriangulated category in the following sense.

\begin{definition}\label{def:alg_ET}
\cite[Thm.~4.26]{Che24a}
An exact dg category $(\A,\SS)$ produces a natural extriangulated category $(H^0\A,\BE_\SS,\fs_\SS)$.
If an extriangulated category $(\CC,\BE,\fs)$ is exact equivalent to some $(H^0\A,\BE_\SS,\fs_\SS)\in\ET$, it is called an \emph{algebraic} extriangulated category.
Also, $(\A,\SS)$ is called an \emph{exact dg enhancement} of $(\CC,\BE,\fs)$.
\end{definition}

\begin{remark}\label{rem:alg_ET_interpretation}
The extriangulated structure $(H^0\A,\BE_\SS,\fs_\SS)$ can be interpreted via the universal embedding as stated in \cite[Prop-Def.~3.14]{Che23}, see \cref{thm:universal_embedding}.
Such an interpretation is useful and enough for our purpose, so we do not recall the constructions of the original $\BE_\SS$ and $\fs_\SS$.
\end{remark}

As \cref{ex:pretri_is_exact_dg} shows, we should mention that the notion of the exact dg enhancement can be traced back to Bondal-Kapranov's notion of enhanced triangulated category \cite{BK90}.

In closing the subsection, we review an exact dg structure is stable under several categorical operations: 
taking the connective cover; considering extension-closed subcategories; passing to a substructure using relative theory. 
We now describe only the parts of the first two basic operations, and the last will be treated separately in relevance for our intension, see \S\ref{subsec:relative_theory}.

\begin{example}\label{ex:connective_cover}
Let $(\A,\SS)$ be an exact dg category.
Then, the connective cover admits a natural exact dg structure, we will write it as $(\tau_{\leq 0}\A,\SS)$ and still call it the \emph{connective cover} of $(\A,\SS)$.
Moreover, we can easily see the identity $H^0(\A)=H^0(\tau_{\leq 0}\A)$ as extriangulated categories, because the axioms \ref{Ex0}--\ref{Ex2op} of exact dg structure do not refer to any morphism of positive degree.
\end{example}

\begin{example}\label{ex:extension_closed_subcat}
\cite[Ex.-Def.~4.8]{Che24a}
A full dg subcategory $\B$ of an exact dg category $(\A,\SS)$ is \emph{extension-closed} if we have $B\in\B$ for any conflation \eqref{diag:3-term_h-complex} in $\A$ with $A,C\in\B$.
Let us consider the subclass $\SS|_\B\sse\SS$ consisting of conflations whose all terms belong to $\B$. 
Then the pair $(\B,\SS|_\B)$ forms an exact dg category the canonical inclusion from which is exact.
\end{example}

\subsection{The universal embedding}
\label{subsec:universal_embedding}
By using Neeman's notion of bounded derived category $\Db(\CA)$ of an exact category $\CA$ \cite{Nee90}, we have an embedding $\CA\to\Db(\CA)$ which is exact in $\ET$.
In this subsection, we review Chen's notion of universal embedding which brings Neeman's embedding to the dg world and shows its universality as the name shows.

Thanks to \cref{lem:quasi-equiv_preserve_ex_dg_str}, we know any algebraic extriangulated category $\CA$ admits a cofibrant dg category $\A$ such that $\CA$ is exact equivalent to $H^0\A$.
Thus we will work under the following setup which is fundamental to study an algebraic extriangulated category in terms of the exact dg enhancement.

\begin{setup}\label{setup:universal_embedding}
Let $(\A,\SS)$ be an exact dg category which is cofibrant.
\end{setup}

We have to begin with recalling the construction of a pretriangulated dg category $\Db_{\dg}(\A,\SS)$.
Let us consider a conflation below.
\begin{equation}\label{diag:conflation_for_univ_emb}
\begin{tikzcd}[row sep=0.6cm]
A \arrow{r}{f}\arrow[bend right]{rr}[swap]{h}& B\arrow{r}{g} & C
\end{tikzcd}
\end{equation}

For the readability purpose, we omit $\wedge$ for representable dg module $A^\wedge$.
As mentioned in \cref{rem:comparison_cone_and_h-cokernel}, the conflation \eqref{diag:conflation_for_univ_emb} induces a morphism $\begin{bsmallmatrix}
-h\amph g
\end{bsmallmatrix}\colon \cone(f)\to C$ and set $M\deff \cone\begin{bsmallmatrix}
-h\amph g
\end{bsmallmatrix}$ in $\CC_{\dg}(\A)$ as below.

\begin{equation}\label{diag:definition_of_deffect}
\begin{tikzcd}[row sep=0.8cm]
A\arrow{r}{f}
&B\arrow{r}{\begin{bsmallmatrix}
0\\
1
\end{bsmallmatrix}}
&U \arrow{r}{\begin{bsmallmatrix}
1\amph 0
\end{bsmallmatrix}}\arrow{d}[swap]{\begin{bsmallmatrix}
-h\amph g
\end{bsmallmatrix}}
&A[1]
\\
{}
&{}
&C
\arrow{d}[swap]{\begin{bsmallmatrix}
0\\
0\\
1
\end{bsmallmatrix}}
&
\\
{}
&{}
&M
&{}
\end{tikzcd}
\end{equation}

Regarding the object $M\in\CC_{\dg}(\A)$ above as in $\CD(\A)$, we call it a \emph{defective} object.
Since the row and column of \eqref{diag:definition_of_deffect} correspond to triangles in $\CD(\A)$, we thus consider the full triangulated subcategory $\CM$ of $\tr(\A)$ generated by the defective objects $M$.
In addition, $\CM_{\dg}$ is the canonical dg enhancement of $\CM$, namely, the full dg subcategory of $\pretr(\A)$ consisting of the objects in $\CM$.

\begin{definition}\label{def:defective_object}
Let $Q\colon \pretr(\A)\to\pretr(\A)/\CM_{\dg}$ be the dg quotient. Putting $\Db_{\dg}(\A,\SS)\deff \pretr(\A)/\CM_{\dg}$, we call it the \emph{bounded dg derived category} of $(\A,\SS)$.
Also, it falls into the Verdier quotient $H^0(Q)\colon \tr(\A)\to\Db(\A,\SS)$ under the functor $H^0$, where we put $\Db(\A,\SS)\deff H^0(\Db_{\dg}(\A,\SS))$ and call it the \emph{bounded derived category} of $H^0\A$ accordingly.
\end{definition}

Since the specified exact dg structure should be clear from the context, we often dorp the symbol $\SS$ for simplicity, that is, the symbols $\Db_{\dg}(\A)$ and $\Db(\A)$ will be used instead of $\Db_{\dg}(\A,\SS)$ and $\Db(\A,\SS)$ respectively.

\begin{theorem}\label{thm:universal_embedding}
\cite[Thm.~3.1]{Che24b}
Let $(\A,\SS)$ be an exact dg category and consider the canonical morphism $F\colon \A\to\Db_{\dg}(\A)$ in $\Hqe$.
Then, it is a universal exact morphism from $\A$ to a pretriangulated category.

Furthermore, if $\A$ is connective, it induces an exact quasi-equivalence $\tau_{\leq 0}\A\to \tau_{\leq 0}\D'$ for an extension-closed dg subcategory $\D'\sse\Db_{\dg}(\A)$. 
In particular, we have an exact fully faithful embedding $H^0\A\to\Db(\A)$ whose essential image is extension-closed and restricts to the exact equivalence in $\ET$.
\end{theorem}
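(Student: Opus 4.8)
The plan is to deduce \cref{thm:universal_embedding} from the known construction of $\Db_{\dg}(\A)$ as the dg quotient $\pretr(\A)/\CM_{\dg}$, together with the basic properties of dg quotients collected in \S\ref{subsec:Hqe}. First I would establish the universality statement: given an exact morphism $G\colon \A\to \T$ to a pretriangulated dg category, the universal property of the pretriangulated hull $\A\hookrightarrow\pretr(\A)$ yields $G'\colon\pretr(\A)\to\T$ with $G=G'\circ(\text{hull})$; since $G$ is exact, $H^0G'$ sends each defective object $M$ (coming from a conflation in $\SS$) to a zero object, because in $\T$ the corresponding sequence $\cone(f)\to C$ is a quasi-isomorphism — this is precisely the content of homotopy short exactness transported along an exact functor, using \cref{rem:comparison_cone_and_h-cokernel} and the fact that cones are preserved. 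Hence $G'$ annihilates $\CM_{\dg}$, and the universal property of the dg quotient $Q\colon\pretr(\A)\to\Db_{\dg}(\A)$ (\cref{lem:universality_of_dg_quotient}, \cref{def:dg_quotient}) factors $G'$ uniquely through $F$. Conversely $F$ itself is exact, since the defect of every conflation becomes contractible in the quotient, so $\tau_{\leq 0}(v)$ becomes a quasi-isomorphism; this is where I would be slightly careful to check that exactness of $F$ (i.e. $\SS$ maps into the homotopy short exact sequences of $\Db_{\dg}(\A)$) really does follow from killing the defective objects, rather than merely the deflation/inflation axioms.

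For the second half, assume $\A$ is connective. The key point is to identify the quasi-essential image $\D'$ of $F$ inside $\Db_{\dg}(\A)$ and show $F$ restricts to an exact quasi-equivalence $\tau_{\leq 0}\A\xto{\sim}\tau_{\leq 0}\D'$. Quasi-essential surjectivity onto $\D'$ is automatic by definition of $\D'$ as the quasi-essential image. The substantive claim is that $F$ is quasi-fully faithful after truncation, i.e. that for $A,B\in\A$ the map on Hom-complexes $\A(A,B)\to\Db_{\dg}(\A)(FA,FB)$ induces an isomorphism on $H^i$ for all $i\le 0$. Using the derived Yoneda embedding (\cref{thm:dg_Yoneda_embedding}) and the description of $\Db_{\dg}(\A)$ as a Verdier-type localization, $\Db_{\dg}(\A)(FA,FB)$ computes as a morphism complex in $\CD(\A)$ between images of representables, and one must show the localization functor $\tr(\A)\to\Db(\A)$ does not change $\Hom(A^\wedge,B^\wedge[i])$ for $i\le 0$. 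This reduces to a calculus-of-fractions argument: any morphism $A^\wedge\to N[j]$ with $N$ a defective object and $j$ constrained by the truncation must vanish in the relevant degrees, because defective objects are, in a precise sense, ``concentrated in positive degrees relative to $\A$'' — conflations in $\SS$ are homotopy short exact, so the defect $M$ has no negative self-extensions with representables in the pertinent range. I expect this vanishing/connectivity estimate to be the main obstacle, and I would prove it by an induction on the triangulated generation of $\CM$ by the defective objects $M$, using the octahedral axiom and the defining triangles in \eqref{diag:definition_of_deffect} to control $\Hom(A^\wedge, M'[i])$ for objects $M'\in\CM$ and $i\le 0$.

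Once quasi-full-faithfulness of $\tau_{\leq 0}F$ is in hand, the statement that $H^0\A\to\Db(\A)$ is exact fully faithful follows by applying $H^0$: fully faithfulness is immediate since $H^0(\tau_{\leq 0}F)=H^0F$ is an isomorphism on $\Hom$-groups by the case $i=0$ of the previous step; exactness of $H^0F$ is the $H^0$-shadow of exactness of $F$ already established; and the essential image being extension-closed is inherited from $\D'$ being an extension-closed dg subcategory of $\Db_{\dg}(\A)$ — which in turn follows because $\D'$ is the image of the exact functor $F$ and conflations in $\Db_{\dg}(\A)$ with both ends in $\D'$ pull back, via quasi-fully-faithfulness, to conflations in $\A$ whose middle term then lands in $\D'$. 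Throughout I would lean on \cref{lem:quasi-equiv_preserve_ex_dg_str} to move freely between $\A$ and its cofibrant replacement as in \cref{setup:universal_embedding}, so that the dg quotient and pretriangulated hull constructions are available in $\dgcat$ rather than merely in $\Hqe$.
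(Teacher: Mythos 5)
The paper does not prove this statement at all: it is imported verbatim as Chen's embedding theorem, \cite[Thm.~3.1]{Che24b}, and used as a black box. So there is no in-paper argument to compare yours against; what follows is an assessment of your sketch on its own terms.

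Your first half (universality) is essentially the standard argument and is sound: extend an exact $G\colon\A\to\T$ over the pretriangulated hull, observe that exactness forces the defective objects to die (their defining triangles \eqref{diag:definition_of_deffect} become isomorphisms $\cone(f^\wedge)\xto{\sim}C^\wedge$ in $\T$), and invoke the universal property of the dg quotient $\pretr(\A)/\CM_{\dg}$. The converse direction (that $F$ itself is exact) also goes through as you say, via \cref{claim:pretri_is_exact_dg}.

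The second half contains a genuine gap, and it is exactly where the real content of \cite{Che24b} lives. You reduce quasi-fully-faithfulness of $\tau_{\leq 0}F$ to showing that the Verdier quotient $\tr(\A)\to\Db(\A)$ does not change $\Hom(A^\wedge,B^\wedge[i])$ for $i\leq 0$, and you propose to get this from the vanishing $H^i(M(A))=0$ for $i\leq 0$ (which does hold for a single defective object $M$, by homotopy right exactness of the conflation) plus an induction over the triangulated generation of $\CM$. This does not close as stated: $\CM$ is closed under \emph{all} shifts and under cones, so $\Hom(A^\wedge,M'[j])$ for general $M'\in\CM$ certainly does not vanish in the degrees you need, and the calculus-of-fractions comparison requires controlling roofs $X\to Y'\leftarrow Y$ whose connecting cone is an \emph{arbitrary} object of $\CM$, not a shift of a single defective object. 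One needs a d\'evissage of $\CM$-objects into shifted defective pieces together with degree bookkeeping that exploits connectivity of $\A$ and the exactness axioms \ref{Ex2}/\ref{Ex2op} simultaneously; this filtration argument is the main theorem of \cite{Che24b} and is not recoverable from the orthogonality estimate you state. Likewise, extension-closedness of the image is not "inherited" for free: it requires showing that any extension in $\Db(\A)$ of two objects of $H^0\A$ is realized by a conflation in $\SS$, which again rests on the same comparison of extension groups. In a paper like this one, the honest move is to cite Chen's theorem rather than to reprove it.
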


The above constructed morphism $F\colon \A\to\Db_{\dg}(\A)$ in $\Hqe$ is called the \emph{universal embedding} of $\A$ with respect to the exact dg structure $\SS$.
Since $\A$ is assumed to be cofibrant, the morphism $F$ is taken to be a dg functor.
Note that $F$ is not necessarily fully faithful though.
To understand what it is like, we observe the two extremal cases.

\begin{example}\label{ex:univ_emb_exact_cat}
Let $(\A,\SS)$ be an exact dg category concentrated in degree $0$.
Then, the universal embedding induces Neeman's embedding $H^0\A\to\Db(H^0\A)=\Db(\A)$.
\end{example}
\begin{proof}
We may regard $\A=H^0\A$ as a Quillen's exact category due to \cref{ex:Quillen_exact_cat_is_exact_dg}.
It is well-known that $\tr(\A)$ is the usual homotopy category $\Hb(\A)$ of bounded complexes in $H^0\A$.
Note that the subcategory $\CM$ is generated by conflations which are regarded as complexes in $H^0\A$.
Thus, $\CM$ is noting other than the full subcategory $\Hb_{\rm ac}(\A)$ of acyclic complexes in $\Hb(\A)$.
The associated Verdier quotient $\Hb_{\rm ac}(\A)\to \Hb(\A)\to \Db(\A)$ shows the assertion.
\end{proof}

\begin{example}\label{ex:univ_emb_pretr}
Let $\A$ be a pretriangulated dg category and consider its connective cover $\tau_{\leq 0}\A\to\A$.
Then, we have an isomorphism $\pretr(\tau_{\leq 0}\A)\xto{\simeq} \Db_{\dg}(\tau_{\leq 0}\A)$ in $\Hqe$ and hence triangle equivalences $H^0\A\xleftarrow{\sim}\tr(\tau_{\leq 0}\A)\xto{\sim}\Db(\tau_{\leq 0}\A)$.
\end{example}
\begin{proof}
Since $\A$ is pretriangulated, for any conflation \eqref{diag:conflation_for_univ_emb} in $\tau_{\leq 0}\A$, the induced morphism $\begin{bsmallmatrix}
-h\amph g
\end{bsmallmatrix}$ in $\CC(\tau_{\leq 0}\A)$ such as \eqref{diag:definition_of_deffect} is a homotopy equivalence, which shows that the subcategory $\CM$ generated by defective objects is zero.
Hence the canonical dg quotient $\pretr(\tau_{\leq 0}\A)\xto{Q} \Db_{\dg}(\tau_{\leq 0}\A)$ is an isomorphism in $\Hqe$.
The latter assertion is  immediate from the fact that $\A$ is pretriangulated.
\end{proof}

Due to the universal embedding, any algebraic extriangulated category $H^0\A$ is an extension-closed subcategory of the ambient triangulated category $\Db(\A)$.
Thus, $H^0\A$ possesses an extriangulated structure inherited from $\Db(\A)$ and that of \cref{def:alg_ET}.
Both extriangulated structures are exact equivalent by \cref{thm:universal_embedding}.

In addition, as a consequence of the existence of the universal embedding, we recall the following characterizations for an extriangulated category to be algebraic.

\begin{proposition}\label{prop:characterization_for_alg_ET}
\cite[Prop.-Def.~3.14]{Che24b}
Let $(\CA,\BE,\fs)$ be an extriangulated category.
Then the following are equivalent.
\begin{enumerate}[label=\textup{(\arabic*)}]
\item 
$\CA$ is exact equivalent to an extension-closed subcategory of an algebraic triangulated category.
\item 
$\CA$ is exact equivalent to $\CB/[\CP]$, the ideal quotient of a Quillen's exact category $\CB$ by a class $\CP$ of projective-injective objects.
\item 
$\CA$ is exact equivalent to $(H^0\A,\BE_\SS,\fs_\SS)$ for an exact dg category $(\A,\SS)$.
\end{enumerate}
\end{proposition}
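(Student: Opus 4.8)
The plan is to prove the cycle $(3)\Rightarrow(1)\Rightarrow(2)\Rightarrow(3)$, using the universal embedding to pass from an exact dg enhancement to an ambient algebraic triangulated category, a Frobenius model to pass from such an ambient category to an exact category, and Chen's enhanced ideal quotient to close the loop. For $(3)\Rightarrow(1)$ I would argue as follows: given $(\A,\SS)$ with $\CA$ exact equivalent to $H^0\A$, replace $\A$ by its connective cover (harmless by \cref{ex:connective_cover}) and then by a cofibrant replacement (which transfers the exact dg structure by \cref{lem:quasi-equiv_preserve_ex_dg_str}), so that \cref{thm:universal_embedding} applies and yields an exact, fully faithful embedding $H^0\A\hookrightarrow\Db(\A)$ with extension-closed essential image that moreover carries the extriangulation $\fs_\SS$; since $\Db(\A)=H^0(\Db_{\dg}(\A))$ with $\Db_{\dg}(\A)$ pretriangulated, $\Db(\A)$ is algebraic, which is $(1)$. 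The converse $(1)\Rightarrow(3)$ also admits a direct proof that I would record: take a cofibrant pretriangulated enhancement $\T$ of the ambient triangulated category with its maximal exact dg structure of \cref{ex:pretri_is_exact_dg}, and restrict to the extension-closed full dg subcategory of $\T$ on the objects of $\CA$ via \cref{ex:extension_closed_subcat}; its homotopy category is $\CA$ with the inherited extriangulation.

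For $(1)\Rightarrow(2)$ I would use that an algebraic triangulated category $\CT$ is the stable category $\underline{\CF}$ of a Frobenius exact category $\CF$, with class of projective-injectives $\CP$ and projection $p\colon\CF\to\underline{\CF}=\CT$. Given $\CA$ extension-closed in $\CT$, set $\CB\deff p^{-1}(\CA)$, the full subcategory of $\CF$ on objects $X$ with $pX\in\CA$. Then $\CP\sse\CB$ since $p(\CP)=0\in\CA$, and $\CB$ is extension-closed in $\CF$ because a conflation with outer terms in $\CB$ induces a triangle of $\CT$ with outer terms in $\CA$, forcing the middle term into $\CA$, hence into $\CB$; thus $\CB$ is a Quillen exact category in which $\CP$ is a class of projective-injectives. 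Finally the full, dense, exact functor $\CB\to\CA$ induced by $p$ annihilates precisely the morphisms factoring through $\CP$, so it descends to an exact equivalence $\CB/[\CP]\simeq\CA$. The point requiring care is that the ideal-quotient extriangulation of \cite{NP19} on $\CB/[\CP]$ corresponds under this identification to the triangles of $\CT$ with all terms in $\CA$, which uses that $\CF$ is Frobenius so that every such triangle is realized by a conflation whose terms already lie in $\CB$.

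For $(2)\Rightarrow(3)$ I would invoke Chen's enhanced ideal quotient. Regarding the Quillen exact category $\CB$ as a connective exact dg category concentrated in degree $0$ via \cref{ex:Quillen_exact_cat_is_exact_dg}, the objects of $\CP$ are projective-injective in the extriangulated category $H^0\CB=\CB$; then \cite[\S3.5]{Che24b} endows the connective cover of the Drinfeld dg quotient $\CB/\CP$ (formed after a cofibrant replacement) with an exact dg structure $\wtil{\SS}$ whose associated extriangulated category $(H^0(\CB/\CP),\BE_{\wtil{\SS}},\fs_{\wtil{\SS}})$ is exact equivalent to $\CB/[\CP]$ with its \cite{NP19} extriangulation, hence to $\CA$; this exhibits $\CA$ as algebraic in the sense of \cref{def:alg_ET}.

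The step I expect to be the main obstacle is not the construction of the underlying categories but the book-keeping of extriangulated structures across these transitions, i.e., verifying that every equivalence produced is \emph{exact} rather than merely additive: in $(1)\Rightarrow(2)$ that the \cite{NP19} extriangulation on $\CB/[\CP]$ agrees with the one $\CA$ inherits from $\CT$, and in $(2)\Rightarrow(3)$ that $H^0$ of Chen's enhanced ideal quotient reproduces exactly that \cite{NP19} extriangulation. The remaining technicalities --- smallness of the auxiliary categories and the cofibrant replacements needed to form dg quotients --- should be routine given the machinery recalled above.
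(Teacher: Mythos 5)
The paper offers no proof of this statement: it is recorded purely as a recollection of \cite[Prop.-Def.~3.14]{Che24b}, so there is no in-text argument to compare yours against; I therefore assess your reconstruction on its own terms, and it is essentially correct. Your $(3)\Rightarrow(1)$ (connective cover, cofibrant replacement, then \cref{thm:universal_embedding}) is exactly how the paper itself uses the universal embedding, including the identification of the two extriangulations on $H^0\A$ stated in the discussion after that theorem, and your direct $(1)\Leftrightarrow(3)$ via the maximal exact dg structure on a pretriangulated enhancement and \cref{ex:extension_closed_subcat} is also fine. Your $(1)\Rightarrow(2)$ is the classical Frobenius-preimage argument and is correct, including the two points you rightly isolate: extension-closedness of $\CB=p^{-1}(\CA)$ and the fact that a triangle of $\CT$ with terms in $\CA$ is realized by a conflation whose middle term lands in $\CB$ because $\CB$ is closed under stable isomorphism; note that you do not actually need the full ``algebraic $=$ stable category of a Frobenius category'' characterization, since it suffices to embed $\CT$ as a triangulated subcategory of $\CH(\A)=\underline{\CC(\A)}$, which the paper recalls explicitly in \S\ref{subsec:Der_cat_and_pretr}. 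The only step that is not self-contained is $(2)\Rightarrow(3)$, which you delegate wholesale to Chen's enhanced ideal quotient \cite[\S 3.5]{Che24b} (recalled here as \cref{prop:Chan's_enahanced_ideal_quotient} and \cref{ex:dg_subquotient_closed_under_spade}(1)); since that result appears \emph{after} Prop.-Def.~3.14 in Chen's paper, you should at least note that no circularity arises because the enhanced ideal quotient rests on the embedding theorem rather than on the characterization you are proving. With that caveat, and the set-theoretic trimming of $p^{-1}(\CA)$ to a skeletally small subcategory that you already flag, the argument is complete.
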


\subsection{Higher extensions}
\label{subsec:higher_extensions}
The viewpoint of realizing $(H^0\A,\BE_\SS,\fs_\SS)$ as an extension-closed subcategory of $\Db(\A,\SS)$ gives us a great advantage to deal with the higher extensions of $H^0\A$.

We recall some basics of the higher extensions over an (arbitrary) extriangulated category $(\CA,\BE,\fs)$.
The \emph{higher extension} $\BE^n(?,-)$ is defined as the $n$-th tensor power of $\BE$ for any $n\geq 1$ \cite[Def.~3.1]{GNP23}.
We remark that, if $\CA$ has enough projectives and enough injectives, it is isomorphic to that in \cite[\S 5.1]{LN19} by \cite[Cor.~3.21]{GNP23}.
The aim of this subsection is to explain that the higher extensions over $H^0\A$ is interpreted as the shifted $\Hom$-spaces in $\Db(\A)$.
To this end, we have to recall the notion of $\delta$-functors
 over an extriangulated category $(\CA,\BE,\fs)$.

\begin{definition}\label{def:delta_functor}
\cite[Def.~4.5]{GNP23}\cite[Def.~2.5]{Che24b}
Let $(T,\epsilon)$ be the pair of
\begin{itemize}
\item 
$T=\{T^i\}_{i\geq 0}$: a sequence of right $\CA$-modules; and
\item 
$\epsilon$: a collection of \emph{connecting morphisms} $\epsilon^i_{\delta}\colon T^i(A)\to T^{i+1}(C)$ for each $\delta\in\BE(C,A)$ and $i\geq 0$,
\end{itemize}
which is natural with respect to morphisms of $\BE$-extensions.
It is called a \emph{right $\delta$-functor} if, for any $\fs$-triangle $A\overset{f}{\lra} B\overset{g}{\lra} C\overset{\delta}{\dra} $, the associated sequence
\[
\cdots\to T^n(C)\xto{T^n(g)}T^n(B)\xto{T^n(f)}T^n(A)\xto{\epsilon^n_{\delta}}T^{n+1}(C)\to\cdots
\]
is exact.
A \emph{morphism} $\theta\colon (T,\epsilon)\to (R,\eta)$ of right $\delta$-functors is a family of morphisms of right $\CA$-modules $\theta^i\colon T^i\to R^i$ which is compatible with the connecting morphisms, i.e., for each $\delta\in\BE(C,A)$ and each $i\geq 0$, the following diagram is commutative.
\[
\begin{tikzcd}
T^i(A)\arrow{r}{\epsilon^i_{\delta}}\arrow{d}[swap]{\theta^i(A)} & T^{i+1}(C)\arrow{d}{\theta^{i+1}(C)}\\
R^i(A)\arrow{r}{{\eta}^i_{\delta}} &R^{i+1}(C)
\end{tikzcd}
\]
Also, the notions of \emph{left $\delta$-functors} and \emph{morphisms} amongst them are defined dually.

A \emph{$\delta$-functor} is defined to be the triplet $(T,\epsilon,\eta)$ where $T=\{T^i\}_{i\geq 0}$ is a sequence of $\CA$-$\CA$-bimodules such that
\begin{itemize}
\item $(T(?,X),\epsilon)$: a right $\delta$-functor; and
\item $(T(X,-),\eta)$: a left $\delta$-functor,
\end{itemize}
for each object $X\in\CA$.
A \emph{morphism} $\theta\colon (T,\epsilon,\eta)\to (R,\epsilon',\eta')$ is a family of morphisms of $\CA$-$\CA$-bimodules $\theta^i\colon T^i\to R^i$ which is compatible with the connecting morphisms.
\end{definition}

We need more terminology of effaceability to put the notation of $\delta$-functors to use.
It was introduced for abelian categories by Grothendieck \cite[p.141]{Gro57} and was placed in extriangulated context \cite{Eno21,Oga21}.

\begin{definition}\label{def:effaceable}
A right $\CA$-module $F\colon \CA^{\op}\to\Ab$ is \emph{effaceable} if, for any $C\in\CA$ and $x\in F(C)$, there exists a deflation $B\xto{p} C$ such that $F(p)(x)=0$.
Dually, we define the notion of effaceable left $\CA$-module.

In addition, an $\CA$-$\CA$-bimodule $\BF\colon \CA^{\op}\otimes \CA\to\Ab$ is \emph{effaceable} if both $\BF(?,A)$ and $\BF(A,-)$ are effaceable for any $A\in\CA$.
\end{definition}

As below, the effaceablity determines the higher part of (right) $\delta$-functors in a sense.

\begin{proposition}\label{prop:effaceablity_determines_the_higher_degrees}
\cite[Prop.~2.4]{Che24b}
Let $(T,\epsilon)$ and $(R,\eta)$ be right $\delta$-functors such that $T^i$ is effaceable for each $i>0$.
Then each morphism $\theta^0\colon T^0\to R^0$ extends uniquely to a morphism of right $\delta$-functors $\theta\colon (T,\epsilon)\to (R,\eta)$.
\end{proposition}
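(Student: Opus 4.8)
The plan is to prove the statement by induction on $i$, constructing the components $\theta^i$ one degree at a time, where at each step the effaceability of $T^i$ forces the map on $T^i$ to be determined by its value on $T^{i-1}$ together with the connecting morphisms. For the base case $i=0$ there is nothing to do: $\theta^0$ is given. So suppose $\theta^j$ has been constructed for all $j<i$ in a way compatible with the connecting morphisms, and we wish to produce $\theta^i\colon T^i\to R^i$ (as a morphism of right $\CA$-modules) making the relevant square commute for every $\delta\in\BE(C,A)$. First I would fix an object $C\in\CA$ and an element $x\in T^i(C)$; since $T^i$ is effaceable, choose a deflation $p\colon B\onto C$ with $T^i(p)(x)=0$. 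Complete $p$ to an $\fs$-triangle $A\xto{f}B\xto{p}C\overset{\delta}{\dra}$. By the exactness of the right $\delta$-functor sequence for $T$, the element $x\in T^i(C)$ with $T^i(p)(x)=0$ lies in the image of $\epsilon^{i-1}_{\delta}\colon T^{i-1}(A)\to T^i(C)$, say $x=\epsilon^{i-1}_{\delta}(y)$. I would then define $\theta^i(C)(x)\deff \eta^{i-1}_{\delta}\bigl(\theta^{i-1}(A)(y)\bigr)\in R^i(C)$, forced by the required compatibility square in degree $i-1$.

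The main work is to verify that this is well-defined, i.e.\ independent of the choices of $p$ (equivalently $\delta$) and of the lift $y$. Independence of $y$ follows from the exactness of the sequence for $R$ together with the induction hypothesis that $\theta^{i-1}$ is a $\delta$-functor morphism in lower degrees: if $\epsilon^{i-1}_{\delta}(y)=\epsilon^{i-1}_{\delta}(y')$ then $y-y'$ lies in the image of $T^{i-1}(f)$, and naturality of $\theta^{i-1}$ plus the exactness $R^{i-1}(A)\xto{\eta^{i-1}_{\delta}}R^i(C)$ vanishing on $\Im R^{i-1}(f)$ gives $\eta^{i-1}_{\delta}(\theta^{i-1}(A)(y-y'))=0$. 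Independence of the deflation is handled by the usual argument: given two deflations $p\colon B\onto C$ and $p'\colon B'\onto C$ annihilating $x$, one passes to a common refinement (using that deflations are closed under pullback/composition in an extriangulated category) and checks that the naturality of $\epsilon,\eta$ with respect to the induced morphism of $\fs$-triangles makes the two definitions agree. One must also check $\theta^i(C)$ is additive in $x$ (take a single deflation annihilating both $x$ and $x'$, using that the class of deflations is closed under the relevant operations) and natural in $C$ (given $c\colon C'\to C$, pull back the $\fs$-triangle along $c$ and use naturality of the connecting morphisms together with the module-map property of $\theta^{i-1}$).

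Finally I would verify that the family $\theta=\{\theta^i\}$ so constructed actually is a morphism of right $\delta$-functors, i.e.\ that $\theta^{i+1}(C)\circ\epsilon^i_{\delta}=\eta^i_{\delta}\circ\theta^i(A)$ for \emph{all} $\fs$-triangles $A\xto{f}B\xto{g}C\overset{\delta}{\dra}$ and all $i\ge 0$ --- not just for the particular triangles chosen in the effacement step. This is where a small extra argument is needed: given an arbitrary $\delta$ and $y\in T^i(A)$, one wants $\theta^{i+1}(C)(\epsilon^i_\delta(y))=\eta^i_\delta(\theta^i(A)(y))$. The left side, by the \emph{definition} of $\theta^{i+1}$, equals $\eta^i_{\delta}(\theta^i(A)(y))$ precisely when the deflation $g\colon B\onto C$ is the one used to compute $\theta^{i+1}$ on $\epsilon^i_\delta(y)$ --- but $T^i(g)(\epsilon^i_\delta(y))=0$ by exactness, so $g$ is a legitimate choice, and well-definedness (just proved) shows the value does not depend on this choice. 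Hence the identity holds for every triangle.

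The uniqueness clause is then immediate: any morphism of right $\delta$-functors $\theta'$ extending $\theta^0$ must satisfy $\theta'^{i}(C)(\epsilon^{i-1}_\delta(y))=\eta^{i-1}_\delta(\theta'^{i-1}(A)(y))$ by compatibility, and since every element of $T^i(C)$ is of the form $\epsilon^{i-1}_\delta(y)$ for a suitable effacing deflation (effaceability of $T^i$ plus exactness), an induction on $i$ shows $\theta'^i=\theta^i$ for all $i$. I expect the main obstacle to be the well-definedness verification --- specifically checking independence of the effacing deflation --- since it requires carefully manipulating morphisms of $\fs$-triangles and invoking the closure properties of deflations in an extriangulated category; the rest is a routine diagram chase, and I would likely just cite \cite{GNP23} or \cite{Che24b} for the precise form of those closure properties.
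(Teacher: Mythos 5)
Your proposal is correct. Note that the paper itself gives no proof of this statement --- it is quoted verbatim from \cite[Prop.~2.4]{Che24b} --- so there is nothing internal to compare against; your argument is the classical Grothendieck-style effacement/dimension-shifting induction, which is exactly the right (and the cited reference's) approach. You have correctly identified the two genuinely delicate points, namely independence of the chosen effacing deflation (handled by a common refinement via weak pullback of deflations, closure of deflations under composition, and naturality of $\epsilon$, $\eta$ with respect to the resulting morphism of $\BE$-extensions) and the verification that compatibility with the connecting maps holds for \emph{all} $\fs$-triangles rather than only the effacing ones, which indeed reduces to well-definedness since $T^{i}(g)\circ\epsilon^{i-1}_{\delta}=0$ makes $g$ itself an admissible effacing deflation.
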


It is obvious that the defining bifunctor $\BE(?,-)$ of an extriangulated category $\CA$ is effaceable, so are the higher ones $\BE^n(?,-)$ for any $n>0$ by the construction.
Moreover, putting $\BE^0(?,-)=\Hom_{\CA}(?,-)$, we see that the sequence $\{\BE^i(?,-)\}_{i\geq 0}$ gives rise to a $\delta$-functor together with the obvious connecting morphisms.
To be precise, we make a space to confirm the assertion.

\begin{lemma}\label{lem:higher_extensions_are_effaceable}
The sequence $\{\BE^i(?,-)\}_{i\geq 0}$ of bifunctors forms a $\delta$-functor and each $\BE^i(?,-)$ is effaceable for any $i>0$.
\end{lemma}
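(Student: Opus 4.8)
The assertion has two essentially independent halves, and I would treat the $\delta$-functor structure first. Its connecting morphisms should be the Yoneda products with $\BE$-extensions: writing $\BE^{n+1}=\BE\otimes\BE^{\otimes n}=\BE^{\otimes n}\otimes\BE$, for $\delta\in\BE(C,A)$ one sets $\epsilon^n_\delta\colon\BE^n(A,X)\to\BE^{n+1}(C,X)$, $\xi\mapsto\delta\otimes\xi$, and $\eta^n_\delta\colon\BE^n(X,C)\to\BE^{n+1}(X,A)$, $\zeta\mapsto\zeta\otimes\delta$. Naturality of these families along a morphism $(a,c)\colon\delta\to\delta'$ of $\BE$-extensions is the bifunctoriality of the tensor product of bimodules applied to the defining identity $a_*\delta=c^*\delta'$, and exactness of the two long sequences attached to an $\fs$-triangle $A\overset{f}{\lra}B\overset{g}{\lra}C\overset{\delta}{\dra}$ is precisely the long exact sequence of higher extensions established in \cite{GNP23}, whose connecting maps are by construction the Yoneda products above. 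Hence the triplet $(\{\BE^i\}_{i\geq 0},\epsilon,\eta)$ satisfies \cref{def:delta_functor}.

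For the effaceability I would use the tensor decomposition of $\BE^n$ together with one standard property of extriangulated categories \cite{NP19}. First, $\BE^1=\BE$ is effaceable: given $C\in\CA$ and $\delta\in\BE(C,X)$, realize $\delta$ as an $\fs$-triangle $X\overset{f}{\lra}B\overset{g}{\lra}C\overset{\delta}{\dra}$; then $g$ is a deflation and $g^*\delta=0$ in $\BE(B,X)$, so $\BE(?,X)$ is effaceable, and dually $\BE(X,-)$ is effaceable because $f$ is an inflation with $f_*\delta=0$. For $n>1$ and $\xi\in\BE^n(C,X)=(\BE\otimes\BE^{n-1})(C,X)$, choose a representing finite sum $\xi=\sum_k\delta_k\otimes\xi_k$ with $\delta_k\in\BE(C,Y_k)$ and $\xi_k\in\BE^{n-1}(Y_k,X)$. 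Put $Y=\bigoplus_k Y_k$, let $\delta\in\BE(C,Y)\cong\bigoplus_k\BE(C,Y_k)$ have components $(\delta_k)_k$, and realize it as $Y\lra B\overset{g}{\lra}C\overset{\delta}{\dra}$. Then $g$ is a deflation, $g^*\delta=0$, hence $g^*\delta_k=(\pi_k)_*g^*\delta=0$ for each projection $\pi_k\colon Y\to Y_k$, so $g^*\xi=\sum_k(g^*\delta_k)\otimes\xi_k=0$. The symmetric argument, using $\BE^n=\BE^{n-1}\otimes\BE$ and inflations, shows $\BE^n(X,-)$ is effaceable, and thus $\BE^n(?,-)$ is effaceable for all $n>0$.

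The only genuine input here is the long exact sequence of \cite{GNP23}, invoked for the $\delta$-functor claim; everything else is formal. I therefore expect the main obstacle to be bookkeeping rather than conceptual: on the one hand, matching sign and bracketing conventions so that the connecting morphisms $\epsilon,\eta$ of \cref{def:delta_functor} literally coincide with the Yoneda products; on the other, confirming that the bifunctor $\BE\otimes\BE^{n-1}$ restricts to $g^*$ in its outer (contravariant) variable, which is what legitimizes the identity $g^*(\delta_k\otimes\xi_k)=(g^*\delta_k)\otimes\xi_k$ used above. Both are routine once the construction of $\BE^{\bullet}$ in \cite{GNP23} is unwound.
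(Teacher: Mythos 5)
Your proposal is correct and follows essentially the same route as the paper: the paper also treats the $\delta$-functor structure as immediate from the construction and the results of \cite{GNP23}, and proves effaceability by writing an element of the tensor product as a finite sum $\sum x_i\otimes y_i$, assembling the outer components into a single element of $\BE(C,\bigoplus Y_i)$, effacing that with one deflation $p$, and concluding $\sum (p^{*}x_i)\otimes y_i=0$, with the left-module case handled dually. The extra care you take in spelling out the effaceability of $\BE^1$ and the compatibility of $g^{*}$ with the outer tensor factor matches what the paper leaves implicit.
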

\begin{proof}
Let $\BF(?,-)$ be an effaceable $\CA$-$\CA$-bimodule.
We shall show that the tensor product $\BF^2(?,X)\deff\BF\otimes_{\CA}\BF(?,X)$ is an effaceable right $\CA$-module as well.
An element $\delta\in \BF^2(C,X)$ is represented by the finite sum $\sum x_i\otimes y_i$ of some $x_i\in\BF(C,Y_i)$ and $y_i\in\BF(Y_i,X)$.
Then, considering the corresponding element $x\in\BF(C,\coprod Y_i)$, we get a deflation $B\xto{p}C$ such that $\BF(p)(x)$=0.
We also have a desired equation $\BF^2(p,X)(\sum x_i\otimes y_i)=\sum px_i\otimes y_i=0$.
Combining the dual argument, we have checked that $\BE^i(?,-)$ is effaceable.
\end{proof}

We have mentioned in \cref{prop:characterization_for_alg_ET} that any algebraic extriangulated category $\CA$ is realized as an extension-closed subcategory of an algebraic triangulated category $\CT$.
In this case, the shifted $\Hom$-space $\Ext^i_{\CT}(?,-)=\Hom_{\CT}(?,-[i])$ is a bifunctor over $\CA$.
If we employ the ambient triangulated category $\CT$ as the universal embedding $\CA\sse\Db(\A)$, the shifted $\Hom$-spaces are effaceable as well.
Precisely, we have the following assertion which will be a key ingredient in \S\ref{sec:subquotient}.

\begin{proposition}\label{prop:higher_extension_via_univ_emb}
\cite[Prop.~3.17]{Che24b}
Let us assume that $\A$ is connective and consider the universal embedding $F\colon \A\to\Db(\A)$.
Then the bifunctor $\Ext^i_{\Db(\A)}(?,-)$ over $\CA$ is a effaceable $\delta$-functor with a coincidence $\Hom_{\CA}(?,-)=\Ext^0_{\Db(\A)}(?,-)$.
Furthermore, we have a canonical isomorphism of $\delta$-functors $\theta^i\colon \BE^i(?,-)\xto{\simeq}\Ext^i_{\Db(\A)}(?,-)$ for any $i\geq 0$.
\end{proposition}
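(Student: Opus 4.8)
The plan is to exhibit both $\{\BE^i(?,-)\}_{i\ge 0}$ and the restriction to $\CA=H^0\A$ of $\{\Ext^i_{\Db(\A)}(?,-)\}_{i\ge 0}$ as effaceable $\delta$-functors which coincide with $\Hom_{\CA}(?,-)$ in degree $0$, and then to compare them by the uniqueness assertion of \cref{prop:effaceablity_determines_the_higher_degrees}.

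First I would pass to a cofibrant model and, via \cref{thm:universal_embedding}, identify $\CA$ with the (full, extension-closed) essential image of the universal embedding $F\colon\A\to\Db_{\dg}(\A)$. Then any $\fs$-triangle $A\to B\to C\overset{\delta}{\dra}$ in $\CA$ lifts to a genuine triangle $A\to B\to C\to A[1]$ in $\Db(\A)$ with $\delta$ the homotopy class of the connecting morphism $C\to A[1]$, and applying the cohomological functors $\Hom_{\Db(\A)}(-,X)$ and $\Hom_{\Db(\A)}(X,-)$ produces exactly the long exact sequences demanded of a $\delta$-functor in \cref{def:delta_functor}; the connecting maps are (shifts of) composition with $\delta$, and their naturality in morphisms of $\BE$-extensions is the functoriality of mapping cones in $\Db(\A)$. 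Combined with the full faithfulness in \cref{thm:universal_embedding}, which gives $\Ext^0_{\Db(\A)}(?,-)|_{\CA}=\Hom_{\CA}(?,-)$, this settles that $\{\Ext^i_{\Db(\A)}(?,-)\}_{i\ge 0}$ is a $\delta$-functor extending $\Hom_{\CA}$.

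The crux is the effaceability of $\Ext^i_{\Db(\A)}(?,-)$ for $i>0$. Fix $C,A\in\CA$, $i\ge 1$ and $x\in\Hom_{\Db(\A)}(C,A[i])$; I must find a deflation $g\colon B\onto C$ in $\CA$ with $x\circ g=0$. For $i=1$ this is immediate: since $\CA$ is extension-closed in $\Db(\A)$ one has $\Ext^1_{\Db(\A)}(C,A)=\BE(C,A)$, so $x$ is realized by a conflation $A\to B\overset{g}{\to}C\overset{x}{\dra}$ and $x\circ g=0$ because consecutive morphisms in a triangle compose to zero. For $i\ge 2$ one must use the structure of $\Db(\A)$: by construction $\Db_{\dg}(\A)=\pretr(\A)/\CM_{\dg}$ and by \cref{thm:universal_embedding} every object of $\Db(\A)$ is a bounded iterated extension of shifts of objects of $\CA$, which allows one to represent $x$ by a finite sequence of conflations $A\to E_1\to\cdots\to E_i\to C$ in $\CA$; writing this as $A\to E_1\to\cdots\to E_{i-1}\to K$ together with a conflation $K\to E_i\overset{g}{\to}C$, the deflation $g$ effaces $x$ because the tail conflation pulled back along its own deflation splits, whence $x\circ g=0$. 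Making this representation precise — that is, that higher $\Db(\A)$-extensions between objects of $\CA$ are computed by Yoneda-type sequences of conflations in $\CA$ — is the step I expect to be the main obstacle; it is essentially the content of \cite[Prop.~3.17]{Che24b}, with the exact-category case of \cref{ex:univ_emb_exact_cat} serving as the guiding prototype.

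Finally, $\{\BE^i(?,-)\}_{i\ge 0}$ is effaceable in positive degrees by \cref{lem:higher_extensions_are_effaceable} and coincides with $\Hom_\CA$ in degree $0$ by definition, while $\{\Ext^i_{\Db(\A)}(?,-)\}_{i\ge 0}$ has now been shown to enjoy the same two properties. Applying the bimodule version of \cref{prop:effaceablity_determines_the_higher_degrees} (freeze one variable at a time, using effaceability in the frozen variable) to $\id\colon\Hom_\CA\to\Hom_\CA$ in both directions yields morphisms of $\delta$-functors $\theta\colon\{\BE^i\}\to\{\Ext^i_{\Db(\A)}\}$ and $\theta'$ the other way, each extending the identity in degree $0$; the uniqueness clause then forces $\theta'\theta$ and $\theta\theta'$ to be the identity morphisms, so each $\theta^i\colon\BE^i(?,-)\xto{\simeq}\Ext^i_{\Db(\A)}(?,-)$ is an isomorphism of $\delta$-functors, as claimed.
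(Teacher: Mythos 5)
The paper itself offers no proof of this proposition: it is imported verbatim from \cite[Prop.~3.17]{Che24b}, so there is no in-paper argument to compare against. Your skeleton is nonetheless the standard one and matches the framework the paper sets up for exactly this purpose: show that the restriction of $\{\Ext^i_{\Db(\A)}(?,-)\}_{i\geq 0}$ to $\CA$ is an effaceable $\delta$-functor agreeing with $\Hom_{\CA}(?,-)$ in degree $0$, note that $\{\BE^i(?,-)\}_{i\geq 0}$ has the same two properties by \cref{lem:higher_extensions_are_effaceable}, and conclude by the uniqueness clause of \cref{prop:effaceablity_determines_the_higher_degrees} applied in both directions. The degree-$0$ identification via the full faithfulness in \cref{thm:universal_embedding}, the long exact sequences coming from triangles in $\Db(\A)$, the $i=1$ effaceability via extension-closedness of the image of $F$, and the final two-sided uniqueness argument are all correct.

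The genuine gap is exactly where you flag it: effaceability of $\Ext^i_{\Db(\A)}(?,-)$ for $i\geq 2$. Your proposed reduction --- that every class in $\Hom_{\Db(\A)}(C,A[i])$ with $C,A\in\CA$ is represented by a finite string of conflations in $\CA$ --- does not follow from the fact that $\Db(\A)$ is generated by $\CA$ under shifts and extensions; that is a statement about objects, whereas you need a statement about morphisms between objects of $\CA$ in the quotient $\pretr(\A)/\CM_{\dg}$. Worse, Yoneda-representability of $\Ext^i_{\Db(\A)}$ is essentially equivalent to surjectivity of $\theta^i$ (since $\BE^i$ is by definition the $i$-th tensor power of $\BE$), i.e., to a substantial part of the conclusion, so assuming it makes the argument circular --- as you concede by writing that this step ``is essentially the content of'' the result being proved. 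Already in the prototype of \cref{ex:univ_emb_exact_cat} (an exact category and Neeman's embedding) this comparison of Yoneda extensions with derived-category extensions is a nontrivial theorem; in the exact dg setting it requires an explicit analysis of morphism spaces in the dg quotient by the defective objects (a calculus-of-fractions or filtration argument on $\CM$), which is the actual content of \cite[Prop.~3.17]{Che24b}. As written, your proof establishes everything except the one step that carries the mathematical weight.
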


We would like to emphasize that passing to the connective cover is an essential step in ensuring that $\Db(\A)$ behaves compatibly with higher extensions.
Needless to say, the incarnation of \cref{prop:higher_extension_via_univ_emb} is a usual abelian category $\CA$ together with the embedding $\CA\hookrightarrow\Db(\CA)$ to its bounded derived category.
As is well-known, the group $\Ext^i_\CA(A,B)$ is understood as the shifted $\Hom$-space $\Hom_{\Db(\CA)}(A,B[i])$.

\subsection{Exact dg substructures and extriangulated substructures}
\label{subsec:relative_theory}
We review a remarkable interaction between substructures of an exact dg category $(\A,\SS)$ and those of the associated extriangulated category $\CA=(H^0\A,\BE,\fs)$, following \cite[\S 4.4]{Che24a}.
They bijectively correspond to each other even in connection with their poset structures.

Recall from \cite[\S 5.1]{INP24} that an additive subbifunctor $\BF\sse\BE$ is said to be \emph{closed} if $(\CA,\BE,\fs)$ satisfies (one of) the following equivalent conditions:
\begin{itemize}
\item 
$(\CA,\BF,\fs|_{\BF})$ is an extriangulated category;
\item 
$\fs|_\BF$-inflations are closed under composition; and
\item 
$\fs|_\BF$-deflations are closed under composition.
\end{itemize}
This may permit us to place some arguments in \cite{AS93,DRSSK99} in extriangulated contexts.
We call the above $(\CA,\BF,\fs|_{\BF})$ an \emph{extriangulated subcategory/substructure} of $(\CA,\BE,\fs)$.
We should mention that our notion of extriangulated subcategory is slightly stronger than that in \cite[Def.~3.7]{Hau21}.

Since the intersections of (possibly infinitely many) closed subfunctors is still closed \cite[Cor.~3.14]{HLN21}, the class of closed subfunctors of $\BE$ forms a poset with respect to the canonical inclusion.
Similarly, exact substructures of $(\A,\SS)$ form a poset given by inclusion as well.
What we want to review in this subsection is this.

\begin{proposition}\label{prop:bijection_between_substructures}
We have a poset isomorphism between the following classes:
\begin{enumerate}[label=\textup{(\arabic*)}]
\item 
The poset of exact substructures of $(\A,\SS)$; and
\item 
The poset of closed subbifunctors of $\BE$.
\end{enumerate}
\end{proposition}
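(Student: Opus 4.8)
The plan is to produce mutually inverse, order-preserving maps between the two posets. In one direction, an exact substructure $\SS' \sse \SS$ is sent to the bifunctor $\BE_{\SS'}$ of the extriangulated category $(H^0\A, \BE_{\SS'}, \fs_{\SS'})$ produced from $(\A, \SS')$ by \cref{def:alg_ET}. The inclusion $\SS' \sse \SS$ forces $\BE_{\SS'} \sse \BE_\SS = \BE$ as a subbifunctor, and the mere fact that $(H^0\A, \BE_{\SS'}, \fs_{\SS'})$ is extriangulated says --- by the list of equivalent conditions recalled in \cref{subsec:relative_theory} --- that $\BE_{\SS'}$ is a \emph{closed} subbifunctor of $\BE$; monotonicity is clear from the construction. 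In the other direction, a closed subbifunctor $\BF \sse \BE$ is sent to
\[
\SS_\BF \deff \{\, \sigma \in \SS \mid \text{the } \fs_\SS\text{-triangle realized by } \sigma \text{ has connecting extension in } \BF \,\},
\]
which is isomorphism-closed in $\CH_{{\rm 3}t}(\A)$ and visibly monotone in $\BF$.

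First I would check that $(\A, \SS_\BF)$ is an exact dg category, i.e.\ satisfies \ref{Ex0}--\ref{Ex2op}. Axiom \ref{Ex0} is trivial, as $\id_0$ realizes the zero extension, which lies in $\BF(0,0)$. For \ref{Ex1}, one passes to $H^0\A$: an $\SS_\BF$-deflation corresponds, under Chen's conflation--triangle dictionary, to a deflation of the extriangulated substructure $(H^0\A, \BF, \fs|_\BF)$, and closedness of $\BF$ is \emph{by definition} the assertion that such deflations compose (\cref{subsec:relative_theory}); the composite then lifts to a conflation in $\SS$ via \ref{Ex1} for $\SS$, and that conflation has class in $\BF$, hence lies in $\SS_\BF$. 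For \ref{Ex2} and \ref{Ex2op}, the required homotopy pullback (resp.\ pushout) square already exists in $(\A,\SS)$; since a homotopy (co)pullback square at the dg level descends to the corresponding (co)pullback in $(H^0\A, \BE_\SS, \fs_\SS)$, and since $\BF$ is a \emph{subbifunctor} of $\BE$ (so the base-change maps of $\BE$ restrict to $\BF$), the folded conflation of that square again has class in $\BF$ and therefore lies in $\SS_\BF$.

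Next I would verify that the two composites are the identity. For $\BF \mapsto \SS_\BF \mapsto \BE_{\SS_\BF}$: on one hand $\SS_\BF$ realizes only classes in $\BF$, so $\BE_{\SS_\BF} \sse \BF$; on the other hand every $\delta \in \BF(C,A) \sse \BE(C,A)$ is realized by some $\sigma \in \SS$ (as $\fs_\SS$ realizes every $\BE$-extension), and such a $\sigma$ lies in $\SS_\BF$, whence $\delta \in \BE_{\SS_\BF}$; thus $\BE_{\SS_\BF} = \BF$. For $\SS' \mapsto \BE_{\SS'} \mapsto \SS_{\BE_{\SS'}}$: the inclusion $\SS' \sse \SS_{\BE_{\SS'}}$ is immediate, while if $\sigma \in \SS$ realizes some $\delta \in \BE_{\SS'}(C,A)$ then $\delta$ is also realized by a conflation $\sigma' \in \SS'$, so $\sigma$ and $\sigma'$ are linked by a morphism of $3$-term h-complexes that is the identity on the two outer terms; such a morphism is automatically invertible in $\CH_{{\rm 3}t}(\A)$ (the dg analogue of the short five lemma), whence $\sigma \cong \sigma'$ and $\sigma \in \SS'$ by isomorphism-closedness.

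The genuinely delicate step is this last one --- equivalently, the injectivity of $\SS' \mapsto \BE_{\SS'}$. A conflation in an exact dg category carries homotopy data (the degree $-1$ map $h$, and the higher coherence data $s_1,s_2,t$ in a morphism) that is invisible to $H^0\A$, so $\SS'$ cannot be reconstructed from $\fs_{\SS'}$ by bare hands; one must invoke the precise correspondence of \cite{Che24a} between the class $\SS'$ and the realization $\fs_{\SS'}$ --- and, relatedly, the matching of dg-level inflations and deflations with $\fs_{\SS'}$-inflations and deflations --- together with the dg short five lemma. Granting that input, the verification of \ref{Ex0}--\ref{Ex2op} for $\SS_\BF$ and the identity $\BE_{\SS_\BF}=\BF$ are comparatively routine.
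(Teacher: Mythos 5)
The paper offers no proof of \cref{prop:bijection_between_substructures}: it is recalled from Chen's work (the subsection opens with ``following \cite[\S 4.4]{Che24a}''), so there is no in-paper argument to compare yours against. Your reconstruction is the expected one and is sound in outline: the assignment $\SS'\mapsto\BE_{\SS'}$ landing in closed subbifunctors via the equivalent characterizations of closedness, the inverse assignment $\BF\mapsto\SS_\BF$ selecting the conflations of $\SS$ whose class lies in $\BF$, the verification of \ref{Ex0}--\ref{Ex2op} for $\SS_\BF$ by combining the axioms for $\SS$ with closedness of $\BF$ and functoriality of base change, and the check that the two composites are identities.

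Two points deserve sharpening. First, in your argument for \ref{Ex1} the passage from ``the composite is an $\fs|_{\BF}$-deflation in $H^0\A$'' to ``the $\SS$-conflation realizing the composite has class in $\BF$'' uses the uniqueness, up to isomorphism, of the cocone of a deflation in an extriangulated category together with the fact that $\BF$ is stable under pushout along that isomorphism; this is routine but should be said. Second, and more importantly, you correctly isolate the injectivity of $\SS'\mapsto\BE_{\SS'}$ as the only genuinely nontrivial step, but your appeal to ``the precise correspondence between $\SS'$ and $\fs_{\SS'}$'' reads as nearly circular as phrased. The concrete input needed is: any two conflations in $\SS$ with the same outer terms $A,C$ and the same class in $\BE_{\SS}(C,A)$ are isomorphic in $\CH_{{\rm 3}t}(\A)$. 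This is precisely what makes Chen's realization $\fs_{\SS}$ well defined --- equality of classes means the two conflations are connected by a zigzag of morphisms of $3$-term h-complexes that are identities on the outer terms, and each such morphism between homotopy short exact sequences is invertible by the homotopy five lemma --- so the fact is available from \cite{Che24a}, but it should be invoked in that explicit form. With that made precise, your argument is complete.
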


As witnessing in \cref{prop:characterization_for_alg_ET} and \cref{prop:bijection_between_substructures}, we know that the algebraic extriangulated categories are closed under taking extension-closed subcategories and extriangulated substructures.

The following closed subfunctors will play an important role in \S\ref{subsec:extriangulated_subquotient}.
This generalizes a way to construct Quillen's exact substructure on Quillen's exact category $\CA$.

\begin{lemma}\label{lem:Quillen_substructure}
\cite[Lem. A.2]{Che23}
Let $(\CA,\BE,\fs)$ be an arbitrary extriangulated category.
For any objects $A,C\in\CA$, we consider subsets of $\BE(C,A)$:
\begin{enumerate}[label=\textup{(\arabic*)}]
\item 
$\displaystyle \BE_{[\CN]}(C,A)\ = \sum_{f\colon C\to N, N\in\CN}\Im(\BE(N,A)\to\BE(C,A))$; and
\item 
$\displaystyle \BE^{[\CN]}(C,A)\ = \sum_{f\colon N\to A, N\in\CN}\Im(\BE(C,N)\to\BE(C,A))$.
\end{enumerate}
Then,  both $\BE_{[\CN]}$ and $\BE^{[\CN]}$ give rise to \emph{closed} bifunctors of $\BE$.
The associated extriangulated categories are denoted by $(\CA,\BE_{[\CN]},\fs_{[\CN]})$ and $(\CA,\BE^{[\CN]},\fs^{[\CN]})$, respectively.
\end{lemma}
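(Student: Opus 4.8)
The plan is to reduce closedness to the stability of $\fs|_{\BE_{[\CN]}}$-deflations under composition and then to run a diagram chase off the dual octahedron axiom. By duality (applying the statement to $(\CA^\op,\BE^\op,\fs^\op)$) it is enough to treat $\BE_{[\CN]}$. First I would check that $\BE_{[\CN]}$ is an additive subbifunctor of $\BE$: it is a subgroup of $\BE(C,A)$ for all $C,A$ by construction, naturality of pullbacks and pushforwards gives $a_\ast(f^\ast\delta')=f^\ast(a_\ast\delta')$ and $c^\ast(f^\ast\delta')=(fc)^\ast\delta'$ for $a\colon A\to A'$, $c\colon C'\to C$, $f\colon C\to N\in\CN$, and compatibility with the biproduct decompositions of $\BE$ is immediate; moreover $\BE_{[\CN]}=\BE_{[\add\CN]}$, so we may assume $\CN$ closed under finite direct sums, whence every element of $\BE_{[\CN]}(C,A)$ can be written as a single $f^\ast\delta'$ with $f\colon C\to N\in\CN$ and $\delta'\in\BE(N,A)$.

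Next, recall the criterion (recorded around \cite[\S5.1]{INP24}; cf.\ \cite[Cor.~3.14]{HLN21}) that an additive subbifunctor $\BF\subseteq\BE$ is closed precisely when the $\fs|_\BF$-deflations are closed under composition. So let $g_1\colon B_1\to C$ and $g_2\colon B_2\to B_1$ be $\fs|_{\BE_{[\CN]}}$-deflations, realised by $\fs$-triangles $A_1\xrightarrow{x_1}B_1\xrightarrow{g_1}C\overset{\delta_1}{\dashrightarrow}$ and $A_2\xrightarrow{x_2}B_2\xrightarrow{g_2}B_1\overset{\delta_2}{\dashrightarrow}$ with $\delta_1=f_1^\ast\delta_1'$ ($f_1\colon C\to N_1\in\CN$) and $\delta_2=f_2^\ast\delta_2'$ ($f_2\colon B_1\to N_2\in\CN$). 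Feeding these into $(\mathrm{ET4})^{\op}$ yields an $\fs$-triangle $A_2\xrightarrow{j}A_3\xrightarrow{p}A_1\dashrightarrow$, an $\fs$-triangle $A_3\to B_2\xrightarrow{g_1g_2}C\overset{\delta}{\dashrightarrow}$, and the octahedral identities $p_\ast\delta=\delta_1$ and $g_1^\ast\delta=j_\ast\delta_2$; I must then show $\delta\in\BE_{[\CN]}(C,A_3)$. Applying $\BE(-,A_3)$ to the triangle of $g_1$ shows that $\ker\bigl(g_1^\ast\colon\BE(C,A_3)\to\BE(B_1,A_3)\bigr)=\{\varphi_\ast\delta_1:\varphi\in\Hom(A_1,A_3)\}$, and each such element equals $f_1^\ast(\varphi_\ast\delta_1')\in\BE_{[\CN]}$; hence it suffices to exhibit a single $\bar\delta\in\BE_{[\CN]}(C,A_3)$ with $g_1^\ast\bar\delta=g_1^\ast\delta=f_2^\ast(j_\ast\delta_2')$, for then $\delta=\bar\delta+(\delta-\bar\delta)$ with $\delta-\bar\delta\in\ker g_1^\ast\subseteq\BE_{[\CN]}$. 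To obtain such a $\bar\delta$ I would reinterpret $\delta_i=f_i^\ast\delta_i'$ as saying that $g_i$ is a base change along $f_i$ of an $\fs$-deflation $u_i\colon V_i\to N_i$ onto an object of $\CN$, form the deflation $u_1\oplus u_2\colon V_1\oplus V_2\to N_1\oplus N_2\in\CN$ (using that $\CN$ is closed under direct sums), and assemble the comparison data of the two homotopy-bicartesian squares into a single homotopy-bicartesian square over an object of $\CN$ that carries $\bar\delta$.

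The main obstacle is precisely this last construction, and it is genuinely subtle. The tempting shortcut — splice the two inflations into $\CN$ into a single inflation with cokernel in $\CN$ — does not work, because the cokernel of that composite is an extension of $N_1$ by $N_2$, which in general does not lie in $\CN$, since $\CN$ is not assumed closed under extensions. So the argument must stay at the level of extension classes and use the \emph{explicit} output of $(\mathrm{ET4})^{\op}$ — the comparison morphisms and the two homotopy-bicartesian squares — rather than the induced long exact sequences alone; the bookkeeping of signs and of the compatibility squares there is where the real work lies.
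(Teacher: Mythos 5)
Your preparatory reductions are all correct: $\BE_{[\CN]}$ is an additive subbifunctor, $\BE_{[\CN]}=\BE_{[\add\CN]}$ so that every element is a single $f^{\ast}\delta'$, closedness is equivalent to $\fs|_{\BE_{[\CN]}}$-deflations composing, and the long exact sequence does give $\ker\bigl(g_1^{\ast}\colon\BE(C,A_3)\to\BE(B_1,A_3)\bigr)=\{\varphi_{\ast}\delta_1\}\subseteq\BE_{[\CN]}(C,A_3)$, reducing everything to producing $\bar\delta\in\BE_{[\CN]}(C,A_3)$ with $g_1^{\ast}\bar\delta=f_2^{\ast}(j_{\ast}\delta_2')$. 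But that last step \emph{is} the lemma, and you do not carry it out — you only gesture at ``assembling the two homotopy-bicartesian squares'' and yourself flag it as the place where the real work lies. (Note also that the paper gives no proof of this statement; it is imported from \cite[Lem.~A.2]{Che23}.)

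The gap is not just unfinished bookkeeping: the route you commit to cannot be completed at the generality you adopt. You explicitly discard the splicing argument because ``$\CN$ is not assumed closed under extensions'' and then aim to prove the statement for an arbitrary additive $\CN$. In that generality the statement is false. Take $\CA=\mod k[x]/(x^3)$, $\CN=\add S$ with $S$ the simple module, $M=k[x]/(x^2)$, $P=k[x]/(x^3)$, and $\pi\colon M\to S$ the projection. Every map from $M$ to a semisimple module factors through $\pi$, so $\BE_{[\CN]}(M,-)=\Im\bigl(\pi^{\ast}\colon\Ext^1(S,-)\to\Ext^1(M,-)\bigr)$. One checks that $\pi^{\ast}\colon\Ext^1(S,M)\to\Ext^1(M,M)$ is an isomorphism of one-dimensional spaces (the pullback of $M\rightarrowtail P\twoheadrightarrow S$ along $\pi$ is the non-split self-extension $M\rightarrowtail P\oplus S\twoheadrightarrow M$), whereas $\pi^{\ast}\colon\Ext^1(S,S)\to\Ext^1(M,S)$ is zero (the pullback of $S\rightarrowtail M\xrightarrow{\pi}S$ along $\pi$ splits via $m\mapsto(m,m)$), so $\BE_{[\CN]}(M,M)=\Ext^1(M,M)$ but $\BE_{[\CN]}(M,S)=0\neq\Ext^1(M,S)$. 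Now $g_1\colon P\oplus S\twoheadrightarrow M$ (kernel $M$) and $g_2=\id_P\oplus\pi\colon P\oplus M\twoheadrightarrow P\oplus S$ (kernel $S$, class pulled back along the projection onto the summand $S\in\CN$) are both $\fs_{[\CN]}$-deflations, yet the composite $P\oplus M\twoheadrightarrow M$ has kernel $M\oplus S$ and its pushforward along $M\oplus S\to S$ is the non-split class of $S\rightarrowtail P\twoheadrightarrow M$; hence its class does not lie in $\BE_{[\CN]}(M,M\oplus S)=\Ext^1(M,M)\oplus 0$. So the lemma tacitly carries the standing hypothesis, in force in every application in this paper and in \cite{Che23}, that $\CN$ is extension-closed and closed under direct summands.

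Under that hypothesis the ``tempting shortcut'' you rejected is the intended proof. Writing $\delta_i=f_i^{\ast}\delta_i'$ and realizing $\delta_i'$ by $A_i\to V_i\xrightarrow{u_i}N_i$, \cref{lem:wPO_wPB}(2) gives conflations $B_1\rightarrowtail C\oplus V_1\twoheadrightarrow N_1$ and $B_2\rightarrowtail B_1\oplus V_2\twoheadrightarrow N_2$ whose inflations have $-g_1$, resp.\ $-g_2$, as first component. Composing the second of these inflations with the direct sum of the first and $\id_{V_2}$, the axiom \ref{ET4} yields an inflation $B_2\rightarrowtail C\oplus V_1\oplus V_2$ whose cone $E$ sits in a conflation $N_2\rightarrowtail E\twoheadrightarrow N_1$, hence $E\in\CN$ by extension-closedness. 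Since $g_1g_2$ is an $\fs$-deflation, the standard converse to \cref{lem:wPO_wPB} (a conflation $B\rightarrowtail C\oplus V\twoheadrightarrow E$ whose inflation has a deflation as first component realizes the class of that deflation as the pullback along the component $C\to E$ of a deflation $V\twoheadrightarrow E$ with the same cocone; see \cite[Prop.~3.17]{NP19}) exhibits the class of $g_1g_2$ as $h^{\ast}\epsilon$ with $h\colon C\to E$ and $E\in\CN$. Your long-exact-sequence reduction, while valid, is then superfluous, and I do not see how to complete the proof along the descent route you propose even granting extension-closedness: the obstruction to descending $f_2$ along $g_1$ is the nonvanishing of $f_2x_1$, and it is the global splicing, not a class-by-class correction, that absorbs it.
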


These substructures possess a useful stability that will be used in \S\ref{subsec:extriangulated_subquotient}.

\begin{lemma}\label{lem:stability_of_Quillen_substructure}
Consider the extriangulated subcategories $(\CA,\BE_{[\CN]},\fs_{[\CN]})$ and $(\CA,\BE^{[\CN]},\fs^{[\CN]})$.
Then we have equalities $\BE_{[\CN]}=(\BE_{[\CN]})_{[\CN]}$ and $\BE^{[\CN]}=(\BE^{[\CN]})^{[\CN]}$.
\end{lemma}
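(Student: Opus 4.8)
The plan is to establish each equality by a pair of inclusions, and since the two assertions are interchanged by passing to $\CA^\op$ (under which the constructions $\BE_{[\CN]}$ and $\BE^{[\CN]}$ correspond), I would only write out the argument for $\BE_{[\CN]}=(\BE_{[\CN]})_{[\CN]}$ in full. First I would note that the iterated construction even makes sense: by \cref{lem:Quillen_substructure} the triple $(\CA,\BE_{[\CN]},\fs_{[\CN]})$ is itself an extriangulated category, so $(-)_{[\CN]}$ may be applied to $\BE_{[\CN]}$.

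For the inclusion $(\BE_{[\CN]})_{[\CN]}\subseteq\BE_{[\CN]}$ I would simply observe that for \emph{any} bifunctor $\BF$ on $\CA$ each summand $\Im\big(\BF(f,A)\colon\BF(N,A)\to\BF(C,A)\big)$ in the definition of $\BF_{[\CN]}(C,A)$ sits inside $\BF(C,A)$, so $\BF_{[\CN]}\subseteq\BF$ as subbifunctors; applying this with $\BF=\BE_{[\CN]}$ gives the claim. For the reverse inclusion the key point to isolate is that $\BE_{[\CN]}$ already agrees with $\BE$ on objects of $\CN$ in the first variable: for $N\in\CN$ the summand indexed by $\id_N$ shows $\BE_{[\CN]}(N,A)=\BE(N,A)$ for every $A$. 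Granting this, consider an element of $\BE_{[\CN]}(C,A)$ of the generating form $\BE(f,A)(\delta')$ with $f\colon C\to N$, $N\in\CN$, $\delta'\in\BE(N,A)$. Then $\delta'\in\BE(N,A)=\BE_{[\CN]}(N,A)$, and since $\BE_{[\CN]}$ is a subbifunctor of $\BE$ the map $\BE(f,A)$ restricts to $(\BE_{[\CN]})(f,A)$; hence $\BE(f,A)(\delta')=(\BE_{[\CN]})(f,A)(\delta')$ belongs to the summand of $(\BE_{[\CN]})_{[\CN]}(C,A)$ indexed by $f$. As $(\BE_{[\CN]})_{[\CN]}(C,A)$ is a subgroup of $\BE_{[\CN]}(C,A)$, it is closed under addition, so it contains every element of $\BE_{[\CN]}(C,A)$, which gives the reverse inclusion and thus equality. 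The formally dual computation, using morphisms $N\to A$ into the second variable, then yields $\BE^{[\CN]}=(\BE^{[\CN]})^{[\CN]}$.

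There is no genuine obstacle here: once the identity $\BE_{[\CN]}(N,-)=\BE(N,-)$ for $N\in\CN$ is extracted, the proof is essentially immediate. The only places calling for the mild care typical of such bookkeeping are (i) confirming that the iterated construction is legitimate — which is precisely why \cref{lem:Quillen_substructure} must be invoked — and (ii) keeping track that the pullback maps of the subbifunctor $\BE_{[\CN]}$ are literally the restrictions of those of $\BE$, so that a generator of $\BE_{[\CN]}(C,A)$ is recognized verbatim as a generator of $(\BE_{[\CN]})_{[\CN]}(C,A)$.
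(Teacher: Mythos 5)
Your proposal is correct and follows essentially the same route as the paper: the easy inclusion $(\BE_{[\CN]})_{[\CN]}\subseteq\BE_{[\CN]}$ is immediate from the construction, and the reverse inclusion rests on the observation that $\BE_{[\CN]}(N,A)=\BE(N,A)$ for $N\in\CN$ (which the paper states as "obvious" and you justify via the summand indexed by $\id_N$), after which each generator $\BE(f,A)(\delta')$ of $\BE_{[\CN]}(C,A)$ is recognized verbatim as a generator of $(\BE_{[\CN]})_{[\CN]}(C,A)$. The paper phrases this with a finite sum $\sum_i\BE(h_i,A)(\eta_i)=\delta$ rather than generators plus closure under addition, but the content is identical, and the dualization to $\BE^{[\CN]}$ is handled the same way in both.
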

\begin{proof}
We place the argument only on the first equality.
To show the inclusion $\BE_{[\CN]}\sse(\BE_{[\CN]})_{[\CN]}$, we consider an element $\delta\in\BE_{[\CN]}(C,A)$ corresponding to an $\fs_{[\CN]}$-conflation $A\overset{f}{\lra} B\overset{g}{\lra} C\overset{\delta}{\dra} $.
By definition, there exists a finite set of morphisms $C\xto{h_i}N_i$ with $N_i\in\CN$ and $\eta_i\in\BE(N_i,A)$ such that $\sum_{i} \BE(h_i,A)(\eta_i)=\delta$.
Since $\eta_i\in\BE_{[\CN]}(N_i,A)$ is obvious, we can rewrite $\sum_{i} \BE_{[\CN]}(h_i,A)(\eta_i)=\delta$ showing $\delta\in(\BE_{[\CN]})_{[\CN]}(C,A)$.
\end{proof}

\section{Extriangulated categories}
\label{sec:ET}
Throughout the section, let $\CA=(\CA,\BE,\fs)$ be a skeletally small extriangulated category.
We begin by recalling two types of localizations of $\CA$ with respect to an extension-closed subcategory $\CN\sse\CA$.
The first operation, reviewed in \S\ref{subsec:extriangulated_quotient}, is the extriangulated quotient \cite{NOS22} which exists under a certain condition.
It provides an exact functor $Q\colon \CA\to \CA/\CN$ with an appropriate universality and unifies the Serre/Verdier quotients of abelian/triangulated categories.
The subsection \S\ref{subsec:auxiliary_exact_sequence} serves as an auxiliary preparation to the subsequent discussions.
In \S\ref{subsec:extriangulated_subquotient}, we introduce the second operation called the extriangulated subquotient which---by contrast---always exists.
Finally in \S\ref{subsec:subquotients_are_algebraic}, we verify that algebraic extriangulated categories are stable under the subquotients.

Although we refer the reader to \cite[\S 2]{NP19} for details of extriangulated category, we record only a few lemmas to proceed with our discussions.
An \emph{extriangulated category} is defined to be the triplet $(\CA,\BE,\fs)$ of
\begin{itemize}
\item an additive category $\CA$;
\item an (bi-)additive bifunctor $\BE\colon \CA^{\op}\times\CA\to \Ab$, where $\Ab$ is the category of abelian groups;
\item a \emph{realization} $\fs$, that is, it associates each equivalence class of  a sequence  $A\overset{f}{\lra} B\overset{g}{\lra} C$ in $\CA$ to an element in $\BE(C,A)$ for any $C,A\in\CA$,
\end{itemize}
which satisfies some axioms laid out in \cite[Def.~2.12]{NP19}.
An element $\delta\in\BE(C,A)$ with an associated sequence is denoted by $A\overset{f}{\lra} B\overset{g}{\lra} C\overset{\delta}{\dra}$ called an \emph{$\fs$-triangle}.
Borrowing the terminology from \cite{Kel90}, we call the sequence $A\overset{f}{\lra} B\overset{g}{\lra} C$ an \emph{$\fs$-conflation}.
The morphisms $f$ and $g$ appearing there are called an \emph{$\fs$-inflation} and an \emph{$\fs$-deflation}, respectively.
The next lemma will be used in many places. 
It says that, like in the triangulated case, extriangulated categories admit weak pushouts and weak pullbacks.

\begin{lemma}\label{lem:wPO_wPB}
\cite[Prop.~1.20]{LN19}
The following properties hold.
\begin{enumerate}[label=\textup{(\arabic*)}]
\item\label{item:wPO}
For any $\fs$-triangle $A\overset{f}\lra B\overset{g}{\lra} C\overset{\delta}{\dra} $ together with a morphism $a\in\CA(A, A')$ and an associated $\fs$-triangle $A'\overset{f'}\lra B'\overset{g'}{\lra} C\overset{a_{*}\delta}{\dra} $,
there exists $b\in\CA(B,B')$ which gives a morphism of $\fs$-triangles 
\[
\begin{tikzcd}[row sep=0.6cm]
    A 
        \arrow{r}{f}
        \arrow{d}[swap]{a}
        \wPO{dr}
    & B 
        \arrow{r}{g}
        \arrow{d}{b}
    & C 
        \arrow[dashed]{r}{\delta}
        \arrow[equals]{d}{}
    & {}\\
    A' 
        \arrow{r}{f'}
    & B'
        \arrow{r}{g'}
    & C 
        \arrow[dashed]{r}{a_{*}\delta}
    & {}
\end{tikzcd}
\]
and makes 
$
\begin{tikzcd}[column sep=1.1cm, cramped]
A
    \arrow{r}{\begin{bsmallmatrix}
        f\\a
    \end{bsmallmatrix}}
& B\oplus A'
    \arrow{r}{\begin{bsmallmatrix}
        b \amph -f'
    \end{bsmallmatrix}}
& B'
    \arrow[dashed]{r}{(g')^{*}\delta}
&{}
\end{tikzcd}
$
an $\fs$-triangle.
We call the square ${\rm (wPO)}$ is a \emph{weak pushout} of $f$ along $a$.
\item\label{item:wPB}
For any $\fs$-triangle $A\overset{f}\lra B\overset{g}{\lra} C\overset{\delta}{\dra} $ together with a morphism $c\in\CA(C',C)$ and an associated $\fs$-triangle $A\overset{f'}\lra B'\overset{g'}{\lra} C'\overset{c^{*}\delta}{\dra} $,
there exists $b\in\CA(B',B)$ which gives a morphism of $\fs$-triangles
\[
\begin{tikzcd}[row sep=0.6cm]
    A 
        \arrow{r}{f'}
        \arrow[equals]{d}{}
    & B' 
        \arrow{r}{g'}
        \arrow{d}[swap]{b}
        \wPB{dr}
    & C' 
        \arrow[dashed]{r}{c^{*}\delta}
        \arrow{d}{c}
    & {}\\
    A 
        \arrow{r}{f}
    & B
        \arrow{r}{g}
    & C 
        \arrow[dashed]{r}{\delta}
    & {}
\end{tikzcd}
\]
and makes 
$
\begin{tikzcd}[column sep=1.1cm, cramped]
B'
    \arrow{r}{\begin{bsmallmatrix}
        -g'\\b
    \end{bsmallmatrix}}
& C'\oplus B
    \arrow{r}{\begin{bsmallmatrix}
        c \amph g
    \end{bsmallmatrix}}
& C
    \arrow[dashed]{r}{f'_{*}\delta}
&{}
\end{tikzcd}
$
an $\fs$-triangle.
We call the square ${\rm (wPB)}$ a \emph{weak pullback} of $g$ along $c$.
\end{enumerate}
\end{lemma}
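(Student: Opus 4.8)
The plan is to deduce (2) from (1) by passing to the opposite extriangulated category $\CA^{\op}=(\CA^{\op},\BE^{\op},\fs^{\op})$ — in which $\fs$-inflations and $\fs$-deflations are interchanged and the two displayed total sequences swap roles — so it suffices to prove (1). The first step of (1) is to produce $b$. The hypothesis already provides a realization $\fs(a_{*}\delta)=[A'\xrightarrow{f'}B'\xrightarrow{g'}C]$, and $a_{*}\delta=(\id_C)^{*}(a_{*}\delta)$ holds trivially by naturality of $\BE$; hence axiom (ET3) (in the sense of \cite[Def.~2.12]{NP19}), applied to the pair of $\BE$-extensions $(\delta,a_{*}\delta)$ with the morphisms $a\colon A\to A'$ and $\id_C\colon C\to C$, yields $b\in\CA(B,B')$ with $bf=f'a$ and $g'b=g$. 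This is precisely the asserted morphism of $\fs$-triangles, and $\begin{bsmallmatrix}b&-f'\end{bsmallmatrix}\begin{bsmallmatrix}f\\a\end{bsmallmatrix}=bf-f'a=0$, so the sequence $A\to B\oplus A'\to B'$ in (1) is at least a complex.

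It remains to identify that complex as an $\fs$-triangle realizing $(g')^{*}\delta$, and for this I would use a single application of (ET4). Factor $\begin{bsmallmatrix}f\\a\end{bsmallmatrix}$ as the composite of the inflations
\[
A\xrightarrow{\begin{bsmallmatrix}1\\a\end{bsmallmatrix}}A\oplus A'\xrightarrow{\begin{bsmallmatrix}f&0\\0&1\end{bsmallmatrix}}B\oplus A',
\]
the first split, with $\fs$-triangle $A\to A\oplus A'\xrightarrow{\begin{bsmallmatrix}-a&1\end{bsmallmatrix}}A'\dashrightarrow 0$, and the second the direct sum of $f$ with $\id_{A'}$, with $\fs$-triangle $A\oplus A'\to B\oplus A'\xrightarrow{\begin{bsmallmatrix}g&0\end{bsmallmatrix}}C\dashrightarrow\begin{bsmallmatrix}\delta\\0\end{bsmallmatrix}$. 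Applying (ET4) to this composable pair produces an object $Z$, an $\fs$-triangle $A\xrightarrow{\begin{bsmallmatrix}f\\a\end{bsmallmatrix}}B\oplus A'\xrightarrow{q}Z\dashrightarrow\sigma$, and a connecting $\fs$-triangle on the cokernels of the shape $A'\to Z\to C\dashrightarrow\begin{bsmallmatrix}-a&1\end{bsmallmatrix}_{*}\begin{bsmallmatrix}\delta\\0\end{bsmallmatrix}=-a_{*}\delta$. Since $-a_{*}\delta$ is realized by $B'$ (with one of the structure maps $f',g'$ negated) and any two realizations of a fixed $\BE$-extension are isomorphic over the identity on the outer terms, this identifies $Z$ with $B'$; transporting $q$ and $\sigma$ along a suitable such isomorphism and chasing the commutative diagram that (ET4) supplies then yields $q=\begin{bsmallmatrix}b&-f'\end{bsmallmatrix}$ and $\sigma=(g')^{*}\delta$.

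The genuine obstacle is exactly this last bookkeeping: (ET4) determines $Z$, $q$, and $\sigma$ only up to the slack inherent in the axioms, so pinning the maps down on the nose requires using the compatibility identities of (ET4) carefully and tracking signs throughout (for instance the choice between $-f'$ and $-g'$ above). A more economical alternative, which I would likely adopt in the final write-up, is to invoke directly the ``homotopy cartesian square'' machinery of Nakaoka--Palu (\cite[\S3]{NP19}): there the assertion that $\mathrm{(wPO)}$ is homotopy cartesian is equivalent to the displayed total sequence being an $\fs$-triangle realizing $(g')^{*}\delta$, and combined with the construction of $b$ from (ET3) this gives (1) at once; (2) then follows by the duality noted above. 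Either way, the statement is essentially \cite[Prop.~1.20]{LN19}.
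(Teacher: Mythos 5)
The paper itself offers no proof of this lemma: it is quoted verbatim from \cite[Prop.~1.20]{LN19} (whose proof in turn rests on \cite[Prop.~3.15, 3.17]{NP19}), so strictly speaking nothing needed to be supplied. Your reduction of (2) to (1) by duality and your (ET4) computation — factoring $\begin{bsmallmatrix}f\\a\end{bsmallmatrix}$ through $A\oplus A'$ and reading off the connecting $\fs$-triangle $A'\to Z\to C\overset{-a_*\delta}{\dra}$ — are exactly the right skeleton; this is essentially how Nakaoka--Palu prove the statement.

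However, there is a genuine gap in the logical order of your construction. You first fix $b$ by an application of (ET3) and only afterwards try to identify the map $q$ produced by (ET4) with $\begin{bsmallmatrix}b\amph -f'\end{bsmallmatrix}$ for that prechosen $b$. This identification is false in general: (ET3) only asserts the existence of \emph{some} completion $b$ of the square, and for an arbitrary such completion the total complex $A\to B\oplus A'\to B'$ need not be an $\fs$-conflation. This is the extriangulated incarnation of Neeman's ``good morphism'' phenomenon for triangulated categories — every commutative square extends to a morphism of triangles, but only the good completions have a distinguished mapping cone — and the whole content of the lemma is that a \emph{good} $b$ exists, not that every $b$ works. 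The repair is to reverse the order: run the (ET4) argument first, transport $E\cong B'$ along the isomorphism of realizations of $-a_*\delta$, \emph{define} $b$ to be the first component of the resulting map $B\oplus A'\to B'$ (checking that the second component is $-f'$, which follows from the commutativity $h'\circ\begin{bsmallmatrix}f&0\\0&1\end{bsmallmatrix}=d\circ\begin{bsmallmatrix}-a&1\end{bsmallmatrix}$ restricted to the $A'$-summand), and then verify from the compatibilities $d^*\delta'=0$ and $e^*\rho=f_*\delta'$ that this $b$ gives a morphism of $\fs$-triangles. Your fallback of citing the homotopy-cartesian-square results of \cite[\S 3]{NP19} directly is sound and is what the present paper implicitly does by citing \cite{LN19}.
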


Let us recall \ref{ET4} which is a part of the original axioms for $(\CA,\BE,\fs)$ in \cite[Def.~2.12]{NP19}.
\begin{enumerate}[label=\textup{(ET\arabic*)}]
\setcounter{enumi}{3}
\item\label{ET4}
Let $A\overset{f}{\lra}B\overset{f'}{\lra}D\overset{\delta}{\dra}$ and $B\overset{g}{\lra}C\overset{g'}{\lra}F\overset{\rho}{\dra}$ be $\fs$-triangles in $(\CA,\BE,\fs)$. 
Then there exists a commutative diagram made of $\fs$-triangles in $\C$,
\[
\begin{tikzcd}[column sep=1.2cm, row sep=0.6cm]
    A \arrow{r}{f}\arrow[equals]{d}{}& B \arrow{r}{f'}\arrow{d}{g}& D \arrow[dashed]{r}{\delta}\arrow{d}{d}& {} \\
    A \arrow{r}[swap]{h}& C\arrow{r}[swap]{h'}\arrow{d}{g'} & E \arrow[dashed]{r}{\delta'}\arrow{d}{e}& {} \\
      & F \arrow[equals]{r}{}\arrow[dashed]{d}{\rho}& F,\arrow[dashed]{d}{f'_{*}\rho} & {} \\
      & {}  &{}   &
\end{tikzcd}
\]
where 
$A\overset{h}{\lra}C\overset{h'}{\lra}E\overset{\delta'}{\dra}$ and $D\overset{d}{\lra}E\overset{e}{\lra}F\overset{f'_{*}\rho}{\dra}$ are $\fs$-triangles, 
and
$d^{*}\delta' = \delta$ and $e^{*}\rho=f_{*}\delta'$. 
\end{enumerate}

\subsection{The extriangulated quotient}
\label{subsec:extriangulated_quotient}
We recall the localization of extriangulated categories with respect to a suitable class of morphisms, whose study was initiated by Nakaoka-Ogawa-Sakai~\cite{NOS22}.
This construction encompasses the Verdier/Serre quotients of triangulated/abelian categories, as well as other ``quotient-like'' operations, see \cref{ex:extri_quotient_by_biresolving}.

An \emph{exact} functor $(F,\phi)\colon (\CA,\BE,\fs)\to (\CA',\BE',\fs')$ between extriangulated categories is the pair consisting of an additive functor $F\colon \CA\to\CA'$ and a natural transformation $\phi\colon \BE\rightarrow\BE'\circ(F^{\op}\times F)$ which satisfies
\[
\mathfrak{s}'(\phi_{C,A}(\delta))=[F(A)\overset{F(x)}{\lra}F(B)\overset{F(y)}{\lra}F(C)]
\]
for any $\fs$-triangle $A\overset{x}{\lra}B\overset{y}{\lra}C\overset{\delta}{\dashrightarrow}$ in $\CA$, see \cite[Def.~2.32]{B-TS21}.

\begin{lemma}\label{lem:exact_equivalence}
\cite[Prop.~2.13]{NOS22}
An exact functor $(F,\phi)\colon (\CA,\mathbb{E},\mathfrak{s})\to(\CA',\mathbb{E}',\mathfrak{s}')$ is called an \emph{exact equivalence}, if the following equivalent conditions are satisfied:
\begin{enumerate}[label=\textup{(\arabic*)}]
\item
There exists an exact functor $(G,\psi)\colon (\CA',\mathbb{E}',\mathfrak{s}')\to(\CA,\mathbb{E},\mathfrak{s})$ and a natural isomorphisms $\eta\colon (G,\psi)\circ (F,\phi)\cong\id_{\CA}$ and $\eta'\colon (F,\phi)\circ(G,\psi)\cong\id_{\CA'}$;
\item
$F$ is an equivalence of categories and $\phi$ is a natural isomorphism.
\end{enumerate}  
\end{lemma}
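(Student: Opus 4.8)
The plan is to prove the two conditions equivalent directly; apart from ordinary category theory the only inputs are the bifunctoriality of $\BE,\BE'$ and the realization-compatibility axiom for an exact functor. For the implication $(1)\Rightarrow(2)$, suppose we are given an exact functor $(G,\psi)$ and natural isomorphisms $\eta\colon(G,\psi)\circ(F,\phi)\xto{\sim}\id_\CA$ and $\eta'\colon(F,\phi)\circ(G,\psi)\xto{\sim}\id_{\CA'}$. Then $G$ is a categorical quasi-inverse of $F$, so $F$ is an equivalence of categories. That $\phi$ is a natural isomorphism I would read off from the compatibility of $\eta$ with realizations: unwinding the definition of a morphism of exact functors forces $\psi_{FC,FA}\circ\phi_{C,A}$ to be the conjugation isomorphism $\BE(C,A)\xto{\sim}\BE(GFC,GFA)$ determined by $\eta$, so each $\phi_{C,A}$ is a split monomorphism; running the same argument with $\eta'$ shows every $\psi_{C',A'}$ is a split monomorphism, whence $\psi_{FC,FA}$ is simultaneously a split monomorphism and (as the left factor of an isomorphism) a split epimorphism, hence bijective, and therefore $\phi_{C,A}=\psi_{FC,FA}^{-1}\circ(\psi_{FC,FA}\circ\phi_{C,A})$ is bijective as well; being already natural, $\phi$ is a natural isomorphism.

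For $(2)\Rightarrow(1)$, assume $F$ is an equivalence, fix a categorical quasi-inverse $G$ with natural isomorphisms $\eta\colon GF\xto{\sim}\id_\CA$ and $\varepsilon\colon FG\xto{\sim}\id_{\CA'}$, and assume $\phi$ is a natural isomorphism. I would upgrade $G$ to an exact functor by transporting $\phi^{-1}$ across the equivalence, defining
\[
\psi_{C',A'}\deff\phi_{GC',GA'}^{-1}\circ(\varepsilon_{A'}^{-1})_{*}\circ(\varepsilon_{C'})^{*}\colon\BE'(C',A')\lra\BE(GC',GA'),
\]
a composite of isomorphisms; naturality of $\psi$ in both variables is immediate from naturality of $\phi$ and $\varepsilon$ together with bifunctoriality. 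To see that $(G,\psi)$ is exact, one takes an $\fs'$-triangle $A'\xto{x'}B'\xto{y'}C'\overset{\delta'}{\dra}$, applies $F$ and uses exactness of $(F,\phi)$ to identify $\fs'\bigl(\phi_{GC',GA'}(\psi_{C',A'}(\delta'))\bigr)$ with $[FGA'\to FGB'\to FGC']$, then transports this statement back along $\varepsilon$; since $\phi$ is an isomorphism and $F$ is an equivalence, a sequence in $\CA$ is an $\fs$-conflation exactly when its $F$-image is an $\fs'$-conflation, and one concludes $\fs(\psi_{C',A'}(\delta'))=[GA'\xto{Gx'}GB'\xto{Gy'}GC']$. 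Finally $\eta$ and $\varepsilon$ are natural isomorphisms of the underlying functors which, by the very construction of $\psi$, are compatible with $\phi$ and $\psi$; hence they are isomorphisms of exact functors $(G,\psi)\circ(F,\phi)\cong\id_\CA$ and $(F,\phi)\circ(G,\psi)\cong\id_{\CA'}$, giving $(1)$.

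The implication $(1)\Rightarrow(2)$ is essentially bookkeeping around the universal property of a categorical equivalence; the substance lies in $(2)\Rightarrow(1)$, and within it the step I expect to cost the most care is checking that the transported transformation $\psi$ obeys the realization-compatibility axiom of an exact functor. This requires pushing $\fs'$-triangles through $F$, $G$ and the natural isomorphisms while correctly tracking pushforwards and pullbacks of $\BE$-classes, and along the way verifying that $F$ reflects $\fs$-conflations once $\phi$ is known to be an isomorphism. The computation is entirely formal, but it is precisely the kind of diagram-chase in which a sign or direction error would most easily hide.
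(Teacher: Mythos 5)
The paper does not prove this lemma at all --- it is quoted verbatim from \cite[Prop.~2.13]{NOS22} --- so there is no internal argument to compare against; I can only assess your proof on its own terms, and it is essentially correct. Your $(1)\Rightarrow(2)$ argument is exactly right, provided one reads ``natural isomorphism'' in (1) as a natural isomorphism \emph{of exact functors} (i.e.\ an isomorphism in $\ET$), which is the intended meaning: the compatibility condition then identifies $\psi_{FC,FA}\circ\phi_{C,A}$ with $(\eta_A^{-1})_*\circ(\eta_C)^*$ and the two-sided split-mono/split-epi bookkeeping for $\psi_{FC,FA}$ goes through as you describe. In $(2)\Rightarrow(1)$ your transported $\psi_{C',A'}=\phi_{GC',GA'}^{-1}\circ(\varepsilon_{A'}^{-1})_*\circ(\varepsilon_{C'})^*$ is well-typed and natural, and your exactness check correctly reduces to the observation that a fully faithful $F$ with $\phi$ invertible \emph{reflects} $\fs$-conflations (the isomorphism $FE\cong FGB'\cong B'$ of realizations descends along full faithfulness). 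The one step you wave at too quickly is the compatibility of $\eta$ itself with $(\phi,\psi)$: with an arbitrary quasi-inverse this need not hold on the nose, and one should either replace $(G,\eta,\varepsilon)$ by an adjoint equivalence so that $\varepsilon_{FA}=F\eta_A$ (after which naturality of $\phi$ gives $(\eta_A)_*\psi_{FC,FA}\phi_{C,A}=(\eta_C)^*$ directly), or simply define $\eta$ from $\varepsilon$ via full faithfulness of $F$. With that adjustment the argument is complete.
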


We recall the notion of exact sequences in the category $\ET$ of skeletally small extriangulated categories and exact functors.

\begin{definition}\label{def:exact_seq_ET}
\cite[Def.~4.10]{OS24b}
We consider a morphism $(\CA,\BE,\fs)\xto{(Q,\mu)}(\CB,\BF,\ft)$ in $\ET$ together with
an extension-closed subcategory $(\CN,\BE|_\CN,\fs|_\CN)\sse(\CA,\BE,\fs)$.
We say that the sequence $\Theta\colon$
\begin{equation}\label{eqn:exact-seq-in-ET}
\begin{tikzcd}[column sep=1.2cm]
(\CN,\BE|_\CN,\fs|_\CN)
	\arrow{r}{(\inc,\iota)}
& (\CA,\BE,\fs)
	\arrow{r}{(Q,\mu)}
& (\CB,\BF,\ft)
\end{tikzcd}
\end{equation}
in $\ET$ is an \emph{exact sequence} of extriangulated categories, 
if the following conditions are fulfilled.
\begin{enumerate}[label=\textup{(\arabic*)}]
    \item\label{item:image-equals-kernel}
    $\CN = \Im(\inc) = \Ker Q$ holds.

    \item\label{item:universality}
    For any exact functor 
    $(G,\psi)\colon (\CA,\BE,\fs)\to (\CA',\BE',\fs')$ 
    in $\ET$ with $G|_\CN = G \circ \inc = 0$, 
    there is a unique exact functor
    $(G',\psi')\colon (\CB,\BF,\ft) \to (\CA',\BE',\fs')$ 
    such that $(G,\psi)=(G',\psi')\circ (Q,\mu)$.
	
\end{enumerate}
Let us consider another exact sequence $\Theta'$ which appears in the first row of the following commutative diagram in $\ET$.
\begin{equation*}
\begin{tikzcd}[column sep=1.5cm]
    (\CN',\BE'|_{\CN'},\fs'|_{\CN'}) \arrow{r}\arrow{d}{}& (\CA',\BE',\fs') \arrow{r}{(Q',\mu')}\arrow{d}&
    (\CB',\BF',\ft') \arrow{d} \\
    (\CN,\BE|_\CN,\fs|_\CN) \arrow{r}&
    (\CA,\BE,\fs)\arrow{r}{(Q,\mu)} &
    (\CB,\BF,\ft)
\end{tikzcd}
\end{equation*}
If the above vertical arrows are fully faithful, then we call it an \emph{exact subsequence} of \eqref{eqn:exact-seq-in-ET} and we write $\Theta'\sse\Theta$ to present the situation.
Also, exact (sub)sequences in $\TR$, $\EX$ and $\AB$ are defined similarly.
As one expects, the Serre/Verdier quotients give rise to exact sequences in $\ET$ \cite[\S 4.2]{NOS22}.
\end{definition}

Now we are going to review a construction of exact sequences.
Let us fix a class $\CS$ of morphisms in $\CA$ satisfying the following condition.

\begin{enumerate}[label=\textup{(M\arabic*)}]
\setcounter{enumi}{-1}
\item\label{M0}
$\CS$ contains all isomorphisms in $\CA$, and is closed under compositions and taking finite direct sums.
\end{enumerate}

We would like to assume from the beginning that $\CA$ is skeletally small to consider the Gabriel-Zisman localization $\widetilde{\CA}=\CA[\CS^{-1}]$ in the sense of \cite{GZ67} without set-theoretic obstructions.
We shall recall a sufficient condition for $\CS$ to impose a natural extriangulated structure on the localization $\widetilde{\CA}$.

\begin{definition}
\label{def:null_system_extri_quotient}
We associate a full subcategory $\CN_\CS\sse\CA$ to the above class $\CS$ which consists of objects $N\in\CA$ such that both $N\to 0$ and $0\to N$ belong to $\CS$.
\end{definition}

Since $\CS$ is closed under direct sums, $\CN_\CS$ is an additive subcategory of $\CA$.
In the rest of this subsection, we denote by $p\colon \CA\to\ovl{\CA}=\CA/[\CN_\CS]$ the ideal quotient as well as $\ovl{f}=p(f)$ for any $f\in\Mor\CA$.
Also, let $\ovl{\CS}$ be the closure of $p(\CS)$ with respect to compositions with isomorphisms in $\ovl{\CA}$.

Recall from \cite[Lem.~3.2, Rem.~3.3]{NOS22} that if $\CA$ is \emph{weakly idempotent complete} in the sense that any section admits a cokernel in $\CA$, we have $\ovl\CS=p(\CS)$.
It is shown in \cite[Thm.~C, Prop.~2.7]{Kla22} that the {\rm (WIC)} property for $(\CA,\BE,\fs)$ in \cite[Condition~5.8]{NP19} is equivalent to weakly idempotent completeness for the underlying additive category $\CA$.
Thus, we prefer to say {\rm (WIC)} than weakly idempotent complete because of its shortness.

We now provide a construction of exact sequences in $\ET$, which will play a fundamental role in what follows.

\begin{theorem}\label{thm:mult_loc}
\cite[Thm. 3.5]{NOS22}
Let $(\CA,\BE,\fs)$ be an extriangulated category and $\CS$ a class of morphisms satisfying \ref{M0}.
\begin{enumerate}[label=\textup{(\arabic*)}]
\item
Suppose that $\CS$ satisfies the following conditions \ref{MR1}--\ref{MR4} and consider the localization $\widetilde{\CA}=\CA[\CS^{-1}]$.
Then we obtain an extriangulated category $(\wtil{\CA},\wtil{\BE},\wtil{\fs})$ together with an exact functor $(Q,\mu)\colon (\CA,\BE,\fs)\to (\wtil{\CA},\wtil{\BE},\wtil{\fs})$.
\begin{enumerate}[label=\textup{(MR\arabic*)}]
\setcounter{enumi}{0}
\item\label{MR1}
$\ovl{\CS}$ satisfies the $2$-out-of-$3$ property with respect to compositions.
\item\label{MR2}
$\ovl{\CS}$ is a multiplicative system in $\ovl{\CA}$.
\item\label{MR3}
Let $\langle A\overset{x}{\lra}B\overset{y}{\lra}C,\delta\rangle$, $\langle A'\overset{x'}{\lra}B'\overset{y'}{\lra}C',\delta'\rangle$ be any pair of $\fs$-triangles, and let $a\in\CA(A,A'),c\in\CA(C,C')$ be any pair of morphisms satisfying $a_*\delta=c^*\delta'$. If $\ovl{a}$ and $\ovl{c}$ belong to $\ovl{\CS}$, then there exists $\ovl{b}\in\ovl{\CS}(B,B')$ which satisfies $\ovl{b}\circ\ovl{x}=\ovl{x}'\circ\ovl{a}$ and $\ovl{c}\circ\ovl{y}=\ovl{y}'\circ\ovl{b}$.
\item\label{MR4} $\ovl{\CM}_{\mathsf{inf}}:=\{ \ovl{v}\circ \ovl{x}\circ \ovl{u}\mid x\ \text{is an}\ \mathfrak{s}\text{-inflation}, \ovl{u},\ovl{v}\in\ovl{\CS}\}$ is closed under composition in $\ovl{\CA}$.
Dually, $\ovl{\CM}_{\mathsf{def}}:=\{ \ovl{v}\circ \ovl{y}\circ \ovl{u}\mid y\ \text{is an}\ \mathfrak{s}\text{-deflation}, \ovl{u},\ovl{v}\in\ovl{\CS}\}$ is closed under compositions.
\end{enumerate}
\setcounter{enumi}{1}
\item
If $\ovl{\CS}=p(\CS)$, we have an equality $\CN_\CS=\Ker Q$.
\item
The exact functor $(Q,\mu)\colon (\CA,\BE,\fs)\to (\wtil{\CA},\wtil{\BE},\wtil{\fs})$ obtained in {\rm (1)} has the following universality:
for any exact functor $(F,\phi)\colon (\CA,\BE,\fs)\to (\CA',\BE',\fs')$ such that $F(s)$ is an isomorphism for any $s\in\CS$, there exists a unique exact functor $(\wtil{F},\wtil{\phi})\colon (\wtil{\CA},\wtil{\BE},\wtil{\fs})\to (\CA',\BE',\fs')$ with $(F,\phi)=(\wtil{F},\wtil{\phi})\circ (Q,\mu)$.
\end{enumerate}
The quotient functor  $(Q,\mu)\colon(\CA,\BE,\fs)\to(\wtil{\CA},\wtil{\BE},\wtil{\fs})$is called an \emph{extriangulated localization} of $\CA$ at $\CS$.
\end{theorem}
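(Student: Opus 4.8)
The plan is to factor the localization $Q\colon\CA\to\widetilde{\CA}=\CA[\CS^{-1}]$ as the ideal quotient $p\colon\CA\to\ovl{\CA}=\CA/[\CN_\CS]$ followed by the Gabriel--Zisman localization $\ovl{\CA}\to\ovl{\CA}[\ovl{\CS}^{-1}]\simeq\widetilde{\CA}$, and to transport the extriangulated data across the second step using a calculus of fractions. First I would record that every object of $\CN_\CS$ becomes a zero object in $\widetilde{\CA}$ (since $Q$ inverts $N\to 0$ and $0\to N$), so the factorization through $\ovl{\CA}$ exists; that by \ref{MR2} the class $\ovl{\CS}$ is a multiplicative system, so every morphism of $\widetilde{\CA}$ is represented by a roof and $\widetilde{\CA}$ is again additive; and that \ref{M0} together with \ref{MR1} makes $\ovl{\CS}$ well behaved under this calculus. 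This pins down the underlying additive category of $\widetilde{\CA}$ and the functor $Q$.

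Next I would build the bifunctor $\widetilde{\BE}$. The idea is to set $\widetilde{\BE}(C,A)$ to be a filtered colimit of the groups $\BE(C',A')$ over $\ovl{\CS}$-zigzags $C'\to C$ and $A\to A'$ (formed in $\ovl{\CA}$ and lifted back to $\CA$), so that $Q$ induces a natural morphism $\BE(C,A)\to\widetilde{\BE}(QC,QA)$ and $\widetilde{\BE}$ is biadditive because the colimit is filtered. Functoriality of $\widetilde{\BE}$ along roofs is exactly where \ref{MR1} and \ref{MR3} enter: given a roof and an $\BE$-class, one uses the Ore conditions to fill in the square and \ref{MR3} to identify the two resulting extensions up to $\ovl{\CS}$. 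The realization $\widetilde{\fs}$ is defined by representing an element of $\widetilde{\BE}(C,A)$ by some $\delta\in\BE(C',A')$ and declaring $\widetilde{\fs}$ of its class to be the $Q$-image of an $\fs$-conflation realizing $\delta$; independence of the representative reduces, again via \ref{MR3} and \cref{lem:wPO_wPB}, to the statement that morphisms of $\fs$-triangles in $\CA$ localize.

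The bulk of the work is the verification of the extriangulated axioms for $(\widetilde{\CA},\widetilde{\BE},\widetilde{\fs})$, the easier axioms (ET1)--(ET3) and their duals being routine, with \ref{ET4} and its dual forming the hard part. For each axiom the strategy is to \emph{lift} the given data from $\widetilde{\CA}$ to $\CA$ --- replacing morphisms by roofs with denominators in $\ovl{\CS}$, and $\widetilde{\fs}$-triangles by honest $\fs$-triangles representing them --- then \emph{apply} the corresponding axiom in $\CA$ (including \cref{lem:wPO_wPB}), and finally \emph{descend} the resulting diagram back to $\widetilde{\CA}$. Compatibility of $\widetilde{\fs}$ with the pushforwards and pullbacks occurring in \ref{ET4} is ensured by \ref{MR3}, while \ref{MR4} is precisely what guarantees that the composites of $\widetilde{\fs}$-inflations (resp. deflations) produced in this way are again inflations (resp. deflations), which is needed to close the octahedron. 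This lift--apply--descend step is the main obstacle: one must check that the various completions coming from the Ore calculus can be chosen coherently, and making that work is exactly the role for which the technical conditions \ref{MR1}--\ref{MR4} were designed.

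Finally, for part (3): an exact functor $(F,\phi)$ with $F(s)$ invertible for all $s\in\CS$ factors uniquely through $\widetilde{\CA}$ on the level of additive functors by the universal property of the Gabriel--Zisman localization; since $F$ inverts $\CS$, the groups $\BE'(FC,FA)$ already absorb the $\ovl{\CS}$-colimit defining $\widetilde{\BE}$, so $\phi$ extends uniquely to a natural transformation $\widetilde{\phi}\colon\widetilde{\BE}\to\BE'\circ(\widetilde{F}^{\op}\times\widetilde{F})$, and compatibility of $(\widetilde{F},\widetilde{\phi})$ with the realizations together with its uniqueness follow because $\widetilde{\BE}$ is generated by the image of $\BE$. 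For part (2), when $\ovl{\CS}=p(\CS)$ the equality $\CN_\CS=\Ker Q$ is a direct computation with roofs: the inclusion $\CN_\CS\subseteq\Ker Q$ is clear from the first paragraph, and conversely if $QX\cong 0$ then the calculus of fractions produces a morphism of $\ovl{\CS}$ out of $\ovl{X}$ which is zero, hence --- using $\ovl{\CS}=p(\CS)$ and \ref{MR1} --- a morphism in $\CS$ witnessing both $X\to 0$ and $0\to X$ in $\CS$, i.e. $X\in\CN_\CS$.
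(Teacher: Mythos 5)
For parts (1) and (3) the paper does not give a proof at all: it simply cites \cite[Thm.~3.5]{NOS22}, and the entire construction of $(\wtil{\CA},\wtil{\BE},\wtil{\fs})$ is imported as a black box. Your proposal instead sketches that construction from scratch. The architecture you describe --- factor $Q$ through the ideal quotient $p\colon\CA\to\ovl{\CA}$, use \ref{MR2} to get a calculus of fractions on $\ovl{\CA}$, define $\wtil{\BE}$ as a filtered colimit over roofs, realize classes by $Q$-images of $\fs$-conflations, and verify the axioms by lifting data along denominators --- is indeed the strategy of the cited theorem, and the roles you assign to \ref{MR1}, \ref{MR3} (well-definedness of $\wtil{\BE}$ and $\wtil{\fs}$ on equivalence classes) and \ref{MR4} (closure of $\wtil{\fs}$-inflations/deflations under composition, hence \ref{ET4}) are the correct ones. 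However, as written your argument for (1) is a plan rather than a proof: the passage ``one must check that the various completions coming from the Ore calculus can be chosen coherently'' is exactly where all of the content of the theorem lives, and nothing in your text discharges it. Since the paper's own treatment is a citation, this is not a defect relative to the paper, but you should be aware that you have not actually reproved the result; if you intend to, the verification of \ref{ET4} and of independence of $\wtil{\fs}$ from the chosen representative are the steps that must be written out in full.

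Part (2) is the only part the paper proves in-house, and your argument is essentially the same as theirs: $\CN_\CS\sse\Ker Q$ because $Q$ inverts $N\to 0$ and $0\to N$, and conversely $QN\cong 0$ forces the two zero maps to be inverted, whence $\ovl{\CS}=p(\CS)$ lets you lift them back into $\CS$. One small caution: both you and the paper pass from ``$Q(a)$ is an isomorphism'' to ``$\ovl{a}\in\ovl{\CS}$'' without comment; this is not automatic for an arbitrary multiplicative system (it is a saturation statement), and for the zero maps in question it requires a short argument with the fraction calculus. Your invocation of \ref{MR1} at this point is a reasonable way to close that gap, so your version is, if anything, slightly more careful than the paper's.
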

\begin{proof}
(1)(3) These assertions are contained in \cite[Thm.~3.5]{NOS22}.

(2) For any $N\in\CN_\CS$ together with a zero map $0\to N$ in $\CS$, we have an isomorphism $0\xto{\simeq}QN$.
Conversely, for any $N\in\Ker Q$, both zero maps $0\xto{a} N$ and $N\xto{b} 0$ are sent to isomorphisms in $\ovl{\CA}$.
By the assumption $\ovl{\CS}=p(\CS)$, we see $a,b\in\CS$ and $N\in\CN_\CS$.
\end{proof}

The following lemma gives useful descriptions of $\widetilde{\mathfrak{s}}$-conflations.

\begin{lemma}
\label{lem:inf_from_inf}
\cite[Lem. 3.32]{NOS22}
The following holds for any morphism $\alpha$ in $(\widetilde{\CA},\widetilde{\mathbb{E}},\widetilde{\mathfrak{s}})$.
\begin{enumerate}[label=\textup{(\arabic*)}]
\item $\alpha$ is an $\widetilde{\mathfrak{s}}$-inflation in $\widetilde{\CA}$ if and only if $\alpha=s\circ Q(f)\circ t$ holds for some $\mathfrak{s}$-inflation $f$ in $\CA$ and isomorphisms $s,t$ in $\widetilde{\CA}$.
\item $\beta$ is an $\widetilde{\mathfrak{s}}$-deflation in $\widetilde{\CA}$ if and only if $\beta=s\circ Q(f)\circ t$ holds for some $\mathfrak{s}$-deflation $f$ in $\CA$ and isomorphisms $s,t$ in $\widetilde{\CA}$.
\end{enumerate}
\end{lemma}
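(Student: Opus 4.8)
The plan is to prove the two implications of (1) separately, the ``if'' part being formal and the ``only if'' part being the one that uses real content of the localization, with statement (2) obtained in each case by the dual argument. The single nontrivial ingredient I will need is a description of $\widetilde{\fs}$-conflations that is implicit in the construction behind \cref{thm:mult_loc}: every $\widetilde{\fs}$-triangle is isomorphic, as a triangle in $\widetilde{\CA}$, to $Q$ applied to some $\fs$-triangle of $\CA$.

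For the implication from the factorized form, suppose $\alpha=s\circ Q(f)\circ t$ with $f$ an $\fs$-inflation and $s,t$ isomorphisms of $\widetilde{\CA}$. Since $(Q,\mu)$ is exact by \cref{thm:mult_loc}, applying it to an $\fs$-triangle $A\xto{f}B\xto{g}C\overset{\delta}{\dra}$ gives the $\widetilde{\fs}$-triangle $Q(A)\xto{Q(f)}Q(B)\xto{Q(g)}Q(C)\overset{\mu_{C,A}(\delta)}{\dra}$, so $Q(f)$ is a $\widetilde{\fs}$-inflation. I then invoke the elementary fact that, in any extriangulated category, the class of inflations is closed under pre- and post-composition with isomorphisms (isomorphic sequences realize corresponding extensions), which yields that $\alpha$ is a $\widetilde{\fs}$-inflation. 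Statement (2) is handled in the same way, replacing ``inflation'' by ``deflation''.

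For the converse, assume $\alpha\colon X\to Y$ is a $\widetilde{\fs}$-inflation, so it sits in a $\widetilde{\fs}$-triangle $X\xto{\alpha}Y\xto{\beta}Z\overset{\widetilde{\delta}}{\dra}$. Using the structural fact above I pick an $\fs$-triangle $A\xto{f}B\xto{g}C\overset{\delta}{\dra}$ in $\CA$ together with an isomorphism of $\widetilde{\fs}$-triangles from its $Q$-image to $X\xto{\alpha}Y\xto{\beta}Z\overset{\widetilde{\delta}}{\dra}$; write $u\colon Q(A)\xto{\sim}X$, $v\colon Q(B)\xto{\sim}Y$, $w\colon Q(C)\xto{\sim}Z$ for its vertical components, which are isomorphisms in $\widetilde{\CA}$. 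Commutativity of the left square gives $\alpha\circ u=v\circ Q(f)$, hence $\alpha=v\circ Q(f)\circ u^{-1}$, and setting $s\deff v$, $t\deff u^{-1}$ exhibits $\alpha$ in the required form through the $\fs$-inflation $f$. For (2) I read off the right square $\beta\circ v=w\circ Q(g)$, so $\beta=w\circ Q(g)\circ v^{-1}$ with $g$ an $\fs$-deflation.

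The step I expect to be the main obstacle is justifying the structural fact, i.e.\ that every $\widetilde{\fs}$-triangle is, up to isomorphism in $\widetilde{\CA}$, of the form $Q$ of an $\fs$-triangle. This is exactly where the construction of $(\widetilde{\BE},\widetilde{\fs})$ in \cite{NOS22} is used: a class in $\widetilde{\BE}$ is represented by an extension of $\BE$ transported along morphisms becoming invertible in $\widetilde{\CA}$, and conditions \ref{MR3} (for compatibility of the realization) and \ref{MR4} (to control inflations and deflations) allow one to straighten the resulting roof of $\fs$-triangles into a genuine isomorphism of triangles in $\widetilde{\CA}$. In the situation where $\ovl{\CS}=p(\CS)$ — in particular when $\CA$ is {\rm (WIC)} — this is precisely what the verifications underlying \cref{thm:mult_loc} supply, so I would invoke it rather than reprove it.
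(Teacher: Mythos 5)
Your proposal is correct, and the paper itself offers no proof of this lemma -- it is quoted verbatim from \cite[Lem.~3.32]{NOS22} -- so there is nothing to diverge from: your argument (exactness of $(Q,\mu)$ plus closure of inflations under composition with isomorphisms for the ``if'' direction, and the fact that every $\widetilde{\fs}$-triangle is by construction isomorphic to the $Q$-image of an $\fs$-triangle for the ``only if'' direction) is precisely how the cited result is established. The only cosmetic imprecision is attributing the ``structural fact'' to \ref{MR3}--\ref{MR4}; it really follows directly from the definition of the realization $\widetilde{\fs}$ in the construction of \cref{thm:mult_loc}, together with the standard extriangulated fact that a morphism of $\fs$-triangles with isomorphisms on the outer terms has an isomorphism in the middle.
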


Next, we review when the pair $(\CA,\CN)$ of an extriangulated category $\CA$ and a subcategory $\CN\sse\CA$ yields an extriangulated localization.
Note that the kernel of an exact functor $F\colon \CA\to\CB$ in $\ET$ is a thick subcategory in the following sense.

\begin{definition}
\label{def:thick}
A full subcategory $\CN$ of $\CA$ is called a \emph{thick} subcategory if it satisfies the following conditions:
\begin{itemize}
\item
$\CN$ is closed under direct summands and isomorphisms; and
\item
$\CN$ satisfies $2$-out-of-$3$ for $\mathfrak{s}$-conflations. Namely, for any $\mathfrak{s}$-conflation $A\lra B\lra C$, if two of $\{A, B, C\}$ belong to $\CN$, then so does the third.
\end{itemize}
In addition, a thick subcategory $\CN$ is said to be \emph{biresolving}, if, for any object $C\in\CA$, there exist an $\mathfrak{s}$-inflation $C\to N$ and an $\mathfrak{s}$-deflation $N'\to C$ with $N,N'\in\CN$, cf. \cite[p.~403]{Rum21}.
\end{definition}

We associate a class $\Sn$ of morphisms to a thick subcategory $\CN$ as below.

\begin{definition}
\label{def:Sn_from_thick}
For a thick subcategory $\CN$, we consider the following classes
of morphisms:
\begin{enumerate}[label=\textup{(\arabic*)}]
\item
$\Inf_{\CN} = \{f\in\Mor\C \mid f \ \textnormal{is an $\mathfrak{s}$-inflation with}\ \cone(f)\in\CN\}$; and 
\item
$\Def_{\CN} = \{g\in\Mor\C \mid g \ \textnormal{is an $\mathfrak{s}$-deflation with}\ \cocone(g)\in\CN\}$.
\end{enumerate}
Define $\Sn$ to be the smallest subclass closed under compositions containing both $\Inf_\CN$ and $\Def_\CN$.
It is obvious that $\Sn$ satisfies the condition \ref{M0}.
Note that, since $\CN$ is closed under extensions, all the above classes are closed under compositions.
\end{definition}

As a direct consequence of Theorem \ref{thm:mult_loc}, if $\Sn$ satisfies the needed conditions, we obtain an extriangulated localization.

\begin{corollary}
\label{cor:mult_loc}
Let $(\CA,\CN)$ be a pair of an extriangulated category $(\CA,\mathbb{E},\mathfrak{s})$ and a thick subcategory $\CN$ of $\CA$.
If the class $\Sn$ satisfies the conditions \ref{MR1}--\ref{MR4} and $\ovl{\Sn}=p(\Sn)$, then, we have an exact sequence
\begin{equation}\label{seq:mult_loc}
(\CN,\mathbb{E}|_\CN,\mathfrak{s}|_\CN)\lra (\CA,\mathbb{E},\mathfrak{s})\overset{(Q,\mu)}{\lra} (\CA/\CN,\widetilde{\mathbb{E}},\widetilde{\mathfrak{s}})
\end{equation}
in $\ET$.
\end{corollary}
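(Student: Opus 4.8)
The plan is to obtain this corollary as an essentially formal consequence of Theorem~\ref{thm:mult_loc}, by checking the two defining conditions of an exact sequence in $\ET$ from Definition~\ref{def:exact_seq_ET}. First I would apply Theorem~\ref{thm:mult_loc}(1) with $\CS=\Sn$: this is legitimate since $\Sn$ satisfies \ref{M0} by Definition~\ref{def:Sn_from_thick} and, by hypothesis, also \ref{MR1}--\ref{MR4}. This produces the extriangulated category $(\widetilde{\CA},\widetilde{\BE},\widetilde{\fs})$, which we rename $(\CA/\CN,\widetilde{\BE},\widetilde{\fs})$, together with the exact localization functor $(Q,\mu)$. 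Since a thick subcategory is in particular extension-closed, $(\CN,\BE|_\CN,\fs|_\CN)$ is an extriangulated subcategory of $\CA$ in the sense of \cite{NP19} and the inclusion $(\inc,\iota)$ is exact, so the sequence~\eqref{seq:mult_loc} does live in $\ET$.

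The one genuinely combinatorial step is the identity $\CN=\CN_{\Sn}$. For the inclusion $\CN\subseteq\CN_{\Sn}$, I note that for $N\in\CN$ the zero morphism $0\to N$ fits into the $\fs$-triangle $0\to N\xrightarrow{\id}N\dashrightarrow$, hence is an $\fs$-inflation with $\cone(0\to N)\cong N\in\CN$, so $0\to N\in\Inf_\CN\subseteq\Sn$; dually $N\to 0\in\Def_\CN\subseteq\Sn$, whence $N\in\CN_{\Sn}$. For the reverse inclusion, given $N\in\CN_{\Sn}$ the morphism $0\to N$ is, by Definition~\ref{def:Sn_from_thick}, a finite composite $0=X_0\to X_1\to\cdots\to X_n=N$ in which each $X_{i-1}\to X_i$ lies in $\Inf_\CN$ or $\Def_\CN$; applying the $2$-out-of-$3$ property of the thick subcategory $\CN$ to the $\fs$-triangle witnessing each step, an easy induction on $i$ yields $X_i\in\CN$, so $N\in\CN$. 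Combining this with the hypothesis $\ovl{\Sn}=p(\Sn)$ and Theorem~\ref{thm:mult_loc}(2) gives $\Ker Q=\CN_{\Sn}=\CN$; since also $\Im(\inc)=\CN$, condition~\ref{item:image-equals-kernel} of Definition~\ref{def:exact_seq_ET} holds.

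For the universality~\ref{item:universality}, the key observation is that for an exact functor $(G,\psi)\colon(\CA,\BE,\fs)\to(\CA',\BE',\fs')$ the condition $G|_\CN=0$ is equivalent to requiring that $G(s)$ be an isomorphism for every $s\in\Sn$. Indeed, if $G|_\CN=0$ then for $s\in\Inf_\CN$ (resp.\ $s\in\Def_\CN$) the functor $G$ carries the $\fs$-triangle $X\xrightarrow{s}Y\to\cone(s)\dashrightarrow$ (resp.\ $\cocone(s)\to X\xrightarrow{s}Y\dashrightarrow$) to an $\fs'$-triangle one of whose outer terms is $0$, which forces $G(s)$ to be an isomorphism; since isomorphisms are closed under composition, $G$ inverts all of $\Sn$. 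Conversely, if $G$ inverts $\Sn$ then $G(0\to N)$ is an isomorphism for $N\in\CN$, so $GN=0$. Granting this equivalence, Theorem~\ref{thm:mult_loc}(3) supplies, for any exact $(G,\psi)$ with $G|_\CN=0$, a unique exact functor $(G',\psi')\colon(\CA/\CN,\widetilde{\BE},\widetilde{\fs})\to(\CA',\BE',\fs')$ with $(G,\psi)=(G',\psi')\circ(Q,\mu)$, which is precisely~\ref{item:universality}.

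Since all the substantive analytic work is absorbed into Theorem~\ref{thm:mult_loc}, the only real obstacle here is the bookkeeping needed to align its universal property---phrased in terms of inverting the morphisms of $\Sn$---with the one of Definition~\ref{def:exact_seq_ET}, phrased in terms of annihilating $\CN$; the identity $\CN=\CN_{\Sn}$ is exactly what makes the two match. With that in hand, \eqref{seq:mult_loc} is an exact sequence in $\ET$, as claimed.
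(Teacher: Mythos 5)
Your proof is correct and follows the same route as the paper: invoke Theorem~\ref{thm:mult_loc} for the existence of the localization, identify $\Ker Q$ with $\CN$, and transfer the universal property from ``inverting $\Sn$'' to ``annihilating $\CN$''. The only difference is that you prove the two bridging facts ($\CN=\CN_{\Sn}$ via the $2$-out-of-$3$ induction, and the equivalence between $G|_\CN=0$ and $G$ inverting $\Sn$) directly, whereas the paper delegates them to external citations ([NOS22, Lem.~4.5], [OS24a, Thm.~3.8], [ES22, Prop.~4.3]); your in-line arguments are sound and make the proof self-contained.
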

\begin{proof}
The extriangulated localization exists by \cref{thm:mult_loc}.
It remains to show that the sequence \eqref{seq:mult_loc} is an exact sequence in $\ET$.
As we see $\Ker Q=\CN$ from \cref{thm:mult_loc}(2) and \cite[Lem. 4.5]{NOS22}, we have to only check the universality of $(Q,\mu)$.
It actually follows from \cite[Thm.~3.8]{OS24a} and \cite[Prop.~4.3]{ES22}.
\end{proof}

\begin{definition}\label{def:extriangulated_quotient}
Let us consider the exact sequence $\eqref{seq:mult_loc}$ coming from the pair $(\CA,\CN)$ as in \cref{cor:mult_loc}.
Then we call the quotient functor $(Q,\mu)\colon (\CA,\BE,\fs)\to (\CA/\CN,\wtil{\BE},\wtil{\fs})$ the \emph{extriangulated quotient} (or just \emph{quotient}) of $\CA$ by $\CN$, and the pair $(\CA,\CN)$ is said to be \emph{admissible} to the extriangulated quotient.
\end{definition}

We can find some examples of admissible pairs $(\CA,\CN)$ in \cite[\S 4.2]{NOS22}.
For later use, we only recall one of them, that is, any biresolving subcategory $\CN$ forms an admissible pair $(\CA,\CN)$.

\begin{example}\label{ex:extri_quotient_by_biresolving}
\cite[Prop.~4.26, Cor.~4.27]{NOS22}
Let $\CN\sse\CA$ be a biresolving subcategory.
Then, we have an exact sequence \eqref{seq:mult_loc} such that the resulting category $\CA/\CN$ corresponds to a triangulated category.
We should remark that, in the case that $\CA$ is an exact category, this result reduces to \cite[Thm.~5]{Rum21}.
\end{example}

A prototypical example of biresolving subcategories is the subcategory $\CN$ of projectives in a Frobenius exact category $\CA$.
In fact, the associated extriangulated quotient \eqref{seq:mult_loc} coincides with the ideal quotient $\CN\lra\CA\overset{p}{\lra} \CA/[\CN]$, see \cite[Rem.~3.35]{NOS22} and the extriangulated quotient $\CA/\CN=\CA/[\CN]$ is the stable triangulated category.
Also, we should notice that a thick subcategory of a triangulated category is biresolving.
Thus \cref{ex:extri_quotient_by_biresolving} contains the Verdier quotient.

\subsection{Auxiliary exact sequences}
\label{subsec:auxiliary_exact_sequence}
In this preliminary subsection, we develop the extriangulated quotient which matches well with our context.
More precisely, we show that an extriangulated quotient restricts to an exact subsequence under a certain technical condition.
The condition is stated below and will be justified in view of our notion of extriangulated subquotient introduced in \S\ref{subsec:extriangulated_subquotient}.

\begin{setup}
\label{setup:spade}
Let $(\CA,\BE,\fs)$ be an extriangulated category and consider an extension-closed subcategory $\CN\sse\CA$ which is closed under direct summands.
In addition, we assume $\CA$ is ${\rm (WIC)}$ and the pair $(\CA,\CN)$ satisfies the following extra condition:
\begin{enumerate}[label=\textup{($\spadesuit$)}]
\item \label{spade}
For any $\fs$-inflation $N_1\overset{f}{\lra} B$ from $N_1\in\CN$, there exists a morphism $B\xto{b} N_2$ such that $b\circ f$ is an $\fs|_\CN$-inflation.
Dually, for any $\fs$-deflation $B\overset{g}{\lra} N_3$ to $N_3\in\CN$, there exists a morphism $N_2\xto{b} B$ such that $g\circ b$ is an $\fs|_\CN$-deflation.
\end{enumerate}
\end{setup}

This condition makes the pair $(\CA,\CN)$ to be admissible.
So the extriangulated quotient under \ref{spade} always exists.

\begin{proposition}\label{prop:extriangulated_quotient_under_spade}
We have an exact sequence \eqref{seq:mult_loc}
\[
\Theta \colon 
\begin{tikzcd}
    (\CN ,\BE|_\CN ,\fs|_\CN )
        \arrow{r}
    & (\CA ,\BE ,\fs )
        \arrow{r}{(Q,\mu)}
    & (\CB,\BF ,\ft)
\end{tikzcd}
\]
in $\ET$,
where we put $(\CB,\BF ,\ft)= (\CA/\CN,\wtil{\BE},\wtil{\fs})$.
\end{proposition}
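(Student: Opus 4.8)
The plan is to reduce the statement to \cref{thm:mult_loc} applied to the class $\Sn$ of \cref{def:Sn_from_thick} (this is the same class that underlies \cref{cor:mult_loc}, but I would not use that corollary verbatim, since $\CN$ is not assumed thick in \cref{setup:spade} --- thickness of $\CN$ will instead be a byproduct of the identity $\Ker Q=\CN$ proved below, together with the fact that kernels of exact functors in $\ET$ are thick). So I first have to check that $\Sn$ satisfies \ref{M0} (immediate, as noted after \cref{def:Sn_from_thick}) and \ref{MR1}--\ref{MR4}, and that $\ovl{\Sn}=p(\Sn)$; the latter is automatic because $\CA$ is ${\rm (WIC)}$, by \cite[Lem.~3.2, Rem.~3.3]{NOS22}.

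The core of the proof is the verification of \ref{MR1}--\ref{MR4}, and this is exactly where \ref{spade} enters. Every morphism in $\Sn$ is a finite composite of morphisms in $\Inf_\CN$ and $\Def_\CN$; the two halves of \ref{spade} provide the means to complete spans and cospans one atomic factor at a time while keeping the newly produced cones and cocones inside $\CN$. Thus \ref{MR1} (the $2$-out-of-$3$ property for $\ovl{\Sn}$) comes from a bookkeeping of such composites together with extension-closedness of $\CN$; for the Ore conditions in \ref{MR2}, given a cospan $X\xrightarrow{f}Z\xleftarrow{s}Y$ with $s\in\Sn$, one factors $s$ and, for an atomic factor in $\Def_\CN$, forms the weak pullback of \cref{lem:wPO_wPB}(2), for one in $\Inf_\CN$ the dual weak pushout, and then invokes \ref{spade} to absorb the resulting obstruction into $\CN$ so that the completed leg again belongs to $\Sn$ (and symmetrically for spans, and for the cancellation clause); \ref{MR3} is the analogous completion of a morphism of $\fs$-triangles whose outer legs have become members of $\ovl{\Sn}$, and \ref{MR4} is the closedness of $\ovl{\CM}_{\mathsf{inf}}$ and $\ovl{\CM}_{\mathsf{def}}$ under composition, again obtained by rearranging composites of $\fs$-(de)flations with \cref{lem:wPO_wPB}. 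Granting this, \cref{thm:mult_loc}(1) yields an extriangulated localization $(Q,\mu)\colon(\CA,\BE,\fs)\to(\wtil{\CA},\wtil{\BE},\wtil{\fs})$ and \cref{thm:mult_loc}(2) gives $\Ker Q=\CN_{\Sn}$.

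To conclude that $\Theta$ is exact in $\ET$ in the sense of \cref{def:exact_seq_ET}, it remains to identify $\Ker Q$ with $\CN$ and to verify the universality. For the first, $\CN\subseteq\CN_{\Sn}$ is clear, and for the reverse I would take $M$ with $0\to M$ and $M\to 0$ both in $\Sn$, write out their defining chains of atomic factors, and collapse these using \ref{spade} to force $M\in\CN$; hence $\Ker Q=\CN=\Im(\inc)$. For the universality, any exact functor $(G,\psi)$ on $\CA$ with $G|_\CN=0$ sends each morphism in $\Inf_\CN\cup\Def_\CN$ --- and therefore each morphism in $\Sn$ --- to an isomorphism, because its cone or cocone lies in $\CN=\Ker G$; hence $(G,\psi)$ factors uniquely through $(Q,\mu)$ by \cref{thm:mult_loc}(3), uniqueness being underpinned by essential surjectivity of $H^0Q$ on objects. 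Setting $(\CB,\BF,\ft)=(\wtil{\CA},\wtil{\BE},\wtil{\fs})=(\CA/\CN,\wtil{\BE},\wtil{\fs})$ then finishes the proof. The step I expect to be the main obstacle is \ref{MR1}--\ref{MR4}: one must carry all cones and cocones inside $\CN$ through repeated weak pushout/pullback completions, and \ref{spade} is precisely the hypothesis engineered to make this possible --- it holds automatically for biresolving subcategories (recovering \cref{ex:extri_quotient_by_biresolving}, hence the Verdier quotient) and for subcategories of projective--injectives (recovering the ideal quotient).
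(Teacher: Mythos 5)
Your overall route is the paper's route: check \ref{M0}, \ref{MR1}--\ref{MR4} and $\ovl{\Sn}=p(\Sn)$ for the class $\Sn$ of \cref{def:Sn_from_thick}, invoke \cref{thm:mult_loc}, identify $\Ker Q$ with $\CN$, and get the universality from the fact that an exact functor killing $\CN$ inverts $\Inf_\CN\cup\Def_\CN$. (Deriving thickness of $\CN$ a posteriori rather than up front, as the paper does in \cref{lem:spade_implies_thickness}, is a harmless reordering.) The problem is that the part you defer to ``bookkeeping'' is exactly where the content lies, and as sketched it does not go through.

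The missing ingredient is the factorization $\Sn=\Def_\CN^{\sp}\circ\Inf_\CN=\Def_\CN\circ\Inf_\CN^{\sp}$ (the paper's \cref{lem:factorization_of_Sn}), i.e.\ that \ref{spade} lets you rewrite \emph{any} finite composite of atomic factors as a single $\fs$-inflation with cone in $\CN$ followed by a single \emph{split} deflation with kernel in $\CN$ (and dually). Your plan to complete spans and cospans ``one atomic factor at a time'' by weak pushouts/pullbacks fails on the factors of the wrong variance: \cref{lem:wPO_wPB} only lets you push an \emph{inflation} out along an arbitrary morphism and pull a \emph{deflation} back along an arbitrary morphism. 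For the Ore condition on a span $B\xleftarrow{f}A\xrightarrow{s}A'$ you will hit an atomic factor in $\Def_\CN$ that you cannot push out, and on a cospan you will hit a factor in $\Inf_\CN$ that you cannot pull back; \ref{spade} does not directly ``absorb'' these, it is used once and for all to make the offending factor split, after which its section or retraction supplies the completion (this is precisely how the paper handles \ref{MR2}, and the same device drives \ref{MR1}, \ref{MR3}, \ref{MR4}, the identification $\CN_{\Sn}=\CN$ via closure of $\CN$ under direct summands, and later the saturation of $\Sn$). Note also that \ref{MR1} additionally needs the ${\rm (WIC)}$ hypothesis together with the dual of \cite[Prop.~3.15]{NP19}, not just extension-closedness of $\CN$. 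So: state and prove the factorization lemma first; with it in hand, the rest of your outline is correct and coincides with the paper's argument.
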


Because of the lengthy proof, we divide it in several steps.
We have to check the thickness of $\CN\sse\CA$.

\begin{lemma}\label{lem:spade_implies_thickness}
$\CN$ is a thick subcategory of $\CA$.
\end{lemma}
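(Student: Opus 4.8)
The goal is to verify the two defining conditions of a thick subcategory for $\CN\sse\CA$: closure under direct summands (which is assumed in \cref{setup:spade}), and the $2$-out-of-$3$ property for $\fs$-conflations. Since closure under summands is already part of the hypotheses, the work lies entirely in proving $2$-out-of-$3$. So I would take an arbitrary $\fs$-conflation $A\overset{x}{\lra} B\overset{y}{\lra} C$ and treat the three cases: (i) $A,B\in\CN$ implies $C\in\CN$; (ii) $B,C\in\CN$ implies $A\in\CN$; (iii) $A,C\in\CN$ implies $B\in\CN$.

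Case (iii) is immediate: $\CN$ is extension-closed, so $A,C\in\CN$ forces $B\in\CN$ directly — no need for \ref{spade} here. The substance is in cases (i) and (ii), and this is where I expect the main obstacle to lie. Consider case (ii): we have $A\overset{x}{\lra} B\overset{y}{\lra} C$ with $B,C\in\CN$. Here $y\colon B\to C$ is an $\fs$-deflation to an object $N_3=C\in\CN$; but we already have $B\in\CN$, so I would instead exploit that $x\colon A\to B$ is an $\fs$-inflation. The trouble is that $A$ is a priori not in $\CN$, so \ref{spade} does not directly apply to $x$. Instead I would use the deflation half of \ref{spade} applied to $y\colon B\to C$: there exists $N_2\xto{b} B$ with $y\circ b$ an $\fs|_\CN$-deflation, and since $\CN$ is extension-closed the cocone of $y\circ b$ lies in $\CN$. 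Then I would compare the two $\fs$-triangles over $C$ — the original one with cocone $A$ and the new one with cocone in $\CN$ — using the weak pullback from \cref{lem:wPO_wPB}\ref{item:wPB} (or a direct application of \ref{ET4}) to build a conflation exhibiting $A$ as an extension of objects in $\CN$, so that extension-closedness gives $A\in\CN$. Dually for case (i), using the inflation half of \ref{spade} applied to $x\colon A\to B$ (noting $A\in\CN$ there): one gets $B\xto{b}N_2$ with $b\circ x$ an $\fs|_\CN$-inflation, whose cokernel lies in $\CN$, and then a weak pushout argument (\cref{lem:wPO_wPB}\ref{item:wPO}) identifies $C$ as sitting in a conflation with outer terms in $\CN$.

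The delicate point — the main obstacle — is keeping careful track of which maps are genuine $\fs$-(in/de)flations in $\CN$ versus merely in $\CA$, and assembling the octahedral diagram from \ref{ET4} so that the resulting ``comparison'' $\fs$-triangle really has both of its non-target terms in $\CN$. In case (i), for instance, after forming the weak pushout of $x$ along $b\circ x$-related data one must check that the object produced is a genuine extension of $C$ by something in $\CN$ — this uses that $\CN$ is closed under direct summands to split off the copy of $N_2$ that appears. I would organize the argument so that each of (i) and (ii) reduces, via one application of \ref{ET4} together with the relevant half of \ref{spade}, to a single conflation with outer terms in $\CN$, whereupon extension-closedness finishes it. The (WIC) hypothesis is used only implicitly (it guarantees the ambient constructions behave well and that cocones/cones exist as needed). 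Having established all three cases, $\CN$ satisfies $2$-out-of-$3$ for $\fs$-conflations and is closed under summands and isomorphisms, hence is thick.
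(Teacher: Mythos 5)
Your proposal is correct and follows essentially the same route as the paper: case (iii) is immediate from extension-closedness, and cases (i) and (ii) are dual, each obtained by applying the relevant half of \ref{spade} to produce a second $\fs$-triangle inside $\CN$, comparing the two triangles, folding the resulting square into an $\fs$-triangle via \cref{lem:wPO_wPB}, and finishing with extension-closedness plus closure under direct summands to split off the extra copy of $N_2$. The only cosmetic slip is that in case (i) the two triangles share their \emph{first} term, so the folded square comes from the weak-pullback half \ref{item:wPB} (after adjusting the comparison map as that lemma allows), not the weak-pushout half.
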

\begin{proof}
We consider an $\fs$-triangle $N_1\overset{f}{\lra} N_2\overset{g}{\lra} C\overset{\delta}{\dra} $ with $N_1,N_2\in\CN$.
By \ref{spade}, we get a morphism $N_2\xto{f'}N'_2$ such that the composite $f'\circ f$ is an $\fs|_{\CN}$-inflation.
In a few moment, we have a morphism of $\fs$-triangles as follows.
\[
\begin{tikzcd}[column sep=1.0cm]
    N_1 \arrow{r}{f}\arrow[equal]{d}{}& N_2 \arrow{r}{g}\arrow{d}[swap]{f'}\wPB{rd}&
    C \arrow{d}{}\arrow[dashed]{r}{\delta} &
    {}\\
    N_1 \arrow{r}&
    N'_2\arrow{r}{} &
    N_3\arrow[dashed]{r} &
    {}
\end{tikzcd}
\]
Thanks to \cref{lem:wPO_wPB}, we can think of the folded sequence of ${\rm (wPB)}$ is an $\fs$-triangle by replacing $f'$ if necessary.
Since $\CN\sse\CA$ is closed under extensions and direct summands, $N_3,N_2\in\CN$ implies $C\oplus N'_2\in\CN$ and $C\in\CN$.
We omit the dual part of the proof.
\end{proof}

Following \cref{def:Sn_from_thick}, we consider the defining classes $\Inf_\CN, \Def_\CN$ and $\Sn$ associated with the pair $(\CA,\CN)$.
Note that $\ovl{\Sn}\sse\Mor\ovl{\CA}$ coincides with $p(\Sn)$ owing to $\rm (WIC)$ of $\CA$.
As a preparation for verifying the conditions \ref{MR1}--\ref{MR4}, we reveal the following useful factorization of $\Sn$ which serves as a source of the good behavior of the extriangulated quotient under \ref{spade}.
We denote by $\Inf_\CN^{\sp}$ the class of sections in $\Inf_\CN$ and by $\Def_\CN^{\sp}$ the class of retractions in $\Def_\CN$.

\begin{lemma}\label{lem:factorization_of_Sn}
We have $\Sn=\Def_\CN^{\sp}\circ\Inf_\CN=\Def_\CN\circ\Inf_\CN^{\sp}$.
\end{lemma}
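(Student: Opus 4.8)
The plan is to prove the two displayed equalities by establishing three inclusions, the non-trivial content being that every morphism in $\Sn$ admits a factorization of the asserted shape. Recall that $\Sn$ is by definition the smallest class closed under composition that contains $\Inf_\CN$ and $\Def_\CN$; so a general element of $\Sn$ is a finite composite of $\fs$-inflations with cone in $\CN$ and $\fs$-deflations with cocone in $\CN$. First I would record the easy inclusions $\Def_\CN^{\sp}\circ\Inf_\CN\sse\Sn$ and $\Def_\CN\circ\Inf_\CN^{\sp}\sse\Sn$, which hold because sections of $\fs$-inflations in $\Inf_\CN$ are again in $\Inf_\CN$ (the identity is an $\fs$-inflation with cone $0\in\CN$, and $\Inf_\CN$ is closed under composition, since $\CN$ is extension-closed), and dually for retractions; in fact $\Inf_\CN^{\sp}\sse\Inf_\CN$ and $\Def_\CN^{\sp}\sse\Def_\CN$ directly. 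So both right-hand classes sit inside $\Sn$.

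For the reverse inclusion $\Sn\sse\Def_\CN^{\sp}\circ\Inf_\CN$, the strategy is to show that the class $\Def_\CN^{\sp}\circ\Inf_\CN$ already contains $\Inf_\CN$ and $\Def_\CN$ and is closed under composition; minimality of $\Sn$ then finishes it. Containment of $\Inf_\CN$ is clear (compose with an identity retraction), and for $\Def_\CN$ the key point is precisely condition~\ref{spade}: given an $\fs$-deflation $g\colon B\to N_3$ with $\cocone(g)\in\CN$ and $N_3\in\CN$, condition~\ref{spade} yields $b\colon N_2\to B$ with $g\circ b$ an $\fs|_\CN$-deflation; I would then argue that this splits off a summand so that $g$ itself is a retraction followed by... — more carefully, one uses that $g\circ b$ being an $\fs|_\CN$-deflation (hence in particular, being {\rm (WIC)}, a split epimorphism locally is not automatic, but) one forms the weak pullback of $g$ along a suitable splitting and uses {\rm (WIC)} to extract that $g=(\text{retraction in }\Def_\CN)\circ(\text{section in }\Inf_\CN)$; alternatively, and more cleanly, one observes $g=g\circ b\circ(\text{section})$ after arranging $b$ to be a section via \cref{lem:wPO_wPB}. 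The closure of $\Def_\CN^{\sp}\circ\Inf_\CN$ under composition is the crux: given $r_1\circ i_1$ and $r_2\circ i_2$ in that class, one must commute $i_1$ past $r_2$, i.e.\ given an $\fs$-inflation $i_1$ and a retraction $r_2\in\Def_\CN^{\sp}$, factor $r_2\circ i_1$ (up to the outer pieces) again as (retraction in $\Def_\CN^{\sp}$)$\circ$(inflation in $\Inf_\CN$). Here I would take the weak pushout of $i_1$ along the section $\sec(r_2)$ using \cref{lem:wPO_wPB}\ref{item:wPO}, read off from the folded $\fs$-triangle that the new map is an $\fs$-inflation whose cone agrees with that of $i_1$ (hence lies in $\CN$), and use that $r_2$ is split to identify the cokernel side as a retraction with cocone in $\CN$; {\rm (WIC)} of $\CA$ is what guarantees the relevant cokernels/splittings exist. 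The symmetric argument, using weak pullbacks \cref{lem:wPO_wPB}\ref{item:wPB} and the dual half of \ref{spade}, gives $\Sn\sse\Def_\CN\circ\Inf_\CN^{\sp}$.

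I expect the main obstacle to be the compatibility/commutation step, namely pushing an inflation past a split deflation (and dually) while controlling that the cone, respectively cocone, stays in $\CN$; this is where one genuinely needs both halves of condition~\ref{spade}, the weak (co)base change of \cref{lem:wPO_wPB}, and {\rm (WIC)} to turn the ``weak'' (co)limits into honest sections and retractions. Everything else is bookkeeping with the definitions of $\Inf_\CN$, $\Def_\CN$ and the minimality of $\Sn$.
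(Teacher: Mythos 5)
Your overall architecture matches the paper's: show that $\Def_\CN^{\sp}\circ\Inf_\CN$ contains the generators $\Inf_\CN$ and $\Def_\CN$ and is closed under composition, then invoke minimality of $\Sn$. But the key step — the containment $\Def_\CN\sse\Def_\CN^{\sp}\circ\Inf_\CN$ — is where your proposal goes wrong, and it is visibly the point where you start hedging. You take a morphism of $\Def_\CN$ to be a deflation $g\colon B\to N_3$ with \emph{both} $\cocone(g)\in\CN$ and $N_3\in\CN$, and then apply the second half of \ref{spade}. A general element of $\Def_\CN$ is a deflation $B\xto{s}C$ whose cocone $N_1$ lies in $\CN$ but whose \emph{target} $C$ is arbitrary, so the second half of \ref{spade} does not apply. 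The correct move is to apply the \emph{first} half of \ref{spade} to the $\fs$-inflation $N_1\to B$ (the cocone map): this produces $f'\colon B\to N_2$ with the composite $N_1\to N_2$ an $\fs|_\CN$-inflation, and the resulting weak pullback square, folded via \cref{lem:wPO_wPB}, exhibits $\begin{bsmallmatrix}s\\ f'\end{bsmallmatrix}\colon B\to C\oplus N_2$ as a member of $\Inf_\CN$ (its cone is the third term $N_3$ of the $\fs|_\CN$-conflation in $\CN$). Then $s=\begin{bsmallmatrix}\id_C&0\end{bsmallmatrix}\circ\begin{bsmallmatrix}s\\ f'\end{bsmallmatrix}$ is the desired factorization. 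Your alternative phrasings ("splits off a summand so that $g$ itself is a retraction followed by...") do not recover this.

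You also misplace the crux. The commutation step you single out — pushing an inflation past a split deflation — needs neither \ref{spade} nor weak pushouts: since the deflation factor is a split projection $\begin{bsmallmatrix}\id_C&0\end{bsmallmatrix}\colon C\oplus N\to C$ with $N\in\CN$, one simply has $t\circ\begin{bsmallmatrix}\id_C&0\end{bsmallmatrix}=\begin{bsmallmatrix}\id_D&0\end{bsmallmatrix}\circ(t\oplus\id_N)$, and $t\oplus\id_N$ is again in $\Inf_\CN$; composing inflations with cones in $\CN$ stays in $\Inf_\CN$ by extension-closedness. So the only place \ref{spade} (and the extension-closedness of $\CN$) genuinely enters is the single-deflation factorization above, once per direction. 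With that step repaired your plan goes through, but as written it does not establish the lemma.
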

\begin{proof}
To show $\Def_\CN\sse \Def_\CN^{\sp}\circ\Inf_\CN$, consider an $\fs$-triangle $N_1\overset{f}{\lra} B\overset{s}{\lra} C\overset{\delta}{\dra} $ with $N_1\in\CN$.
The condition \ref{spade} shows the existence of a morphism $B\xto{f'}N_2$ such that the composite $f'\circ f$ is an $\fs|_{\CN}$-inflation.
Thus, we have a morphism of $\fs$-triangles,
\[
\begin{tikzcd}[column sep=1.0cm]
    N_1 \arrow{r}{f}\arrow[equal]{d}{}& B \arrow{r}{s}\arrow{d}[swap]{f'}\wPB{rd}&
    C \arrow{d}{}\arrow[dashed]{r}{\delta} &
    {}\\
    N_1 \arrow{r}&
    N_2\arrow{r}{} &
    N_3\arrow[dashed]{r} &
    {}
\end{tikzcd}
\]
the square {\rm (wPB)} of which yields the folded $\fs$-triangle by \cref{lem:wPO_wPB}.
Note that $\begin{bsmallmatrix}s \\
f'
\end{bsmallmatrix}\in\Inf_\CN$ by $N_3\in\CN$.
Hence we have a desired factorization
$
s\colon B\overset{\begin{bsmallmatrix}s \\
f'
\end{bsmallmatrix}}\lra C\oplus N_2\overset{\begin{bsmallmatrix}\id_C & 0
\end{bsmallmatrix}}{\lra} C
$.
We next show $\Inf_\CN\circ \Def_\CN\sse\Def_\CN\circ\Inf_\CN$ by using the containment that we have just verified.
In addition to the above morphism $B\xto{s}C$, we consider a morphism $C\xto{t}D$ in $\Inf_{\CN}$.
By the former argument, the composite $t\circ s$ admits a factorization as follows:
\[
t\circ s= \bigl[
B\overset{\begin{bsmallmatrix}s \\
f'
\end{bsmallmatrix}}\lra C\oplus N\overset{\begin{bsmallmatrix}\id_C & 0
\end{bsmallmatrix}}{\lra} C\overset{t}{\lra} 
D \bigr] 
=
\bigl[
B\overset{\begin{bsmallmatrix}s \\
f'
\end{bsmallmatrix}}\lra C\oplus N\overset{\begin{bsmallmatrix}
t & 0\\
0 & \id_N
\end{bsmallmatrix}}{\lra} D\oplus N\overset{\begin{bsmallmatrix}\id_D & 0
\end{bsmallmatrix}}{\lra} 
D 
\bigr],
\]
which shows $\Inf_\CN\circ\Def_\CN\sse \Inf_\CN\circ\Def_\CN^{\sp}\circ\Inf_\CN\sse \Def_\CN^{\sp}\circ\Inf_\CN$.
Lastly, by combining the dual argument, we have $\Inf_\CN\circ\Def_\CN=\Def_\CN\circ\Inf_\CN$.

Since $\Sn$ consists of the finite compositions of morphisms in $\Inf_\CN\cup \Def_\CN$, we have $\Sn=\Def_\CN\circ\Inf_\CN=\Def_\CN^{\sp}\circ\Inf_\CN$.
The second equality is also true.
\end{proof}

\begin{proof}[Proof of \cref{prop:extriangulated_quotient_under_spade}]
Now we proceed to verify the conditions \ref{MR1}--\ref{MR4}.

\ref{MR1}:
To check the $2$-out-of-$3$ for $\Sn$, let us consider a sequence $A\xto{s}A'\xto{t}B$ in $\CA$ and assume $s,ts\in\Sn$.
We want to explain that we may employ a more strong assumption of $s, ts\in\Inf_\CN$ from the beginning:
Thanks to a factorization of $ts=\begin{bsmallmatrix}\id_{B}\amph 0 
\end{bsmallmatrix}\circ\begin{bsmallmatrix}ts \\
f 
\end{bsmallmatrix}$ by $\Sn=\Def_\CN^{\sp}\circ \Inf_\CN
$,
we can construct the following commutative diagram with $N\in\CN$.
\[
\begin{tikzcd}[column sep=0.7cm]
    &A'\oplus N\arrow{rd}{
    \begin{bsmallmatrix}t \amph 0 \\
    0 \amph \id_{N}
    \end{bsmallmatrix}
    }&&\\
    A\arrow{ru}{
    \begin{bsmallmatrix}s \\
    f
    \end{bsmallmatrix}
    }\arrow{rr}{
    \begin{bsmallmatrix}ts \\
    f
    \end{bsmallmatrix}
    }&&B\oplus N\arrow{r}{
    \begin{bsmallmatrix}\id_B \amph 0
    \end{bsmallmatrix}
    }&B ,
\end{tikzcd}
\]
Due to ${\rm (WIC)}$ of $\CA$, we know $\begin{bsmallmatrix}s \\
f
\end{bsmallmatrix}\in\Inf_\CN$ from $\begin{bsmallmatrix}ts \\
f
\end{bsmallmatrix}\in\Inf_\CN$.

Now, we return to a discussion on the sequence $A\xto{s}A'\xto{t}B$ with $s,ts\in\Inf_\CN$.
Applying \cite[dual of Prop.~3.15]{NP19} to both $\fs$-inflations starting from $A$ to get the following commutative diagram of $\fs$-triangles
\[
\begin{tikzcd}[column sep=1.0cm]
    A \wPO{rd}\arrow{r}{ts}\arrow{d}[swap]{s}& B \arrow{r}{}\arrow{d}{b}&
    N \arrow[equal]{d}{}\arrow[dashed]{r}{} &
    {}\\
    A' \arrow{r}{s'}\arrow{d}{}&
    B'\arrow{r}{}\arrow{d}{} &
    N\arrow[dashed]{r} &
    {}\\
    N'\arrow[equal]{r}\arrow[dashed]{d}{}&N'\arrow[dashed]{d}{}&&\\
    {}&{}&&
\end{tikzcd}
\]
with $N,N'\in\CN$.
Since $b$ splits, we have the corresponding retraction $B'\xto{b'}B$.
We can easily check that $\ovl{t-b's'}=0$ in $\ovl{\CA}$.
By $s'\in\Inf_\CN$ and $b'\in\Def_\CN^{\sp}$, we see $t$ still sits in $\Def_\CN^{\sp}\circ\Inf_\CN=\Sn$.
The other condition can be checked dually.

\ref{MR2}:
We confirm that $\ovl{\Sn}$ is a multiplicative system.
First, we consider a diagram $B\xleftarrow{f}A\xto{s}A'$ in $\CA$ with $s\in\Sn$.
A factorization $s\colon A\xto{s_1}A''\xto{s_2}A'$ exists by $\Sn=\Def_\CN^{\sp}\circ \Inf_\CN$ in \cref{lem:factorization_of_Sn}.
Taking a weak pushout of the $\fs$-inflation $s_1$ along $f$, we get the commutative square ${\rm (wPO)}$:
\[
\begin{tikzcd}[column sep=1.0cm]
    A \wPO{rd}\arrow{r}{s_1}\arrow{d}[swap]{f}& A'' \arrow{r}\arrow{d}{f'}&
    N \arrow[equal]{d}\arrow[dashed]{r} &
    {}\\
    B \arrow{r}{s'}&
    B'\arrow{r}{} &
    N\arrow[dashed]{r} &
    {}
\end{tikzcd}
\]
as a part of a morphism of $\fs$-triangles, where $N\in\CN$. 
This gives rise to a desired square, since $f'\circ s'_2\circ s = f'\circ s'_2\circ s_2\circ s_1 =s'\circ f$ holds in $\ovl{\CA}$ where $s'_2$ denote the section corresponding to $s_2\in \Def_\CN^{\sp}$.

Next, we consider a sequence $A\xto{f}B\xto{s}B'$ with $\ovl{s\circ f}=0$ in $\ovl{\CA}$.
Take a factorization $s=s_2\circ s_1$ by $\Def_\CN\circ\Inf_\CN^{\sp}$ similarly to the former argument. 
We consider the commutative square below on the left, in which $N\in\CN$.
\[
\begin{tikzcd}[column sep=1.0cm]
    A \arrow{r}{a}\arrow{d}[swap]{s_1f}& N \arrow{d}{b}\\
    B'' \arrow{r}{s_2}&
    B' ,
\end{tikzcd}
\qquad
\qquad
\begin{tikzcd}[column sep=1.0cm]
    &A\arrow[dotted]{d}{}\arrow[bend left]{rd}{a}\arrow[bend right = 50]{dd}[swap, yshift=2.5mm]{s_1f}&&
    \\
    N' \arrow[crossing over]{r}{}\arrow[equal]{d}& N'' \arrow{r}\arrow{d}{}&
    N \arrow{d}{b}\arrow[dashed]{r} &
    {}
    \\
    N' \arrow{r}{}&
    B''\arrow{r}{s_2} &
    B'\arrow[dashed]{r} \wPB{ul}&
    {}
\end{tikzcd}
\]
By taking a weak pullback of $s_2$ along $b$, we have the right  and the dotted arrow by the property of ${\rm (wPB)}$ which shows $s_1f$ factors through $N''\in\CN$.
Since $s_1$ is a section having the cokernel in $\CN$, we see $\ovl{f}=0$ in $\ovl{\CA}$.
The remaining conditions that we have to check are just the dual, so we omit them.

\ref{MR3}: We consider a morphism of $\fs$-triangles with $\ovl{a},\ovl{b}\in\ovl{\Sn}$ as in \ref{MR3} and factorize it as follows by \cite[Prop.~2.3]{Eno21}.
\[
\begin{tikzcd}[column sep=1.0cm]
    A \wPO{rd}\arrow{r}{f}\arrow{d}[swap]{a}& B \arrow{r}{g}\arrow{d}{b_1}&
    C \arrow[equal]{d}\arrow[dashed]{r}{\delta} &
    {}
    \\
    A' \arrow{r}{f''}\arrow[equal]{d}&
    B''\arrow{r}{g''}\arrow{d}[swap]{b_2} &
    C\arrow[dashed]{r}{\delta''}\arrow{d}{c} &
    {}
    \\
    A' \arrow{r}{f'}&
    B'\arrow{r}{g'} &
    C'\wPB{lu}\arrow[dashed]{r}{\delta'} &
    {}
\end{tikzcd}
\]
We will focus only on the upper morphism and show that $b_1$ can be replaced by a morphism in $\Sn$, because the needed argument for $b_2\in\Sn$ is dual.
Consider a factorization $a\colon A\xto{a_1}A''\xto{a_2}A'$ with $a_1\in\Inf_\CN, a_2\in\Def_\CN^{\sp}$ by \cref{lem:factorization_of_Sn}.
To replace the morphism $(a,b_1,\id_C)$,
we take accurate weak pushouts of $f$ along the factorization in succession as explained below:
\[
\begin{tikzcd}[column sep=1.0cm]
    A \wPO{rd}\arrow{r}{f}\arrow{d}[swap]{a_1}& B \arrow{r}{g}\arrow{d}{b'}&
    C \arrow[equal]{d}\arrow[dashed]{r}{\delta} &
    {}
    \\
    A'' \wPO{rd}\arrow{r}{f_0}\arrow{d}[swap]{a_2}&
    B_0\arrow{r}{g_0}\arrow{d}{b''} &
    C\arrow[dashed]{r}{\delta_0}\arrow[equal]{d} &
    {}
    \\
    A' \arrow{r}{f''}&
    B''\arrow{r}{g''} &
    C\arrow[dashed]{r}{\delta''} &
    {}
\end{tikzcd}
\]
At the first step to constitute the upper morphism, we have taken it such that $b'$ is an $\fs$-inflation sharing the cone isomorphic that of $a$ by \cite[dual of Prop.~3.15]{NP19}.
Thus we get $b'\in\Inf_\CN$.
At the second step, we consider the splitting cocone of the retraction $a_2$, say $N\xto{a'_2}A''$, and apply the axiom {\rm (ET4)} to the composition $f_0\circ a'_2$ to produce the bottom row.
The cocones of $a_2$ and $b''$ are isomorphic to each other, so $a_2\in\Def_\CN^{\sp}$ implies $b''\in\Def_\CN$.
Thus $b_1$ has been replaced by a desired morphism $b''\circ b'\in\Sn$.

\ref{MR4}: We only prove that $\ovl{\mathcal{M}}_{\mathsf{inf}}:=\{ \ovl{v}\circ \ovl{f}\circ \ovl{u}\mid f\ \text{is an}\ \mathfrak{s}\text{-inflation}, \ovl{u},\ovl{v}\in\ovl{\Sn}\}$ is closed under compositions in $\ovl{\CA}$.
Since $\Sn$ is closed under compositions, we are reduced to show that $\ovl{f_2}\circ\ovl{s}\circ\ovl{f_1}\in\ovl{\CM}_{\mathsf{inf}}$ for $\fs$-inflations $f_1,f_2$ and $s\in\Sn$.
Let us denote the sequence by $A\xto{f_1}B\xto{s}B'\xto{f_2}C$.
Since we have a factorization $
s\colon B\overset{\begin{bsmallmatrix}s \\
g
\end{bsmallmatrix}}\lra B'\oplus N\overset{\begin{bsmallmatrix}\id_{B'} & 0
\end{bsmallmatrix}}{\lra} B'
$ with $N\in\CN$ by $\Sn=\Def_\CN^{\sp}\circ\Inf_\CN$,
the sequence $f_2\circ s\circ f_1$ can be factorized as below:
\[
A
\xto{f_1}
B
\xto{\begin{bsmallmatrix}s \\
g
\end{bsmallmatrix}}
B'\oplus N
\xto{\begin{bsmallmatrix}f_2 \amph 0 \\
0 \amph \id_N
\end{bsmallmatrix}}
C\oplus N
\xto{\begin{bsmallmatrix}
\id_C \amph 0
\end{bsmallmatrix}}
C,
\]
where we notice that the first three morphisms are $\fs$-inflations and the last one sits in $\Def_\CN^{\sp}$.
Thus we conclude $\ovl{f_2\circ s\circ f_1}\in\ovl{\mathcal{M}}_{\mathsf{inf}}$.

As a conclusion, the extriangulated quotient of $\CA$ by $\CN$ exists by \cref{cor:mult_loc}.
\end{proof}

\begin{example}\label{ex:spade}
We list a few prototypical examples of extension-closed subcategories $\CN\sse\CA$ satisfying \ref{spade}.
\begin{enumerate}[label=\textup{(\arabic*)}]
\item 
Let $\CN$ be a subcategory projective-injective objects in $\CA$ which is closed under direct summands.
Then it satisfies \ref{spade}:
Actually, a given $\fs$-inflation $N_1\xto{f}B$ splits and the corresponding retraction $B\xto{b}N_1$ is a desired morphism.
In this case, we have the extriangulated quotient $\CA/\CN$ which is exact equivalent to the ideal quotient $\CA/[\CN]$, see \cite[Rem.~3.35]{NOS22}.
\item 
A biresolving subcategory $\CN\sse\CA$ satisfies \ref{spade}:
Actually, for a given $\fs$-inflation $N_1\xto{f}B$, there exists an $\fs$-inflation $B\xto{b}N_2$ with $N_2\in\CN$.
Since $\CN$ is thick, the composite $b\circ f$ is an $\fs|_\CN$-inflation.
Thus, by \cref{prop:extriangulated_quotient_under_spade}, we have the extriangulated quotient $\CA/\CN$ which paraphrases \cref{ex:extri_quotient_by_biresolving}.
\end{enumerate}
\end{example}

As below, the class $\Sn$ is \emph{saturated}, i.e., a morphism $s\in\Mor\CA$ is in $\Sn$ if and only if $Q(s)$ is an isomorphism in $\CA/\CN$.

\begin{corollary}\label{cor:Sn_is_saturated}
The class $\Sn$ is saturated with respect to the localization $Q\colon \CA\to\CA/\CN$.
\end{corollary}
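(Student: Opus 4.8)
The implication ``$s\in\Sn\Rightarrow Q(s)$ invertible'' is the easy one: by \cref{prop:extriangulated_quotient_under_spade} the functor $Q$ is the extriangulated localization of $\CA$ at $\Sn$, built as in \cref{thm:mult_loc} from the ideal quotient $p\colon\CA\to\ovl\CA$ followed by the Gabriel--Zisman localization of $\ovl\CA$ at the multiplicative system $\ovl\Sn=p(\Sn)$, so every morphism of $\Sn$ becomes invertible; equivalently, for $f\in\Inf_\CN$ exactness of $Q$ turns $A\xto{f}B\to\cone(f)\dra$ (with $\cone(f)\in\CN=\Ker Q$) into $QA\to QB\to 0\dra$, forcing $Q(f)$ invertible, dually for $\Def_\CN$, and $\Sn$ is generated under composition by $\Inf_\CN\cup\Def_\CN$.

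For the converse I would first isolate the following \emph{absorption lemma}: if $\sigma\in\Sn$ with $\sigma\colon A\to B$ and $n\colon A\to B$ factors through an object of $\CN$, then $\sigma+n\in\Sn$. To prove it, write $n=\beta\alpha$ with $\alpha\colon A\to N_0$, $\beta\colon N_0\to B$, $N_0\in\CN$, and factor $\sigma=r\circ\iota$ with $\iota\in\Inf_\CN$ and $r\in\Def_\CN^{\sp}$ via \cref{lem:factorization_of_Sn}; since $r$ is a retraction we may take it to be the projection $B\oplus L\onto B$ with $L=\cocone(r)\in\CN$, so $\iota=\begin{bsmallmatrix}\iota_B\\\iota_L\end{bsmallmatrix}\colon A\to B\oplus L$ and $\sigma=\iota_B$. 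Now consider $\iota'=\begin{bsmallmatrix}\iota_B\\\iota_L\\\alpha\end{bsmallmatrix}\colon A\to B\oplus L\oplus N_0$ and $r'=\begin{bsmallmatrix}1_B&0&\beta\end{bsmallmatrix}\colon B\oplus L\oplus N_0\to B$, so that $r'\circ\iota'=\iota_B+\beta\alpha=\sigma+n$. One checks that $r'$ is a retraction with $\cocone(r')\cong L\oplus N_0\in\CN$, whence $r'\in\Def_\CN^{\sp}$; and that $\iota'=(\iota\oplus1_{N_0})\circ\begin{bsmallmatrix}1_A\\\alpha\end{bsmallmatrix}$ is the composite of the split $\fs$-inflation $\begin{bsmallmatrix}1_A\\\alpha\end{bsmallmatrix}$ (cone $N_0$) with the $\fs$-inflation $\iota\oplus1_{N_0}$ (cone $\cone(\iota)\in\CN$), so by \ref{ET4} its cone is an extension of $\cone(\iota)$ by $N_0$ and therefore lies in $\CN$, $\CN$ being extension-closed; hence $\iota'\in\Inf_\CN$ and $\sigma+n=r'\circ\iota'\in\Def_\CN^{\sp}\circ\Inf_\CN=\Sn$ by \cref{lem:factorization_of_Sn} again.

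Granting this, assume $Q(s)$ is invertible, $s\colon A\to B$. Using the calculus of fractions for $\ovl\Sn$ (condition \ref{MR2}) I would present $Q(s)^{-1}$ by a roof $\ovl B\xleftarrow{\bar t}\ovl W\xrightarrow{\bar a}\ovl A$ with $\bar t\in\ovl\Sn$, lift $\bar t$ to $t\in\Sn$ (possible since $\ovl\Sn=p(\Sn)$ as $\CA$ is $\mathrm{(WIC)}$) and $\bar a$ to $a$ in $\CA$; then $Q(sa)=Q(t)$, so the equalizing morphism of the calculus yields $u\in\Sn$ with $p(sau)=p(tu)$, that is, $sau-tu$ factors through $\CN$, and the absorption lemma promotes $tu\in\Sn$ to $sau\in\Sn$; the $2$-out-of-$3$ property of $\Sn$ established inside the proof of \ref{MR1}, applied to the composable pair $u\in\Sn$ and $sa$ with composite $sau\in\Sn$, gives $sa\in\Sn$, and a mirror argument in $\ovl\CA^{\op}$ together with $\Ker Q=\CN$ and one further application of $2$-out-of-$3$ then yields $s\in\Sn$. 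I expect the absorption lemma to be the main obstacle: the naive attempt to realize $\sigma+n$ by twisting the original $\iota$ by an automorphism of $B\oplus L$ fails because $\alpha$ need not factor through $\iota$, and the remedy --- enlarging the middle object by $N_0$ and routing the correction $\beta\alpha$ through the split $\Def_\CN$-factor while keeping the $\Inf_\CN$-factor honest --- is exactly what makes $\Sn$ stable under the ideal $[\CN]$, hence saturated.
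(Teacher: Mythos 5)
Your easy direction, your absorption lemma, and the reduction to $sa\in\Sn$ are all sound; the absorption lemma is in fact a welcome precision, since the paper's proof of \ref{MR1} silently uses exactly this $[\CN]$-stability of $\Sn$ when it passes from $\ovl{t-b's'}=0$ to $t\in\Sn$. The gap is in your final sentence. After the right-fraction argument you have $sa\in\Sn$ for some $a\colon W\to A$, and the mirror (left-fraction) argument gives $cs\in\Sn$ for some $c\colon B\to V'$; but neither $a$ nor $c$ is known to lie in $\Sn$ (you only know $Q(a)$ and $Q(c)$ are invertible, which is the statement being proved), so no instance of $2$-out-of-$3$ applies. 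You are in the $2$-out-of-$6$ configuration $W\xto{a}A\xto{s}B\xto{c}V'$ with both two-fold composites in $\Sn$, and a composition-closed class satisfying $2$-out-of-$3$ need not satisfy $2$-out-of-$6$ --- for such classes $2$-out-of-$6$ is essentially equivalent to saturation itself, so invoking it here is circular.

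What closes the argument, and is how the paper does it, is the explicit shape $\Sn=\Def_\CN^{\sp}\circ\Inf_\CN$ together with the ${\rm (WIC)}$ cancellation property, applied to the relation $cs\in\Sn$ in which $s$ is the \emph{first} factor. Write $cs=\begin{bsmallmatrix}1 & 0\end{bsmallmatrix}\circ\begin{bsmallmatrix}cs\\ h\end{bsmallmatrix}$ with $\begin{bsmallmatrix}cs\\ h\end{bsmallmatrix}\in\Inf_\CN$ and $h\colon A\to N$, $N\in\CN$, by \cref{lem:factorization_of_Sn}. Since $\begin{bsmallmatrix}cs\\ h\end{bsmallmatrix}=\begin{bsmallmatrix}c&0\\0&1\end{bsmallmatrix}\begin{bsmallmatrix}s\\ h\end{bsmallmatrix}$, the ${\rm (WIC)}$ property makes $\begin{bsmallmatrix}s\\ h\end{bsmallmatrix}$ an $\fs$-inflation; its cone lies in $\Ker Q=\CN$ because $Q$ is exact and $Q\begin{bsmallmatrix}s\\ h\end{bsmallmatrix}$ is invertible (as $QN=0$ and $Q(s)$ is invertible by hypothesis). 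Hence $\begin{bsmallmatrix}s\\ h\end{bsmallmatrix}\in\Inf_\CN$ and $s=\begin{bsmallmatrix}1 & 0\end{bsmallmatrix}\begin{bsmallmatrix}s\\ h\end{bsmallmatrix}\in\Def_\CN^{\sp}\circ\Inf_\CN=\Sn$. Note that the paper obtains the needed postcomposing morphism directly from a left fraction representing $Q(s)^{-1}$, arranged so that $gs\in\Sn$ on the nose; with that, the entire right-fraction half of your argument becomes unnecessary.
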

\begin{proof}
Let $A\xto{f}B$ be a morphism in $\CA$ such that $Q(f)$ is an isomorphism.
Since $\CA/\CN$ is defined as the localization $\CA[\ovl{\Sn}^{-1}]$, the inverse $Q(f)^{-1}$ is represented by a left fraction $B\xto{g}A'\xleftarrow{s}B$ where $s\in\Sn$.
We can chose such a roof together with a commutative diagram in $\overline{\CA}$:
\[
\begin{tikzcd}
&A'\arrow[equal]{d}&\\
A\arrow{ru}{gf}\arrow[equal]{rd}{}&A'&A\arrow{lu}[swap]{s}\arrow{l}[swap]{s}\arrow[equal]{ld}{}\\
&A\arrow{u}{s}&
\end{tikzcd}
\]
Thus we get $gf=s\in\overline{\Sn}$ in $\overline{\CA}$ and it can be factorized by $\Sn=\Def_\CN^{\sp}\circ\Inf_\CN$.
In particular, there exists a morphism $A\xto{h}N$ such that the induced morphism $A\xto{\begin{bsmallmatrix}gf \\
h
\end{bsmallmatrix}}A'\oplus N$ belongs to $\Inf_\CN$ and $N\in\CN$.
The ${\rm (WIC)}$ assumption shows that $\begin{bsmallmatrix}f \\
h
\end{bsmallmatrix}$ is also in $\Inf_\CN$ and $f\in\Sn$ as desired.
\end{proof}

As an advantage of the exact sequence $\Theta$ in \cref{prop:extriangulated_quotient_under_spade}, it restricts to an extension-closed subcategory $\CA'\sse\CA$ and produces an exact subsequence.

\begin{proposition}\label{prop:restriction_of_extri_subquotient}
Let $(\CA',\BE|_{\CA'},\fs|_{\CA'})\sse (\CA,\BE,\fs)$ be an extension-closed subcategory which contains $(\CN,\BE|_\CN,\fs|_\CN)$ and is closed under direct summands.
Then we have the following assertions.
\begin{enumerate}[label=\textup{(\arabic*)}]
\item 
$(\CA',\CN)$ satisfies the condition \ref{spade}.
\item 
The pair induces an exact subsequence $\Theta'\sse\Theta$, the situation of which is displayed as the following commutative diagram in $\ET$,
\begin{equation}\label{diag:PBII}
\begin{tikzcd}[column sep=1.5cm]
    (\CN,\BE|_\CN,\fs|_\CN) \arrow{r}\arrow[equal]{d}{}& (\CA',\BE|_{\CA'},\fs|_{\CA'}) \arrow{r}{(Q',\mu')}\arrow{d}&
    (\CB',\BF',\ft') \arrow{d}{(G,\psi)} \\
    (\CN,\BE|_\CN,\fs|_\CN) \arrow{r}&
    (\CA,\BE,\fs)\arrow{r}[swap]{(Q,\mu)} &
    (\CB,\BF,\ft)
\end{tikzcd}
\end{equation}
where we put $(\CB',\BF' ,\ft')= (\CA'/\CN,\wtil{\BE|_{\CA'}},\wtil{\fs|_{\CA'}})$.
Moreover, the image of $G$ is extension-closed in $(\CB,\BF,\ft)$.
\item 
If we identify $\CB'$ with $\Im G$, we have an exact equivalence $(\CB',\BF' ,\ft')\xto{\sim}(\CB',\BF|_{\CB'},\ft|_{\CB'})$.
\end{enumerate}
\end{proposition}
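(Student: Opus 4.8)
\emph{Plan.} I would dispatch the three items in order, the real work being the comparison of the two localizations in (2)--(3), whose engine is the factorization $\Sn=\Def_\CN^{\sp}\circ\Inf_\CN=\Def_\CN\circ\Inf_\CN^{\sp}$ of \cref{lem:factorization_of_Sn}. For (1): an $\fs|_{\CA'}$-inflation $N_1\xto{f}B$ with $N_1\in\CN$ is in particular an $\fs$-inflation out of $\CN$, so \ref{spade} for $(\CA,\CN)$ produces $b\colon B\to N_2$, $N_2\in\CN$, with $bf$ an $\fs|_\CN$-inflation; since $\CN\sse\CA'$ is a full subcategory and "being an $\fs|_\CN$-inflation" refers only to the data $(\CN,\BE|_\CN,\fs|_\CN)$, the morphism $b$ lies in $\CA'$ and $bf$ is still an $\fs|_\CN$-inflation, so \ref{spade} holds for $(\CA',\CN)$; the deflation half is dual. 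As $\CA'$ is closed under summands in the {\rm (WIC)} category $\CA$ it is itself {\rm (WIC)}, so \cref{prop:extriangulated_quotient_under_spade} applies to $(\CA',\CN)$ and yields the exact sequence $\Theta'$ in the top row of \eqref{diag:PBII}, with $(Q',\mu')$ the extriangulated localization of $(\CA',\BE|_{\CA'},\fs|_{\CA'})$ at the class $\Sn^{\CA'}$ built from $\CN$ inside $\CA'$.

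For (2): the exact functor $\CA'\hookrightarrow\CA\xto{(Q,\mu)}\CB$ inverts $\Sn^{\CA'}$ (because $\Sn^{\CA'}\sse\Sn$ and $Q$ inverts $\Sn$), so by \cref{thm:mult_loc}(3) it factors uniquely as $(G,\psi)\circ(Q',\mu')$ with $(G,\psi)\colon(\CB',\BF',\ft')\to(\CB,\BF,\ft)$ exact; this is precisely the commutative square \eqref{diag:PBII}, its leftmost vertical arrow being the identity. I would then record the \emph{closure property}: if $s\in\Sn$ has its target (resp.\ source) in $\CA'$, then its source (resp.\ target) lies in $\CA'$ and $s\in\Sn^{\CA'}$. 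Indeed, writing $s=s_2s_1$ with $s_1\in\Inf_\CN$ and $s_2\in\Def_\CN^{\sp}$, the retraction $s_2$ has cocone in $\CN$, so its source is a direct summand of the target of $s$ together with an object of $\CN\sse\CA'$, hence lies in $\CA'$; the $\fs$-conflation exhibiting $s_1$ then has two terms in $\CA'$, so by extension-closedness so does the third, namely the source of $s$, and reading the same factorization inside $\CA'$ gives $s\in\Sn^{\CA'}$; the "source in $\CA'$" case uses $\Sn=\Def_\CN\circ\Inf_\CN^{\sp}$. Granting this, together with the obvious full faithfulness of $\ovl{\CA'}=\CA'/[\CN]\hookrightarrow\ovl{\CA}=\CA/[\CN]$ and the description of $\CB,\CB'$ via the calculus of left fractions of $\ovl{\Sn},\ovl{\Sn^{\CA'}}$ (\cref{thm:mult_loc}, valid by {\rm (WIC)}), one checks that any fraction between objects of $\CA'$ can be rewritten with all its data inside $\CA'$ and denominator in $\Sn^{\CA'}$, and that any fraction annihilated in $\CB$ is already annihilated in $\CB'$; hence $G$ is fully faithful and $\Theta'\sse\Theta$ is an exact subsequence. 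For extension-closedness of $\Im G$: note $\Im G$ is the essential image of $Q|_{\CA'}$ (as $Q'$ is bijective on objects), and the closure property with \cref{cor:Sn_is_saturated} gives $QX\in\Im G\iff X\in\CA'$ (an isomorphism $QX\cong QX'$ with $X'\in\CA'$ is a span $X\xleftarrow{s}Y\xto{t}X'$ with $s,t\in\Sn$, forcing $Y\in\CA'$ and then $X\in\CA'$). Finally, every $\ft$-conflation of $\CB$ is, by \cref{lem:inf_from_inf} and exactness of $(Q,\mu)$, isomorphic to $Q$ of some $\fs$-conflation $A_1\to B_1\to C_1$ of $\CA$; when its ends lie in $\Im G$ we get $A_1,C_1\in\CA'$, hence $B_1\in\CA'$, so the middle term lies in $\Im G$.

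For (3): identify $\CB'$ with $\Im G$ via the fully faithful $G$, which by (2) is an extension-closed subcategory of $\CB$. The corestriction $\bar G\colon(\CB',\BF',\ft')\to(\Im G,\BF|_{\Im G},\ft|_{\Im G})$ is exact and an equivalence of underlying categories, so by \cref{lem:exact_equivalence} it is an exact equivalence once its structure morphism is a natural isomorphism, i.e.\ once each $\psi_{Q'C_0,Q'A_0}\colon\BF'(Q'C_0,Q'A_0)\to\BF(QC_0,QA_0)$ with $A_0,C_0\in\CA'$ is bijective. Surjectivity: an element $\delta$ of $\BF(QC_0,QA_0)$ is realized by an $\ft$-conflation with ends in $\Im G$, which by the argument just used is isomorphic to $G$ of an $\ft'$-conflation; transporting back along the induced isomorphisms of $\CB'$ and using that $\fs$-triangles with prescribed end terms have a well-defined extension identifies $\delta$ with $\psi$ of the corresponding $\BF'$-extension. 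Injectivity: if $\psi(\delta_1')=\psi(\delta_2')$, the $G$-images of the two realizing $\ft'$-conflations realize a common $\ft$-extension, hence are isomorphic by a map fixing the ends, and full faithfulness of $G$ pulls this back to an isomorphism of the original $\ft'$-conflations, so $\delta_1'=\delta_2'$.

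I expect the bookkeeping in (2)--(3) to be the main obstacle: every assertion funnels through the single closure property above, but deploying it cleanly demands care in moving between $\CA$, $\ovl{\CA}$ and the fraction calculus of \cite{NOS22}, together with the standard but slightly delicate fact that every $\ft$-conflation of $\CB$ lifts, up to isomorphism, to an $\fs$-conflation of $\CA$.
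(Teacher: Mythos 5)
Your proposal is correct and follows essentially the same route as the paper: the "closure property" you isolate is exactly the paper's \cref{cor:closed_under_weak_equiv} (proved the same way from \cref{lem:factorization_of_Sn}), and full faithfulness of $G$, extension-closedness of $\Im G$ via \cref{lem:inf_from_inf}, and the bijectivity of $\psi$ are handled just as in the paper. The only differences are cosmetic: you spell out item (1) and the {\rm (WIC)} inheritance for $\CA'$, which the paper treats as obvious, while your faithfulness step is slightly more compressed than the paper's explicit use of the multiplicative-system condition \ref{MR2}.
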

The assertion (1) is obvious, so we have an exact sequence $\Theta'$ associated to the admissible pair $(\CA',\CN)$.
To write down the proof of \cref{prop:restriction_of_extri_subquotient}, we declare that $\Sn$ and $\Sn'$ denote the defining classes of morphisms associated to the extriangulated quotient $\CA/\CN$ and $\CA'/\CN$, respectively.
Due to \cref{lem:factorization_of_Sn}, we have the following useful property of $\Sn$.

\begin{corollary}\label{cor:closed_under_weak_equiv}
Let $s\colon A\to B$ be a morphism in $\Sn\sse\Mor\CA$.
If one of $\{A, B\}$ belongs to $\CA'$, then so does the other.
Moreover we have an equality $\Sn'=\Mor\CA'\cap\Sn$.
\end{corollary}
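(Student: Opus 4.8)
The plan is to run every case through the two factorizations $\Sn=\Def_\CN^{\sp}\circ\Inf_\CN=\Def_\CN\circ\Inf_\CN^{\sp}$ supplied by \cref{lem:factorization_of_Sn}, picking in each situation the one that orients the ``$\CN$-part'' so that extension-closedness of $\CA'$ can be applied, and using the elementary fact that an $\fs$-triangle whose deflation (equivalently, whose inflation) splits decomposes as a direct sum of its outer terms; see \cite[Cor.~3.5]{NP19}.

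For the first assertion, suppose $s\colon A\to B$ lies in $\Sn$ with $A\in\CA'$. I would write $s=q\circ i$ with $i\colon A\to M$ in $\Inf_\CN$ and $q\colon M\to B$ in $\Def_\CN^{\sp}$. The defining $\fs$-triangle $A\xto{i}M\to N_1\dra$ has $N_1\in\CN\sse\CA'$, so $M\in\CA'$ because $\CA'$ is extension-closed; and since $q$ is a retraction, its defining $\fs$-triangle $N_2\to M\xto{q}B\dra$ splits, exhibiting $B$ as a direct summand of $M$, whence $B\in\CA'$. If instead $B\in\CA'$, I would use the other factorization $s=q'\circ i'$ with $i'\colon A\to M'$ in $\Inf_\CN^{\sp}$ and $q'\colon M'\to B$ in $\Def_\CN$: the $\fs$-triangle $N_2'\to M'\xto{q'}B\dra$ with $N_2'\in\CN\sse\CA'$ yields $M'\in\CA'$ by extension-closedness, while $i'$ being a section with cone $N_1'\in\CN$ exhibits $A$ as a direct summand of $M'$, so $A\in\CA'$.

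For the equality $\Sn'=\Mor\CA'\cap\Sn$, the inclusion $\Sn'\sse\Mor\CA'\cap\Sn$ is routine: reading $\fs|_{\CA'}$-conflations as $\fs$-conflations all of whose terms lie in $\CA'$ (legitimate since $\CA'$ is extension-closed), the generators $\Inf_\CN'$ and $\Def_\CN'$ of $\Sn'$ fall inside $\Mor\CA'\cap\Inf_\CN$ and $\Mor\CA'\cap\Def_\CN$ respectively, and $\Mor\CA'\cap\Sn$ is closed under composition. For the reverse inclusion, take $s\in\Sn$ with both endpoints $A,B$ in $\CA'$ and factor $s=q\circ i$ with $i\in\Inf_\CN$, $q\in\Def_\CN^{\sp}$ as above; the triangle $A\xto{i}M\to N_1\dra$ forces $M\in\CA'$, so $i$ is an $\fs|_{\CA'}$-inflation in $\CA'$ with cone $N_1\in\CN$, i.e.\ $i\in\Inf_\CN'$, and then $N_2\to M\xto{q}B\dra$ is a conflation in $\CA'$, so $q\in\Def_\CN'$; hence $s=q\circ i\in\Sn'$.

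I do not expect a real obstacle. The only point that needs care — and on which the whole argument rests — is that restricting the extriangulated structure to the extension-closed subcategory $\CA'$ leaves the class of conflations unchanged, so that ``being an inflation (resp.\ deflation) of $\CA'$ with (co)cone in $\CN$'' is literally the same datum as ``being the corresponding morphism of $\CA$ with all three terms in $\CA'$''; this is precisely what lets the factorizations of \cref{lem:factorization_of_Sn} be applied simultaneously in $\CA$ and in $\CA'$.
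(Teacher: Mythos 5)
Your proof is correct and follows essentially the same route as the paper: factor $s$ via \cref{lem:factorization_of_Sn}, use extension-closedness of $\CA'$ (together with $\CN\sse\CA'$) to handle the $\Inf_\CN$ (resp.\ $\Def_\CN$) part and closure under direct summands to handle the split part, then rerun the factorization inside $\CA'$ for the equality $\Sn'=\Mor\CA'\cap\Sn$. The only difference is that you spell out the dual case and the easy inclusion $\Sn'\sse\Mor\CA'\cap\Sn$, which the paper leaves implicit.
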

\begin{proof}
Suppose that $A$ belongs to $\CA'$.
As we have seen in \cref{lem:factorization_of_Sn}, the morphism $s$ admits a factorization $s\colon A\xto{s_1}B'\xto{s_2}B$ with $s_1\in\Inf_\CN, s_2\in\Def_\CN^{\sp}$.
Since $\cone(s_1)\in\CN$ and $\CA'\sse\CA$ is extension-closed, we get $B'\in\CA'$.
Also, since $s_2$ is a retraction, we get $B\in\CA'$ as well.
The case of $B\in\CA'$ can be checked in a dual manner.

To show the inclusion $\Sn'\supseteq\Mor\CA'\cap\Sn$, we consider a morphism $s\colon A\to B$ with $A,B\in\CA'$.
By mimicking the above argument, we see $s\in\Sn'$ in a few moment. 
\end{proof}

\begin{proof}[Proof of \cref{prop:restriction_of_extri_subquotient}]
By the universality of $Q'\colon \CA'\to \CB'$, the exact functor $(G,\psi)$ exists and makes the diagram \eqref{diag:PBII} commute.
To check (2)(3), We have to only show that $(G,\psi)$ is fully faithful and the image of $G$ is extension-closed in $(\CB,\BF,\ft)$.

\underline{Faithfulness of $G$}:
We consider a morphism $Q'A\xto{\alpha}Q'B$ in $\CB'$ which is represented by a left fraction $A\xto{f}B'\xleftarrow{s}B$ in $\CA'$ with $s\in\Sn'$.
If we suppose $G(\alpha)=0$, we can deduce $Qf=0$ from $G(\alpha)=(Qs)^{-1}Qf$ in $\CB$.
Since $\Sn$ forms a multiplicative system in $\ovl{\CA}$ by \ref{MR2}, there is a morphism $A'\xto{t}A$ in $\Sn$ such that the composition $tf$ factors through an object in $\CN$.
By \cref{cor:closed_under_weak_equiv}, the morphism $tf$ indeed sits in $\CA'$.
Thus we can apply $Q'$ to get a zero morphism $Q'(tf)=0$ in $\CA'$.
Also, we see $t\in\Sn'$ by \cref{cor:closed_under_weak_equiv} and $Q'(t)$ is an isomorphism, which shows $Q'(f)=0$ and $\alpha=0$.

\underline{Fullness of $G$}:
We fix objects $A,B\in\CA'$ and consider a morphism $QA\xto{\alpha}QB$ in $\CB$ which is represented by a left fraction $A\xto{f}B'\xleftarrow{s}B$ in $\CA$ with $s\in\Sn$.
We know this left fraction sits in $\CA'$ and $s\in\Sn'$ by \cref{cor:closed_under_weak_equiv}.

\underline{Extension-closedness of $\Im G$}:
Let $A',C'$ be objects in $\CA'$ and consider a $\ft$-triangle $QA'\overset{x}{\lra} Y\overset{y}{\lra} QC'\overset{\delta}{\dra}$ in $\CB$.
We have to show $Y\in\Im G$.
By \cref{lem:inf_from_inf}, there is an $\fs$-triangle $A\overset{f}{\lra} B\overset{g}{\lra} C\overset{\eta}{\dra}$ in $\CA$ which is isomorphic to the above $\ft$-triangle under $Q$.
In particular, we have an isomorphism $QA\cong QA'$ and $A\in\CA'$ by \cref{cor:closed_under_weak_equiv}.
Also we get $C\in\CA'$ and $B\in\CA'$ by the extension-closedness of $\CA'$.
Since $G$ is fully faithful, the commutativity $Q=G\circ Q'$ shows that isomorphisms $\alpha\colon Q'A\xto{\simeq}Q'A'$ and $\beta\colon Q'C'\xto{\simeq}Q'C$ exist in $\CB'$.
Taking a pushout and a pullback along $\alpha$ and $\beta$ in succession, we get the following isomorphic $\ft'$-triangles in $\CB'$,
\[
\begin{tikzcd}[column sep=1.2cm]
    Q'A
        \arrow{r}{Q'f}
        \arrow{d}{\simeq}[swap]{\alpha}
    & Q'B
        \arrow{r}{Q'g}
        \arrow{d}{\simeq}
    & Q'C 
        \arrow[dashed]{r}{\delta'}
        \arrow[equal]{d}{}
    & {}\\
    Q'A'
        \arrow{r}{}
    & Y'
        \arrow{r}{}
    & Q'C
        \arrow[dashed]{r}{\alpha_*\delta'}
    & {}\\
     Q'A'
        \arrow{r}{}\arrow[equal]{u}{}
    & Y''
        \arrow{r}{}\arrow{u}{\simeq}
    & Q'C'
        \arrow[dashed]{r}{\beta^*\alpha_*\delta'}\arrow{u}{\simeq}[swap]{\beta}
    & {}
\end{tikzcd}
\]
where we put $\delta'=\mu_{C',A'}(\eta)$.
Applying $G$ to the above, we have a desired isomorphism of $\ft$-triangles:
\[
\begin{tikzcd}[column sep=1.2cm]
    QA'
        \arrow{r}{x}
        \arrow[equal]{d}{}
    & Y 
        \arrow{r}{y}
        \arrow{d}{\simeq}
    & QC' 
        \arrow[dashed]{r}{\delta}
        \arrow[equal]{d}{}
    & {}\\
    QA'
        \arrow{r}{Qf}
    & GY''
        \arrow{r}{Qg}
    & QC'
        \arrow[dashed]{r}{\delta}
    & {}
\end{tikzcd}
\]
which shows $\delta=\psi_{Q'C',Q'A'}(\beta^*\alpha_*\delta')$.
Hence we have seen $Y\in\CB'$ and that $\psi\colon \BF'\to\BF$ is surjective for each pair of objects.
Again by that fact that $G$ is fully faithful, we see $\psi$ is bijective.
In turn, the functor $(G,\psi)$ shows the exact equivalence $(\CB',\BF,\ft')\xto{\sim} (\Im G,\BF|_{\Im G},\ft|_{\Im G})$ by \cref{lem:exact_equivalence}.
\end{proof}

\subsection{The extriangulated subquotient}
\label{subsec:extriangulated_subquotient}
We will establish an exact sequence from any extension-closed subcategory $\CN\sse\CA$ by generalizing the localization developed in \cite{Oga24}.
In contrast to the previous subsection \S\ref{subsec:auxiliary_exact_sequence}, we do not assume that $\CA$ satisfies the condition \ref{spade}.
The setup in this subsection is this.

\begin{setup}
\label{setup:extriangulated_subquotient}
Let $(\CA,\BE,\fs)$ be an extriangulated category with ${\rm (WIC)}$ and consider an extension-closed subcategory $\CN\sse\CA$ which are closed under direct summands.
\end{setup}

Notice that $(\CA,\CN)$ is not assumed to be admissible.
However, we can make $(\CA,\CN)$ admissible by passing to the following extriangulated substructures of $(\CA,\BE,\fs)$ which are determined by the extension-closed subcategory $\CN$.
It is indeed a modification of Quillen's substructures $\BE_{[\CN]}$ and $\BE^{[\CN]}$ introduced in \cref{lem:Quillen_substructure}.

\begin{proposition}\label{prop:modified_Quillen_substructure}
\cite[Prop. A.4]{Che23}
By defining subsets of $\BE(C,A)$ for any objects $A,C\in\CA$ as follows, we have closed bifunctors of $\BE$.
\begin{enumerate}[label=\textup{(\arabic*)}]
\item $\BE^{R}_{\CN}(C,A) \deff \Set{\delta\in\BE(C,A) | a_*\delta\in\BE_{[\CN]}(C,N)\ \textnormal{for any}\ A\xto{a}N\ \textnormal{with}\ N\in\CN}$;
\item $\BE_{\CN}^L(C,A) \deff \Set{\delta\in\BE(C,A) | c^*\delta\in\BE_{[\CN]}(N,A)\ \textnormal{for any}\ N\xto{c}C\ \textnormal{with}\ N\in\CN}$.
\end{enumerate}
Moreover, putting $\BE_\CN:=\BE^R_\CN\cap\BE^L_\CN$, we have these extriangulated structures
\[
\CA_\CN^R\deff (\CA_\CN^R,\BE_\CN^R,\fs_\CN^R),\quad \CA_\CN^L\deff (\CA_\CN^L,\BE_\CN^L,\fs_\CN^L),\quad \CA_\CN\deff (\CA_\CN,\BE_\CN,\fs_\CN)
\]
relative to $(\CA,\BE,\fs)$.
Here $\fs_\CN$ is a restriction of $\fs$ to $\BE_\CN$ and other undefined symbols are used in similar meanings.
Note that all the above extriangulated subcategories share the same underlying category $\CA$.
\end{proposition}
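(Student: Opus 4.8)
The plan is to treat the three substructures uniformly. Because the intersection of an arbitrary family of closed subbifunctors of $\BE$ is again closed (\cite[Cor.~3.14]{HLN21}) and $\BE_\CN=\BE^{R}_{\CN}\cap\BE^{L}_{\CN}$ with $\fs_\CN$ the evident restriction of $\fs$, it is enough to show that $\BE^{R}_{\CN}$ and $\BE^{L}_{\CN}$ are closed subbifunctors of $\BE$, and by duality I only treat $\BE^{R}_{\CN}$. First, $\BE^{R}_{\CN}$ is an additive subbifunctor: for fixed $C,A$ the set $\BE^{R}_{\CN}(C,A)$ is a subgroup of $\BE(C,A)$ since each $\BE_{[\CN]}(C,N)$ is; it is stable under $b_*$ because $(a'b)_*=a'_*b_*$ and $a'b\colon A\to N$ is again a morphism to $\CN$; and it is stable under $c^*$ because $a_*c^*=c^*a_*$ and $\BE_{[\CN]}$ is a subfunctor. (These routine verifications use only \cref{lem:Quillen_substructure}.)

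To see that $\BE^{R}_{\CN}$ is closed it suffices, by the characterisation recalled in \cref{subsec:relative_theory} (see \cite[\S5.1]{INP24}), to check that $\fs^{R}_{\CN}$-deflations compose. So let $D\xto{g_2}B\xto{g_1}C$ be composable $\fs^{R}_{\CN}$-deflations, sitting in $\fs$-triangles $A_2\to D\xto{g_2}B\overset{\delta_2}{\dra}$ and $A_1\xto{e_1}B\xto{g_1}C\overset{\delta_1}{\dra}$ with $\delta_1\in\BE^{R}_{\CN}(C,A_1)$ and $\delta_2\in\BE^{R}_{\CN}(B,A_2)$. The dual of \ref{ET4} applied to this pair produces an $\fs$-triangle $A_3\to D\xto{g_1g_2}C\overset{\delta_3}{\dra}$, a ``kernel'' $\fs$-triangle $A_2\xto{i}A_3\to A_1\dra$, and a compatibility of the form $i_*\delta_2=g_1^{*}\delta_3$. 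I must show $\delta_3\in\BE^{R}_{\CN}(C,A_3)$, i.e.\ $a_*\delta_3\in\BE_{[\CN]}(C,N)$ for every $a\colon A_3\to N$ with $N\in\CN$. Fixing such an $a$, we get $g_1^{*}(a_*\delta_3)=a_*(i_*\delta_2)=(ai)_*\delta_2\in\BE_{[\CN]}(B,N)$, the last membership because $\delta_2\in\BE^{R}_{\CN}$ and $ai\colon A_2\to N$. The conclusion $a_*\delta_3\in\BE_{[\CN]}(C,N)$ is then exactly the instance, for $g=g_1$ and $\xi=a_*\delta_3$, of the descent statement $(\ast)$: \emph{if $g\colon B\to C$ is an $\fs^{R}_{\CN}$-deflation with cocone $A_1$ and $\xi\in\BE(C,N)$ with $N\in\CN$ satisfies $g^{*}\xi\in\BE_{[\CN]}(B,N)$, then $\xi\in\BE_{[\CN]}(C,N)$.}

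The main obstacle is $(\ast)$. When $g^{*}\xi=0$ it is immediate: the long exact sequence in $\BE(-,N)$ of $A_1\xto{e_1}B\xto{g_1}C\overset{\delta_1}{\dra}$ gives $\xi=\psi_*\delta_1$ for some $\psi\colon A_1\to N$, and $\psi_*\delta_1\in\BE_{[\CN]}(C,N)$ because $\delta_1\in\BE^{R}_{\CN}$. For the general case I would use a pushout‑into‑$\CN$ device: write $g^{*}\xi=h^{*}\mu$ with $h\colon B\to M\in\CN$ and $\mu\in\BE(M,N)$; from $ge_1=0$ one gets $(he_1)^{*}\mu=0$, so $he_1$ lifts through the deflation $M'\to M$ of the $\fs$-triangle realising $\mu$, where $M'\in\CN$ since $\CN$ is extension-closed. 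Pushing out $e_1$ along this lift $\alpha\colon A_1\to M'$ replaces $g$ by an $\fs_{[\CN]}$-deflation $\hat g\colon\hat B\to C$ — its extension $\alpha_*\delta_1$ lies in $\BE_{[\CN]}(C,M')$ since $\delta_1\in\BE^{R}_{\CN}$ — and, by the weak universal property of the pushout, yields $u\colon\hat B\to M\in\CN$ restricting to $M'\to M$ on the cocone $M'$. A chase with the long exact sequences then rewrites $\xi$ modulo $\BE_{[\CN]}(C,N)$ as a class $\xi'$ with $\hat g^{*}\xi'=u^{*}\mu\in\BE_{[\CN]}(\hat B,N)$, reducing $(\ast)$ to the inclusion $\BE_{[\CN]}(\hat B,N)\cap\Im(\hat g^{*})\subseteq\hat g^{*}\!\bigl(\BE_{[\CN]}(C,N)\bigr)$. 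This last inclusion is the delicate step, and it is here that the closedness of $\BE_{[\CN]}$ (\cref{lem:Quillen_substructure}) and its stability $\BE_{[\CN]}=(\BE_{[\CN]})_{[\CN]}$ (\cref{lem:stability_of_Quillen_substructure}) enter: iterating the same pushout‑into‑$\CN$ construction along the cocone of $\hat g$ one arranges the comparison morphism into $\CN$ to annihilate that cocone, whence it factors through $\hat g$ and displays the given class as $\hat g^{*}$ of an element of $\BE_{[\CN]}(C,N)$. The case of $\BE^{L}_{\CN}$ is entirely dual, and $\BE_\CN$ is then closed by the intersection principle quoted at the outset.
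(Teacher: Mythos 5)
The paper offers no proof of this proposition; it is imported wholesale from \cite[Prop.~A.4]{Che23}, so your argument has to stand on its own. The routine parts are fine: $\BE^{R}_{\CN}$ is an additive subbifunctor, closedness does reduce to composability of $\fs^{R}_{\CN}$-deflations, the bookkeeping $g_1^{*}(a_*\delta_3)=(ai)_*\delta_2$ from the dual of \ref{ET4} is correct, and your reduction of the descent statement $(\ast)$ --- the case $g^{*}\xi=0$, then the lift $\alpha\colon A_1\to M'$ and the weak pushout $\hat B$ --- down to the inclusion $\BE_{[\CN]}(\hat B,N)\cap\Im(\hat g^{*})\subseteq\hat g^{*}\bigl(\BE_{[\CN]}(C,N)\bigr)$ is also correct (and the ambiguity in choosing a preimage under $\hat g^{*}$ is indeed harmless, because $\alpha_*\delta_1\in\BE_{[\CN]}(C,M')$ forces the image of $(\alpha_*\delta_1)^{\sharp}\colon\CA(M',N)\to\BE(C,N)$ into $\BE_{[\CN]}(C,N)$).

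The gap is that final inclusion, which you yourself call the delicate step and then dispose of in one sentence. The proposed mechanism --- ``arrange the comparison morphism into $\CN$ to annihilate the cocone, whence it factors through $\hat g$'' --- does not work as described: the comparison morphism $u\colon\hat B\to M$ supplied by the weak pushout restricts on the cocone $M'$ to the deflation $p\colon M'\to M$ of the $\fs$-triangle realising $\mu$, which is nonzero by construction, and subtracting any $w\hat g$ leaves that restriction unchanged; so $u$ never factors through $\hat g$, and no iteration of the pushout-into-$\CN$ device changes this. The natural alternatives also stall: to pull $u^{*}\mu$ back from $C$ via the weak-pullback triangle $\hat B\to C\oplus Y\to N_0$ attached to a factorisation $\alpha_*\delta_1=f^{*}\eta$, one must either extend $u$ along the inflation $\hat B\to C\oplus Y$ (the obstruction is $p_{*}\eta\in\BE(N_0,M)$, which need not vanish) or realise $u^{*}\mu$ in the image of $v^{*}$ for the comparison morphism $v\colon\hat B\to Y$, which there is no reason to expect. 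Neither the closedness of $\BE_{[\CN]}$ nor the stability $\BE_{[\CN]}=(\BE_{[\CN]})_{[\CN]}$ is actually brought to bear on the class $u^{*}\mu$ in your sketch. This inclusion is precisely where the substance of \cite[Prop.~A.4]{Che23} lies, and as written your proof is incomplete at its crux.
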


\begin{remark}\label{rem:modified_Quillen_substructure}
Before stating the main theorem, we should make some remarks on \cref{prop:modified_Quillen_substructure}.
\begin{enumerate}[label=\textup{(\arabic*)}]
\item 
If $\CA$ corresponds to a triangulated category, the subfunctors $\BE_{\CN}^{R}, \BE_{\CN}^L$ and $\BE_{\CN}$ are same as the closed subfunctors defined in \cite[Prop. 2.1]{Oga24}.
\item 
The stabilities $\BE^R_{\CN}=(\BE^R_{\CN})^R_{\CN}$ and $\BE^L_{\CN}=(\BE^L_{\CN})^L_{\CN}$ is inherited from those of $\BE_{[\CN]}$ and $\BE^{[\CN]}$ that we verified in \cref{lem:stability_of_Quillen_substructure}, respectively.
In particular, we have $\BE_\CN=(\BE_\CN)_\CN$ too.
\end{enumerate}
\end{remark}

The main feature of the extriangulated substructure $\CA_\CN=(\CA_\CN,\BE_\CN,\fs_\CN)$ is that it satisfies the condition \ref{spade} in \cref{setup:spade}.
Due to \cref{prop:restriction_of_extri_subquotient}, we know that the pair $(\CA_\CN,\CN)$ is admissible and hence the following are straightforward.

\begin{theorem}\label{thm:subquotient_of_extri_cat}
Let $(\CA,\CN)$ be the pair of an extriangulated category $(\CA,\BE,\fs)$ and an extension-closed subcategory $\CN\sse\CA$ which is closed under direct summands.
We consider the extriangulated subcategory $\CA_\CN=(\CA_\CN,\BE_\CN,\fs_\CN)$ determined by $\CN$.
Then, we have the following.
\begin{enumerate}[label=\textup{(\arabic*)}]
\setcounter{enumi}{-1}
\item 
We have an identity $(\CN,\BE|_\CN,\fs|_\CN)=(\CN,\BE_\CN|_\CN,\fs_\CN|_\CN)$ as extriangulated categories.
\item 
The pair $(\CA_\CN,\CN)$ satisfies the condition \ref{spade}.
\item 
The pair $(\CA_\CN,\CN)$ is admissible, that is, there exists the associated exact sequence $\Theta$ in $\ET$:
\begin{equation}\label{seq:subquotient_of_extri_cat}
\begin{tikzcd}
    (\CN,\BE|_\CN,\fs|_\CN)
        \arrow{r}
    & (\CA_\CN,\BE_\CN,\fs_\CN)
        \arrow{r}{(Q,\mu)}
    & (\CA_\CN/\CN,\widetilde{\BE_\CN},\widetilde{\fs_\CN}),
\end{tikzcd}
\end{equation}
where we denote by $\CA_\CN/\CN$ the underlying category of the associated extriangulated quotient.
\item 
Consider the defining classes $\Inf_\CN, \Def_\CN$ and $\Sn$ associated to the quotient $\CA_\CN/\CN$.
Then we have identities $\Sn=\Def_\CN^{\sp}\circ \Inf_\CN=\Def_\CN\circ \Inf_\CN^{\sp}$.
Moreover, $\Sn$ is saturated with respect to the quotient $(Q,\mu)$.
\end{enumerate}
We call it the \emph{extriangulated subquotient} (or \emph{subquotient} for short) of an extriangulated category $\CA$ by an extension-closed subcategory $\CN$.
\end{theorem}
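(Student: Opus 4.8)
The plan is to reduce everything to \cref{prop:restriction_of_extri_subquotient} once we establish the single nontrivial input, namely part (1): the pair $(\CA_\CN,\CN)$ satisfies \ref{spade}. First I would dispense with part (0): since $\CN$ is extension-closed in $\CA$, for any conflation $N_1\to B\to N_3$ in $\CA$ with all terms in $\CN$, the realizing extension $\delta\in\BE(N_3,N_1)$ automatically lies in both $\BE_{[\CN]}$ and $\BE^{[\CN]}$ (take the identity maps $N_1\to N_1$ and $N_3\to N_3$), hence in $\BE_\CN$; conversely $\BE_\CN\sse\BE$. So the conflations with all terms in $\CN$ are the same for $(\CA,\BE,\fs)$ and for $(\CA_\CN,\BE_\CN,\fs_\CN)$, giving the identity $(\CN,\BE|_\CN,\fs|_\CN)=(\CN,\BE_\CN|_\CN,\fs_\CN|_\CN)$.

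For part (1) — the heart of the matter — I would argue as follows. Let $N_1\xto{f}B$ be an $\fs_\CN$-inflation with $N_1\in\CN$, fitting into an $\fs_\CN$-triangle $N_1\xto{f}B\xto{g}C\overset{\delta}{\dra}$ with $\delta\in\BE_\CN(C,N_1)\sse\BE^R_\CN(C,N_1)$. By the defining condition of $\BE^R_\CN$, applied to the identity $N_1\xto{\id}N_1$ (which is the relevant map ``$A\to N$'' with $N=N_1\in\CN$), we get $\delta=\id_*\delta\in\BE_{[\CN]}(C,N_1)$. Unwinding the definition of $\BE_{[\CN]}$ in \cref{lem:Quillen_substructure}, $\delta=\sum_i \BE(h_i,N_1)(\eta_i)$ for finitely many $h_i\colon C\to N_i$ with $N_i\in\CN$ and $\eta_i\in\BE(N_i,N_1)$. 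Bundling these into $h\colon C\to\bigoplus_i N_i=:N$ and $\eta\in\BE(N,N_1)$, we obtain $\delta=h^*\eta$. Now pull back the $\fs$-triangle realizing $\eta$, i.e. $N_1\to E\to N\overset{\eta}{\dra}$ with $E\in\CN$ (extension-closedness of $\CN$), along $h$; by \cref{lem:wPO_wPB}\ref{item:wPB} there is a morphism of $\fs$-triangles over $h\colon C\to N$ whose top row is $N_1\xto{f}B\xto{g}C$ and whose middle vertical map $b\colon B\to E$ satisfies $b\circ f = $ the inflation $N_1\to E$. Since $N_1,E\in\CN$ and this composite is an $\fs$-inflation with cokernel $N\in\CN$, by part (0) it is an $\fs_\CN|_\CN$-inflation, i.e. an $\fs|_\CN$-inflation. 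This produces the required $b$, and the second half of \ref{spade} (about $\fs$-deflations onto objects of $\CN$) follows by the dual argument using $\BE^L_\CN$, $\BE^{[\CN]}$ and \cref{lem:wPO_wPB}\ref{item:wPO}. The main obstacle I anticipate is exactly this bookkeeping: checking that the pullback/pushout manufactured by the weak pullback/pushout lemma genuinely has the composite landing in the right relative substructure, and that finite biproducts of $\CN$-objects stay in $\CN$ — both are routine given that $\CN$ is extension-closed and summand-closed, but must be stated carefully.

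Given parts (0) and (1), the rest is assembly. For part (2): \ref{spade} for $(\CA_\CN,\CN)$ places us exactly in \cref{setup:spade} (note $\CA_\CN$ is ${\rm (WIC)}$ since it has the same underlying additive category as $\CA$, and ${\rm (WIC)}$ is a property of the underlying additive category by the cited equivalence), so \cref{prop:extriangulated_quotient_under_spade} gives the exact sequence $\Theta$ in $\ET$ displayed in \eqref{seq:subquotient_of_extri_cat}; in particular $(\CA_\CN,\CN)$ is admissible. For part (3): the factorization $\Sn=\Def_\CN^{\sp}\circ\Inf_\CN=\Def_\CN\circ\Inf_\CN^{\sp}$ is precisely \cref{lem:factorization_of_Sn} applied inside $(\CA_\CN,\BE_\CN,\fs_\CN)$, and saturation of $\Sn$ with respect to $(Q,\mu)$ is \cref{cor:Sn_is_saturated}. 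Finally, the name ``extriangulated subquotient'' is declared. I would write the proof in this order — (0), then the two-sided verification of (1), then invoke \cref{prop:extriangulated_quotient_under_spade}, \cref{lem:factorization_of_Sn} and \cref{cor:Sn_is_saturated} for (2) and (3) — with the bulk of the text devoted to (1).
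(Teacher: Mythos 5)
Your proposal is correct and follows essentially the same route as the paper: item (0) from the definition of $\BE_\CN$, item (1) by unwinding $\delta\in\BE^R_\CN\sse\BE_{[\CN]}$ into a factorization $\delta=h^*\eta$ with $\eta\in\BE(N,N_1)$ realized inside $\CN$ and then taking a weak pullback along $h$ to produce the morphism $b$ with $b\circ f$ an $\fs|_\CN$-inflation, and items (2)–(3) by assembling \cref{prop:extriangulated_quotient_under_spade}, \cref{lem:factorization_of_Sn} and \cref{cor:Sn_is_saturated}. Your explicit remarks on bundling the finite sum into a single map $C\to\bigoplus_i N_i$ and on ${\rm (WIC)}$ being inherited by $\CA_\CN$ are details the paper leaves implicit, but the argument is the same.
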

\begin{proof}
The item (0) is immediate from the definition of $\BE_\CN$.
If \ref{spade} holds for $(\CA_\CN,\CN)$, the assertions (2) and (3) are just a combination of \cref{prop:restriction_of_extri_subquotient}, \cref{cor:closed_under_weak_equiv} and \cref{cor:Sn_is_saturated}.
So it is enough to show the item (1).
To do this, we consider an $\fs_\CN$-conflation $N_1\overset{f}{\lra} B\overset{g}{\lra} C\overset{\delta}{\dra}$ with $N_1\in\CN$.
By the definition of $\BE^R_\CN$, we see $\delta\in\BE_{[\CN]}$.
Again by the definition of $\BE_{[\CN]}$, there is a morphism $C\xto{h}N_3$ and an $\BE$-extension $\eta\in\BE(N_3,N_1)$ such that $\BE(h,A)(\eta)=\delta$.
That is, we have the following morphism of $\fs$-triangles.
\begin{equation*}\label{diag:subquotient_of_extri_cat}
\begin{tikzcd}
    N_1 \arrow{r}{f}\arrow[equal]{d}{}& B \arrow{r}{g}\arrow{d}[swap]{b}&C \arrow{d}{h}\arrow[dashed]{r}{\delta} & {}\\
    N_1 \arrow{r}{bf}&N_2\arrow{r}{} &N_3\arrow[dashed]{r}{\eta}\wPB{lu} & {}
\end{tikzcd}
\end{equation*}
As $\CN$ is extension-closed in $\CA$, we get $N_2\in\CN$.
The morphism $b$ is a desired one, which shows \ref{spade} by combining the dual argument.
\end{proof}

Now, it turns out that a certain pair $(\CA,\CN)$ may provide two different types of localizations.
We should make a comparison of the extriangulated quotient and the subquotient.
If the pair $(\CA,\CN)$ is admissible, by the universality of $(Q',\mu')\colon \CA\to\CA_{\CN}/\CN$, we have a morphism $(F,\phi)\colon \CA_\CN/\CN\to\CA/\CN$ in $\ET$ which makes the following diagram commute.
\begin{equation}\label{diag:comparison}
\begin{tikzcd}[column sep=1.5cm]
    (\CN,\BE|_\CN,\fs|_\CN) \arrow{r}\arrow[equal]{d}{}& (\CA_\CN,\BE_\CN,\fs_\CN) \arrow{r}{(Q',\mu')}\arrow{d}{(\inc,\iota)}&(\CA_\CN/\CN,\wtil{\BE_\CN},\wtil{\fs_\CN}) \arrow[dotted]{d}{(F,\phi)} \\
    (\CN,\BE|_\CN,\fs|_\CN) \arrow{r}&(\CA,\BE,\fs)\arrow{r}{(Q,\mu)} &(\CA/\CN,\wtil{\BE},\wtil{\fs})
\end{tikzcd}
\end{equation}

We should remark that \eqref{diag:comparison} is very different to \eqref{diag:PBII}, because $\CA_\CN$ is never extension-closed in $(\CA,\BE,\fs)$ unless $\BE_\CN=\BE$ occurs.
We will reveal a necessary and sufficient condition for $(\CA,\CN)$ to produce the exact equivalence $(F,\phi)$, where the condition \ref{spade} again comes into play.

\begin{proposition}\label{prop:comparison_of_quotient_and_subquotient}
We keep the setup in \cref{thm:subquotient_of_extri_cat}.
Then the following conditions are equivalent.
\begin{enumerate}[label=\textup{(\arabic*)}]
\item 
The extriangulated subcategory $(\CN,\BE|_\CN,\fs|_\CN)$ is thick and satisfies the conditions \ref{MR1}--\ref{MR4}.
The functor $(F,\phi)\colon (\CA_\CN/\CN,\wtil{\BE_\CN},\wtil{\fs_\CN})\to (\CA/\CN,\wtil{\BE},\wtil{\fs})$ is an exact equivalence.
\item 
The identity $\BE=\BE_\CN$ holds.
\item
The pair $(\CA,\CN)$ satisfies \ref{spade} of \cref{setup:spade}.
\end{enumerate}
\end{proposition}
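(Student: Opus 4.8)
The plan is to verify $(2)\Rightarrow(1)$, $(2)\Rightarrow(3)$, $(3)\Rightarrow(2)$ and $(1)\Rightarrow(2)$, of which only the last requires real work. The implications out of $(2)$ are immediate: when $\BE=\BE_\CN$ the extriangulated substructure $(\CA_\CN,\BE_\CN,\fs_\CN)$ is literally $(\CA,\BE,\fs)$, so the comparison morphism $(F,\phi)$ of \eqref{diag:comparison} is an identity, hence an exact equivalence, while $(\CA,\CN)=(\CA_\CN,\CN)$ is admissible with $\CN$ thick (which is precisely the first statement of $(1)$) and satisfies \ref{spade}, all by \cref{thm:subquotient_of_extri_cat}.

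For $(3)\Rightarrow(2)$, since $\BE_\CN\sse\BE$ holds in general, I would show \ref{spade} forces equality by rephrasing membership in the subbifunctors of \cref{prop:modified_Quillen_substructure}. The pivot is that, for an $\fs$-triangle $A\xto{f}B\xto{g}C\overset{\delta}{\dra}$, one has $\delta\in\BE_{[\CN]}(C,A)$ exactly when there is a morphism $b\colon B\to N$ with $N\in\CN$ and $b\circ f$ an $\fs|_\CN$-inflation: a witness of the former is a morphism of $\fs$-triangles onto one whose outer terms lie in $\CN$, which is built from such a $b$ by a standard use of \ref{ET4} and the extension-closedness of $\CN$, and conversely. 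Feeding this through the pushout and pullback descriptions of \cref{lem:wPO_wPB} and unwinding the definitions in \cref{prop:modified_Quillen_substructure}, the first clause of \ref{spade} becomes equivalent to $\BE=\BE_\CN^R$ and, dually, the second to $\BE=\BE_\CN^L$; hence \ref{spade} yields $\BE=\BE_\CN^R\cap\BE_\CN^L=\BE_\CN$.

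It remains to treat $(1)\Rightarrow(2)$. Assuming $(1)$, write $\Snn{\CA}$ and $\Snn{\CA_\CN}$ for the classes of \cref{def:Sn_from_thick} attached to $(\CA,\CN)$ and to the substructure $(\CA_\CN,\CN)$. First I would prove $\Snn{\CA}=\Snn{\CA_\CN}$: the inclusion $\Snn{\CA_\CN}\sse\Snn{\CA}$ is clear since $\fs_\CN$-inflations and $\fs_\CN$-deflations are in particular $\fs$-inflations and $\fs$-deflations, and conversely $Q\colon\CA\to\CA/\CN$ inverts every $s\in\Snn{\CA}$, so $Q'(s)=F^{-1}(Q(s))$ is invertible (using $F\circ Q'=Q$ and that $F$ is an exact equivalence), whence $s\in\Snn{\CA_\CN}$ because $\Snn{\CA_\CN}$ is saturated: indeed $(\CA_\CN,\CN)$ satisfies \ref{spade} by \cref{thm:subquotient_of_extri_cat}(1), so \cref{cor:Sn_is_saturated} applies (cf.\ also \cref{thm:subquotient_of_extri_cat}(3)). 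Then I would check that $(\CA,\CN)$ satisfies \ref{spade}, which by $(3)\Rightarrow(2)$ gives $\BE=\BE_\CN$. Let $f\colon N_1\to B$ be an $\fs$-inflation with $N_1\in\CN$ and complete it to $N_1\xto{f}B\xto{g}C\overset{\delta}{\dra}$. As $\cocone(g)=N_1\in\CN$, the morphism $g$ is an $\fs$-deflation with kernel in $\CN$, so $g\in\Snn{\CA}=\Snn{\CA_\CN}$; by \cref{thm:subquotient_of_extri_cat}(3) we may write $g=d\circ i$ with $i\colon B\to B\oplus N'$ the section of an $\fs_\CN$-inflation with cokernel $N'\in\CN$ and $d\colon B\oplus N'\to C$ an $\fs_\CN$-deflation whose kernel $\kappa\colon K\to B\oplus N'$ has $K\in\CN$. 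From $d\circ(i\circ f)=g\circ f=0$ and the weak-kernel property of $\kappa$ one gets $i\circ f=\kappa\circ\bar f$ for some $\bar f\colon N_1\to K$; since $i\circ f$ (a composite of $\fs$-inflations) and $\kappa$ are $\fs$-inflations, $\bar f$ is an $\fs$-inflation by the cancellation property valid in {\rm(WIC)} extriangulated categories, and \ref{ET4} applied to $N_1\xto{\bar f}K\xto{\kappa}B\oplus N'$ — together with the fact that $\fs$-deflations are epimorphisms — identifies $\cone(\bar f)$ with $N'\in\CN$, so $\bar f$ is an $\fs|_\CN$-inflation. Finally, \ref{spade} for $(\CA_\CN,\CN)$ (\cref{thm:subquotient_of_extri_cat}(1)) applied to the $\fs_\CN$-inflation $\kappa$ produces $b'\colon B\oplus N'\to N_2$ with $b'\circ\kappa$ an $\fs|_\CN$-inflation, and then $b\deff b'\circ i$ satisfies $b\circ f=(b'\circ\kappa)\circ\bar f$, a composite of $\fs|_\CN$-inflations and hence an $\fs|_\CN$-inflation; the dual argument gives the other clause of \ref{spade}.

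The main obstacle I anticipate is the second half of $(1)\Rightarrow(2)$: transporting the completing morphisms granted by \ref{spade} from the substructure $\CA_\CN$, where they are available by \cref{thm:subquotient_of_extri_cat}, back to $\CA$. This rests on the factorization $\Snn{\CA_\CN}=\Def_\CN\circ\Inf_\CN^{\sp}$ of \cref{thm:subquotient_of_extri_cat}(3) and on the \ref{ET4}-computation showing $\cone(\bar f)\in\CN$; everything else is routine bookkeeping with \cref{prop:modified_Quillen_substructure}, \cref{lem:wPO_wPB} and the extension-closedness of $\CN$.
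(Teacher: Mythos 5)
Your proof follows essentially the same route as the paper's: the implications out of (2) are formal consequences of \cref{thm:subquotient_of_extri_cat}, your (3)$\Rightarrow$(2) is the same unwinding of the definitions of $\BE_\CN^R$ and $\BE_\CN^L$ via \ref{spade}, and your (1)$\Rightarrow$(2) proceeds, exactly as in the paper, by first establishing \ref{spade} for $(\CA,\CN)$: one transports the deflation $g$ complementary to the given inflation $f$ into the saturated class $\Snn{\CA_\CN}$ using the exact equivalence $(F,\phi)$, factors it by \cref{thm:subquotient_of_extri_cat}(3), and cancels with {\rm(WIC)}. (The paper uses the factorization $\Def_\CN^{\sp}\circ\Inf_\CN$ and a direct matrix computation where you use $\Def_\CN\circ\Inf_\CN^{\sp}$ and a weak-kernel argument; these are interchangeable.)

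One step is wrongly justified, although the conclusion you need survives. $\fs$-deflations are not epimorphisms in a general extriangulated category (in a triangulated category every morphism is a deflation), so your \ref{ET4} argument does not identify $\cone(\bar f)$ with $N'$: the two octahedra only produce conflations $\cone(\bar f)\to\cone(i\circ f)\to C$ and $C\to\cone(i\circ f)\to N'$, which do not pin down $\cone(\bar f)$. Fortunately the identification is unnecessary: under hypothesis (1) the subcategory $\CN$ is thick, and $\bar f\colon N_1\to K$ is an $\fs$-inflation between objects of $\CN$, so its cone lies in $\CN$ by the 2-out-of-3 property and $\bar f$ is an $\fs|_\CN$-inflation --- which is precisely how the paper concludes for the morphism $bf\colon N_1\to N_2$ in its version of the argument.
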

\begin{proof}
(3) $\Rightarrow$ (2):
Note that $\CN$ is a thick subcategory of $\CA$ by \cref{lem:spade_implies_thickness}.
To confirm the condition \ref{spade} forces the equality $\BE=\BE_\CN$, we shall check that any $\fs$-triangle $N_1\overset{f}{\lra} B\overset{g}{\lra} C\overset{\delta}{\dra}$ with $N_1\in\CN$ is an $\fs_\CN$-triangle.
In fact, due to \ref{spade}, we have a morphism $B\xto{b}N_2$ such that $N_1\overset{bf}{\lra}N_2$ is a $\fs$-inflation and a diagram of the same form as \eqref{diag:subquotient_of_extri_cat}.
This verifies $\delta\in\BE^R_{\CN}(C,N_1)$ and $\BE=\BE^R_{\CN}$.
Combining the dual, we have done.

(2) $\Rightarrow$ (1) and (3): Both items follow from \cref{thm:subquotient_of_extri_cat}(1)(2).

(1) $\Rightarrow$ (3):
Let us consider an $\fs$-triangle $N_1\overset{f}{\lra} B\overset{g}{\lra} C\overset{\delta}{\dra}$ with $N_1\in\CN$.
By the exact equivalence $(F,\phi)$, we know $Q'(g)$ is an isomorphism in $\CA_\CN/\CN$.
By \cref{thm:subquotient_of_extri_cat}(3), the $\fs$-deflation $g$ admits a factorization
\[
g\colon B\overset{\begin{bsmallmatrix}g \\
b
\end{bsmallmatrix}}\lra C\oplus N_2\overset{\begin{bsmallmatrix}\id_C & 0
\end{bsmallmatrix}}{\lra} C
\]
where $\begin{bsmallmatrix}g \\
b
\end{bsmallmatrix}$ is an $\fs_\CN$-inflation and $N_2\in\CN$.
Thus we get an $\fs_\CN$-inflation $\begin{bsmallmatrix}g \\
b
\end{bsmallmatrix}\circ f=\begin{bsmallmatrix}0 \\
bf
\end{bsmallmatrix}$.
Due to ${\rm (WIC)}$, we have an $\fs_\CN$-inflation $N_1\xto{bf}N_2$.
Since we can complete $bf$ into an $\fs_\CN$-conflation $N_1\xto{bf} N_2\to N_3$ in $\CN$, the morphism $b$ is a desired one and we omit the dual part of the remaining proof.
\end{proof}

In particular, under the condition $\ref{spade}$, the extriangulated quotient $\CA/\CN$ coincides with the extriangulated subquotient $\CA_\CN/\CN$.

\subsection{The extriangulated subquotients are algebraic}
\label{subsec:subquotients_are_algebraic}
It is well-known that the property of a triangulated category to be algebraic is stable under taking the Verdier quotient, e.g. \cite[\S 3.6]{Kel06}, \cite[\S 7.5]{Kra07}.
As stated in \cite{Che23}, this leads us to a natural question whether the algebraic extriangulated categories are closed under taking the extriangulated quotient.
As a partial answer to this question, we will prove our first main theorem, that is, they are closed under taking extriangulated subquotients.

From now on, the following setup is in play to the end of the section.

\begin{setup}\label{setup:algebraic_subquotient}
Let us consider the pair $(\CA,\CN)$ of an extriangulated category $(\CA,\BE,\fs)$ with ${\rm (WIC)}$ and an extension-closed subcategory $(\CN,\BE|_\CN,\fs|_\CN)$ which is closed under direct summands. In addition, we assume that $\CA$ admits an exact dg enhancement $(\A,\SS)$ which is connective and cofibrant.
\end{setup}

\emph{Keeping in mind \cref{prop:comparison_of_quotient_and_subquotient}, we may assume that the pair $(\CA,\CN)$ satisfies \ref{spade} in \cref{setup:spade}.}
If not in that case, we can take the extriangulated substructure $\CA_\CN=(\CA_\CN,\BE_\CN,\fs_\CN)$ by \cref{thm:subquotient_of_extri_cat}.
As stated in \cref{prop:comparison_of_quotient_and_subquotient}, we have an exact sequence \eqref{seq:mult_loc} in $\ET$ such as
\begin{equation}\label{seq:algebraic_subquotient}
\begin{tikzcd}
    \Theta\colon(\CN,\BE|_\CN,\fs|_\CN)
        \arrow{r}
    & (\CA,\BE,\fs)
        \arrow{r}{(Q,\mu)}
    & (\CA/\CN,\widetilde{\BE},\widetilde{\fs}).
\end{tikzcd}
\end{equation}

The universal embedding $F\colon \A\to\Db_{\dg}(\A)$ in \cref{thm:universal_embedding} shows that $H^0\A=(\CA,\BE,\fs)$ is an extension-closed subcategory of the bounded derived category $\Db(\A)$.
By considering the thick closure of $\CN$ in $\Db(\A)$, we have the Verdier quotient of $\Db(\A)$ by $\thick\CN$.
Thus, by the universality of \eqref{seq:algebraic_subquotient}, we have the following commutative diagram in $\ET$ consisting of exact sequences,
\begin{equation}
\label{diag:algebraic_subquotient}
\begin{tikzcd}[column sep=1.5cm]
    (\CN,\BE|_\CN,\fs|_\CN) \arrow{r}\arrow[hook]{d}{}& (\CA,\BE,\fs) \arrow{r}{(Q,\mu)}\arrow[hook]{d}&
    (\CA/\CN,\widetilde{\BE},\widetilde{\fs}) \arrow{d}{(G,\psi)} \\
    \thick\CN \arrow{r}&
    \Db(\A)\arrow{r}{L} &
    \dfrac{\Db(\A)}{\thick\CN}
\end{tikzcd}
\end{equation}
where we denote by $L$ the Verdier quotient.

We are going to show the extriangulated quotient $\CA/\CN$ is still algebraic by verifying that it forms an exact subsequence of the Verdier quotient.
In turn, what we will do amounts to the following statement.

\begin{proposition}
\label{prop:algebraic_subquotient}
The induced exact functor $(G,\psi)$ is fully faithful and the essential image $\Im G$ is extension-closed in $\frac{\Db(\A)}{\thick\CN}$.
Moreover, $(\CA/\CN,\wtil{\BE},\wtil{\fs})$ is exact equivalent to $\Im G$ and satisfies {\rm (WIC)}.
\end{proposition}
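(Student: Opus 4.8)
The plan is to realize $(\CA/\CN,\wtil{\BE},\wtil{\fs})$ as an extension-closed subcategory of the \emph{algebraic} triangulated category $\Db(\A)/\thick\CN$, after which algebraicity and {\rm (WIC)} of the subquotient follow formally. First one records that $\Db(\A)/\thick\CN$ is algebraic: it is the Verdier quotient of $\Db(\A)=H^0\Db_{\dg}(\A)$, which is algebraic by \cref{thm:universal_embedding}, by a thick subcategory, and algebraic triangulated categories are stable under Verdier quotients by \cref{prop:enhancement_of_Verdier_quotient} (cf. \cite[\S 3.6]{Kel06}); being triangulated it is also {\rm (WIC)}. Thus it suffices to prove that $(G,\psi)$ is fully faithful, that $\Im G$ is extension-closed in $\Db(\A)/\thick\CN$, and that $\psi$ is a natural isomorphism; then \cref{lem:exact_equivalence} yields an exact equivalence $(\CA/\CN,\wtil{\BE},\wtil{\fs})\xto{\sim}(\Im G,\BF|_{\Im G},\ft|_{\Im G})$, where $\BF=\Hom_{\Db(\A)/\thick\CN}(-,-[1])$, and since $\Im G$ is then an extension-closed, direct-summand-closed subcategory of a {\rm (WIC)} triangulated category it is {\rm (WIC)}, hence so is $\CA/\CN$, which is therefore algebraic by \cref{prop:characterization_for_alg_ET}.

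The whole argument rests on a \emph{resolution lemma}: for each $A\in\CA$, the morphisms $A'\xto{s}A$ with $A'\in\CA$ and $s\in\Sn$ are coinitial among all morphisms $X\to A$ of $\Db(\A)$ whose cone lies in $\thick\CN$ (and dually, the $\Sn$-morphisms $A\xto{s}A'$ with $A'\in\CA$ are cofinal among $A\to X$ with cone in $\thick\CN$). I would prove this by dévissage along the generation of $\thick\CN$ by the shifts $\CN[n]$, reducing to the case $X=\cocone(w\colon A\to N[n])$ with $N\in\CN$. For $n<0$ the group $\Hom_{\Db(\A)}(A,N[n])$ vanishes (a standard feature of the universal embedding, in the spirit of \cref{prop:higher_extension_via_univ_emb}), so $w=0$, $X\cong A\oplus N[n-1]$, and $\id_A\in\Sn$ suffices; for $n=0$, condition \ref{spade} together with the factorization $\Sn=\Def_\CN^{\sp}\circ\Inf_\CN=\Def_\CN\circ\Inf_\CN^{\sp}$ of \cref{thm:subquotient_of_extri_cat} turns $w\colon A\to N$ (or its dual $N\to A$) into an honest $\fs$-conflation of $\CA$ comparing $X$ and $A$ through an $\Sn$-morphism; the remaining case $n\ge1$ is treated by peeling off one $\fs$-conflation at a time, the process terminating because $\Db(\A)$ is bounded (its objects are iterated extensions of finitely many shifts of objects of $\CA$). \textbf{Carrying out this dévissage, in particular the termination and the $n\ge 1$ step, is the main obstacle}; the tools available are exactly condition \ref{spade}, the factorization of $\Sn$, and the effaceability and $\Ext$-compatibility recorded in \cref{lem:higher_extensions_are_effaceable} and \cref{prop:higher_extension_via_univ_emb}.

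Granting the resolution lemma, full faithfulness of $G$ follows by routine calculus of fractions. Since an $\fs$-conflation of $\CA$ becomes a triangle of $\Db(\A)$ and $\Inf_\CN,\Def_\CN$ have (co)cones in $\CN\subseteq\thick\CN$, the functor $L$ sends $\Sn$ to isomorphisms, so $G$ is as in \eqref{diag:algebraic_subquotient} and $\Im G=\Im(L|_\CA)$. For faithfulness, if $f\colon A\to B$ in $\CA$ has $L(f)=0$, then writing $L$ as a Verdier quotient gives $sf=0$ for some $s\colon B\to B'$ with $\cone(s)\in\thick\CN$; the resolution lemma refines $s$ to a morphism of $\Sn$ with target in $\CA$ still annihilating $f$, so $Q(f)=0$, and with the fraction description of $(\CA/\CN)(QA,QB)$ this yields faithfulness. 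For fullness, a morphism $LA\to LB$ is a roof $A\xleftarrow{s}X\xto{g}B$ with $\cone(s)\in\thick\CN$; the resolution lemma produces $A'\in\CA$, $s'\in\Sn$ and a morphism $A'\to X$ over $A$, and then $g$ restricted along $A'\to X$ together with $s'$ defines a preimage in $(\CA/\CN)(QA,QB)$.

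Finally, for extension-closedness of $\Im G$, given a triangle $LA'\to Y\to LC'\to LA'[1]$ with $A',C'\in\CA$ one represents its connecting morphism by a fraction and uses the resolution lemma to rewrite it as $L$ applied to a morphism $C'\to A'[1]$ of $\Db(\A)$, i.e. to an $\fs$-triangle $A'\to Z\to C'\overset{\delta}{\dra}$ of $\CA$ ($\CA$ being extension-closed in $\Db(\A)$); then $Y\cong LZ\in\Im G$. The same rewriting shows, exactly as in the proof of \cref{prop:restriction_of_extri_subquotient}(3), that $\psi$ is objectwise surjective, hence — $G$ being fully faithful — a natural isomorphism, which gives the exact equivalence $(\CA/\CN,\wtil{\BE},\wtil{\fs})\xto{\sim}\Im G$; combined with the extension-closedness of $\Im G$ (and its closure under summands), the {\rm (WIC)} assertion then follows as explained in the first paragraph.
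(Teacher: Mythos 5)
Your overall architecture coincides with the paper's: everything is funneled through a single fraction-rewriting statement (your ``resolution lemma'' is, up to phrasing, the $k=0$ and $k=1$ instances of \cref{lem:description_for_extensions}), and your deductions of faithfulness, fullness and extension-closedness of $\Im G$ from it are essentially those of Lemmas \ref{lem:fulness}, \ref{lem:faithfulness} and \ref{lem:extension-closedness}. The difficulty is that the resolution lemma is where all the work lies, and your sketch of it contains one step that is false and one that is explicitly left open.

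The false step is the claim that $\Hom_{\Db(\A)}(A,N[n])=0$ for $n<0$. \cref{prop:higher_extension_via_univ_emb} identifies these groups with $\BE^n(A,N)$ only for $n\geq 0$ and says nothing in negative degrees; indeed, if $\A$ is the connective cover of a pretriangulated dg category with its maximal exact structure, then $\Db(\A)\simeq H^0\A$ by \cref{ex:univ_emb_pretr}, and the negative-shift $\Hom$-groups are ordinary $\Hom$-groups of the triangulated category $H^0\A$, which need not vanish. So the factors $N[n]$ lying ``below'' the relevant shift cannot simply be discarded; the paper disposes of them by switching to \emph{right} fractions and effacing the obstruction class in its contravariant variable, using the dual half of \ref{spade} to keep the effacing deflation inside $\CN$ (case (ii) of the proof of \cref{lem:description_for_extensions}) --- a genuinely different mechanism from a vanishing statement. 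The open step is the $n\geq 1$ peeling, which you yourself flag as ``the main obstacle'': the actual argument needs, first, the finer filtration $\thick\CN=\bigcup_{n\leq m}\CN[n]*\cdots*\CN[m]$ of \cref{lem:finer_description_of_thickN} (itself a consequence of \ref{spade} and effaceability, not of general triangulated facts), and then, at each stage, the killing of the obstruction in $\Ext^{m}_{\Db(\A)}(A,N_1)$ by an inflation $N_1\to N_2$ chosen \emph{inside} $\CN$ via \ref{spade}, followed by an octahedron replacing the tower by one supported on the strictly smaller interval $[n,m-1]$ (\cref{claim:fulness}). Termination comes from induction on the width $m-n$, not from ``boundedness of $\Db(\A)$'', which controls neither the number of $*$-factors nor the degrees in which the obstructions live. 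Until these two points are supplied, the proof is incomplete.
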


To prove \cref{prop:algebraic_subquotient}, we reveal a convenient description of $\thick\CN$.
Denote, for two classes $\CU$ and $\CV$ of objects in a triangulated category, by $\CU*\CV$ the closure with respect to taking the direct summands of the full subcategory consisting of $X$ occurring in a triangle $U\to X\to V\to U[1]$ with $U\in\CU$ and $V\in\CV$.
Now let $\CN_1$ be the full subcategory of all $N[n]$ with $N\in\CN$ and $n\in\BZ$. For $r>0$, let $\CN_r=\CN_1*\CN_1*\cdots *\CN_1$ be the product with $r$ factors.
Then we have the identity $\thick\CN=\bigcup_{r\geq 0}\CN_r$ in general.
Due to the condition \ref{spade}, we have a finer description.

\begin{lemma}
\label{lem:finer_description_of_thickN}
Any object $X\in\thick\CN$ admits integers $n\leq m$ such that $X\in\CN[n]*\CN[n+1]*\cdots *\CN[m]$.
\end{lemma}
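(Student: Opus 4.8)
The plan is to prove the equivalent statement that
$\CC\deff\bigcup_{n\leq m}\CN[n]*\CN[n+1]*\cdots*\CN[m]$ is a thick subcategory of $\Db(\A)$: since $\CN=\CN[0]\subseteq\CC$ and $\CC\subseteq\bigcup_{r\geq 0}\CN_r=\thick\CN$, thickness of $\CC$ forces $\CC=\thick\CN$, which is exactly the assertion. Abbreviate $\CN_{[n,m]}\deff\CN[n]*\cdots*\CN[m]$. Closure of $\CC$ under shifts and direct summands is formal, so everything reduces to closure under extensions, i.e.\ to $\CN_{[a,b]}*\CN_{[c,d]}\subseteq\CC$ for all $a\leq b$ and $c\leq d$. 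I will use throughout two elementary facts: since $\CN$ is extension-closed in $\CA$, hence in $\Db(\A)$ by \cref{thm:universal_embedding}, one has $\CN[\ell]*\CN[\ell]=\CN[\ell]$ for every $\ell$; and since $0\in\CN$, ranges may be freely enlarged and merged (e.g.\ $\CN[\ell]\subseteq\CN_{[n,m]}$ when $n\leq\ell\leq m$, and $\CN_{[n,p]}*\CN_{[p+1,m]}=\CN_{[n,m]}$).

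The technical core, and the only place where \ref{spade} enters, is the following \emph{Key Lemma}: for $N,N'\in\CN$, $k\geq 1$ and $\delta\in\Ext^{k}_{\Db(\A)}(N',N)$, the object $Y$ occurring in a triangle $N[k-1]\to Y\to N'\xto{\delta}N[k]$ lies in $\CN_{[0,k-1]}$. I would prove this by induction on $k$. For $k=1$ it is extension-closedness of $\CN$. For $k\geq 2$, using \cref{prop:higher_extension_via_univ_emb} to identify $\Ext^{k}_{\Db(\A)}(N',N)$ with $\BE^{k}(N',N)=\BE(N',-)\otimes_{\CA}\BE^{k-1}(-,N)$, and absorbing a finite sum into a single tensor by passing to a direct sum of the intermediate objects, write $\delta=\delta_1\otimes\delta'$ with $\delta_1\in\BE(N',P)$, $\delta'\in\BE^{k-1}(P,N)$, $P\in\CA$. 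Realising $\delta_1$ by an $\fs$-conflation $P\to E\xto{g}N'$ and applying \ref{spade} to the $\fs$-deflation $g$ onto $N'\in\CN$ yields $N_{*}\in\CN$ and a map $N_{*}\to E$ making $N_{*}\to N'$ an $\fs|_{\CN}$-deflation; this exhibits $\delta_1$ as a pushforward $(b_0)_{*}\eta$ along some $b_0\colon N_{*}\to P$, and since $(b_0)_{*}\eta\otimes\delta'=\eta\otimes b_0^{*}\delta'$ we may replace $P$ by $N_{*}\in\CN$. Then $E$, being an extension of $N'$ by $P$ with both terms in $\CN$, lies in $\CN$; the $(k-1)$-extension object $W$ of $\delta'$ lies in $\CN_{[0,k-2]}$ by the inductive hypothesis; and the octahedral axiom applied to the factorisation $N'\xto{\delta_1}P[1]\xto{\delta'[1]}N[k]$ of $\delta$ (using $\cone(\delta_1)=E[1]$, $\cone(\delta'[1])=W[2]$, $\cone(\delta)=Y[1]$) produces a triangle $E\to Y\to W[1]\to E[1]$, whence $Y\in\CN*\CN_{[1,k-1]}=\CN_{[0,k-1]}$.

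The Key Lemma immediately gives the \emph{Swap Lemma} $\CN[i]*\CN[j]\subseteq\CN_{[\min(i,j),\max(i,j)]}$: for $i\leq j$ it is pure padding, and for $i>j$ it is the Key Lemma with $k=i-j+1$ after shifting by $-j$. The rest is elementary bookkeeping. An \emph{Absorption Lemma} $\CN[j]*\CN_{[n,m]}\subseteq\CN_{[n,m]}$ for $n\leq j\leq m$ follows by induction on $m-n$: decompose $\CN_{[n,m]}=\CN[n]*\CN_{[n+1,m]}$, apply the Swap Lemma to $\CN[j]*\CN[n]\subseteq\CN_{[n,j]}$, and then absorb the resulting factors into $\CN_{[n+1,m]}$ one at a time via the inductive hypothesis and the merging identity (a finite sub-recursion). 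Padding upgrades this to $\CN[j]*\CN_{[n,m]}\subseteq\CN_{[\min(j,n),\max(j,m)]}$ for \emph{every} integer $j$, and a short induction on the number of factors then yields $\CN[a_1]*\cdots*\CN[a_r]\subseteq\CN_{[\min_i a_i,\,\max_i a_i]}$. Specialising to $\CN_{[a,b]}*\CN_{[c,d]}$ shows $\CC$ is closed under extensions, finishing the proof.

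I expect the Key Lemma to be the main obstacle: the delicate point is turning an arbitrary class in a higher $\BE$-group into a tensor whose middle object can be pushed into $\CN$ by means of \ref{spade}, and then reading off the clean octahedral triangle $E\to Y\to W[1]\to E[1]$. The subsequent combinatorial steps are routine but must be organised carefully, exploiting the merging identity $\CN[\ell]*\CN[\ell]=\CN[\ell]$, so that the various ``bubble-sort'' cascades terminate.
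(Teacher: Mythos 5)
Your proposal is correct, and it reaches the conclusion by a genuinely different organization than the paper, even though the crucial mechanism is the same. The paper inducts on the number of factors $r$ in $X\in\CN_r$ and performs an in-place bubble sort using the claim $\CN[n]*\CA\sse\CN[n-1]*\CA*\CN[n]$ (\cref{claim:finer_description_of_thickN}); there the wrongly-placed layer is moved past an \emph{arbitrary} object of $\CA$ one degree at a time, and the degree is lowered by invoking effaceability of $\Ext^{n+1}_{\Db(\A)}(?,-)$ followed by \ref{spade} and an octahedron. You instead prove that the class of already-sorted objects $\bigcup_{n\le m}\CN[n]*\cdots*\CN[m]$ is thick, reducing everything to the two-layer Swap Lemma $\CN[i]*\CN[j]\sse\CN[\min]*\cdots*\CN[\max]$, which you establish by induction on the degree gap via the tensor description $\BE^{k}=\BE\otimes_{\CA}\BE^{k-1}$ (the same structure the paper uses to prove effaceability in \cref{lem:higher_extensions_are_effaceable}), again feeding a degree-one piece into \ref{spade} and reading off an octahedron. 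Your route buys a cleaner inductive statement (both ends of the extension stay in $\CN$, so no ambient $\CA$-object has to be carried along) and a reusable closure-under-extensions formulation, at the cost of somewhat heavier combinatorial bookkeeping (absorption, padding, the $r$-factor induction); the paper's version is shorter because the single claim is applied directly inside the $\CN_r$-induction. Two cosmetic slips that do not affect correctness: the pushforward in your Key Lemma is along a map $b_0$ defined on the \emph{cocone} of the $\fs|_{\CN}$-deflation supplied by \ref{spade} (not on its source $N_{*}$), and the inductive hypothesis must be applied to $b_0^{*}\delta'$, whose source then lies in $\CN$, rather than to $\delta'$ itself.
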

\begin{proof}
There exists an integer $r$ such that $X\in\CN_r$.
Since the case of $r=1$ is obvious, we may assume that there exists a triangle
$X'\to X\to N[l]\to X'[1]$ with $N\in\CN$ and $X'\in\CN[n]*\CN[n+1]\cdots *\CN[m]$ for some $n\leq m$.
There is nothing to show provided $m\leq l$, so we suppose $m>l$.
We are reduced to show the following claim.

\begin{claim}\label{claim:finer_description_of_thickN}
We have $\CN[n]*\CA\sse \CN[n-1]*\CA*\CN[n]$ for any integer $n\geq 1$.
In particular, $\CN*\CN[1]*\cdots*\CN[n]*\CA\sse \CN*\CA*\CN[1]*\cdots*\CN[n]$ holds for any integer $n\geq 1$.
\end{claim}
\begin{proof}
Let $X$ be an object in l.h.s. and consider its defining triangle $N[n]\xto{f} X\xto{g} A\xto{h} N[n+1]$ with $N\in\CN$ and $A\in\CA$.
Since $\Db(\A)(?,-[n+1])=\Ext^{n+1}_{\Db(\A)}(?,-)$ is effaceable by \cref{prop:higher_extension_via_univ_emb}, the element $h\in\Ext^{n+1}_{\Db(\A)}(A,N)$ admits an $\fs$-conflation $N\xto{a} B\xto{b} C$ with $a[n+1]\circ h=0$ in $\Db(\A)$.
Now we employ \ref{spade} for the $\fs$-inflation $N\xto{a}B$, we get a morphism $B\xto{a'} N'$ in $\CA$ with the composite $a'a$ being an $\fs|_\CN$-inflation.
Thus we have an $\fs|_\CN$-conflation $N\xto{a'a}N'\xto{b'}N''$ in $\CN$ which induces the following commutative diagram in $\Db(\A)$ where the first and second columns and the bottom row are triangles.
\[
\begin{tikzcd}[row sep=0.8cm, column sep=1.6cm]
N''[n-1] \arrow{r}{}\arrow{d}[swap]{f'}\wPO{rd}
&N[n] \arrow{d}{f}
&
&
\\
X' \arrow{r}{}\arrow{d}[swap]{g'}
& X \arrow{d}{g}
&
&
\\
A\arrow[equal]{r}\arrow{d}[swap]{h'}
&
A \arrow{d}{h}\arrow[bend left, dotted]{rd}{0}
&
&
\\
N''[n]\arrow{r}{}
&
N[n+1]\arrow{r}{a'a[n+1]}
&
N'[n+1]\arrow{r}{b'[n+1]}
&
N''[n+1]
\end{tikzcd}
\]
Thus we see $X\in \CN[n-1]*\CA*\CN[n]$.
If $n-1\geq 1$, by repeating the same operations to $\CN[n-1]*\CA$, we have the latter inclusion.
\end{proof}
To be precise, we return to the argument on $X\in \CN[n]*\cdots *\CN[m]*\CN[l]$ with $m>l$.
By \cref{claim:finer_description_of_thickN}, we have $\CN[m]*\CN[l]\sse \CN[m-1]*\CN[l]*\CN[m]$ which explains well the following equations:
\begin{align*}
\CN[n]*\cdots *\CN[m]*\CN[l] &\sse \CN[n]*\cdots *\CN[m-1]*\CN[l]*\CN[m]\\
&\sse \CN[n]*\cdots *\CN[m-2]*\CN[l]*\CN[m-1]*\CN[m]\\
&\sse \CN[\min(n,l)]*\cdots*\CN[m]
\end{align*}
letting us close the proof.
\end{proof}

For the readability purpose, we proceed with the proof of \cref{prop:algebraic_subquotient} by dividing it in several steps.
In the rest of this subsection, we put $\wtil{\CD}\deff\frac{\Db(\A)}{\thick\CN}$ and $\CN[n,m]\deff\CN[n]*\cdots *\CN[m]$ for $n\leq m$ to save the space.
The following lemma is crucial and we need to use it many times not only in what follows but also in \S\ref{subsec:Drinfeld_subquotient}.

\begin{lemma}\label{lem:description_for_extensions}
Let us consider objects $A, B\in\CA$, an integer $k\geq 0$ and a morphism $A\xto{\alpha} B[k]$ in $\wtil{\CD}$.
Then, it is represented by a left fraction $s[k]\backslash a\colon A\xto{a}B'[k]\xleftarrow{s[k]}B[k]$ in $\Db(\A)$ with $B'\in\CA$.
In particular, if $k=0$, the left fraction is considered in $\CA$.
\end{lemma}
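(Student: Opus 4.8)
The plan is to reduce the statement to the following assertion: if a morphism $\alpha\colon A\to B[k]$ of $\wtil{\CD}$ is given by a left fraction $A\xto{a_0}Y\xleftarrow{t_0}B[k]$ in $\Db(\A)$ with $\cone(t_0)\in\thick\CN$, then one can post-compose $t_0$ with a morphism $u\colon Y\to Y_1$ whose cone again lies in $\thick\CN$ and such that $Y_1\in\CA[k]$. Once this is done, the left fraction $A\xto{u a_0}Y_1\xleftarrow{u t_0}B[k]$ still represents $\alpha$, and since $H^0\A\hookrightarrow\Db(\A)$ is fully faithful (\cref{thm:universal_embedding}) the morphism $u t_0\colon B[k]\to Y_1$ is of the form $s[k]$ for a unique $s\colon B\to B'$ in $\CA$ with $B'\deff Y_1[-k]$, and $\cone(s)=\cone(u t_0)[-k]\in\thick\CN$; this is exactly the claimed form $s[k]\backslash a$. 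For $k=0$ the same full faithfulness shows that $a$ and $s$ lie in $\Hom_\CA$, i.e.\ the fraction is considered in $\CA$.

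To produce $u$, I would argue by induction on the ``spread relative to $k$'' of $\cone(t_0)$. By \cref{lem:finer_description_of_thickN} we may write $\cone(t_0)\in\CN[n]*\cdots*\CN[m]$ for some integers $n\le m$, and I would induct on $(m-k)_++(k-n)_+$. In the base case $n=m=k$, i.e.\ $\cone(t_0)\in\CN[k]$, the defining triangle $B[k]\xto{t_0}Y\to\cone(t_0)\to B[k+1]$ exhibits $Y\in\CA[k]*\CN[k]$, which is contained in $\CA[k]$ because $\CA$ is extension-closed in $\Db(\A)$ and $\CN\subseteq\CA$; so $u=\id_Y$ works. For the inductive step one peels off the extreme factor $\CN[j]$ of $\cone(t_0)$ with $j$ equal to $m$ (when $m>k$) or to $n$ (when $n<k$): using the octahedral axiom this splits $t_0$ up to the appropriate comparison morphism, and the obstruction to carrying this out inside $\Db(\A)$ is a class in a higher extension group $\BE^{\bullet}(-,-)\cong\Ext^{\bullet}_{\Db(\A)}(-,-)$ in a strictly positive degree (here $k\ge 0$ is used to keep the degree positive), which is effaceable by \cref{prop:higher_extension_via_univ_emb}. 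Effaceability supplies an inflation out of an object of $\CN$ killing that class, and \ref{spade} then upgrades it to an $\fs|_{\CN}$-inflation; the associated pushout furnishes the required $u$ with cone in $\CN[j']\subseteq\thick\CN$ and with $\cone(u t_0)$ of strictly smaller spread---this is precisely the mechanism already used in the proof of \cref{claim:finer_description_of_thickN}.

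The main obstacle is this inductive step: one must genuinely produce the comparison morphism $u$ out of $Y$ (rather than merely a zigzag of weak equivalences), keep its cone inside $\thick\CN$, and verify that the new cone has smaller spread relative to $k$. This forces a careful combination of the octahedral/$3\times 3$ bookkeeping that tracks how $\cone(t_0)$ changes under the pushout, the effaceability of the $\BE^i$ for $i>0$ (available only because $\A$ is connective), and the condition \ref{spade}, which is exactly what turns the inflation handed back by effaceability into one whose cone stays in $\CN$. I would isolate the one-step reduction as a separate lemma and then run the induction. Everything else---that post-composing with a $\thick\CN$-weak-equivalence does not change the represented morphism, and the passage between $\Db(\A)$-morphisms and $\CA$-morphisms for objects of $\CA[k]$---is routine.
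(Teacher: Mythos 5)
Your overall architecture---induction on the spread of the defect $\cone(t_0)\in\CN[n]*\cdots*\CN[m]$ supplied by \cref{lem:finer_description_of_thickN}, peeling off an extreme factor, and killing the obstruction via effaceability of the higher extensions (\cref{prop:higher_extension_via_univ_emb}) together with \ref{spade}---matches the paper's. But the reduction you build everything on is false: it is not true that one can always post-compose with a morphism $u\colon Y\to Y_1$ such that $\cone(u)\in\thick\CN$ and $Y_1\in\CA[k]$. Take $k=0$, $\CA=\mod\Lambda$ for $\Lambda=k[x]/(x^2)$, and $\CN=\add\Lambda$ (projective-injectives, so \ref{spade} holds by \cref{ex:spade}); consider the left fraction with middle object $Y=\cone(\Lambda\twoheadrightarrow S)\cong S[1]$ and denominator $t_0\colon S\to S[1]$, whose cone is $\Lambda[1]\in\thick\CN$. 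Any $u\colon S[1]\to C$ with $C\in\mod\Lambda$ is zero, since $\Hom_{\Db(\A)}(S[1],C)=\Ext^{-1}(S,C)=0$, and then $\cone(u)=S[2]\oplus C$ never lies in $\thick\CN=\Kb(\proj\Lambda)$ because $S$ is not perfect. So no such $u$ exists, even though the conclusion of the lemma of course still holds for this fraction.

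The failure occurs precisely for the factors $\CN[j]$ with $j>k$, and there the paper moves in the opposite direction: it replaces $Y$ by a homotopy \emph{pullback} $Y_0\to Y$ (the middle object shrinks rather than grows), and the price is that the numerator $a\colon A\to Y$ must be \emph{lifted} through $Y_0\to Y$. The obstruction to that lift is the composite $gba\in\Ext^m_{\Db(\A)}(A,N_1)$ with $m\geq 1$---note that it genuinely involves $a$, whereas your proposed one-step assertion concerns $(Y,t_0)$ alone, independently of the numerator, which is already a sign that it is too strong---and this obstruction is exactly what effaceability plus \ref{spade} kill in \cref{claim:fulness}. Your one-sided move does work for the factors in degrees $<k$ (and trivially in degrees $>k+1$, where the relevant extension splits), which is in the spirit of the paper's case (ii) after passing to right fractions; but for the degree-$(k+1)$ layer you need the pullback-and-lift mechanism, and without it the induction cannot close.
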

\begin{proof}
The morphism $\alpha$ is represented by a left fraction $s[k]\backslash a\colon A\xto{a}B'\xleftarrow{s[k]}B[k]$ in $\Db(\A)$.
so, we display such a triangle in $\Db(\A)$ with $\wtil{N}\in\thick\CN$ as follows,
\begin{equation}\label{diag:for_fullness1}
\begin{tikzcd}[column sep=1.2cm]
    {}
    & A \arrow[dotted]{ld}[swap]{\alpha}\arrow{d}{a}
    & {}
    & {}
    \\
    B[k] \arrow{r}[swap]{s[k]}
    & B' \arrow{r}[swap]{b}
    & \wtil{N} \arrow{r}
    & B[1]
\end{tikzcd}
\end{equation}
where the dotted arrow exists in $\wtil{\CD}$.
By \cref{lem:finer_description_of_thickN}, we have $\wtil{N}\in\CN[n,m]$.
We notice that, if $\widetilde{N}\in\CN[k]$, we have nothing to do.
We proceed with the proof by induction on $l\deff m-n\geq 1$.

(i) \underline{The case of $m>k$}:
By Lemma \ref{lem:finer_description_of_thickN}, the object $\widetilde{N}$ admits a triangle
\begin{equation}\label{diag:for_fullness2}
\wtil{N}'\overset{f}{\lra} \wtil{N}\overset{g}{\lra} N_1[m]\overset{h}{\lra} \wtil{N}'[1]
\end{equation}
with $\wtil{N}'\in\CN[n,m-1]$ and $N_1\in\CN$ with $n\leq m-1$.
The following argument will be frequently used, so we want to record it as a Claim.

\begin{claim}\label{claim:fulness}
There exists a modification of \eqref{diag:for_fullness2}, namely, a triangle
\begin{equation}\label{diag:for_fullness2-2}
\wtil{N}''\overset{f'}{\lra} \wtil{N}\overset{g'}{\lra} N_2[m]\overset{}{\lra} \wtil{N}''[1]
\end{equation}
such that $\wtil{N}''\in\CN[n,m-1], N_2\in\CN$ and $g'ba=0$.
\end{claim}
\begin{proof}
Since the composite $A\xto{gba}N_1[m]$ belongs to $\Ext^m(A,N_1)$, we have an $\fs$-triangle
\begin{equation}\label{diag:for_fullness3}
N_1\overset{x}{\lra} N_2\overset{y}{\lra} N_3\overset{z}{\dra}
\end{equation}
with $x[m]\circ gba=0$ by the effaceability.
Thanks to our assumption \ref{spade}, we may take the above $\fs$-triangle in $\CN$.
Applying the octahedral axiom to the composite $x[m]\circ g$, we have the following commutative diagram made of triangles in $\Db(\A)$.
\begin{equation}\label{diag:for_fullness4}
\begin{tikzcd}
    {}
    & N_2[m-1] \arrow{d}{}\arrow[equal]{r}{}
    & N_2[m-1] \arrow{d}{}
    & {}
    \\
    \wtil{N}' \arrow{r}{}\arrow[equal]{d}
    & \wtil{N}'' \arrow{r}{}\arrow{d}{f'}
    & N_3[m-1] \arrow{r}\arrow{d}{}
    & \wtil{N}'[1]\arrow[equal]{d}{}
    \\
    \wtil{N}' \arrow{r}{f}
    & \wtil{N} \arrow{r}{g}\arrow{d}[swap]{x[m]\circ g}
    & N_1[m] \arrow{r}\arrow{d}{x[m]}
    & \wtil{N}'[1]
    \\
    {} 
    & N_2[m] \arrow[equal]{r}{}
    & N_2[m] 
    & {}
\end{tikzcd}
\end{equation}
This gives a desired decomposition of $\wtil{N}$ which appears in the second column.
We notice that $\wtil{N}''$ still belongs to $\CN[n,m-1]$ and $x[m]\circ gba=0$.
\end{proof}

Thus we may suppose $gba=0$ for the triangle \eqref{diag:for_fullness2}.
We immediately see $ba$ factors through $\wtil{N}'$ and get the commutative square $(\circlearrowleft)$ as indicated below.
\[
\begin{tikzcd}[column sep=1.5cm]
    {}
    & A \arrow{r}{a}\arrow[dotted]{d}[swap]{a'}\arrow[mysymbol]{rd}[description]{(\circlearrowleft)}
    & B' \arrow{d}{b}
    & {}
    \\
    N_1[m-1]\arrow{r}{}
    & \wtil{N}' \arrow{r}[swap]{f}
    & \wtil{N} \arrow{r}[swap]{g}
    & N_1[m]
\end{tikzcd}
\]
Now, by taking a homotopy pullback of $b$ along $f$, we can modify the left fraction $s\backslash a$ to be $s'\backslash a''$ via the following commutative diagram.
\[
\begin{tikzcd}[column sep=1.5cm]
    {}
    & A\arrow[bend left]{rd}{a'}\arrow[bend right = 50]{dd}[yshift=4.5mm, swap]{a}\arrow[dotted]{d}{a''}
    & {}
    & {}
    \\
    B \arrow[equal]{d}{}\arrow[crossing over]{r}[swap]{s'}
    & B'_0 \arrow{r}{}\arrow{d}{t}
    & \wtil{N}' \arrow{d}{f}\arrow{r}{}
    & B[1] \arrow[equal]{d}
    \\
    B\arrow{r}{s}
    & B' \arrow{r}{b}
    & \wtil{N} \arrow{r}\wPB{lu}
    & B[1]
\end{tikzcd}
\]
More precisely, we see $s\backslash a= (ts')\backslash (ta'')=s'\backslash a''$ by the isomorphisms $\cone(t)\cong \cone(f')\cong N_2[m]$.
As $\wtil{N}'\in\CN[n,m-1]$, we have done.
The argument tells us we may suppose $\wtil{N}\in \CN[n,m]$ and $m\leq k$ for the triangle \eqref{diag:for_fullness1}.

(ii) \underline{The case of $m\leq k$}:
We can use a right fraction $a'/s'[k]$ corresponding to $\alpha$ instead of the left fraction $s[k]\backslash a$.
Actually, by taking a homotopy pullback of $s$ along $a$, we have a right fraction as follows.
\[
\begin{tikzcd}[column sep=1.2cm]
    \wtil{N}[-1] \arrow[equal]{d}{}\arrow{r}{b'}
    &B'' \arrow{r}{s'[k]}\arrow{d}[swap]{a'}
    & A \arrow[dotted]{ld}[description]{\alpha}\arrow{d}{a}\arrow{r}{}
    & \wtil{N} \arrow[equal]{d}{}
    \\
    \wtil{N}[-1] \arrow{r}{}
    &B[k] \arrow{r}[swap]{s[k]}
    & B' \arrow{r}[swap]{b}
    & \wtil{N}
\end{tikzcd}
\]
As $n<k$, we notice that $\wtil{N}$ admits a triangle 
\begin{equation}
N_1[n-1]\overset{f}{\lra} \wtil{N}[-1]\overset{g}{\lra} \wtil{N}'[-1]\overset{h}{\lra} N_1[n]
\end{equation}
with $\wtil{N}'\in \CN[n+1,m], N_1\in\CN$ and $a'b'f=0$ by the dual of \cref{claim:fulness}.
We can check that the dual argument to the case (i) works well and have a similar commutative diagram:
\[
\begin{tikzcd}[column sep=1.5cm]
    A[-1] \arrow[equal]{d}{}\arrow[crossing over]{r}
    & \wtil{N}[-1] \arrow{r}{b'}\arrow{d}[swap]{g}\wPO{rd}
    & B'' \arrow{d}[swap]{t}\arrow{r}[xshift=1.0mm]{s'[k]}\arrow[bend left = 50]{dd}[yshift=-4.5mm]{a'}
    & A \arrow[equal]{d}
    \\
    A[-1]\arrow{r}
    & \wtil{N}'[-1] \arrow{r}{}\arrow[bend right]{rd}[swap]{b''}
    & B_0'' \arrow[crossing over]{r}[xshift=1.0mm]{s''[k]}\arrow[dotted]{d}[swap]{a''}
    & A
    \\
    {}
    &{}
    &B
    &{}
\end{tikzcd}
\]
Thus we get $a'/s'[k]=(a''t)/(s''[k]t)=a''/s''[k]$.

By the arguments (i) and (ii), we are reduced to the case of $n=m=k$ and complete the proof.
\end{proof}

The fullness of $G$ is immediate from \cref{lem:description_for_extensions} if we consider the case of $k=0$ there.

\begin{lemma}\label{lem:fulness}
$(G,\psi)$ is full.
\end{lemma}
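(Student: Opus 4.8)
The plan is to read off fullness of $(G,\psi)$ directly from \cref{lem:description_for_extensions}, which already does the hard work of normalizing a morphism of $\wtil{\CD}=\frac{\Db(\A)}{\thick\CN}$ into a left fraction living in $\CA$. Given objects $A,B\in\CA$ and an arbitrary morphism $\alpha\colon G(A)\to G(B)$ in $\wtil{\CD}$, I would apply \cref{lem:description_for_extensions} with $k=0$ to obtain a left fraction $s\backslash a\colon A\xto{a}B'\xleftarrow{s}B$ representing $\alpha$, in which $a$ and $s$ are morphisms of $\CA$ and $B'\in\CA$. Inspecting the construction in that proof at $k=0$ (the terminal case $n=m=0$, so that $\cone(s)\in\CN[0]=\CN$), one moreover gets $\cone(s)\in\CN$ inside $\Db(\A)$; since $\CA=H^0\A$ is extension-closed in $\Db(\A)$ by \cref{thm:universal_embedding}, the triangle $B\xto{s}B'\to\cone(s)\to B[1]$ realizes an $\fs$-conflation, so $s$ is an $\fs$-inflation with cone in $\CN$, i.e.\ $s\in\Inf_\CN\sse\Sn$.

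From here the argument is a short formal chase. By \cref{thm:subquotient_of_extri_cat}(3) the class $\Sn$ is saturated for $(Q,\mu)\colon\CA\to\CA/\CN$, hence $Q(s)$ is an isomorphism in $\CA/\CN$, and I can form $\beta\deff Q(s)^{-1}\circ Q(a)\colon A\to B$ in $\CA/\CN$. Since \eqref{diag:algebraic_subquotient} commutes, $G\circ Q$ coincides with the composite $\CA\hookrightarrow\Db(\A)\xto{L}\wtil{\CD}$; applying $G$ to $\beta$ and using functoriality gives $G(\beta)=G(Q(s))^{-1}\circ G(Q(a))=L(s)^{-1}\circ L(a)=\alpha$. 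Thus every morphism $G(A)\to G(B)$ in $\wtil{\CD}$ lies in the image of $G$, which is exactly fullness.

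I do not expect a genuine obstacle here: all the delicate analysis — handling the $\thick\CN$-denominators, and reducing their cones from $\thick\CN$ down to $\CN$ via effaceability of the higher $\Ext$-groups of $\Db(\A)$ together with condition \ref{spade} — is already encapsulated in \cref{lem:description_for_extensions} and \cref{lem:finer_description_of_thickN}. The only point demanding a line of care is the identification of the denominator $s$ of the normalized fraction with a member of $\Sn$, that is, that the cone produced in the $k=0$ case of \cref{lem:description_for_extensions} sits in $\CN$ rather than merely in $\thick\CN$, so that $s\in\Inf_\CN$ and saturation applies. Everything after that is a routine verification of commutativity.
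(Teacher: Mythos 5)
Your proposal is correct and follows the paper's own (very terse) proof: the paper likewise deduces fullness immediately from \cref{lem:description_for_extensions} at $k=0$, and you simply spell out the routine chase it leaves to the reader. One micro-quibble: the fact that $Q(s)$ is invertible needs only that $Q$ inverts $\Sn$ by construction of the localization, not the (converse) saturation statement, but this does not affect correctness.
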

\begin{proof}
Thanks to \cref{lem:description_for_extensions}, we see in a few moment that any morphism $\alpha\colon A\to B$ in $\wtil{\CD}$ with $A,B\in\CA$ can be represented by a left fraction $ s\backslash a$ in $\CA$.
\end{proof}

\begin{lemma}\label{lem:faithfulness}
$(G,\psi)$ is faithful.
\end{lemma}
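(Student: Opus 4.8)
The plan is to reduce faithfulness of $(G,\psi)$ to the statement that, for every morphism $a\colon A\to B'$ in $\CA$ with $L(a)=0$ in $\wtil{\CD}$, one already has $Q(a)=0$ in $\CA/\CN$. Granting this, let $\alpha\colon QA\to QB$ be a morphism of $\CA/\CN$ with $G(\alpha)=0$, and represent it by a left fraction $A\xto{a}B'\xleftarrow{s}B$ with $s\in\Sn$. Each factor of $s$ is either an $\fs$-inflation with cone in $\CN$ or an $\fs$-deflation with cocone in $\CN$, hence becomes invertible in the Verdier quotient $L\colon\Db(\A)\to\wtil{\CD}$; therefore $G(\alpha)=L(s)^{-1}L(a)$, and $G(\alpha)=0$ forces $L(a)=0$. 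The reduced statement then yields $Q(a)=0$, so $\alpha=Q(s)^{-1}Q(a)=0$. Since $G$ is additive, this proves $(G,\psi)$ faithful.

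To prove the reduced statement, first note that $L(a)=0$ in the Verdier quotient means $a$ factors through an object of $\thick\CN$: write $a=\psi\circ\phi$ with $\phi\colon A\to\wtil{N}$ and $\psi\colon\wtil{N}\to B'$ in $\Db(\A)$ and, by \cref{lem:finer_description_of_thickN}, $\wtil{N}\in\CN[n,m]$ for some $n\le m$. I would then induct by a two-sided reduction entirely parallel to the proof of \cref{lem:description_for_extensions} (and \cref{claim:fulness}), treating the cases $m>0$ and ($m\le 0$, $n<0$) separately. When $m>0$, pick a triangle $\wtil{N}'\to\wtil{N}\xto{g}N_1[m]\to\wtil{N}'[1]$ with $\wtil{N}'\in\CN[n,m-1]$ and $N_1\in\CN$; the composite $g\phi\in\Ext^m_{\Db(\A)}(A,N_1)$ is killed by some $\fs$-conflation $N_1\xto{x}N_2\to N_3$ by \cref{prop:higher_extension_via_univ_emb} and \cref{lem:higher_extensions_are_effaceable}, and by \ref{spade} this conflation can be taken inside $\CN$. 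An octahedron on $x[m]\circ g$, exactly as in \cref{claim:fulness}, replaces $\wtil{N}$ by some $\wtil{N}''\in\CN[n,m-1]$ through which $\phi$, hence $a$, still factors; here one uses that $\CN$ is extension-closed in $\CA$ and that $\Ext^1_{\Db(\A)}\cong\BE$ on $\CA$, so a consecutive $\CN[m-1]*\CN[m-1]$ collapses to $\CN[m-1]$. Dually, when $m\le 0$ and $n<0$, one peels off the bottom term $N_1[n]$, effaces the composite $\psi\circ(N_1[n]\to\wtil{N})\in\Ext^{-n}_{\Db(\A)}(N_1,B')$ using effaceability in the first variable together with \ref{spade}, and applies the dual octahedron to pass to an object of $\CN[n+1,m]$ through which $a$ still factors. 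Iterating, we reduce to $\wtil{N}\in\CN[0,0]=\CN$.

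In this remaining case, $\phi$ and $\psi$ are morphisms of $\Db(\A)$ between objects of $\CA$, hence lie in $\CA$ by full faithfulness of the universal embedding $\CA\hookrightarrow\Db(\A)$; thus $a=\psi\circ\phi$ factors in $\CA$ through $\wtil{N}\in\CN=\Ker Q$, whence $Q(a)=Q(\psi)\circ Q(\phi)=0$. The delicate point — and the only real work — is the bookkeeping in the two-sided reduction: one must check that each octahedral modification keeps the factoring object within $\CN[n,m-1]$ (respectively $\CN[n+1,m]$), which is precisely where \ref{spade}, the effaceability of the higher extensions, and the identification $\Ext^1_{\Db(\A)}\cong\BE$ on $\CA$ interact. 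Since this mirrors the argument already carried out for \cref{lem:description_for_extensions}, I would quote that proof rather than repeat it.
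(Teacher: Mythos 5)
Your proposal is correct and takes essentially the same approach as the paper: represent $\alpha$ by a left fraction, observe that $G(\alpha)=0$ forces $L(a)=0$ so that $a$ factors through an object of $\thick\CN$, then use \cref{lem:finer_description_of_thickN} together with the two-sided inductive peeling of \cref{claim:fulness} and its dual (resting on effaceability of the higher extensions and \ref{spade}) to reduce the factoring object to $\CN$, whence $Q(a)=0$ and $\alpha=0$. The only cosmetic difference is that the paper first normalizes $s$ via \cref{lem:factorization_of_Sn} so that its cone is a single object of $\CN$ before passing to $\Db(\A)$; this does not change the substance of the argument.
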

\begin{proof}
Let us consider a morphism $A\xto{\alpha}B$ in $\CA/\CN$ such that $G(\alpha)=0$.
We have a left fraction $s\backslash a$ in $\CA$ which represents $\alpha$.
Thanks to the factorization of $s\in\Sn=\Def_\CN^{\sp}\circ \Inf_\CN$ in \cref{lem:factorization_of_Sn}, we may suppose that there is a left fraction $s\backslash a$ appearing in the following diagram
\begin{equation}\label{diag:for_faithfulness1}
\begin{tikzcd}
    {}
    & A \arrow[dotted]{ld}[swap]{\alpha}\arrow{d}{a}
    & {}
    & {}
    \\
    B \arrow{r}{s}
    & B' \arrow{r}{b}
    & N_0 \arrow{r}
    & B[1]
\end{tikzcd}
\end{equation}
with $N_0\in\CN$.
Note that the bottom row is a triangle in $\Db(\A)$ by the extension-closedness of $\CA\sse\Db(\A)$.
Regarding \eqref{diag:for_faithfulness1} as a left fraction in $\Db(\A)$, we see that the morphism $A\xto{a}B'$ factors through an object $\wtil{N}\in\thick\CN$, say $A\xto{a_1}\wtil{N}\xto{a_2}B'$.
Thus it suffices to show that $A\xto{a}B'$ factors through an object $N\in\CN$.

Recall from \cref{lem:finer_description_of_thickN} that we have $\wtil{N}\in\CN[n,m]$ for some $n\leq m$.
Similarly to the proof of \cref{lem:fulness}, we proceed by the induction on $l=m-n\geq 0$.
We deal with the case of $l=0$ later and focus on the case of $l>0$:
\begin{itemize}
    \item 
    If $m>0$, then we have a decomposition \eqref{diag:for_fullness2} of $\wtil{N}$ such that $ga_1=0$ by \cref{claim:fulness}.
Thus $a_1$ factors through $\wtil{N}'\in\CN[n,m-1]$; 
    \item 
    If $m\leq 0$, by a similar argument, we see that $a_2$ factors through an object in $\CN[n+1,m]$.
\end{itemize}
In turn, the argument amounts to the case of $l=0$ and $m=n=0$ which shows $\alpha=0$ in $\CA/\CN$.
\end{proof}

\begin{lemma}\label{lem:extension-closedness}
The essential image $\Im G$ is extension-closed in $\wtil{\CD}$.
The quotient $(\CA/\CN,\wtil{\BE},\wtil{\fs})$ is exact equivalent to $\Im G$.
\end{lemma}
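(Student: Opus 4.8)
The plan is to deduce \cref{lem:extension-closedness} from \cref{lem:description_for_extensions} together with the already-established full-faithfulness of $(G,\psi)$ from \cref{lem:fulness} and \cref{lem:faithfulness}. First I would fix objects $A,C\in\CA$ and a triangle in $\wtil{\CD}$ of the form $GA\xto{x}Y\xto{y}GC\xto{\delta}GA[1]$, and try to produce an $\fs$-triangle in $\CA$ realizing it. The connecting morphism $\delta\colon GC\to GA[1]$ lives in $\Hom_{\wtil{\CD}}(C,A[1])$, so \cref{lem:description_for_extensions} applied with $k=1$ represents it as a left fraction $s[1]\backslash a\colon C\xto{a}A'[1]\xleftarrow{s[1]}A[1]$ with $A'\in\CA$; equivalently, after inverting $s$, the class $\delta$ comes from a genuine morphism $a\colon C\to A'[1]$ in $\Db(\A)$ with $A'$ in $\CA$. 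Using the isomorphism of $\delta$-functors $\theta^1\colon \BE^1(?,-)\xto{\simeq}\Ext^1_{\Db(\A)}(?,-)$ of \cref{prop:higher_extension_via_univ_emb}, together with the fact that $\CA$ is extension-closed in $\Db(\A)$, the morphism $a$ (after using the weak equivalence $s$) corresponds to an honest $\BE$-extension, hence to an $\fs$-triangle $A''\xto{f}B\xto{g}C''\overset{\eta}{\dra}$ in $\CA$ whose image under $Q$ is isomorphic to $GA\xto{x}Y\xto{y}GC\xto{\delta}$ after identifying $GA\cong QA''$ and $GC\cong QC''$. The key technical point, exactly as in the proof of \cref{prop:restriction_of_extri_subquotient}, is that $A''$ and $C''$ are then isomorphic in $\wtil{\CD}$ to $GA$ and $GC$ respectively, so by full-faithfulness of $G$ there are isomorphisms $Q'A''\cong Q'A$ and $Q'C''\cong Q'C$ in $\CA/\CN$; pushing out and pulling back the $\wtil{\BE}$-extension $\mu(\eta)\in\wtil{\BE}(Q'C'',Q'A'')$ along these isomorphisms produces a $\wtil\fs$-triangle $Q'A\to Z\to Q'C\overset{\wtil\eta}{\dra}$ in $\CA/\CN$, and applying $G$ and comparing connecting maps gives $Y\cong GZ$. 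Hence $Y\in\Im G$, proving $\Im G$ is extension-closed.

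For the second sentence of the statement I would run the same argument once more to see that $\psi\colon\wtil{\BE}\to\BF|_{\Im G}$ is surjective on each $\Hom$-group: any $\ft$-extension with endpoints in $\Im G$ was just shown to arise as $\psi$ of a $\wtil{\BE}$-extension. Since $G$ is fully faithful, the natural transformation $\psi$ restricted to objects of $\Im G$ is also injective (a nonzero $\wtil\BE$-extension realizes a non-split $\wtil\fs$-triangle, which $G$ sends to a non-split triangle, i.e.\ a nonzero $\ft$-extension), hence bijective. Therefore $(G,\psi)$ corestricts to an exact functor $(\CA/\CN,\wtil\BE,\wtil\fs)\to(\Im G,\BF|_{\Im G},\ft|_{\Im G})$ which is essentially surjective, fully faithful, and whose structural transformation is an isomorphism, so by \cref{lem:exact_equivalence} it is an exact equivalence. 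This is the content of ``$(\CA/\CN,\wtil\BE,\wtil\fs)$ is exact equivalent to $\Im G$''.

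I expect the main obstacle to be bookkeeping the compatibility between the \emph{a priori} two extriangulated structures on $\CA$ (the one coming from the universal embedding $\CA\hookrightarrow\Db(\A)$ and the one from \cref{def:alg_ET}) while transporting extensions along the left fraction $s$: one must check that the $\fs$-triangle produced in $\CA$ really is an $\fs$-triangle in the sense relevant to the exact sequence $\Theta$ of \eqref{seq:algebraic_subquotient}, not merely a triangle in $\Db(\A)$ with objects in $\CA$. This is handled by invoking that the two structures on $\CA$ agree (the remark following \cref{prop:higher_extension_via_univ_emb}, together with \cref{thm:universal_embedding}), but the matching of connecting morphisms via $\theta^1$ and the commutativity of the various pushout/pullback squares under $Q$ and $L$ in \eqref{diag:algebraic_subquotient} is where the care is needed. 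Once that is in place, the full-faithfulness of $G$ from the preceding lemmas does all the remaining work, mirroring almost verbatim the ``extension-closedness of $\Im G$'' step in the proof of \cref{prop:restriction_of_extri_subquotient}.
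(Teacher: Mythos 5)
Your proposal is correct and follows essentially the same route as the paper's proof: both represent the connecting morphism $GC\to GA[1]$ via \cref{lem:description_for_extensions} (with $k=1$) as a left fraction whose middle term $A'[1]$ has $A'\in\CA$, use extension-closedness of $\CA$ in $\Db(\A)$ together with $\BE\cong\Ext^1_{\Db(\A)}$ to lift it to an $\fs$-triangle, transport along the resulting isomorphisms using the already-proved full-faithfulness of $G$, and then read off surjectivity (hence bijectivity) of $\psi$ to conclude the exact equivalence via \cref{lem:exact_equivalence}. The compatibility issue you flag at the end is already settled by \cref{thm:universal_embedding} and \cref{prop:higher_extension_via_univ_emb}, which is exactly how the paper handles it implicitly.
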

\begin{proof}
Let $A,C$ be objects in $\CA$ and consider a triangle $LA\overset{f}{\lra} Y\overset{g}{\lra} LC\overset{h}{\lra} LA[1]$ in $\wtil{\CD}$.
Note that the morphism $LC\xto{h}LA[1]$ is represented by a left fraction $(s[1])\backslash h'\colon C\xto{h'}A'[1]\xleftarrow{s[1]}A[1]$ in $\Db(\A)$ with $s[1]$ being an isomorphism in $\wtil{\CD}$ as below,
\[
\begin{tikzcd}
    {}
    & C \arrow[dotted]{ld}[swap]{h}\arrow{d}{h'}
    & {}
    & {}
    \\
    A[1] \arrow{r}[swap]{s[1]}
    & A'[1] \arrow{r}{}
    & \wtil{N} \arrow{r}
    & A[2]
\end{tikzcd}
\]
where we emphasize $A'[1]$ is an object in $\Db(\A)$.
In particular, we get $h=(Ls[1])^{-1}Lh'$ in $\wtil{\CD}$.
By \cref{lem:description_for_extensions}, we may suppose $\wtil{N}=N[1]$ for some $N\in\CN$.
Since $\CA$ is extension-closed in $\Db(\A)$, we get $A'\in\CA$ and an $\fs$-triangle $A'\overset{f'}{\lra} B'\overset{g'}{\lra} C\overset{h'}{\dra} $ in $\CA$ together with $s\in\Sn$.
Thus, it turns into isomorphic $\wtil{\fs}$-triangles under $Q$ and triangles under $G$ in succession,
\[
\begin{tikzcd}[column sep=1.2cm]
QA'\arrow{r}{Qf'}\arrow{d}{\simeq}[swap]{(Qs)^{-1}}&QB'\arrow{r}{Qg'}\arrow{d}{}&QC\arrow[dotted]{r}{\mu_{C,A'}(h')}\arrow[equal]{d}{}&{}\\
QA\arrow{r}{}&Y'\arrow{r}{}&QC\arrow[dotted]{r}{\delta}&{}
\end{tikzcd},\quad
\begin{tikzcd}[column sep=1.2cm]
LA'\arrow{r}{Lf'}\arrow{d}{\simeq}[swap]{(Ls)^{-1}}&LB'\arrow{r}{Lg'}\arrow{d}{}&LC\arrow[dotted]{r}{Lh'}\arrow[equal]{d}{}&{}\\
LA\arrow{r}{}&GY'\arrow{r}{}&LC\arrow[dotted]{r}{h}&{}
\end{tikzcd}
\]
where we put $\delta=(Qs^{-1})_*(\mu_{C,A'}(h'))$.
Note that the left/right-hand diagrams sit in $\CA/\CN$ and $\frac{\Db(\A)}{\thick\CN}$, respectively.
The bottom triangle in the right-hand diagram indeed represents the element $h$ due to $h=(Ls)^{-1}\circ Lh'$.
Thus we get $Y\cong GY'\in\Im G$ which shows $\Im G$ is extension-closed in $\wtil{\CD}$.
The bottom sequences show us that $\psi_{QC,QA}\colon \wtil{\BE}(QC,QA)\to \Ext_{\wtil{\CD}}^1(LC,LA)$ is surjective as it sends $\delta$ to $h$.
By Lemmas \ref{lem:fulness} and \ref{lem:faithfulness}, we see $\psi$ is indeed bijective and the functor $(G,\psi)$ gives a desired exact equivalence $\CA/\CN\xto{\sim}\Im G$.
\end{proof}

By the discussion so far, we summarize our first main result as follows.

\begin{corollary}\label{cor:algebraic_subquotient}
Let $(\CA,\BE,\fs)$ be an algebraic extriangulated category with ${\rm (WIC)}$ and $\CN$ an extension-closed subcategory $\CN\sse\CA$ which is closed under direct summands.
Then the subquotient $\CA_{\CN}/\CN$ is still an algebraic extriangulated category.
\end{corollary}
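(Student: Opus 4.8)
The plan is to package together the results of \S\ref{subsec:auxiliary_exact_sequence}--\S\ref{subsec:subquotients_are_algebraic}; the analytic core is already contained in \cref{prop:algebraic_subquotient}, so what remains is to reduce to the situation of \cref{setup:algebraic_subquotient} and then invoke the characterization of algebraicity in \cref{prop:characterization_for_alg_ET}.

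First I would arrange to be inside \cref{setup:algebraic_subquotient}. Since $\CA$ is algebraic it admits an exact dg enhancement, and after replacing it by a cofibrant replacement of its connective cover we may assume this enhancement $(\A,\SS)$ is connective and cofibrant: the connective cover is still an exact dg enhancement of $\CA$ by \cref{ex:connective_cover}, taking a cofibrant replacement preserves the exact dg structure by \cref{lem:quasi-equiv_preserve_ex_dg_str}, and it remains connective since a quasi-equivalence induces quasi-isomorphisms on Hom-complexes. Next I would pass to the substructure $\CA_\CN=(\CA_\CN,\BE_\CN,\fs_\CN)$ of \cref{thm:subquotient_of_extri_cat}: by \cref{prop:comparison_of_quotient_and_subquotient} the pair $(\CA_\CN,\CN)$ satisfies \ref{spade}, and $\CA_\CN$ still has \textup{(WIC)} as it shares the underlying additive category with $\CA$. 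To see that $\CA_\CN$ again admits a connective cofibrant enhancement, note that $\BE_\CN\sse\BE$ is a closed subbifunctor by \cref{prop:modified_Quillen_substructure}, hence by the poset isomorphism of \cref{prop:bijection_between_substructures} corresponds to an exact substructure of $(\A,\SS)$ sitting on the very same (connective, cofibrant) dg category $\A$ and having homotopy category exact equivalent to $(\CA_\CN,\BE_\CN,\fs_\CN)$. Thus, after this replacement, all hypotheses of \cref{setup:algebraic_subquotient} hold for $(\CA,\CN)$, and the extriangulated subquotient in question is the extriangulated quotient \eqref{seq:algebraic_subquotient}.

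Then I would bring in the universal embedding $F\colon\A\to\Db_{\dg}(\A)$ of \cref{thm:universal_embedding}, which realizes $\CA=H^0\A$ as an extension-closed subcategory of $\Db(\A)$; since $\Db_{\dg}(\A)$ is pretriangulated, $\Db(\A)$ is an algebraic triangulated category. Forming the thick closure $\thick\CN$ inside $\Db(\A)$ and applying the Drinfeld dg quotient to its canonical dg enhancement inside $\Db_{\dg}(\A)$, \cref{prop:enhancement_of_Verdier_quotient} shows that the Verdier quotient $\Db(\A)/\thick\CN$ is again algebraic. The universality of \eqref{seq:algebraic_subquotient} provides the comparison functor $(G,\psi)$ in \eqref{diag:algebraic_subquotient}, and \cref{prop:algebraic_subquotient} asserts that $(G,\psi)$ is fully faithful with extension-closed essential image and induces an exact equivalence $(\CA/\CN,\wtil{\BE},\wtil{\fs})\xto{\sim}\Im G$. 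Hence $\CA_\CN/\CN$ is exact equivalent to an extension-closed subcategory of the algebraic triangulated category $\Db(\A)/\thick\CN$, so by \cref{prop:characterization_for_alg_ET}(1) it is algebraic, which is the claim; the same proposition moreover records that it is \textup{(WIC)}.

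The difficulty has essentially been front-loaded into \cref{prop:algebraic_subquotient}, whose proof via \cref{lem:finer_description_of_thickN} and \cref{lem:description_for_extensions} is where the real work lies: one must control how the fractions computing morphisms and extensions in $\Db(\A)/\thick\CN$ interact with the extension-closedness of $\CA\sse\Db(\A)$, and this is exactly where the effaceability of higher extensions through the universal embedding (\cref{prop:higher_extension_via_univ_emb}) and the condition \ref{spade} are needed. For the corollary itself the only genuine obstacle is the bookkeeping of the second paragraph, namely checking that passing to the substructure $\CA_\CN$ does not leave the class of extriangulated categories admitting a connective cofibrant exact dg enhancement.
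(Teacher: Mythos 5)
Your proposal is correct and follows essentially the same route as the paper: pass to the substructure $\CA_\CN$, obtain a connective cofibrant exact dg enhancement of it via \cref{prop:bijection_between_substructures}, apply \cref{prop:algebraic_subquotient} to realize $\CA_\CN/\CN$ as an extension-closed subcategory of $\Db(\A_\N)/\thick\CN$, and conclude with \cref{prop:characterization_for_alg_ET}(1). Your extra bookkeeping (connectivity and cofibrancy of the replacement, algebraicity of the Verdier quotient) only makes explicit what the paper leaves implicit; the sole nitpick is that the \textup{(WIC)} property of the quotient is recorded in \cref{prop:algebraic_subquotient}, not in \cref{prop:characterization_for_alg_ET}.
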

\begin{proof}
Consider the extriangulated substructure $\CA_\CN=(\CA_\CN,\BE_\CN,\fs_\CN)$ determined by $\CN$, see \cref{prop:modified_Quillen_substructure}.
Then we have an admissible pair $(\CA_\CN,\CN)$ by \cref{thm:subquotient_of_extri_cat}.
Thanks to \cref{prop:bijection_between_substructures}, we have an exact dg enhancement $\A_\N=(\A_\N,\SS_\N)$ of $\CA_\CN$ and consider the associated bounded derived category $\Db(\A_\N)$.
Lastly, by \cref{prop:algebraic_subquotient}, the extriangulated subquotient $\CA_\CN/\CN$ is exact equivalent to an extension-closed subcategory of $\frac{\Db(\A_\N)}{\thick\CN}$.
Hence $\CA_\CN/\CN$ is algebraic by \cref{prop:characterization_for_alg_ET}(1).
\end{proof}

We end the section by demonstrating that \cref{cor:algebraic_subquotient} generalizes the known stability of being algebraic.

\begin{example}\label{ex:algebraic_subquotient}
Let us fix an algebraic extriangulated category $(\CA,\BE,\fs)$.
\begin{enumerate}[label=\textup{(\arabic*)}]
\item 
In \cite[\S 3.5]{Che24b}, it is shown that the extriangulated ideal quotient $\CA/[\CN]$ by an full subcategory $\CN\sse\CA$ of projective-injective objects is still algebraic.
This is a special case of \cref{cor:algebraic_subquotient}, because $(\CA,\CN)$ satisfies the condition \ref{spade}, see \cref{ex:spade}(1).
\item 
Suppose that $(\CA,\BE,\fs)$ corresponds to an exact category and $\CN$ is biresolving in $\CA$.
We can deduce from the proof of \cite[Thm.~5]{Rum21} that the triangulated quotient $\CA/\CN$ is an extension-closed subcategory of a certain derived category, and hence algebraic.
Due to \cref{ex:spade}(2), \cref{cor:algebraic_subquotient} is an extriangulated version of such a statement.
\end{enumerate}
\end{example}

\section{A dg enhancement of the extriangulated subquotient}
\label{sec:subquotient}
As we have seen in \cref{prop:enhancement_of_Verdier_quotient}, the Drinfeld dg quotient enhances the Verdier quotient.
In this section,  we broaden the scope of this phenomenon by introducing a new notion, the \emph{exact dg subquotient} of an exact dg category, which yields an enhancement of the extriangulated subquotient.
To this end, in \S\ref{subsec:cohomological_envelope}, we introduce a notion of cohomological envelope $\A^\coh$ of an exact dg category $\A$ and establish basic properties.
\S\ref{subsec:dg_quotient_via_cohomological_envelope} collects fundamental facts concerning dg quotients of $\A^\coh$ and demonstrate their connections to the Serre quotient.
The section \S\ref{subsec:Drinfeld_subquotient} is devoted to stating the main result of this article, namely, we establish the enhanced extriangulated subquotient.

\subsection{Cohomological envelope of an exact dg category}
\label{subsec:cohomological_envelope}
Our notion of cohomological envelope makes a given exact dg category $\A$ to match better with the dg quotient.
We start with the following basic observation which is easily deduced from Tabuada's characterizations of the dg quotient.

\begin{lemma}\label{lem:failure_dg_quotient_of_connective_dg}
Let $\A$ be an arbitrary dg category and $\N\sse\A$ a full dg subcategory. 
If $\A$ is connective, then the natural morphism
$$H^0(\A/\N) \to H^0(\A)/[H^0(\N)]$$
is an isomorphism in $\Hqe$.
\end{lemma}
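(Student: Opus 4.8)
<br>

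**Setting up the plan.** The statement to prove is \cref{lem:failure_dg_quotient_of_connective_dg}: for a connective dg category $\A$ with a full dg subcategory $\N$, the canonical morphism $H^0(\A/\N)\to H^0(\A)/[H^0(\N)]$ is an isomorphism. I would start by recalling precisely what the canonical morphism is. Since $\A/\N$ is defined by the universal property of the dg quotient and $H^0(-)$ sends the annihilating morphism $Q\colon\A\to\A/\N$ to a functor $H^0Q$ killing $H^0\N$, there is a unique additive functor $H^0(\A/\N)\to H^0(\A)/[H^0(\N)]$ commuting with the quotient maps. The plan is to check it is a bijection on Hom-spaces and essentially surjective.

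**Key steps.** First I would invoke Drinfeld's / Tabuada's explicit model of the dg quotient: $\A/\N$ can be taken with the same objects as $\A$, obtained by adjoining, for each object $N\in\N$, a contracting homotopy $\varepsilon_N$ of degree $-1$ with $d\varepsilon_N=\id_N$, and freely composing. Concretely, morphisms in $(\A/\N)(A,B)$ are spanned by alternating zig-zags $A\to N_1\xto{\varepsilon}N_1\to N_2\xto{\varepsilon}\cdots\to B$ with arrows from $\A$. The crucial observation is the degree bookkeeping: each inserted $\varepsilon_{N_i}$ has degree $-1$, while every morphism of $\A$ sits in non-positive degrees if we first replace $\A$ by its connective cover (legitimate by \cref{ex:connective_cover}, since $H^0$ is unchanged, or directly because $\A$ is connective so any degree-$0$ cocycle computing $H^0$ only sees $\tau_{\le 0}\A$). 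Then I would argue that for the degree-$0$ cohomology $H^0$, any zig-zag containing at least one $\varepsilon$ has total degree strictly negative unless every $\A$-arrow between the $\varepsilon$'s is concentrated in degree $0$ and in fact is (a representative of) a morphism; but a closed degree-$0$ composite of the form $f\circ\varepsilon_N\circ g$ is a boundary, namely $d(f\circ\varepsilon_N^{(0)}\circ g)$-type reasoning shows it vanishes in $H^0$ after we mod out homotopy — more precisely, $\varepsilon_N$ already witnesses that $\id_N$ is nullhomotopic in $\A/\N$, so $H^0(\A/\N)(A,B)$ is computed by degree-$0$ cocycles of $\A(A,B)$ modulo boundaries of $\A(A,B)^{-1}$ together with the extra relations forcing any morphism factoring through some $N\in\N$ (in $H^0$) to die. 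That is exactly $H^0(\A)(A,B)/[\,\text{maps factoring through }H^0\N\,](A,B)$, i.e. the Hom-space of the ideal quotient $H^0(\A)/[H^0\N]$.

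**Executing the comparison.** Thus I would: (i) fix $A,B\in\A$ and show the comparison map $H^0(\A/\N)(A,B)\to (H^0\A/[H^0\N])(A,B)$ is surjective — immediate, since it is identity on the images of $\A$-morphisms and both sides are generated by those; (ii) show injectivity — a class in the kernel is represented by a degree-$0$ cocycle of $\A(A,B)$ that becomes a boundary plus a morphism factoring through $H^0\N$; the morphism factoring through $H^0\N$ is killed in $\A/\N$ because $\varepsilon_N$ provides a homotopy; the boundary part is killed since $H^0$ already quotients boundaries. Here the connectivity hypothesis is what guarantees no \emph{positive}-degree morphisms of $\A$ can combine with the negative-degree $\varepsilon$'s to produce \emph{new} degree-$0$ cocycles in $\A/\N$ that are not already visible in $\tau_{\le 0}\A=\A$; without connectivity a composite like $\varepsilon_N\circ p$ with $|p|=+1$ could be a genuinely new degree-$0$ cycle, which is exactly why the lemma can fail in general. (iii) Essential surjectivity on objects is trivial since both $H^0Q$ and the ideal quotient projection are identity on objects and $H^0Q$ is essentially surjective by Drinfeld's result.

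**The main obstacle.** The delicate point is step (ii), controlling the degree-$0$ cocycles of $\A/\N$: I would carry this out by appealing to Tabuada's homotopy-colimit description of $\A/\N$ together with the standard filtration-by-number-of-$\varepsilon$'s argument, and using connectivity to see that the associated graded in positive homological degree of each Hom-complex of $\A/\N$ contributes nothing to $H^0$. If a cleaner route is wanted, one can instead use the characterization already cited in the paper (the reference ``Tab10'' / Drinfeld) that $H^0(\A/\N)$ is the Verdier-type quotient of $H^0\A$ by the thick closure of $H^0\N$ \emph{when} $\A$ is pretriangulated, and then observe that for a \emph{connective, non-pretriangulated} $\A$ the quotient degenerates to the mere ideal quotient because there are no cones available to build the thick closure beyond $H^0\N$ itself — but spelling this out still reduces to the same degree-counting lemma, so I expect that to be the real content.
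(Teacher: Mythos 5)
Your argument is correct, but it takes a genuinely different route from the paper. The paper's proof is purely formal: it works inside the full subcategory $h(\Cat_k)\sse\Hqe$ of dg categories whose Hom-complexes have cohomology concentrated in degree $0$, observes that connectivity makes $H^0\colon\A\to H^0\A$ a morphism in $\dgcat$, and then chains together $h(\Cat_k)(H^0(\A/\N),\C)\cong\Hqe(\A/\N,\C)\cong\Hqe_{\N}(\A,\C)\cong\Hqe_{H^0\N}(H^0\A,\C)\cong h(\Cat_k)(H^0\A/[H^0\N],\C)$, the middle step being Tabuada's characterization of the dg quotient; Yoneda then finishes. You instead compute $H^0$ of the Hom-complexes of Drinfeld's explicit model (adjoining contracting homotopies $\varepsilon_N$ of degree $-1$) after reducing to $\tau_{\leq 0}\A$: words containing $n\geq 1$ copies of $\varepsilon$ live in degrees $\leq -n$, so degree $0$ sees only $Z^0\A(A,B)$, while degree $-1$ contributes the usual coboundaries together with $g\varepsilon_N f$ whose differential is $\pm gf$, yielding exactly the ideal quotient. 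Both proofs are valid; yours is longer but makes completely transparent where connectivity enters (and why the lemma fails otherwise: a positive-degree morphism could pair with an $\varepsilon$ to create a new degree-$0$ cocycle), whereas the paper's avoids any explicit model. One caveat you should add: Drinfeld's description of $\A/\N$ is only valid for h-flat (e.g.\ cofibrant) $\A$, so you must first pass to a connective semi-free replacement of $\tau_{\leq 0}\A$ (which exists, but deserves a word) before the degree count applies; also, your closing alternative via the "thick closure" of $H^0\N$ is not really available here since $\A$ is not pretriangulated, so the degree-counting argument is indeed the actual content, as you suspected.
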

\begin{proof}
We denote by $h(\Cat_k) \sse \Hqe$ the full subcategory consisting of objects whose cohomologies of $\Hom$-complexes are concentrated in degree $0$. It suffices to show that a canonical morphism induces an isomorphism
$$h(\Cat_k)(H^0(\A/\N), -) \cong h(\Cat_k)(H^0\A/[H^0\N], -).$$
Notice that the functor $H^0\colon \A\to H^0\A$ is a morphism in $\dgcat$ by the connectivity of $\A$.
Thus, indeed we have the following isomorphisms for any $\C \in h(\Cat_k)$:
\begin{align*}
h(\Cat_k)(H^0(\A/\N), \C) 
&\cong \Hqe(\A/\N, \C) \\
&\cong \Hqe_{\N}(\A, \C) \\
&\cong \Hqe_{H^0(\N)}(H^0\A, \C) \\
&\cong h(\Cat_k)(H^0\A/[H^0\N], \C).
\end{align*}
The second isomorphism follows from \cite[Thm~4.0.1]{Tab10}, where $\Hqe_{\N}(\A, \C)$ denotes the set of morphisms which annihilate $\N$.
\end{proof}

In the rest of the section, we will work under this setup to investigate an exact dg enhancement for extriangulated quotient.

\begin{setup}\label{setup:cohomological_envelope}
Let $(\A,\SS)$ be an exact dg category and assume that it is cofibrant.
The universal embedding is denoted by $F\colon \A\to\Db(\A,\SS)$ as in \S\ref{subsec:universal_embedding}.
Moreover, we consider an extension-closed subcategory $\N\sse\A$.
\end{setup}

If, in addition to assumptions of \cref{lem:failure_dg_quotient_of_connective_dg}, we further assume that the pair $(H^0\A,H^0\N)=(\CA,\CN)$ is admissible in the sense that an exact sequence \eqref{seq:mult_loc} exists in $\ET$, then the following issue arises.
Since $\CA/\CN$ does not necessarily agree with $\CA/[\CN]$, we cannot in general expect the dg quotient of a connective dg category to enhance the extriangulated quotient.
To bypass such an obstruction, we introduce the following notion based on the existence of the universal embedding.

\begin{definition}\label{def:cohomological_envelope}
Let $\tau_{\leq 0}\A\to\A$ be the connective cover of $\A$ and consider the universal embedding $F\colon \tau_{\leq 0}\A\to\Db_{\dg}(\tau_{\leq 0}\A)$.
There exists an extension-closed dg subcategory $\A^\coh\deff\D'\sse \Db_{\dg}(\A)$ equipped with an exact quasi-equivalence $\tau_{\leq 0}\A\to\tau_{\leq 0}(\A^\coh)$ by \cref{thm:universal_embedding}.
We call the corresponding exact dg category $(\A^\coh,\SS)$ the \emph{cohomological envelope} of $\A$.
\end{definition}

It turns out that there exist canonical exact morphisms $\A^\coh\leftarrow\tau_{\leq 0}\A\to\A$ in $\Hqe$ which fall into exact equivalences in $\ET$ after applying $H^0$.
Recall that each conflation in $\SS$ involves only morphisms in degree $0$ and $-1$.
Hence the cohomological envelope $\A^\coh$ shares the class $\SS$ with the original dg category $\A$ via the above canonical morphisms above.
The stability of taking cohomological envelope is then immediate.
    
\begin{lemma}\label{lem:stability_of_cohomological_envelope}
The operation of taking the cohomological envelope is stable in the sense that there exists a natural isomorphism $\A^\coh\xto{\simeq}(\A^\coh)^\coh$ in $\Hqe$.
\end{lemma}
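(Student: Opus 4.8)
The plan is to unfold the definition of the cohomological envelope and reduce everything to the idempotence of Chen's universal embedding (\cref{thm:universal_embedding}) together with the stability of the bounded dg derived category. Recall that $\A^\coh$ is, by \cref{def:cohomological_envelope}, the extension-closed dg subcategory $\D'\subseteq\Db_\dg(\tau_{\leq 0}\A)$ for which the universal embedding restricts to an exact quasi-equivalence $\A\to\tau_{\leq 0}(\A^\coh)$; moreover $\A^\coh$ carries the same exact structure $\SS$, since conflations only involve morphisms of degrees $0$ and $-1$ and the canonical morphisms $\A^\coh\leftarrow\tau_{\leq 0}\A\to\A$ are exact with $H^0$-exact-equivalences. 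So the first step is to form $(\A^\coh)^\coh$: this is the quasi-essential image of $\tau_{\leq 0}(\A^\coh)$ inside $\Db_\dg(\tau_{\leq 0}(\A^\coh))$. The key observation I want to exploit is that $\tau_{\leq 0}(\A^\coh)$ is already exact quasi-equivalent to $\A$ (hence to $\tau_{\leq 0}\A$), so by \cref{lem:quasi-equiv_preserve_ex_dg_str} and the naturality of the construction $\Db_\dg(-)$ under exact quasi-equivalences, we get an exact quasi-equivalence $\Db_\dg(\tau_{\leq 0}(\A^\coh))\simeq\Db_\dg(\tau_{\leq 0}\A)$ in $\Hqe$.

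The second step is to identify, under this equivalence, the quasi-essential image of $\tau_{\leq 0}(\A^\coh)$ with the quasi-essential image of $\tau_{\leq 0}\A$, namely with $\A^\coh=\D'$ itself. This is exactly the statement that the universal embedding is idempotent: once we have embedded $\A$ as an extension-closed subcategory $\D'$ of a bounded dg derived category $\Db_\dg(\tau_{\leq 0}\A)$ in the canonical way, re-applying the universal embedding to $\D'$ does nothing new — the ambient triangulated category $\Db(\D')$ is triangle equivalent to $\Db(\A)$, and the extension-closed image of $\D'$ in it is $\D'$ again. To make this precise I would invoke the universality in \cref{thm:universal_embedding}: the composite $\A\to\A^\coh\to\Db_\dg(\A^\coh)$ is a universal exact morphism from $\A$ to a pretriangulated dg category (since $\A\to\A^\coh$ is an exact quasi-equivalence and $\A^\coh\to\Db_\dg(\A^\coh)$ is universal), and the composite $\A\to\A^\coh\hookrightarrow\Db_\dg(\A)$ is also such a universal morphism; by uniqueness of universal objects in $\Hqe$ we obtain a canonical isomorphism $\Db_\dg(\A^\coh)\xto{\simeq}\Db_\dg(\A)$ in $\Hqe$ compatible with the two embeddings of $\A^\coh$. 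Tracking the connective covers through this isomorphism and using \cref{thm:universal_embedding} once more (the part producing $\tau_{\leq 0}(\A^\coh)\simeq\tau_{\leq 0}\D''$ for an extension-closed $\D''$) identifies the quasi-essential image of $\tau_{\leq 0}(\A^\coh)$ with $\D'=\A^\coh$, giving the desired isomorphism $\A^\coh\xto{\simeq}(\A^\coh)^\coh$ in $\Hqe$.

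I expect the main obstacle to be the bookkeeping around truncation and quasi-essential images: one must check that $\tau_{\leq 0}$ commutes appropriately with the passage to $\Db_\dg$ along exact quasi-equivalences, and that the various canonical morphisms $\A^\coh\leftarrow\tau_{\leq 0}\A\to\A$ and their analogues one level up are compatible, so that ``the image of the image is the image.'' A clean way to sidestep delicate diagram chases is to argue entirely through universal properties in $\Hqe_{\rm ex}$: characterize $\A^\coh$ up to canonical isomorphism as \emph{the} extension-closed dg subcategory of a bounded dg derived category whose connective truncation receives an exact quasi-equivalence from $\A$, and observe that $(\A^\coh)^\coh$ satisfies the same characterization via $\A\simeq\tau_{\leq 0}(\A^\coh)$. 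The naturality of the isomorphism then follows from the uniqueness clause in each universal property invoked. The only genuinely non-formal input is \cref{thm:universal_embedding}, which we are entitled to assume.
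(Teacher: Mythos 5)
Your proposal is correct and follows essentially the same route as the paper: reduce to the connective case, use the exact quasi-equivalence $\A\xto{\sim}\tau_{\leq 0}(\A^\coh)$ to induce a quasi-equivalence $\Db_{\dg}(\A)\xto{\sim}\Db_{\dg}(\tau_{\leq 0}(\A^\coh))$, and restrict it to the quasi-essential images to obtain $\A^\coh\xto{\sim}(\A^\coh)^\coh$. Your additional universal-property discussion merely elaborates the step the paper asserts directly (note only that the morphism you call ``$\A\to\A^\coh$'' exists as a quasi-equivalence only after passing to $\tau_{\leq 0}\A$, which is harmless once $\A$ is assumed connective).
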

\begin{proof}
We may suppose that $\A$ is connective.
We consider the universal embedding $F\colon \A\to\Db_{\dg}(\A)$ and the extension-closed subcategory $\A^\coh\sse\Db_{\dg}(\A)$ with a quasi-equivalence $\A\xto{\sim}\tau_{\leq 0}(\A^\coh)$.
This yields a quasi-equivalence $\Db_{\dg}(\A)\xto{\sim}\Db_{\dg}(\tau_{\leq 0}(\A^\coh))$ which restricts to a desired quasi-equivalence $\A^\coh\xto{\sim}(\A^\coh)^\coh$.
\end{proof}

An exact dg category $(\A,\SS)$ is said to be \emph{cohomological} if it is exact quasi-equivalent to the cohomological envelope of an exact dg category.
We also say that $(\A,\SS)$ is a \emph{cohomological} exact dg enhancement of $\CA=H^0\A$.
Obviously, any algebraic extriangulated category admits a cohomological exact dg enhancement.

\begin{remark}\label{rem:cohomological_env}
\begin{enumerate}
\item 
The cohomological envelope already appears implicitly in \cite{Che23b}, where it is used---without being named---to prove the main theorem.
This is presumably because it arises in a rather primitive manner.
However, to highlight its significance in the present work, we introduce a specific terminology for it.
\item 
The terminology is inspired by \cite{CX12}, where a subcategory 
$\CX\sse\CD(B)$ is called homological if it is compatible with a homological ring epimorphism 
$B\to C$ in the sense of $\CX\simeq \CD(C)$, and hence with the induced higher extensions.
Our notion is derived from this idea, but we adopt the adjective ``cohomological'' to match the cohomological conventions used throughout this article.
\end{enumerate}

\end{remark}

For later use, we collect some basic properties of $(\A^\coh,\SS)$.

\begin{lemma}\label{lem:cohomological_envelope_as_functor}
The operation of taking cohomological envelope gives rise to a functor $(-)^\coh\colon \Hqe_{\rm ex}\to \Hqe_{\rm ex}$.
Moreover, it restricts to an equivalence $\Hqe_{\rm ex}^{\rm cn}\xto{\sim}\Hqe_{\rm ex}^{\rm coh}$, where we denote by $\Hqe_{\rm ex}^{\rm cn}$ (resp. $\Hqe_{\rm ex}^{\rm coh}$) the full subcategory of $\Hqe_{\rm ex}$ consisting of connective (resp. cohomological) exact dg categories.
\end{lemma}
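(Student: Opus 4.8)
The plan is to realise $(-)^{\coh}$ as a composite of two functors coming from universal properties, and then to produce an explicit quasi-inverse for its restriction. Two preliminary functoriality remarks are needed. First, truncation $\tau_{\leq 0}$ is exact on $\CC(k)$, preserves quasi-isomorphisms and leaves $H^0$ unchanged, hence it sends quasi-equivalences to quasi-equivalences and — by \cref{ex:connective_cover} together with \cref{lem:quasi-equiv_preserve_ex_dg_str} — exact morphisms to exact morphisms; so it descends to an endofunctor $\tau_{\leq 0}\colon \Hqe_{\rm ex}\to\Hqe_{\rm ex}$ whose image lies in the subcategory $\Hqe_{\rm ex}^{\rm cn}$ of connective exact dg categories. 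Second, \cref{thm:universal_embedding} says exactly that the universal embedding $F\colon \A\to\Db_{\dg}(\A)$ is the unit of an adjunction between $\Hqe_{\rm ex}$ and the full subcategory of pretriangulated dg categories carrying the maximal exact structure of \cref{ex:pretri_is_exact_dg}; consequently $\Db_{\dg}(-)$ is a functor $\Hqe_{\rm ex}\to\Hqe_{\rm ex}$ and $F$ is natural in $\A$.

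Next I would construct $(-)^{\coh}$ first on connective exact dg categories. For connective $\A$ one has $\A^{\coh}\sse\Db_{\dg}(\A)$ as the quasi-essential image of $F$, so $H^0(\A^{\coh})$ is the essential image of the fully faithful functor $H^0F\colon H^0\A\to\Db(\A)$, i.e. $H^0(\A^{\coh})\cong H^0\A$. Given an exact morphism $f\colon \A\to\B$ of connective exact dg categories, naturality of $F$ gives $\Db_{\dg}(f)\circ F_{\A}=F_{\B}\circ f$, so $\Db_{\dg}(f)$ carries the objects of $\A^{\coh}$ into those of $\B^{\coh}$; by the factorisation-through-a-full-dg-subcategory argument already used in the proof of \cref{lem:universality_of_dg_quotient}, $\Db_{\dg}(f)$ then restricts to a (necessarily exact) morphism $f^{\coh}\colon \A^{\coh}\to\B^{\coh}$, uniquely. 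Functoriality of $f\mapsto f^{\coh}$ on $\Hqe_{\rm ex}^{\rm cn}$ is inherited from that of $\Db_{\dg}$ and this uniqueness. Finally, for an arbitrary exact dg category $\A$ one sets $\A^{\coh}\deff(\tau_{\leq 0}\A)^{\coh}$ and $f^{\coh}\deff(\tau_{\leq 0}f)^{\coh}$, which matches \cref{def:cohomological_envelope} and is nothing but the composite of the two functors above; this defines $(-)^{\coh}\colon \Hqe_{\rm ex}\to\Hqe_{\rm ex}$, whose image lies in $\Hqe_{\rm ex}^{\rm coh}$ by definition of \emph{cohomological}.

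For the equivalence I would check that $\tau_{\leq 0}$ restricts to a quasi-inverse $\Hqe_{\rm ex}^{\rm coh}\to\Hqe_{\rm ex}^{\rm cn}$ of $(-)^{\coh}|_{\Hqe_{\rm ex}^{\rm cn}}$. On the one hand, for connective $\A$ the exact quasi-equivalence $\tau_{\leq 0}\A\to\tau_{\leq 0}(\A^{\coh})$ furnished by \cref{thm:universal_embedding} is precisely the restriction of the natural morphism $\tau_{\leq 0}(F_{\A})$, hence these assemble into a natural isomorphism $\id\cong\tau_{\leq 0}\circ(-)^{\coh}$ on $\Hqe_{\rm ex}^{\rm cn}$. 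On the other hand, any $\B\in\Hqe_{\rm ex}^{\rm coh}$ is exact quasi-equivalent to some $\C^{\coh}$; since $(-)^{\coh}$ is a functor and $\A^{\coh}=(\tau_{\leq 0}\A)^{\coh}$, one gets $(\tau_{\leq 0}\B)^{\coh}=\B^{\coh}\cong(\C^{\coh})^{\coh}$, which is naturally isomorphic to $\C^{\coh}\cong\B$ by \cref{lem:stability_of_cohomological_envelope}, giving a natural isomorphism $(-)^{\coh}\circ\tau_{\leq 0}\cong\id$ on $\Hqe_{\rm ex}^{\rm coh}$. Essential surjectivity of $(-)^{\coh}|_{\Hqe_{\rm ex}^{\rm cn}}$ onto $\Hqe_{\rm ex}^{\rm coh}$ is then immediate, as $\C^{\coh}=(\tau_{\leq 0}\C)^{\coh}$ with $\tau_{\leq 0}\C$ connective.

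The hard part will not be conceptual but bookkeeping: confirming that $\tau_{\leq 0}$ and $\Db_{\dg}$ really are functorial on the localised category $\Hqe_{\rm ex}$ (i.e.\ that they are compatible with cofibrant replacements and with composition of roofs), that the restriction $\Db_{\dg}(f)|_{\A^{\coh}}$ is well-defined and exact, and — most of all — that the comparison isomorphisms $\A\cong\tau_{\leq 0}(\A^{\coh})$ and $\B\cong(\tau_{\leq 0}\B)^{\coh}$ are natural rather than merely objectwise. Each of these naturality checks reduces to the uniqueness clauses in \cref{thm:universal_embedding} and \cref{lem:stability_of_cohomological_envelope}, so they are routine once one is careful with the diagrams.
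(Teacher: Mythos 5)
Your proposal is correct and follows essentially the same route as the paper: realise $(-)^\coh$ as the restriction of the composite $\Db_{\dg}\circ\tau_{\leq 0}$ of the two endofunctors of $\Hqe_{\rm ex}$, observe that naturality of the universal embedding forces the quasi-essential image of $\A$ to land in $\B^\coh$, and exhibit $\tau_{\leq 0}$ as a quasi-inverse via the natural isomorphisms $\tau_{\leq 0}(\A^\coh)\cong\A$ (from \cref{thm:universal_embedding}) and $(\tau_{\leq 0}\B)^\coh\cong\B$ (from \cref{lem:stability_of_cohomological_envelope}). Your write-up is in fact more explicit than the paper's about the naturality bookkeeping it leaves implicit.
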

\begin{proof}
We have already had two endofunctors on $\Hqe_{\rm ex}$, namely, taking the connective cover and constructing the bounded dg derived category that we denote by $\tau_{\leq 0}(-)$ and $\Db_{\dg}(-)$ respectively.
The composite sends a morphism $F\colon \A\to \B$ to $F'\colon \Db_{\dg}(\tau_{\leq 0}\A)\to \Db_{\dg}(\tau_{\leq 0}\B)$.
We see that $F'$ restricts to $F^\coh\colon \A^\coh\to\B^\coh$, because the quasi-essential image of $\A$ under $F$ is contained in $\B^\coh$.

It is easily confirmed that its restriction is an equivalence:
In fact we have a natural isomorphism $\tau_{\leq 0}(\A^\coh)\xto{\sim} \A$ exists for any $\A\in\Hqe_{\rm ex}^{\rm cn}$ as well as a natural isomorphism $(\tau_{\leq 0}\A)^\coh\xto{\sim}\A$ for any $\A\in\Hqe_{\rm ex}^{\rm coh}$.
We thus finish the proof.
\end{proof}

\begin{remark}
Let $F\colon \A\to\B$ be a morphism of cohomological exact dg categories in $\Hqe$.
Then, it must be exact.
In fact, it is the restriction of the canonical morphism $\pretr\A\to\pretr\B$ along the inclusions.
Hence we have $\Hqe_{\rm ex}^{\rm coh}=\Hqe^{\rm coh}$.
\end{remark}

The next proposition permits us to understand the bounded dg derived category $\Db_{\dg}(\A)$ of $\A$ as the pretriangulated hull of the cohomological envelope $\A^\coh$.

\begin{proposition}\label{prop:universal_embedding_and_cohomological_envelope}
Assume that $\A$ is connective.
Then there exists a natural isomorphism $\pretr(\A^\coh)\xto{\simeq} \Db_{\dg}(\A)$ in $\Hqe$.
Thus we also call the canonical morphism $F^\coh\colon \A^\coh\hookrightarrow\Db_{\dg}(\A)$ the \emph{pretriangulated hull} of $\A^\coh$.
\end{proposition}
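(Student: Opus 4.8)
The plan is to unwind the definitions and reduce the claim to the defining universal property of $\Db_{\dg}(\A)$ established in \cref{thm:universal_embedding}. Recall that, by \cref{def:cohomological_envelope}, the cohomological envelope $\A^\coh$ is the extension-closed dg subcategory $\D'\sse \Db_{\dg}(\A)$ from \cref{thm:universal_embedding}, and since $\A$ is connective there is an exact quasi-equivalence $\A\xto{\sim}\tau_{\leq 0}(\A^\coh)$. In particular the inclusion $\A^\coh\hookrightarrow \Db_{\dg}(\A)$ and the universal embedding $F\colon \A\to \Db_{\dg}(\A)$ differ only by this quasi-equivalence. First I would observe that $\Db_{\dg}(\A)$ is a pretriangulated dg category containing $\A^\coh$ as a dg full subcategory, so by \cref{lem:pretriangulated_hll} (applied to the dg category $\A^\coh$, or rather to its image) the pretriangulated hull $\pretr(\A^\coh)$ maps fully faithfully into $\Db_{\dg}(\A)$, and this map identifies $\tr(\A^\coh)$ with the smallest triangulated subcategory of $H^0(\Db_{\dg}(\A))=\Db(\A)$ containing $H^0(\A^\coh)$.

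The core step is then to show this triangulated subcategory is all of $\Db(\A)$, equivalently that the full faithful embedding $\pretr(\A^\coh)\hookrightarrow\Db_{\dg}(\A)$ is a quasi-equivalence, i.e. quasi-essentially surjective. For this I would go back to the construction in \cref{def:defective_object}: $\Db_{\dg}(\A)=\pretr(\A)/\CM_{\dg}$ where $\CM$ is the triangulated subcategory of $\tr(\A)$ generated by the defective objects, so $\Db(\A)=\tr(\A)/\CM$ is generated as a triangulated category by the image of $H^0\A$. Since $H^0\A\xto{\sim}H^0(\tau_{\leq 0}\A)\to H^0(\A^\coh)$ is an exact equivalence (the canonical morphisms $\A^\coh\leftarrow\tau_{\leq 0}\A\to\A$ become exact equivalences under $H^0$, as noted after \cref{def:cohomological_envelope}), the image of $H^0\A$ in $\Db(\A)$ coincides with the image of $H^0(\A^\coh)$ up to the identifications. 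Therefore the triangulated closure of $H^0(\A^\coh)$ in $\Db(\A)$ is all of $\Db(\A)$, which by \cref{lem:pretriangulated_hll} gives $\tr(\A^\coh)=\Db(\A)$ and hence the natural quasi-equivalence $\pretr(\A^\coh)\xto{\simeq}\Db_{\dg}(\A)$ in $\Hqe$.

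Finally I would package this: the composite $\A^\coh\hookrightarrow\pretr(\A^\coh)\xto{\simeq}\Db_{\dg}(\A)$ is the canonical inclusion $F^\coh$, and since $\A^\coh\hookrightarrow\pretr(\A^\coh)$ is the pretriangulated hull (by \cref{rem:quasi-equivalence}, as $\pretr(\A^\coh)$ is pretriangulated), the morphism $F^\coh\colon \A^\coh\hookrightarrow\Db_{\dg}(\A)$ deserves to be called the pretriangulated hull of $\A^\coh$ as claimed. The step I expect to be the main obstacle is verifying carefully that the various canonical morphisms (the universal embedding $F$, the quasi-equivalence $\A\xto{\sim}\tau_{\leq0}(\A^\coh)$, and the inclusion $\A^\coh\hookrightarrow\Db_{\dg}(\A)$) are compatible so that the "triangulated closure of $H^0(\A^\coh)$" is literally identified with the ambient $\Db(\A)$ — this is bookkeeping about where the defective objects live and uses \cref{lem:quasi-equiv_preserve_ex_dg_str} to transport exact dg structures along quasi-equivalences, but no genuinely new idea is needed beyond \cref{thm:universal_embedding} and \cref{lem:pretriangulated_hll}.
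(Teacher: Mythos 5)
Your proposal is correct, but the decisive step is argued differently from the paper. Both proofs begin the same way: extend the inclusion $\A^\coh\hookrightarrow\Db_{\dg}(\A)$ to a quasi-fully-faithful morphism $\inc\colon\pretr(\A^\coh)\to\Db_{\dg}(\A)$ whose $H^0$-image is the triangulated closure of $H^0(\A^\coh)$. For the essential surjectivity of $H^0(\inc)$, however, the paper first verifies that the canonical morphism $\A\to\pretr(\A^\coh)$ is \emph{exact} (via the chain of extension-closed subcategories $\A^\coh\sse\pretr(\A^\coh)\sse\Db_{\dg}(\A)$) and then invokes the universal property of $F\colon\A\to\Db_{\dg}(\A)$ from \cref{thm:universal_embedding} to produce a morphism $G\colon\Db_{\dg}(\A)\to\pretr(\A^\coh)$ with $\inc\circ G=\id$; you instead argue directly from the construction $\Db_{\dg}(\A)=\pretr(\A)/\CM_{\dg}$ that the Verdier quotient $\Db(\A)=\tr(\A)/\CM$ is generated as a triangulated category by the image of $H^0\A$, which is identified with $H^0(\A^\coh)$. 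Your generation argument is sound (the preimage under the quotient functor of the triangulated subcategory generated by the image of $H^0\A$ is a triangulated subcategory of $\tr(\A)$ containing $H^0\A$, hence everything) and has the advantage of bypassing the exactness verification entirely; the paper's route stays at the level of the universal property, and as a byproduct records the exactness statements (in particular that homotopy short exactness in $\A^\coh$ can be tested in $\Db_{\dg}(\A)$) that are used later in \S4. The bookkeeping you flag — transporting \cref{lem:pretriangulated_hll} from $\CD(\A^\coh)$ to the ambient $\Db_{\dg}(\A)$ — is the same standard argument the paper carries out in \cref{cor:restriction_of_dg_quotient}, so nothing essential is missing.
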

\begin{proof}
Recall from \cref{thm:universal_embedding} that a universal exact morphism $F\colon \A\to\Db_{\dg}(\A)$ to a pretriangulated category exists.
We firstly check the exactness of the canonical morphism $\A\xto{\can} \pretr(\A^\coh)$.
Since $3$-term h-complexes in $\A^\coh$ belong to $\A$ via the canonical inclusion $\A\to \A^\coh$, we are reduced to check the exactness of $\A^\coh\hookrightarrow\pretr(\A^\coh)$.
By taking the pretriangulated hulls for the fully faithful inclusion $\A^{\coh}\hookrightarrow \Db_{\dg}(\A)$, we get the following commutative diagram in $\Hqe$ in which the arrows $\hookrightarrow$ denote the canonical inclusions, see \cref{rem:quasi-equivalence}.
\begin{equation}\label{diag:universal_embedding_and_cohomological_envelope}
\begin{tikzcd}[column sep=1.4cm, row sep=1.0cm]
    \A\arrow{r}{}\arrow{rd}[swap]{F}&\A^\coh \arrow[hook]{r}{}\arrow[hook]{d}[swap]{F^\coh}[swap]{}& \pretr(\A^\coh)\arrow[hook]{ld}{\inc}\\
    &\Db_{\dg}(\A) & {}
\end{tikzcd}
\end{equation}
Since both subcategories $\A^\coh$ and $\pretr(\A^\coh)$ are extension-closed in $\Db_{\dg}(\A)$, so is $\A^\coh$ in $\pretr(\A^\coh)$.
Thus \eqref{diag:universal_embedding_and_cohomological_envelope} gives a sequence $\A^\coh\sse\pretr(\A^\coh)\sse\Db_{\dg}(\A)$ of extension-closed subcategories.
In particular, a $3$-term h-complex $X$ in $\A^\coh$ is homotopy short exact if and only if so is it in $\Db_{\dg}(\A)$.

Next we shall that the above morphism $\inc\colon \pretr(\A^\coh)\to\Db_{\dg}(\A)$ is an isomorphism.
Since $\A\xto{\can}\pretr(\A^\coh)$ is an exact morphism to a pretriangulated dg category, by the universality of $F\colon \A\to\Db_{\dg}(\A)$, we have a morphism $G\colon \Db_{\dg}(\A)\to\pretr(\A^\coh)$ with $\can=G\circ F$ as depicted below.
\begin{equation*}
\begin{tikzcd}[column sep=1.2cm, row sep=1.0cm]
    \A \arrow{r}{\can}\arrow{d}[swap]{F}& \pretr(\A^\coh)\arrow[hook]{ld}[swap]{\inc}\\
        \Db_{\dg}(\A)\arrow[dotted,bend right]{ru}[swap]{G} & {}
\end{tikzcd}
\end{equation*}
Again by the universality of $F$, we conclude $\inc\circ G=\id$ in $\Hqe$ which forces $H^0(\inc) \colon \tr(\A^{\coh})\to \Db(\A)$ to be essentially surjective. Hence $\inc\colon \pretr(\A^{\coh})\to \Db_{\dg}(\A)$ is a quasi-equivalence.
\end{proof}

By the above discussion, we see that a cohomological exact dg category $\A$ is a ``generating'' extension-closed subcategory of a triangulated category in the following sense.

\begin{lemma}\label{lem:char_of_cohomological_exact_dg_cat}
Let $\T$ be a pretriangulated dg category and $\A\sse\T$ an extension-closed subcategory (with a natural exact structure $\SS$).
If $\tr(\A)=H^0\T$, then $(\A,\SS)$ is cohomological.
\end{lemma}
\begin{proof}
The inclusion $\A\sse\T$ can be factorized as the composite $\A\hookrightarrow \pretr(\A)\hookrightarrow \T$ in $\Hqe$.
If $\tr(\A)=H^0\T$, we have a quasi-equivalence $\pretr(\A)\hookrightarrow\T$.
Recall that the bounded derived category $\Db(\A)$ is the quotient of $\tr(\A)$ by the subcategory $\CM$ generated by the defective objects, see \cref{def:defective_object}.
Since any conflation in $\A$ is also a conflation in $\T$, we have $\CM=0$.
Hence $\A$ is cohomological.
\end{proof}

Thanks to the characterization in \cref{lem:char_of_cohomological_exact_dg_cat}, we can find examples of cohomological exact dg categories.
The first one is an incarnation of cohomological exact dg categories.

\begin{example}\label{ex:pvd_smc1}
Let us consider a finite dimensional $k$-algebra $A$.
The canonical inclusion $\mod A\hookrightarrow \Db(\mod A)$ from the module category to its derived category provides a prototypical example of cohomological exact dg category.
Precisely, we consider a natural enhancement of $\Db(\mod A)$ via the equivalence $\mathcal{K}^{{\rm b},-}(\proj A)\simeq \Db(\mod A)$, where $\mathcal{K}^{{\rm b},-}(\proj A)$ is the homotopy category of complexes bounded above with finite total cohomology.
Then, the natural dg enhancement of $\mod A\sse\mathcal{K}^{{\rm b},-}(\proj A)$ is a cohomological exact dg category.
\end{example}

A generalization of \cref{ex:pvd_smc1} for a suitable dg algebra exists under the notion of simple minded collection.

\begin{example}\label{ex:pvd_smc2}
Let $\Lambda$ be a proper connective dg $k$-algebra over a field $k$ and consider the perfect valued derived category $\pvd(\Lambda)\sse \CD(\Lambda)$, i.e.,
\[
\pvd(\Lambda)=\Set{X\in\CD(\Lambda) | \sum_i\dim_k H^i(X)<\infty}.
\]
Also, we consider the set $\CS$ of simple $H^0(\Lambda)$-modules in it.
The dg derived category $\Ddg(\Lambda)$ naturally defines an enhancement $\pvd_\dg(\Lambda)$ of $\pvd(\Lambda)$.
Since $\CS$ is a simple minded collection of $\pvd(\Lambda)$ in the sense of \cite[Def.~3.2]{KY14}, the thick closure of $\mod H^0(\Lambda)\sse\pvd(\Lambda)$ coincides with $\pvd(\Lambda)$.
By \cref{lem:char_of_cohomological_exact_dg_cat}, we have the cohomological exact dg enhancement of $\mod H^0(\Lambda)$ whose bounded derived category is $\pvd(\Lambda)$.

We should notice that, if we regard $\mod H^0(\Lambda)$ as a dg category concentrated in degree $0$, then its pretriangulated hull is the usual bounded derived category $\Db(\mod H^0(\Lambda))$ after taking $H^0$, see \cref{ex:univ_emb_exact_cat}.
It is not necessarily true that $\pvd(\Lambda)$ is equivalent to $\Db(\mod H^0(\Lambda))$.
\end{example}

Another example arises from Cohen-Macaulay dg modules studied in \cite{Jin20}.

\begin{example}\label{ex:Cohen-Macaulay1}
\cite[Prop.~3.29]{Che24b}
Let $\Lambda$ be a dg $k$-algebra as in \cref{ex:pvd_smc2}.
We additionally assume that $\Lambda$ is Gorenstein in the sense that $\per(\Lambda)$ is generated by $D\Lambda\in\CD(\Lambda)$, where $D=\Hom_k(-,k)$ is the standard $k$-dual.
We consider the full subcategory $\mathsf{CM}(\Lambda)\sse\CD(\Lambda)$ of \emph{Cohen-Macaulay} dg $\Lambda$-modules, that is,
\[
\mathsf{CM}(\Lambda)=\Set{X\in\CD(\Lambda) | H^i(X)=0\textup{\ and\ }\Hom_{\CD(\Lambda)}(M,A[i])=0\textup{\ for\ }i>0 }.
\]
Since $\mathsf{CM}(\Lambda)$ is extension-closed in $\CD(\Lambda)$, the dg derived category $\Ddg(\Lambda)$ defines an exact dg enhancement $\mathsf{CM}_{\dg}(\Lambda)$ to $\mathsf{CM}(\Lambda)$.
By \cite[Lem.~3.9(2)]{Jin20}, we see $\pvd(\Lambda)$ is generated by $\mathsf{CM}(\Lambda)$.
Hence $\mathsf{CM}_{\dg}(\Lambda)$ is cohomological and its pretriangulated hull is quasi-equivalent to $\pvd_{\dg}(\Lambda)$.
\end{example}

Passing to the cohomological envelope, we can interpret the higher extensions $\BE^n$ over the extriangulated category $H^0(\A)=H^0(\A^\coh)$ as the shifted $\Hom$-spaces via its pretriangulated hull $\Db_{\dg}(\A)=\pretr(\A^\coh)$ by \cref{prop:higher_extension_via_univ_emb}.

Lastly we would like to mention that $\Hqe_{\rm ex}^{\rm coh}$ admits a closed monoidal structure.
Chen has already proved that $\Hqe_{\rm ex}^{\rm cn}$ admits such a structure with respect to a certain tensor product \cite[\S 4]{Che24b}.
Thus, we have the statement by \cref{lem:cohomological_envelope_as_functor}.

\subsection{A dg quotient via the cohomological envelope}
\label{subsec:dg_quotient_via_cohomological_envelope}
Now we put the pretriangulated hull $F^\coh\colon \A^\coh\hookrightarrow \Db_{\dg}(\A)$ defined in \cref{prop:universal_embedding_and_cohomological_envelope} to use.
We establish a dg quotient of $\A^\coh$ compatible with that of $\Db_{\dg}(\A)$ depending on basic observations in \S\ref{subsec:Hqe}.

\begin{setup}\label{setup:dg_quotient_via_cohomological_envelope}
We keep \cref{setup:cohomological_envelope} and assume that $\A$ is connective through this subsection.
Also, we are given an extension-closed subcategory $\N\sse\A$ which defines the extension-closed subcategory $\CN\sse\CA$ under $H^0$ by \cref{ex:extension_closed_subcat}.
Thus, the sequence $\CN\sse\CA\sse\Db(\A)$ of extension-closed subcategories is available.
\end{setup}

Recall that we are assuming $\A$ to be cofibrant.
Hence the dg quotient $\A\to\A/\N$ is represented by a functor in $\dgcat$.
We can say more that the dg quotient $\A/\N$and the pretriagulated hull $\pretr(\A)$ remain flat.

Following the arguments in \S\ref{subsec:subquotients_are_algebraic}, we consider the thick closure $\thick\CN$ of $\CN\sse\Db(\A)$ and denote by $\thick_{\dg}\N$ its canonical dg enhancement in $\Db_{\dg}(\A)$.
Thus, we have the dg quotient which enhances the Verdier quotient of $\Db(\A)$ by $\thick\CN$ as the following diagram indicates (compare \eqref{seq:ambient_Verdier_quotient}).

\begin{equation}\label{seq:ambient_Verdier_quotient2}
\begin{tikzcd}[column sep=1.0cm]
    \thick_{\dg}\N \arrow{r}&
    \Db_{\dg}(\A)\arrow{r}{Q} &
    \dfrac{\Db_{\dg}(\A)}{\thick_{\dg}\N}
\end{tikzcd}
\quad
\overset{H^0}{\rightsquigarrow}
\quad
\begin{tikzcd}[column sep=1.0cm]
    \thick\CN \arrow{r}&
    \Db(\A)\arrow{r}{H^0Q} &
    \dfrac{\Db(\A)}{\thick\CN}
\end{tikzcd}
\end{equation}

Since $(\A^\coh,\SS)$ is an enhancement of $\CA$, the extension-closed dg subcategory $\N\sse\A$ naturally determines that of $\A^\coh$, namely, the canonical dg enhancement of $\CN\sse H^0(\A^\coh)$.
With abuse of notation, we still use the same symbol to present the situation such as $(\N,\SS|_\N)\sse (\A^\coh,\SS)$.
It has been already seen in \cref{cor:restriction_of_dg_quotient} that the dg quotient $\A^\coh/\N$ is realized as a dg subcategory of $\frac{\Db_{\dg}(\A)}{\thick_{\dg}\N}$.
We rephrase their interaction accordingly to our current context.

\begin{corollary}\label{cor:restriction_of_dg_quotient2}
Let us denote by $\Im(Q|_{\A^\coh})$ the quasi-essential image of $Q\circ F^\coh$ and consider the following commutative diagram in $\Hqe$.
\begin{equation}\label{diag:restriction_of_dg_quotient2}
\begin{tikzcd}[column sep=1.5cm]
\N\arrow{r}{}\arrow{d}{} & \A^\coh \arrow{r}{Q|_{\A^\coh}}\arrow[hook]{d}[swap]{F^\coh}&
    \Im(Q|_{\A^\coh}) \arrow[hook]{d}{\inc} \\
    \thick_{\dg}\N \arrow[hook]{r}&
    \Db_{\dg}(\A)\arrow{r}{Q} &
    \dfrac{\Db_{\dg}(\A)}{\thick_{\dg}\N}
\end{tikzcd}
\end{equation}
Then the following assertions hold.
\begin{enumerate}[label=\textup{(\arabic*)}]
\item 
The canonical inclusion $\inc$ is the pretriangulated hull of $\Im(Q|_{\A^\coh})$.
\item 
The restricted quasi-functor $Q|_{\A^\coh}\colon \A^\coh\to \Im(Q|_{\A^\coh})$ is a dg quotient of $\A^\coh$ by $\N$.
In particular, we have an isomorphism $\Im(Q|_{\A^\coh})\cong \A^\coh/\N$ in $\Hqe$.
\end{enumerate}
Moreover, if $\Im(Q|_{\A^\coh})$ is extension-closed in $\frac{\Db_{\dg}(\A)}{\thick_{\dg}\N}$, then it is cohomological and that the upper row yields a sequence $\CN\to\CA\to H^0(\A^\coh/\N)$ in $\ET$.
\end{corollary}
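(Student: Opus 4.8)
The plan is to dispose of the sequence in $\ET$ first and then to establish the cohomological-ness, which is the substantive point. Write $\B\deff\Im(Q|_{\A^\coh})$ and $\T\deff\Db_{\dg}(\A)/\thick_{\dg}\N$. By part (1) the canonical inclusion $\inc\colon\B\hookrightarrow\T$ is the pretriangulated hull of $\B$, so in particular $\T$ is pretriangulated, and by \cref{prop:universal_embedding_and_cohomological_envelope} one has $\Db_{\dg}(\A)=\pretr(\A^\coh)$. Under the standing hypothesis $\B$ is extension-closed in $\T$, so by \cref{ex:pretri_is_exact_dg} and \cref{ex:extension_closed_subcat} it acquires a natural exact dg structure whose conflations are precisely the homotopy short exact sequences of $\T$ with all terms in $\B$.

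For the sequence in $\ET$: the dg quotient $Q\colon\pretr(\A^\coh)\to\T$ is exact, being a morphism of pretriangulated dg categories, and its restriction $Q|_{\A^\coh}\colon\A^\coh\to\B$ is again exact, since $Q$ carries a conflation of $\A^\coh$ to a homotopy short exact sequence of $\T$ whose terms all lie in $\B=\Im(Q|_{\A^\coh})$. As $\N\hookrightarrow\A^\coh$ is exact by \cref{ex:extension_closed_subcat}, and $H^0(\A^\coh)=\CA$, $H^0\N=\CN$, while $H^0\B=H^0(\A^\coh/\N)$ (part (2)) is algebraic extriangulated by \cref{def:alg_ET}, applying $H^0$ to the upper row of \eqref{diag:restriction_of_dg_quotient2} produces the asserted sequence $\CN\to\CA\to H^0(\A^\coh/\N)$ of exact functors in $\ET$.

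For the cohomological-ness, I would show that the canonical exact morphism $G\colon\B^\coh\to\B$ supplied by \cref{lem:extension_closed_in_pretriangulated_hull} is an exact quasi-equivalence; since $\B^\coh$ is the cohomological envelope of the exact dg category $\tau_{\leq 0}\B$, this exhibits $\B\simeq\B^\coh$ as cohomological. Quasi-essential surjectivity of $G$ is automatic, as on $H^0$ it realises the canonical equivalence $H^0(\B^\coh)=H^0(\tau_{\leq 0}\B)=H^0\B$ coming from \cref{thm:universal_embedding} and \cref{ex:connective_cover}. For quasi-full-faithfulness one compares, for $P,R\in\B^\coh$, the cohomology groups of $\B^\coh(P,R)$ with those of $\B(GP,GR)=\T(GP,GR)$ in every degree. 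In degrees $0$ and $1$ both compute $\Hom_{H^0\B}$ and $\BE_{H^0\B}$, respectively, because $H^0\B$ is extension-closed in the triangulated categories $\Db(\tau_{\leq 0}\B)$ (via the universal embedding, \cref{thm:universal_embedding}) and $H^0\T$ (by hypothesis). For $n\ge 2$, \cref{prop:higher_extension_via_univ_emb} applied to the connective $\tau_{\leq 0}\B$ gives $H^n(\B^\coh(P,R))=\Ext^n_{\Db(\tau_{\leq 0}\B)}(P,R)=\BE^n_{H^0\B}(P,R)$, while $H^n(\T(GP,GR))=\Ext^n_{H^0\T}(P,R)$; since $H^0\B$ is extension-closed in the triangulated $H^0\T$, the family $\{\Ext^n_{H^0\T}(?,-)\}_{n\ge 0}$ restricts to a $\delta$-functor over $H^0\B$, and once it is shown to be effaceable in positive degrees, the uniqueness clause of \cref{prop:effaceablity_determines_the_higher_degrees} together with \cref{lem:higher_extensions_are_effaceable} identifies it, via the map induced by $G$, with $\{\BE^n_{H^0\B}(?,-)\}_{n\ge 0}$. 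The negative degrees are handled by the dual argument. (Equivalently, this amounts to showing that the canonical morphism $\pretr(\B^\coh)=\Db_{\dg}(\tau_{\leq 0}\B)\to\pretr(\B)=\T$ of \cref{prop:universal_embedding_and_cohomological_envelope} is a quasi-equivalence.)

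The hard part is the effaceability of $\Ext^{>0}_{H^0\T}(?,-)$ on $H^0\B$. I would exploit that $H^0\T=\Db(\A)/\thick\CN$ with Verdier quotient $L\colon\Db(\A)\to H^0\T$, that $L$ is exact and sends $\fs$-conflations of $\CA$ (that is, triangles of $\Db(\A)$ with all terms in $\CA$) to conflations of $H^0\B=L(\CA)$, hence $\fs$-deflations of $\CA$ to deflations of $H^0\B$, and that $\Ext^{>0}_{\Db(\A)}(?,-)$ is effaceable on $\CA$ by \cref{prop:higher_extension_via_univ_emb}. Given a class in $\Ext^n_{H^0\T}(LA,LA')$ with $A,A'\in\CA$, one represents it by a roof in $\Db(\A)$ modulo $\thick\CN$, effaces the resulting $\Db(\A)$-class by an $\fs$-conflation in $\CA$, and pushes the conclusion forward along $L$; the delicate point is to control the $\thick\CN$-component of the roof so that the effacing $\fs$-conflation may be chosen inside $\CA$, for which the bounded description of $\thick\CN$ and the factorization techniques of \S\ref{subsec:subquotients_are_algebraic} are the natural tools.
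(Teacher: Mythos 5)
Your treatment of the $\ET$-sequence and your reduction of the problem are fine, and you have in fact identified (in your parenthetical remark) exactly the reformulation the paper uses: the whole point is to show that $\pretr(\B)=\frac{\Db_{\dg}(\A)}{\thick_{\dg}\N}$ is the bounded dg derived category of $\tau_{\leq 0}\B$. But where the paper proves this by a short universal-property argument — any exact morphism $G\colon\tau_{\leq 0}\B\to\T$ to a pretriangulated dg category, precomposed with $\tau_{\leq 0}Q\colon\A\to\tau_{\leq 0}\B$, factors uniquely through $\Db_{\dg}(\A)$ by \cref{thm:universal_embedding}, and the resulting morphism annihilates $\thick_{\dg}\N$, hence factors uniquely through the dg quotient — you instead attempt a degree-by-degree cohomological comparison via $\delta$-functors. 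That route leaves a genuine gap: the effaceability of $\Ext^{>0}_{H^0\T}(?,-)$ restricted to $H^0\B$, which you explicitly defer. This is not a routine verification. The roof-manipulation machinery of \S\ref{subsec:subquotients_are_algebraic} that you propose to invoke (\cref{lem:description_for_extensions}, \cref{claim:fulness}) depends essentially on the condition \ref{spade} — e.g.\ \cref{claim:fulness} uses \ref{spade} to replace an effacing $\fs$-conflation by one lying in $\CN$ — whereas the ``moreover'' clause of the corollary assumes only that $\Im(Q|_{\A^\coh})$ is extension-closed in the quotient. So as written, the hard step of your argument is both unproven and propped up by hypotheses you do not have; the universal-property argument sidesteps all of this.

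Two smaller points. First, your claim that ``the negative degrees are handled by the dual argument'' is misplaced: there is no dual effaceability argument for negative extensions, but none is needed — for $n\geq 0$ one has $H^{-n}(\B^{\coh}(P,R))\cong H^{-n}((\tau_{\leq 0}\B)(P,R))=H^{-n}(\B(P,R))=H^{-n}(\T(P,R))$ directly from \cref{thm:universal_embedding} and the definition of the truncation, so the nonpositive degrees are automatic rather than dual. Second, even granting effaceability, to conclude an isomorphism of $\delta$-functors from \cref{prop:effaceablity_determines_the_higher_degrees} you must check that the degree-$0$ identification is compatible with both connecting morphisms and that both families are effaceable; this is the pattern of \cref{prop:higher_extension_via_univ_emb}, and should be said rather than left implicit. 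I would recommend replacing the cohomological comparison by the universal-property argument you already noted as equivalent.
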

\begin{proof}
The only difference between our current situation and \cref{cor:restriction_of_dg_quotient} is that we have taken the thick closure $\thick\CN$ instead of $\Tria\CN$.
This does not cause any change on the quotient $\frac{\Db_{\dg}(\A)}{\thick_{\dg}\N}$, so the items (1) and (2) are immediate from \cref{cor:restriction_of_dg_quotient}.

We have to show the last assertion.
For simplicity, we still use the same symbol $Q$ to denote the restriction $Q|_{\A^\coh}$ and put $\B=\A^\coh/\N$.
We are currently regarding $\B$ as an extension-closed subcategory of $\frac{\Db_{\dg}(\A)}{\thick_{\dg}\N}$, so it is enough to show that the natural morphism $\tau_{\leq 0}\B\to\frac{\Db_{\dg}(\A)}{\thick_{\dg}\N}$ is a universal exact morphism to a pretriangulated dg category.
If we consider an exact morphism $G\colon \tau_{\leq 0}\B\to\T$ to a pretriangulated dg category $\T$, the composite $G\circ \tau_{\leq 0}Q\colon \tau_{\leq 0}(\A^\coh)=\A\to\T$ uniquely factors through $\Db_{\dg}(\A)$ by \cref{thm:universal_embedding}.
Since the induced unique morphism $\Db_{\dg}(\A)\to\T$ annihilates $\thick\CN$ under $H^0$, we have a desired unique morphism $G'\colon \frac{\Db_{\dg}(\A)}{\thick_{\dg}\N}\to\T$.
We omit the full details here, because the argument is done in the same fashion as \cref{cor:restriction_of_dg_quotient}.
\end{proof}

Lastly in this subsection, we apply \cref{cor:restriction_of_dg_quotient2} to the setting of Serre quotients.
Let $\CA$ be an abelian category and $\CN\sse\CA$ a Serre subcategory.
Let us consider trivial exact dg enhancements $(\N,\SS|_\N)\sse(\A,\SS)$, that is, dg categories concentrated in degree $0$.
They do not necessarily enhance the Serre quotient $\CA/\CN$, because the dg quotient $\A/\N$ yields the ideal quotient $\CA/[\CN]$ by \cref{lem:failure_dg_quotient_of_connective_dg}.
The cohomological envelope circumvents this obstruction.

\begin{proposition}\label{prop:enhanced_Serre}
Let us consider the above $\CN\sse\CA$ with the trivial enhancements $(\N,\SS|_\N)\sse(\A,\SS)$.
Then the exact dg subquotient $(\A^\coh/\N,\wtil{\SS_\N})$ defines a cohomological exact enhancement of the Serre quotient $\CA/\CN$.
\end{proposition}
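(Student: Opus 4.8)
Since $\A$ is the trivial (degree-zero) dg enhancement of the abelian category $\CA$ equipped with its maximal exact structure, \cref{ex:univ_emb_exact_cat} identifies the universal embedding $\A\to\Db_{\dg}(\A)$ with Neeman's embedding $\CA\to\Db(\CA)$; hence $\Db(\A)$ is the usual bounded derived category of $\CA$, and the cohomological envelope $\A^\coh$ is the full dg subcategory of $\Db_{\dg}(\CA)$ on the complexes concentrated in degree $0$, carrying the exact dg structure whose conflations are the short exact sequences of $\CA$. As $\CN$ is closed under extensions and every bounded complex is a finite iterated extension of shifts of its cohomology objects, the thick closure $\thick\CN$ inside $\Db(\CA)$ equals the full triangulated subcategory $\Db_\CN(\CA)$ of complexes whose cohomology lies in $\CN$. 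Granting that, for such a trivial enhancement, the exact dg substructure produced in \cref{thm:dg_subquotient} leaves this ambient picture unchanged, \cref{cor:restriction_of_dg_quotient2} realizes the exact dg subquotient $(\A^\coh/\N,\wtil{\SS_\N})$ as the quasi-essential image of $\A$ in the Drinfeld quotient $\Db_{\dg}(\CA)/\thick_{\dg}\N$, lying over the Verdier quotient $\Db(\CA)/\Db_\CN(\CA)$.

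It then remains to identify the homotopy category of this image, namely the essential image of $\CA$ inside $\Db(\CA)/\Db_\CN(\CA)$, with the Serre quotient $\CA/\CN$ together with its exact structure. Because $\Db_\CN(\CA)$ is preserved by the truncations $\tau^{\le n}$ and $\tau^{\ge n}$, the standard $t$-structure on $\Db(\CA)$ descends to $\Db(\CA)/\Db_\CN(\CA)$; its heart is an abelian category equipped with an exact functor out of $\CA$ whose kernel is exactly $\CN$, and a calculus-of-fractions computation of the $\Hom$-groups shows that this heart realizes the Gabriel--Zisman localization defining $\CA/\CN$. The essential image of $\CA$ is precisely this heart, hence it is extension-closed in $\Db(\CA)/\Db_\CN(\CA)$ (a heart is always extension-closed); by the last clause of \cref{cor:restriction_of_dg_quotient2} this gives that $\A^\coh/\N$ is cohomological and that the top row of \eqref{diag:restriction_of_dg_quotient2} is a sequence $\CN\to\CA\to H^0(\A^\coh/\N)\simeq\CA/\CN$ in $\ET$; checking that this sequence agrees with the Serre-quotient sequence completes the proof that $(\A^\coh/\N,\wtil{\SS_\N})$ is a cohomological exact dg enhancement of $\CA/\CN$.

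The main obstacle is the step that identifies the essential image of the abelian category $\CA$ in $\Db(\CA)/\Db_\CN(\CA)$ with the Serre quotient $\CA/\CN$ as exact categories. One cannot merely invoke the universal property of bounded derived categories here, since $\Db(\CA)/\Db_\CN(\CA)$ need not coincide with $\Db(\CA/\CN)$; instead one must exploit the descended $t$-structure and carry out an explicit fraction calculus on the $\Hom$-groups in order to match the heart with the localization presenting $\CA/\CN$. A secondary point to be verified along the way is exactly the reduction flagged above, namely that passing to the exact dg substructure of \cref{thm:dg_subquotient} does not alter the ambient Drinfeld quotient for a trivial enhancement.
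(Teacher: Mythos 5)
Your strategy is sound and reaches the stated conclusion, but at the decisive step it takes a genuinely different route from the paper. After the shared setup (identifying the universal embedding with Neeman's embedding, $\thick\CN=\Db_{\CN}(\CA)$, and the realization of $\A^{\coh}/\N$ inside the Drinfeld quotient via \cref{cor:restriction_of_dg_quotient2}), the paper simply invokes \cite[Thm.~3.2]{Miy91} to place $\CN\to\CA\to\CA/\CN$ as an exact subsequence of $\thick\CN\to\Db(\CA)\to\Db(\CA/\CN)$, lifts the bottom row to a dg quotient identified with $\Db_{\dg}(\B)$ for $\B$ the trivial enhancement of the Serre quotient, and restricts to conclude the sharper statement $\A^{\coh}/\N\cong\B^{\coh}$. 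You instead descend the standard $t$-structure to the Verdier quotient, identify the essential image of $\CA$ with its heart, compute that heart as $\CA/\CN$ by fraction calculus, and then use extension-closedness of hearts to trigger the last clause of \cref{cor:restriction_of_dg_quotient2}. What each buys: the step you defer as the ``main obstacle'' (descended $t$-structure with heart $\CA/\CN$) is precisely the content of the cited Miyachi theorem, so you could cite it rather than redo the fraction calculus; conversely, your argument uses only that heart statement and not the finer identification $\Db(\CA)/\thick\CN\simeq\Db(\CA/\CN)$ on which the paper's proof leans, so it is somewhat more economical in its inputs while delivering the (sufficient) conclusion that $\A^{\coh}/\N$ is cohomological with $H^0\simeq\CA/\CN$ rather than the explicit isomorphism with $\B^{\coh}$. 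As for the ``secondary point'' you flag about the exact dg substructure: the paper's proof handles it the same way you do, namely by working throughout with $\A^{\coh}$ and the given structure $\SS$ and never actually passing to $\A_{\N}^{\coh}$, so no further verification is made there either.
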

\begin{proof}
By \cite[Theorem 3.2]{Miy91}, we have exact (sub)sequences in $\ET$ as follows,
\begin{equation}\label{diag:enhanced_Serre}
\begin{tikzcd}[column sep=1.5cm]
\CN\arrow{r}{}\arrow{d}{} & \CA \arrow{r}{Q'|_{\CA}}\arrow[hook]{d}[swap]{}&
    \CA/\CN \arrow[hook]{d}{\inc} \\
    \thick\CN \arrow[hook]{r}&
    \Db(\CA)\arrow{r}{Q'} &
    \Db(\CA/\CN)
\end{tikzcd}
\end{equation}
where we notice that $\thick\CN$ is the full subcategory of $\Db(\CA)$ consisting of objects whose cohomologies sit in $\CN$.
Notice that we can regard $\Im Q'|_\CA\cong \CA/\CN$ as an extension-closed subcategory in $\frac{\Db(\CA)}{\thick\CN}$.

Following \cref{ex:pvd_smc1}, we consider a canonical enhancement of $\Db(\CA)$.
By \cref{ex:univ_emb_exact_cat}, the inclusion $\CA\hookrightarrow\Db(\CA)$ is lifted to the universal embedding $F\colon \A\hookrightarrow\Db_{\dg}(\A)$ with $\Db(\A)\simeq\Db(\CA)$.
Thus we have $\Im F=\A^\coh$ by definition, and a natural enhancement $\N$ of $\CN$.
Since the bottom row is lifted to the dg quotient
\[
\begin{tikzcd}
\thick_{\dg}\N\arrow{r}{}&
\Db_{\dg}(\A)\arrow{r}{Q}&
\frac{\Db_{\dg}(\A)}{\thick_{\dg}\N}
\end{tikzcd},
\]
we have the diagram same as \eqref{diag:restriction_of_dg_quotient2} by \cref{cor:restriction_of_dg_quotient2}.

We have to only show that $\Im(Q|_{\A^{\coh}})$ is extension-closed in $\frac{\Db_{\dg}(\A)}{\thick_{\dg}\N}$.
However, it is immediate since $\Im Q'|_{\CA}\cong \CA/\CN$ is an extension-closed subcategory of $\Db(\CA/\CN)$ in \eqref{diag:enhanced_Serre}.
Hence $\Im(Q|_{\A^{\coh}})\cong \A^\coh/\N$ is an enhancement of the Serre quotient $\CA/\CN$.
\end{proof}

\subsection{An exact dg subquotient}
\label{subsec:Drinfeld_subquotient}
We now accomplish our goal of establishing
an exact dg enhancement of the extriangulated subquotient.
Roughly speaking, this will be achieved by placing the arguments in \S\ref{subsec:extriangulated_subquotient} into an exact dg setting.
We keep the situation of the previous subsection: a connective cofibrant exact dg category $(\A,\SS)$ and an extension-closed subcategory $(\N,\SS|_\N)\sse (\A,\SS)$ are fixed.
Their associated extriangulated categories are denoted by $(\CN,\BE|_\CN,\fs|_\CN)\sse (\CA,\BE,\fs)$.
Recall from \cref{thm:subquotient_of_extri_cat} that an extriangulated subquotient $\CA_\CN/\CN$ exists under the 
${\rm (WIC)}$ assumption.
To construct an enhancement of $\CA_\CN/\CN$, we shall introduce the notion of \emph{exact dg subquotient} of $\A$ by $\N$.

The first step of the construction is passing to an exact dg substructure determined by $\N$, as described in the next lemma.

\begin{lemma}\label{lem:dg_substructure}
We denote by $\A_\N=(\A_\N,\SS_\N)$ the exact dg substructure corresponding to the extriangulated substructure $(\CA_\CN,\BE_\CN,\fs_\CN)$ via \cref{prop:bijection_between_substructures}.
Then, both exact dg structures $(\N,\SS|_\N)$ and $(\N,\SS_\N|_\N)$ coincide.
In particular, $(\N,\SS_\N|_\N)$ is an extension-closed subcategory of $(\A_\N,\SS_\N)$.
\end{lemma}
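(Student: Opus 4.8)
The plan is to reduce the statement to its extriangulated counterpart, \cref{thm:subquotient_of_extri_cat}(0), by means of the poset isomorphism of \cref{prop:bijection_between_substructures}.

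First I would dispose of the easy direction and the ``in particular'' clause. Since $(\A_\N,\SS_\N)$ is an exact substructure of $(\A,\SS)$ we have $\SS_\N\subseteq\SS$; hence a conflation in $\SS_\N$ whose outer terms lie in $\N$ is, a fortiori, a conflation in $\SS$ with outer terms in $\N$, so its middle term lies in $\N$ by extension-closedness of $\N$ in $(\A,\SS)$. This shows $\N$ is extension-closed in $(\A_\N,\SS_\N)$, so that $(\N,\SS_\N|_\N)$ makes sense (\cref{ex:extension_closed_subcat}), and it is visibly a sub-exact-dg-structure of $(\N,\SS|_\N)$ on one and the same dg category $\N$.

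For the remaining inclusion I would pass to homotopy categories. Applying \cref{ex:extension_closed_subcat} to the extension-closed subcategory $\N$ inside $(\A,\SS)$ and inside $(\A_\N,\SS_\N)$ respectively, together with the way the extriangulated structure is induced on the homotopy category of an extension-closed dg subcategory (\cref{def:alg_ET}), one obtains $H^0(\N,\SS|_\N)=(\CN,\BE|_\CN,\fs|_\CN)$ and $H^0(\N,\SS_\N|_\N)=(\CN,\BE_\CN|_\CN,\fs_\CN|_\CN)$, the latter because $\A_\N$ is by definition the exact dg substructure matching $(\CA_\CN,\BE_\CN,\fs_\CN)$ under \cref{prop:bijection_between_substructures}. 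Now \cref{thm:subquotient_of_extri_cat}(0) asserts exactly that these two extriangulated structures on $\CN$ agree; at bottom this is the observation that for any conflation with all terms in $\N$ the realized class automatically lies in $\BE_\CN=\BE^R_\CN\cap\BE^L_\CN$, since $\BE_{[\CN]}(C,A)=\BE(C,A)$ whenever $C\in\CN$ (take $f=\id_C$ in the sum of \cref{lem:Quillen_substructure}) and dually. Finally I would invoke \cref{prop:bijection_between_substructures} once more, this time for the exact dg category $(\N,\SS|_\N)$: its exact substructures correspond bijectively to closed subbifunctors of $\BE|_\CN$, and $\SS_\N|_\N$ corresponds to the subbifunctor realized by $H^0(\N,\SS_\N|_\N)$, which we have just identified with the whole of $\BE|_\CN$. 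Injectivity of this correspondence forces $\SS_\N|_\N=\SS|_\N$.

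The only point requiring care is the last step, namely that under \cref{prop:bijection_between_substructures} a sub-exact-dg-structure is pinned down by the extriangulated structure (equivalently, the subbifunctor of realized extensions) it induces on $H^0$, so that matching induced structures forces matching exact dg structures. Granting this, everything is formal and the whole content is carried by the already-established extriangulated computation in \cref{thm:subquotient_of_extri_cat}(0).
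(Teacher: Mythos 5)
Your argument is correct and follows essentially the same route as the paper: pass to $H^0$, identify both induced extriangulated structures on $\CN$ via \cref{thm:subquotient_of_extri_cat}(0) (whose content is exactly your observation that $\BE_{[\CN]}(C,A)=\BE(C,A)$ when $C\in\CN$, and dually), and then use the injectivity of the correspondence in \cref{prop:bijection_between_substructures} to conclude $\SS_\N|_\N=\SS|_\N$. The extra care you take at the start (checking that $\N$ is extension-closed in $(\A_\N,\SS_\N)$ so that $\SS_\N|_\N$ is defined) is a reasonable elaboration of what the paper leaves implicit.
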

\begin{proof}
By taking $H^0$, the canonical inclusion $(\N,\SS_\N|_\N)\hookrightarrow (\N,\SS|_\N)$ produces a coincidence $(\CN,\BE_\CN,\fs_\CN)=(\CN,\BE,\fs)$ by \cref{thm:subquotient_of_extri_cat}(0).
\cref{prop:bijection_between_substructures} guarantees $(\N,\SS_\N|_\N)=(\N,\SS|_\N)$. 
\end{proof}

The next step is taking the cohomological envelope of $(\A_\N,\SS_\N)$ which is denoted by $\A_\N\to (\A_\N^\coh,\SS_\N)$.
We should mention that the cohomological envelope depends on the exact dg structure that we focus on, so $\A_\N^\coh$ is different to the cohomological envelope $\A^\coh$ of $(\A,\SS)$ in general.
Since the cohomological envelope does not make any change on their homotopy categories, the extension-closed subcategory $(\N,\SS|_\N)\sse (\A_\N^\coh,\SS_\N)$ still produces the extension-closed subcategory $(\CN,\BE|_\CN,\fs|_\CN)\sse (\CA_\CN,\BE_\CN,\fs_\CN)$.
The following main theorem provides a way to realize the extriangulated subquotient $\CA_\CN/\CN$ via the dg quotient of $\A_\N^\coh$ by $\N$.

\begin{theorem}\label{thm:dg_subquotient}
Let us consider the exact dg substructure $(\A_\N,\SS_\N)$ determined by $\N$ and the cohomological envelope $\A_\N\hookrightarrow (\A_\N^\coh,\SS_\N)$.
If we additionally suppose that $\CA$ is ${\rm (WIC)}$ and the extension-closed subcategory $\CN\sse\CA$ is closed under direct summands, then the following hold.
\begin{enumerate}[label=\textup{(\arabic*)}]
\item 
The dg quotient of $\A_\N^\coh$ by $\N$ inherits a natural exact dg structure from $(\A_\N^\coh, \SS_\N)$.
In particular, the associated dg quotient gives rise to a sequence of exact morphisms in $\Hqe$:
\begin{equation*}
(\N,\SS|_{\N})\lra (\A_\N^\coh,\SS_\N)\overset{Q}{\lra} (\A_\N^\coh/\N,\wtil{\SS_\N}).
\end{equation*}
Moreover, the exact dg category $(\A_\N^\coh/\N,\wtil{\SS_\N})$ is cohomological.
\item 
The dg quotient $Q\colon \A_\N^\coh\to \A_\N^\coh/\N$ (equipped with exact structures) enhances the extriangulated subquotient of $\CA$ by $\CN$ as exhibited below:
\begin{equation}\label{seq:dg_subquotient}
\begin{tikzcd}
    (\N,\SS|_\N) \arrow{r}
    & (\A_\N^\coh,\SS_\N)\arrow{r}{Q}\arrow[Rightarrow, shorten >=0.5ex, shorten <=0.5ex]{d}{H^0} 
    & (\A_\N^\coh/\N,\wtil{\SS_\N})
    \\
    (\CN,\BE|_\CN,\fs|_\CN) \arrow{r}
    & (\CA_\CN,\BE_\CN,\fs_\CN)\arrow{r}{H^0Q} 
    & (\CA_\CN/\CN,\wtil{\BE_\CN},\wtil{\fs_\CN})
\end{tikzcd}
\end{equation}
\end{enumerate}
We call the upper sequence of \eqref{seq:dg_subquotient} the \emph{exact dg subquotient} of an exact dg category $(\A,\SS)$ by its extension-closed subcategory $(\N,\SS|_\N)$.
\end{theorem}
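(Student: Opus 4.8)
The plan is to obtain \cref{thm:dg_subquotient} by combining \cref{cor:restriction_of_dg_quotient2}, applied to the exact dg substructure $(\A_\N,\SS_\N)$ in place of $(\A,\SS)$, with the extension-closedness proved in \cref{prop:algebraic_subquotient}. First I would check that we are in the situation of \cref{setup:dg_quotient_via_cohomological_envelope} for the pair $((\A_\N,\SS_\N),\N)$: the exact dg category $(\A_\N,\SS_\N)$ is again connective and cofibrant since it has the same underlying dg category as $(\A,\SS)$, and $(\N,\SS|_\N)=(\N,\SS_\N|_\N)$ is extension-closed in $(\A_\N,\SS_\N)$ by \cref{lem:dg_substructure}; passing to the cohomological envelope leaves the homotopy categories and the class $\SS_\N$ unchanged, so $(\N,\SS|_\N)\subseteq(\A_\N^\coh,\SS_\N)$ is still extension-closed. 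Likewise the extriangulated pair $(\CA_\CN,\CN)$ still satisfies the hypotheses of \cref{setup:algebraic_subquotient} — the underlying additive category, hence the ${\rm (WIC)}$ property, is unchanged, $\CN$ is closed under direct summands, and $(\CA_\CN,\CN)$ satisfies \ref{spade} by \cref{thm:subquotient_of_extri_cat}(1).

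Next I would invoke \cref{cor:restriction_of_dg_quotient2} for $((\A_\N,\SS_\N),\N)$, which produces the dg quotient $Q|_{\A_\N^\coh}\colon\A_\N^\coh\to\Im(Q|_{\A_\N^\coh})\cong\A_\N^\coh/\N$ as a dg subcategory of the ambient dg quotient $\Db_{\dg}(\A_\N)\to\frac{\Db_{\dg}(\A_\N)}{\thick_{\dg}\N}$, with $\inc$ serving as its pretriangulated hull. The one point that needs a separate argument is the hypothesis of the last assertion of \cref{cor:restriction_of_dg_quotient2}, namely that $\Im(Q|_{\A_\N^\coh})$ is extension-closed in $\frac{\Db_{\dg}(\A_\N)}{\thick_{\dg}\N}$. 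Since extension-closedness of a dg subcategory of a pretriangulated dg category is a property of its homotopy category, it suffices to identify $H^0(\Im(Q|_{\A_\N^\coh}))$: by \cref{prop:universal_embedding_and_cohomological_envelope} the functor $H^0F^\coh$ is the universal embedding $\CA_\CN\hookrightarrow\Db(\A_\N)$ and $H^0Q$ is the Verdier quotient $L$, so $H^0(\Im(Q|_{\A_\N^\coh}))$ is the essential image of the composite $L\circ(\CA_\CN\hookrightarrow\Db(\A_\N))$, which by the commutative diagram \eqref{diag:algebraic_subquotient} equals $G\circ Q'$ with $Q'$ essentially surjective; hence this homotopy category is exactly $\Im G$, and $\Im G$ is extension-closed in $\frac{\Db(\A_\N)}{\thick\CN}$ by \cref{prop:algebraic_subquotient}. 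Consequently \cref{cor:restriction_of_dg_quotient2} yields that $\A_\N^\coh/\N$ is cohomological and that the top row descends under $H^0$ to a sequence $\CN\to\CA_\CN\to H^0(\A_\N^\coh/\N)$ in $\ET$. The exact dg structure $\wtil{\SS_\N}$ is the one inherited from the extension-closed embedding of $\A_\N^\coh/\N$ into the pretriangulated $\frac{\Db_{\dg}(\A_\N)}{\thick_{\dg}\N}$ (cf. \cref{ex:pretri_is_exact_dg}, \cref{ex:extension_closed_subcat}), and $Q$ is exact for it because it is the corestriction of the exact morphism $\Db_{\dg}(\A_\N)\to\frac{\Db_{\dg}(\A_\N)}{\thick_{\dg}\N}$ along the exact inclusion $F^\coh$. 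This establishes (1).

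For (2) I would pin down the identifications so that taking $H^0$ of the dg subquotient reproduces the extriangulated subquotient of \cref{thm:subquotient_of_extri_cat}. By the computation above $H^0(\A_\N^\coh/\N)=\Im G$, and \cref{prop:algebraic_subquotient} together with \cref{lem:extension-closedness} provides an exact equivalence $\CA_\CN/\CN\xto{\sim}\Im G$ realized by $G$; moreover the equality $H^0(Q|_{\A_\N^\coh})=L\circ(\CA_\CN\hookrightarrow\Db(\A_\N))=G\circ Q'$ shows that under this equivalence $H^0(Q|_{\A_\N^\coh})$ is identified with the extriangulated quotient functor $Q'\colon(\CA_\CN,\BE_\CN,\fs_\CN)\to(\CA_\CN/\CN,\wtil{\BE_\CN},\wtil{\fs_\CN})$ of \eqref{seq:subquotient_of_extri_cat}, while the left-hand inclusions of both sequences are the canonical ones. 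Hence the diagram \eqref{seq:dg_subquotient} commutes and its lower row is the extriangulated subquotient.

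The main obstacle is not in this assembly — which is essentially diagram chasing — but in being confident that the dg quotient $Q$ restricted to $\A_\N^\coh$ genuinely descends under $H^0$ to the Gabriel-Zisman localization $\CA_\CN\to\CA_\CN/\CN$ rather than to some a priori different quotient, and that the exact structure $\wtil{\SS_\N}$ obtained by inheritance coincides with the one for which $Q$ is exact. Both rest on the fact, from \cref{prop:universal_embedding_and_cohomological_envelope}, that $\pretr(\A_\N^\coh)=\Db_{\dg}(\A_\N)$ and that $H^0F^\coh$ is the universal embedding, combined with the compatibility diagram \eqref{diag:algebraic_subquotient}; once these are available the remaining verifications are routine.
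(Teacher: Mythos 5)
Your proposal is correct and follows essentially the same route as the paper: both reduce the theorem to \cref{cor:restriction_of_dg_quotient2} plus the extension-closedness statement of \cref{prop:algebraic_subquotient}, using \cref{prop:universal_embedding_and_cohomological_envelope} to identify the pretriangulated hull of the cohomological envelope with the bounded dg derived category. The only cosmetic difference is that the paper first invokes \cref{prop:comparison_of_quotient_and_subquotient} to assume \ref{spade} holds for $(\CA,\CN)$ itself (so that $\A_\N=\A$ and the subscripts disappear), whereas you carry the substructure $(\A_\N,\SS_\N)$ explicitly and justify that it satisfies the running hypotheses — the underlying argument is identical.
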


To simplify our discussions, we suppose that the extriangulated category $H^0\A=(\CA,\BE,\fs)$ satisfies \ref{spade} in \cref{setup:spade}.
Thanks to \cref{prop:comparison_of_quotient_and_subquotient}, we have nothing lost in our purpose.
In this case, we have $(\A_\N,\SS_\N)=(\A,\SS)$ and proceed the proof under such an assumption.

\begin{proof}[Proof of \cref{thm:dg_subquotient}]
Due to the condition \ref{spade}, the diagram \eqref{seq:dg_subquotient} is rewritten as
\[
\begin{tikzcd}
    (\N,\SS|_\N) \arrow{r}
    & (\A^\coh,\SS)\arrow{r}{Q}\arrow[Rightarrow, shorten >=0.5ex, shorten <=0.5ex]{d}{H^0} 
    & (\A^\coh/\N,\wtil{\SS})
    \\
    (\CN,\BE|_\CN,\fs|_\CN) \arrow{r}
    & (\CA,\BE,\fs)\arrow{r}{H^0Q} 
    & (\CA/\CN,\wtil{\BE},\wtil{\fs})
\end{tikzcd}
\]
We shall impose a natural exact dg structure on the dg quotient $\A^\coh/\N$.
We consider the pretriangulated hull $F^\coh\colon \A^\coh\hookrightarrow \Db_{\dg}(\A)$ of $\A^\coh$ such as \cref{prop:universal_embedding_and_cohomological_envelope} and obtain the following diagram \eqref{diag:restriction_of_dg_quotient} in \cref{cor:restriction_of_dg_quotient},
\begin{equation}\label{diag:dg_subquotient_proof}
\begin{tikzcd}[column sep=1.5cm]
    \N \arrow{r}{}\arrow{d}{}
    & \A^\coh \arrow{r}{\wtil{Q}|_{\A^\coh}}\arrow[hook]{d}[swap]{F^\coh}
    & \Im(\wtil{Q}|_{\A^\coh}) \arrow[hook]{d}{\inc}
    \\
    \thick_{\dg}\N \arrow{r}
    & \Db_{\dg}(\A)\arrow{r}{\wtil{Q}} 
    & \dfrac{\Db_{\dg}(\A)}{\thick_{\dg}\N}
\end{tikzcd}
\end{equation}
where $\Im(\wtil{Q}|_{\A^\coh})$ is isomorphic to the dg quotient $\A^\coh/\N$.
We have to verify that $\Im(Q|_{\A^\coh})$ is extension-closed in $\frac{\Db_{\dg}(\A)}{\thick_{\dg}\N}$.
Under taking $H^0$, the above diagram \eqref{diag:dg_subquotient_proof} falls into the following commutative diagram in $\ET$.
\begin{equation*}\label{diag:dg_subquotient_proof2}
\begin{tikzcd}[column sep=1.5cm]
    \CN \arrow{r}{}\arrow{d}{}
    & \CA \arrow{r}{(H^0\wtil{Q})|_{\CA}}\arrow[hook]{d}[swap]{}
    & \Im((H^0\wtil{Q})|_{\CA}) \arrow[hook]{d}{\inc}
    \\
    \thick\CN \arrow{r}
    & \Db(\A)\arrow{r}{H^0\wtil{Q}} 
    & \dfrac{\Db(\A)}{\thick\CN}
\end{tikzcd}
\end{equation*}
Since \ref{spade} is assumed, we know $\Im((H^0\wtil{Q})|_{\CA})$ is extension-closed in $\frac{\Db(\A)}{\thick\CN}$ and is exact equivalent to the extriangulated quotient $\CA/\CN$ by \cref{prop:algebraic_subquotient}.
We have thus explained the diagram displayed at the beginning of the proof.
It follows from \cref{cor:restriction_of_dg_quotient2}(2) that $\A^\coh/\N$ is a cohomological exact dg enhancement of $\CA/\CN$.
\end{proof}

As we have seen in the proof of \cref{thm:dg_subquotient}, the condition \ref{spade} dovetails the extriangulated (sub)quotient with the dg (sub)quotient.

\begin{corollary}\label{cor:dg_subquotient_closed_under_spade}
Let $(\A,\SS)$ be a cohomological exact dg category and $(\N,\SS|_\N)$ its extension-closed subcategory.
Assume that the corresponding pair $(\CA,\CN)=(H^0\A,H^0\N)$ satisfies the conditions in \cref{setup:spade} which contains $\ref{spade}$.
Then we have the following assertions.
\begin{enumerate}[label=\textup{(\arabic*)}]
\item 
The dg quotient $\A/\N$ inherits a natural exact structure from $(\A,\SS)$.
Moreover, the exact dg category $(\A/\N,\wtil{\SS})$ is still cohomological.
\item 
We have an exact equivalence $\CA/\CN\simeq H^0(\A/\N,\wtil{\SS})$ in $\ET$.
In particular, $\CA/\CN$ is algebraic.
\item 
We have the following diagram of dg quotients whose vertical arrows denote pretriangulated hulls.
\[
\begin{tikzcd}[column sep=1.5cm]
    \N \arrow{r}{}\arrow[hook]{d}{}
    & \A \arrow{r}{\wtil{Q}|_{\A}}\arrow[hook]{d}[swap]{F}
    & \A/\N \arrow[hook]{d}{}
    \\
    \thick_{\dg}\N \arrow{r}
    & \Db_{\dg}(\A)\arrow{r}{\wtil{Q}} 
    & \dfrac{\Db_{\dg}(\A)}{\thick_{\dg}\N}
\end{tikzcd}
\]
Under taking $H^0$, we have associated exact (sub)sequences in $\ET$.
\end{enumerate}
\end{corollary}
\begin{proof}
Since the subcategory $\CN\sse\CA$ satisfies \ref{spade} and $\A$ is cohomological, we have $(\A,\SS)=(\A,\SS_\N)=(\A_\N^{\coh},\SS_\N)$.
The first equality says that taking the substructure with respect to $\N$ does not make any changes on $(\A,\SS)$, and the second one follows from the stability of cohomological exact dg structures, see \cref{lem:stability_of_cohomological_envelope}.
The assertions are immediate from \cref{thm:dg_subquotient}.
\end{proof}

As a benefit of the cohomological enhancement of $\CA_\CN/\CN$, we have a good interaction of the higher extensions under $H^0Q\colon \CA_\CN\to\CA_\CN/\CN$.
We denote by $\wtil{\BE_\CN^n}(C,A)$ the higher extension in $(\CA_\CN/\CN,\wtil{\BE_\CN},\wtil{\fs_\CN})$.
The following corollary says that any element of $\wtil{\BE_\CN^n}(C,A)$ is represented by a `roof' in $\CA_\CN$.

\begin{corollary}\label{cor:interaction_of_higher_extensions}
We consider the extriangulated quotient $(H^0Q,\mu)\colon (\CA_\CN,\BE_\CN,\fs_\CN)\to (\CA_\CN/\CN,\wtil{\BE_\CN},\wtil{\fs_\CN})$ which is enhanced by the exact dg subquotient \eqref{seq:dg_subquotient}.
Let $A,C\in\CA_\CN$ be given.
Then, the higher extension $\wtil{\BE_\CN^n}(C,A)$ is bijectively corresponds to the coset of pairs
\begin{equation}\label{eq:interaction_of_higher_extensions}
\bigl\{ s\backslash\delta = [C\overset{\delta}{\dra} A'\overset{s}{\longleftarrow}A] \mid A'\in\CA_\CN, s\in\Sn, \delta\in\BE_\CN^n(C,A) \bigr\}/\sim,
\end{equation}
where the relation $s\backslash\delta\sim s'\backslash\delta'$ means the existence of a morphism $A'\xto{u}A''$ in $\Sn$ such that $us=s'$ and $u_*\delta=\delta'$.
The situation is depicted as follows.
\begin{equation}\label{diag:interaction_of_higher_extensions}
\begin{tikzcd}
C\arrow[dashed]{r}{\delta}\arrow[dashed]{rd}[swap]{\delta'}&A'\arrow{d}{u}&A\arrow{l}[swap]{s}\arrow{ld}{s'}\\
&A''&
\end{tikzcd}
\end{equation}
\end{corollary}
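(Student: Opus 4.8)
The plan is to identify the higher extension group $\wtil{\BE_\CN^n}(C,A)$ with shifted hom-spaces in the pretriangulated hull $\Db_{\dg}(\A_\N)=\pretr(\A_\N^\coh)$, passed through the Verdier quotient by $\thick\CN$, and then translate the roof description of morphisms in that quotient back into the exact dg subquotient. First I would invoke \cref{prop:universal_embedding_and_cohomological_envelope} together with \cref{thm:dg_subquotient}, which tells us that $\A_\N^\coh/\N$ is a cohomological exact dg enhancement of $\CA_\CN/\CN$ and that its pretriangulated hull is the Verdier quotient $\wtil\CD\deff\frac{\Db(\A_\N)}{\thick\CN}$. Then \cref{prop:higher_extension_via_univ_emb} (applied to the connective cofibrant $\A_\N$, and to the cohomological enhancement of $\CA_\CN/\CN$) yields canonical isomorphisms $\wtil{\BE_\CN^n}(C,A)\cong\Ext^n_{\wtil\CD}(LC,LA)=\Hom_{\wtil\CD}(LC,LA[n])$, where $L\colon\Db(\A_\N)\to\wtil\CD$ is the Verdier quotient functor and we identify $\CA_\CN/\CN$ with its image $\Im G$ under \cref{prop:algebraic_subquotient}.

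The key step is then to show that every morphism $LC\to LA[n]$ in $\wtil\CD$ is, after unwinding, represented by a roof of the prescribed shape. This is exactly the content of \cref{lem:description_for_extensions}: it says that such a morphism is represented by a left fraction $s[n]\backslash a\colon C\xto{a}B'[n]\xleftarrow{s[n]}A[n]$ in $\Db(\A_\N)$ with $B'\in\CA_\CN$. Now I would argue that one may further arrange $B'[n]$ to come from an $n$-fold extension: since $a\colon C\to B'[n]$ is a morphism in $\Db(\A_\N)$ and $\CA_\CN$ is extension-closed in $\Db(\A_\N)$ with $\Ext^n_{\Db(\A_\N)}(C,B')=\BE_\CN^n(C,B')$ by \cref{prop:higher_extension_via_univ_emb} (this time for $\A_\N$ itself), the morphism $a$ is, up to the identifications, an element $\delta\in\BE_\CN^n(C,B')$; and $s\in\Sn$ because $s[n]$ becomes invertible in $\wtil\CD$, which by the saturation of $\Sn$ in \cref{thm:subquotient_of_extri_cat}(3) forces $s\in\Sn$. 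Setting $A'\deff B'$ produces the pair $s\backslash\delta=[C\overset{\delta}{\dra}A'\overset{s}{\longleftarrow}A]$, and the bijection is established on the level of representatives.

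It remains to pin down when two roofs represent the same class. Two left fractions $s[n]\backslash a$ and $s'[n]\backslash a'$ in $\Db(\A_\N)$ map to the same morphism in $\wtil\CD$ precisely when they are equivalent in the localization $\Db(\A_\N)[\thick\CN^{-1}]$, i.e.\ when there is a common refinement; using that $\Sn$ is a multiplicative system in $\ovl\CA$ (condition \ref{MR2}, via \cref{thm:subquotient_of_extri_cat}) together with \cref{cor:closed_under_weak_equiv} (which keeps the refining object inside $\CA_\CN$), one reduces the common refinement to a single morphism $u\colon A'\to A''$ in $\Sn$ fitting into the diagram \eqref{diag:interaction_of_higher_extensions}, with $us=s'$ and $u_*\delta=\delta'$. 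Translating $u_*$ on $\Ext^n_{\Db(\A_\N)}$ back to pushout of $\BE_\CN^n$-extensions gives exactly the relation $\sim$ in the statement. The functoriality and additivity of all the identifications ($\theta^n$ in \cref{prop:higher_extension_via_univ_emb} being an isomorphism of $\delta$-functors) then upgrade the bijection of cosets to the claimed description. The main obstacle I anticipate is the bookkeeping needed to check that the normalization in \cref{lem:description_for_extensions}—which is stated for a single morphism—can be carried through the equivalence relation compatibly, i.e.\ that the reductions used to bring a fraction into the normal form do not collapse distinct classes; this is where one must lean carefully on saturation of $\Sn$ and on \cref{cor:closed_under_weak_equiv} to stay inside $\CA_\CN$ throughout.
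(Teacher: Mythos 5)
Your proposal is correct and follows essentially the same route as the paper: identify $\wtil{\BE_\CN^n}(C,A)$ with $\Ext^n_{\wtil\CD}$ via the cohomological enhancement, normalize the left fraction using \cref{lem:description_for_extensions} so that the denominator lies in $\Sn$ and the numerator is an element of $\BE_\CN^n$, and then check the bijection on equivalence classes. The paper is terser on injectivity and the equivalence relation (it simply asserts these can be checked), so your more explicit handling via the multiplicative-system property and \cref{cor:closed_under_weak_equiv} is a harmless elaboration of the same argument rather than a different proof.
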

\begin{proof}
Since the enhancement of $\CA_\CN/\CN$ is cohomological, the canonical inclusion $\CA_\CN/\CN\hookrightarrow\wtil{\CD}=\frac{\Db(\A)}{\thick\CN}$ preserves the higher extensions in the sense that there exists a $\delta$-functor isomorphism $\wtil{\BE_\CN^n}(C,A)\xto{\simeq}\Ext_{\wtil{\CD}}^n(C,A)$.
Via this isomorphism, any element $\alpha\in\wtil{\BE_\CN^n}(C,A)$ is represented by a left fraction $s[n]\backslash \delta$ in $\Db(\A)$ with the following diagram
\[
\begin{tikzcd}[column sep=1.2cm]
    {}
    & C \arrow[dotted]{ld}[swap]{\alpha}\arrow{d}{\delta}
    & {}
    & {}
    \\
    A[n] \arrow{r}[swap]{s[n]}
    & A'[n] \arrow{r}[swap]{}
    & N[n] \arrow{r}
    & A[1]
\end{tikzcd}
\]
where the bottom sequence is a triangle and $N[n]\in\thick\CN$.
By \cref{lem:description_for_extensions}, we may assume $N\in\CN$ and $A'\in\CA_\CN$.
It turns out that $\alpha$ is represented by the pair $(s,\delta)$ of $s\in\Sn$ and $\delta\in\BE_\CN^n(C',A)$.
We can check that the assignment $\alpha\mapsto s\backslash\delta$ gives rise to an injective map.
The surjectivity is also immediate.
Actually, if we are given a pair $(s,\delta)$ of $s\in\Sn$ and $\delta\in\BE_\CN^n(C,A')$, we get the desired element $\alpha=(s^{-1}[n])\delta\in\Ext_{\wtil{\CD}}^n(C,A)$.
\end{proof}

An extriangulated quotient under the condition \ref{spade} contains the ideal quotient by projective-injectives and the quotient by biresolving subcategories (\cref{ex:algebraic_subquotient}), so we have their cohomological enhancements.

\begin{example}\label{ex:dg_subquotient_closed_under_spade}
Consider an exact dg category $(\A,\SS)$ and its extension-closed subcategory $(\N,\SS|_\N)$, and denote by $(\CN,\BE|_\CN,\fs|_\CN)\sse(\CA,\BE,\fs)$ the corresponding extriangulated categories.
In addition, we need to assume that $\CA$ is {\rm (WIC)} and $\CN$ is closed under direct summands.
\begin{enumerate}[label=\textup{(\arabic*)}]
\item 
Suppose that $\CN$ consists of projective-injective objects in $\CA$.
Then the pair $(\CA,\CN)$ satisfies \ref{spade}, and hence the ideal quotient $\CN\lra \CA\lra \CA/[\CN]$ is enhanced by the dg quotient $\N\lra \A^\coh\lra \A^\coh/\N$. 
We should remark that, in \cite[\S 3.5]{Che24b}, a connective exact dg enhancement for the ideal quotient $\CA/[\CN]$ was established in a similar fashion: 
If $\A$ is connective, the (usual) dg quotient $\A/\N$ indeed enhances $\CA/[\CN]$.
\item 
Suppose $\CN\sse\CA$ is a biresolving subcategory.
Then the pair $(\CA,\CN)$ satisfies \ref{spade}, and hence the extriangulated quotient $\CA/\CN$ admits a cohomological exact dg enhancement $\A^\coh/\N$.
\end{enumerate}
\end{example}

We give a closer look at Cohen-Macaulay dg modules of \cref{ex:Cohen-Macaulay1} in view of dg subquotient.

\begin{example}\label{ex:Cohen-Macaulay2}
Let $\Lambda$ be a proper connective Gorenstein dg $k$-algebra as in \cref{ex:Cohen-Macaulay1}.
Recall that $\mathsf{CM}_{\dg}(\Lambda)$ denotes a canonical dg enhancement determined by the ambient dg derived category $\Ddg(\Lambda)$.
Since the bounded dg derived category of $\mathsf{CM}_{\dg}(\Lambda)$ is $\pvd_{\dg}(\Lambda)$, it is cohomological.
Let us denote by $\add_{\dg}(\Lambda)$ the canonical enhancement of $\add\Lambda\sse\mathsf{CM}(\Lambda)$.
Recall from \cite[Thm.~4.2(1)]{Jin20} that $\add\Lambda$ is the subcategory of projective-injectives in the Frobenius extriangualated category $\mathsf{CM}(\Lambda)$.
Thus, the ideal quotient
\[
\begin{tikzcd}
\add(\Lambda)\arrow{r}{} &\mathsf{CM}(\Lambda)\arrow{r}{} &\underline{\mathsf{CM}}(\Lambda)
\end{tikzcd}
\]
is the extriangulated (sub)quotient of $\mathsf{CM}(\Lambda)$ by $\add(\Lambda)$.
By \cref{prop:comparison_of_quotient_and_subquotient}, the extriangulated subquotient $\underline{\mathsf{CM}}(\Lambda)$ is nothing other than the extriangulated quotient $\mathsf{CM}(\Lambda)/\add(\Lambda)$.
Note that the thick closure of $\add(\Lambda)$ in $\pvd(\Lambda)$ forms $\per(\Lambda)$.
By \cref{thm:dg_subquotient} we have the following dg quotient with their ambient bounded dg derived categories:
\[
\begin{tikzcd}[column sep=1.5cm]
    \add_{\dg}(\Lambda) \arrow{r}{}\arrow[hook]{d}{}
    & \mathsf{CM}_{\dg}(\Lambda) \arrow{r}{}\arrow[hook]{d}[swap]{}
    & \underline{\mathsf{CM}}_{\dg}(\Lambda) \arrow[hook]{d}{}
    \\
    \per_{\dg}(\Lambda) \arrow{r}
    & \pvd_{\dg}(\Lambda)\arrow{r}{} 
    & \frac{\pvd_{\dg}(\Lambda)}{\per_{\dg}(\Lambda)}
\end{tikzcd}
\]
We include more explanations on the above dg quotients.
The dg category $\underline{\mathsf{CM}}_{\dg}(\Lambda)$ denotes the dg (sub)quotient of $\mathsf{CM}_{\dg}(\Lambda)$ by $\add_{\dg}(\Lambda)$ and is an exact dg enhancement of the extriangulated (sub)quotient $\underline{\mathsf{CM}}(\Lambda)$.
Since the sigularity category $\CD_{\rm sg}(\Lambda)$ is defined to be the Verdier quotient $\frac{\pvd(\Lambda)}{\per(\Lambda)}$, the dg quotient $\frac{\pvd_{\dg}(\Lambda)}{\per_{\dg}(\Lambda)}$ is an exact dg enhancement of $\CD_{\rm sg}(\Lambda)$.
By \cref{cor:dg_subquotient_closed_under_spade}(3), the vertical arrows in the diagram are pretriangulated hulls.
We know from \cite[Thm.~2.4(3)]{Jin20} that the rightmost one is an isomorphism in $\Hqe$ which induces an equivalence $\underline{\mathsf{CM}}(\Lambda)\simeq\CD_{\rm sg}(\Lambda)$.
\end{example}

Closing the section, we show that Chen's enhanced ideal quotient can be obtained by using the exact dg (sub)quotient.
We need to recall the following from \cite[Thm.~3.23]{Che24b}.

\begin{proposition}\label{prop:Chen's_enahanced_ideal_quotient}
Let $(\CA,\CP)$ be the pair of an extriangulated category $(\CA,\BE,\fs)$ and its extension-closed subcategory $(\CP,\BE|_\CP,\fs|_\CP)$.
Assume that $\CA$ admits a connective exact dg enhancement $(\A,\SS)$ and denote by $(\P,\SS|_\P)$ the canonical enhancement of $\CP$.
Then, the dg quotient $\A/\P$ carries a canonical exact dg structure $(\A/\P,\overline{\SS})$ induced from $(\A,\SS)$ and its bounded dg derived category is quasi-eqivalent to the canonical enhancement of $\Db(\A)/\tr\P$.
\end{proposition}

First we should mention that $\tr\P$ is indeed a triangulated subcategory of $\Db(\A)$.
Recall from \cref{def:defective_object} that $\Db(\A)$ is defined to be the dg quotient of $\tr\A$ by the subcategory $\CM$ generated by the defective objects, namely, $\Db(\A)=\tr\A/\CM$.
By \cite[Lem.~3.20]{Che24b} we have $\Hom_{\tr\A}(\CP,\CM)=0$ and thus the inclusion $\tr\P\hookrightarrow\Db(\A)$ still exists as below.
\[
\begin{tikzcd}
\CM\arrow{r}{} &\tr\A\arrow{r}{} &\Db(\A)\\
&\tr\P\arrow[hook]{u}{}\arrow[hook]{ru}{}&
\end{tikzcd}
\]
Besides, the above diagram tells us $\tr\P=\Tria\CP$ holds in $\Db(\A)$.

Now we are able to consider the dg quotient $\Db(\A)/\tr\P$ as given in the statement.
Furthermore, taking cohomological envelope $\A^\coh$ produces the following diagram by \cref{thm:dg_subquotient} and \cref{ex:dg_subquotient_closed_under_spade}(1),
\[
\begin{tikzcd}
\P\arrow{r}{}\arrow{d}{}
&\A^\coh\arrow{r}{Q}\arrow[hook]{d}{F^\coh}
&\A^\coh/\P\arrow[hook]{d}{\inc}
\\
\Tria_{\dg}\P\arrow{r}{}
&\Db_{\dg}(\A)\arrow{r}{Q}
&\dfrac{\Db_{\dg}(\A)}{\Tria_{\dg}\P}
\end{tikzcd}
\]
where we use the same symbol $\P$ to denote the canonical enhancement of $\CP\sse H^0\A^\coh$.
Due to \cref{prop:Chen's_enahanced_ideal_quotient}, we know that the connective exact dg category $\A/\P$ admits the universal embedding $\A/\P\to \Db_{\dg}(\A)/\pretr\P=\Db_{\dg}(\A)/\Tria_{\dg}\P$.
Thus the above diagram guarantees an isomorphism $(\A/\P)^\coh\cong\A^\coh/\P$ that we want.

\section{Enhanced heart construction}
\label{sec:enhanced_heart_construction}
It is well-known that a $t$-structure $(\CT^{\leq 0},\CT^{\geq 0})$ over a triangulated category $\CT$ induces a cohomological functor $H$ from $\CT$ to the abelian heart $\CH=\CT^{\leq 0}\cap\CT^{\geq 0}$ \cite{BBD}.
Such a phenomenon was generalized in an extriangulated context in \cite{LN19}.
We will understand the heart construction via the notion of exact dg subquotient.
Throughout the section, $(\CA,\BE,\fs)$ be an extriangulated category.

\subsection{Hearts of cotorsion pairs}
\label{subsec:hearts_of_cotorsion_pairs}
We give a quick review on the heart $\underline{\CH}$ of a given cotorsion pair $(\CU,\CV)$ over $(\CA,\BE,\fs)$.
The notion of cotorsion pair goes back to \cite{Sal79} and was placed in triangulated categories \cite{IY08,Nak11}.
An extriangulated version is defined in the same fashion, that is, the pair $(\CU,\CV)$ of full additive subcategories of $\CA$ which are closed under direct summands is called a \emph{cotorsion pair} if the conditions 
$\BE(\CU,\CV)=0$ and $\cone(\CV,\CU)=\cocone(\CV,\CU)=\CA$ hold.

\begin{definition}\label{def:heart_of_cotorsion_pair}
\cite[\S 2]{LN19}
Given a cotorsion pair $(\CU,\CV)$, we put $\CW=\CU\cap\CV$ and
introduce the following associated subcategories in $\CA$:
\[
\CA^+=\cone(\CV,\CW),
\quad\CA^-=\cocone(\CW,\CU)
\quad\textup{and}\quad
\CH=\CA^+\cap\CA^-.
\]
The ideal quotient $\underline{\CH}=\CH/[\CW]$ is called the \emph{heart} of $(\CU,\CV)$.
\end{definition}

Like the case of $t$-structures, any cotorsion pair induces a natural cohomological functor to the heart.
The next theorem is an extriangulated generalization of \cite[Thm.~5.7]{AN12}.

\begin{theorem}\label{thm:heart}
\cite[Thm.~3.2, 3.5]{LN19}
Let $(\CU,\CV)$ be a cotorsion pair over $(\CA,\BE,\fs)$.
The heart $\underline{\CH}$ is an abelian category and there exists an associated cohomological functor $H\colon (\CA,\BE,\fs)\to \underline{\CH}$.
\end{theorem}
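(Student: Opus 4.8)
The statement I want to prove is \cref{thm:heart}: for a cotorsion pair $(\CU,\CV)$ over an extriangulated category $(\CA,\BE,\fs)$, the heart $\underline{\CH}=\CH/[\CW]$ is abelian and there is a cohomological functor $H\colon \CA\to\underline{\CH}$. The plan is to realize $\underline{\CH}$ as an extriangulated (indeed abelian) subquotient in the sense of \cref{thm:subquotient_of_extri_cat}, and to obtain $H$ as the composite of two localizations produced by that machinery. Concretely, I would first pass to the extension-closed subcategory $\CA^{+}=\cone(\CV,\CW)$ of $\CA$; it is straightforward to check from the cotorsion-pair axioms ($\BE(\CU,\CV)=0$ and $\cone(\CV,\CU)=\cocone(\CV,\CU)=\CA$) together with \ref{ET4} that $\CA^{+}$ is closed under extensions and direct summands, and that $\CW=\CU\cap\CV$ sits inside $\CA^{+}$ as an extension-closed subcategory closed under summands. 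If $\CA$ is assumed (WIC) — or after replacing $\CA^{+}$ by a suitable envelope — I may invoke \cref{thm:subquotient_of_extri_cat} to obtain the extriangulated subquotient $\CA^{+}_\CW/\CW$. The key point, which I would verify directly, is that for the pair $(\CA^{+},\CW)$ the condition \ref{spade} of \cref{setup:spade} holds: given an $\fs$-inflation $W_1\to B$ with $W_1\in\CW$ and $B\in\CA^{+}$, the fact that $\CW$ is the "injective-like" part ($\BE(\CU,\CV)=0$ with $\CW=\CU\cap\CV$) and that $B\in\cone(\CV,\CW)$ forces the existence of a factorization $W_1\to B\to W_2$ through an $\fs|_\CW$-inflation; dually on the deflation side using $\CA^{-}=\cocone(\CW,\CU)$.

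Once \ref{spade} is in hand, \cref{prop:extriangulated_quotient_under_spade} (together with \cref{cor:closed_under_weak_equiv} and \cref{cor:Sn_is_saturated}) gives an exact sequence $\CW\to\CA^{+}\xto{(Q,\mu)}\CA^{+}/\CW$ in $\ET$, and the quotient $\CA^{+}/\CW$ is automatically extriangulated. I would then identify $\CA^{+}/\CW$ with $\CA^{+}/[\CW]$ as additive categories — this is exactly the situation of \cref{ex:spade}(1) type behaviour since $\CW$ behaves as a subcategory of projective-injective objects \emph{relative to $\CA^{+}$} (its objects are $\BE$-injective in $\CA^{+}$ because $\CA^{+}\subseteq\cone(\CV,\CW)$ and $\BE(\CA^{+},\CV)$ is controlled), so the class $\Sn$ reduces to split maps with kernel/cokernel in $\CW$ and $\ovl\Sn=p(\Sn)$ becomes an honest isomorphism class, whence $\CA^{+}/\CW\simeq\CA^{+}/[\CW]$. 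Next I perform the dual construction inside $\CA^{+}/[\CW]$: the image of $\CA^{-}$, namely $\underline{\CH}=(\CA^{+}\cap\CA^{-})/[\CW]$, arises as the further extriangulated subquotient of $\CA^{+}/[\CW]$ by the corresponding $\CW$-part; iterating \cref{thm:subquotient_of_extri_cat} once more (or applying \cref{prop:restriction_of_extri_subquotient} to restrict to the extension-closed subcategory $\CA^{-}/[\CW]$) exhibits $\underline{\CH}$ as an extriangulated category. The composite $\CA\to\CA^{+}/[\CW]\to\underline{\CH}$ — more precisely the composite of the reflection/coreflection onto $\CA^{+}$ and $\CA^{-}$ with the two ideal quotients — is the desired functor $H$, and its cohomological property (sending $\fs$-triangles to long exact sequences in $\underline{\CH}$) follows because each step is an exact functor in $\ET$ that kills $\CW$ and the resulting $\wtil\BE$ on $\underline{\CH}$ is effaceable.

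Finally, to upgrade "extriangulated" to "abelian" I would show that in $\underline{\CH}$ every $\wtil\BE$-inflation is a kernel and every $\wtil\BE$-deflation is a cokernel, equivalently that $\underline{\CH}$ has no non-split conflations with both ends "minimal": this uses the defining intersection $\CH=\CA^{+}\cap\CA^{-}$ together with the adjunction properties of the truncation functors $\CA\to\CA^{+}$ and $\CA\to\CA^{-}$ attached to the cotorsion pair, which force $\wtil\BE(\CH,\CH)$ computed inside $\underline{\CH}$ to vanish, i.e. the extriangulated structure on $\underline{\CH}$ is the trivial (split) one on an additive category that already has kernels and cokernels — hence abelian. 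The main obstacle I anticipate is precisely the verification of \ref{spade} for the pairs $(\CA^{+},\CW)$ and $(\CA^{-}/[\CW],\CW)$ and the careful bookkeeping showing the two successive subquotients compose correctly (that taking the subquotient by $\CW$ on the $\CA^{+}$ side and then again on the $\CA^{-}$ side really yields $(\CA^{+}\cap\CA^{-})/[\CW]$ and not something larger); this requires systematic use of \cref{lem:wPO_wPB} and \ref{ET4} to manipulate the weak pushouts/pullbacks appearing in $\Inf_\CW$ and $\Def_\CW$, and is where the bulk of the technical work lies. The abelianness, by contrast, should then be comparatively formal once the subquotient description is established, and in fact this whole strategy is meant to recover \cite[Thm.~3.2, 3.5]{LN19} while simultaneously producing the enhanced (exact dg) version via \cref{thm:dg_subquotient}.
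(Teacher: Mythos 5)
The paper does not actually prove this statement: it is quoted verbatim from \cite[Thm.~3.2, 3.5]{LN19}, and the material in \S\ref{subsec:Hearts_as_an_extriangulated_subquotient}--\ref{subsec:Hearts_as_an_exact_dg_subquotient} runs in the \emph{opposite} direction to your plan — it takes the abelianness of $\underline{\CH}$ and the cohomological functor $H$ as inputs (see \cref{lem:cohomological_to_exact}, \cref{prop:kernel_heart}, and the last step of the proof of \cref{thm:heart_is_the_subquotient}, which invokes the fact that the extriangulated structure of the abelian category $\underline{\CH}$ is maximal) in order to identify the heart with the subquotient $\CA_\CN/\CN$ for $\CN=\add(\CU*\CV)$. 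So deriving \cref{thm:heart} from the subquotient machinery would be a genuinely different route (and would make \cref{thm:heart_is_the_subquotient} non-circularly self-contained), but as written your argument has several real gaps.

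The most serious ones are the following. (1) Condition \ref{spade} fails for the pair $(\CA^+,\CW)$: take $\CA$ a cluster category with cluster-tilting subcategory $\CT$, so that $\CU=\CV=\CW=\CT$ and $\CA^+=\CA$. Since $\BE(\CT,\CT)=0$, every $\fs|_\CW$-conflation splits, so an $\fs|_\CW$-inflation is a split mono with complement in $\CT$; but the zero map $T_1\to B$ is an $\fs$-inflation in the triangulated category $\CA^+$, and for $T_1\neq 0$ no $b\colon B\to T_2$ makes $b\circ 0$ a split mono. Relatedly, objects of $\CW$ are not $\BE$-injective in $\CA^+$ (from $V\to W'\to X$ one only gets that $\BE(X,W)$ is a quotient of $\Hom(V,W)$, not zero — e.g.\ $\BE(T[1],T')=\Hom(T,T')\neq 0$ with $T[1]\in\CA^+$), so the identification $\CA^+/\CW\simeq\CA^+/[\CW]$ via the mechanism of \cref{ex:spade}(1) is unjustified; one must first pass to the substructure $\BE_\CN$ as in \cref{thm:subquotient_of_extri_cat}, which is exactly what the paper does with $\CN=\add(\CU*\CV)$ rather than with $\CW$ and $\CA^\pm$. (2) Your abelianness argument is wrong: $\wtil{\BE}$ on $\underline{\CH}$ does not vanish — in the cluster example $\underline{\CH}\simeq\mod\CT$ has plenty of non-split extensions, and \cref{claim:heart_is_the_subquotient} shows the subquotient structure is the \emph{maximal} one, not the split one. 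The subquotient machinery only ever outputs an extriangulated category; upgrading it to an abelian one is precisely the substantive content of \cite[Thm.~3.2]{LN19}, for which your proposal supplies no substitute. (3) The quotient functor is only defined and exact on the substructure $(\CA_\CN,\BE_\CN,\fs_\CN)$, so "each step is exact in $\ET$" yields exactness only on $\fs_\CN$-conflations; the cohomological property of $H$ on \emph{arbitrary} $\fs$-triangles (the content of \cite[Thm.~3.5]{LN19}) needs the approximation and adjunction arguments of \cref{lem:coreflection} and \cref{lem:composite_of_adjoint_functors} and does not follow formally from the universal property of the quotient.
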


Since we are going to inspect the above heart construction via a lens of an exact dg category, we will recall a minimal amount to write down the proof.
We need to know how to constitute the above cohomological functor.

\begin{lemma}\label{lem:coreflection}\cite[Def.~2.14, Prop.~2.19]{LN19}
For any $X\in\CC$, there exists an $\fs$-triangle
\begin{equation}\label{diag:coreflection}
\begin{tikzcd}
V_X\arrow{r}{} &X^-\arrow{r}{\alpha_X} &X\arrow[dashed]{r}{} &{}
\end{tikzcd}
\end{equation}
where $V_X\in\CV, X^-\in\CC^-$ and $\BE(\CU,X^-)\xto{(\alpha_X)_*}\BE(\CU,X)$ is bijective.
Such a triangle is called a \emph{coreflection $\fs$-triangle} of $X$.
Dually, there exists a \emph{reflection $\fs$-triangle} of $X$:
\begin{equation}\label{diag:reflection}
\begin{tikzcd}
X\arrow{r}{\beta_X} &X^+\arrow{r}{} &U_X\arrow[dashed]{r}{} &{}
\end{tikzcd}.
\end{equation}
It is an $\fs$-triangle satisfying $U_X\in\CU,X^+\in\CC^+$ and $\BE(X^+,\CV)\xto{(\beta_X)^*}\BE(X,\CV)$ is bijective.
\end{lemma}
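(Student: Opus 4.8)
The plan is to build $X^-$ by splicing together the two defining decompositions of the cotorsion pair $(\CU,\CV)$, to extract the coreflection property from the long exact sequences of $\BE$-cohomology, and to obtain the reflection $\fs$-triangle by passing to the opposite category.

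First I would assemble the raw material: since $(\CU,\CV)$ is a cotorsion pair, $X$ fits in $\fs$-triangles
\[
V_0\lra U_0\lra X\dashrightarrow
\qquad\text{and}\qquad
X\lra V^0\lra U^0\dashrightarrow
\]
with $U_0,U^0\in\CU$, $V_0,V^0\in\CV$, both $\CU$ and $\CV$ are extension-closed, and $\BE(\CU,\CV)=0$. Starting from the first $\fs$-triangle, the idea is to refine the left term $V_0\in\CV$ through a $\CW$-datum supplied by the cotorsion pair and to glue the outcome back into the original $\fs$-triangle using \ref{ET4} together with the weak pushout/pullback of \cref{lem:wPO_wPB}; this realizes the construction of \cite[Def.~2.14]{LN19} and produces an $\fs$-triangle $V_X\lra X^-\overset{\alpha_X}{\lra}X\overset{\delta}{\dashrightarrow}$ with $V_X\in\CV$, in which the same diagram simultaneously exhibits an $\fs$-triangle $X^-\to W\to U'$ with $W\in\CW$ and $U'\in\CU$, i.e.\ $X^-\in\CC^-$.

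It then remains to prove that $(\alpha_X)_*\colon\BE(\CU,X^-)\to\BE(\CU,X)$ is bijective. Fixing $U\in\CU$ and applying $\CC(U,-)$ together with the higher extensions $\BE^\bullet(U,-)$ (as recalled in \cref{subsec:higher_extensions}, following \cite{GNP23}) to the $\fs$-triangle $V_X\to X^-\overset{\alpha_X}{\to}X\overset{\delta}{\dashrightarrow}$ yields a long exact sequence
\[
\cdots\to\BE(U,V_X)\to\BE(U,X^-)\overset{(\alpha_X)_*}{\lra}\BE(U,X)\overset{\partial}{\lra}\BE^2(U,V_X)\to\cdots .
\]
Injectivity of $(\alpha_X)_*$ is immediate from $\BE(U,V_X)=0$, which holds because $V_X\in\CV$. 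Surjectivity is equivalent to the vanishing on $\BE(\CU,X)$ of the connecting map $\partial$, namely of composition with the class $\delta\in\BE(X,V_X)$; this is exactly the point where the shape of the construction is used — that $\delta$ is induced from a $\CW$-object (equivalently that $X^-\in\CC^-$), so that $\partial$ factors through higher extensions into $\CW$ and dies against $\CU$ because $\BE(\CU,\CW)=0$. Finally, the reflection $\fs$-triangle \eqref{diag:reflection} is obtained by applying the coreflection statement already proved to the opposite extriangulated category $(\CC^{\op},\BE^{\op},\fs^{\op})$: there $(\CV,\CU)$ is again a cotorsion pair with the same $\CW$, and its $\CC^-$ is the present $\CC^+$, so the dual of the bijectivity of $(\alpha_X)_*$ on $\BE(\CU,-)$ is precisely the bijectivity of $(\beta_X)^*$ on $\BE(-,\CV)$.

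I expect the main obstacle to be the surjectivity of $(\alpha_X)_*$, i.e.\ carrying out the construction of $X^-$ (hence of $V_X$ and of the connecting class $\delta$) tightly enough that the higher connecting map $\partial$ vanishes on all of $\CU$; the role of the condition $X^-\in\CC^-$ is precisely to guarantee this, so the construction step and this verification are best treated as a single package.
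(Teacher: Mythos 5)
The paper itself gives no proof of this lemma (it is quoted from \cite{LN19}), so I am judging your sketch against the standard argument. The injectivity step and the passage to the opposite category are fine, but the surjectivity of $(\alpha_X)_*$ --- which you yourself identify as the main obstacle --- is exactly where your sketch breaks down. You claim the connecting map $\partial\colon\BE(U,X)\to\BE^2(U,V_X)$, $\theta\mapsto\delta\cdot\theta$, vanishes because ``$\delta$ is induced from a $\CW$-object (equivalently that $X^-\in\CC^-$), so that $\partial$ factors through higher extensions into $\CW$ and dies against $\CU$ because $\BE(\CU,\CW)=0$.'' Two things are wrong here. First, $\BE(\CU,\CW)=0$ is a statement about first extensions and does not kill classes in $\BE^2(\CU,\CW)$; in a general extriangulated category $\BE^2(\CU,\CV)$ need not vanish even though $\BE(\CU,\CV)=0$, so a factorization of $\partial$ through a group of the form $\BE^2(U,W)$ proves nothing. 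Second, the parenthetical ``equivalently that $X^-\in\CC^-$'' is false: the two conditions $V_X\in\CV$ and $X^-\in\CC^-$ do not imply the bijectivity. Indeed the decomposition $V_0\lra U_0\xto{u}X$ coming directly from the cotorsion pair already has $V_0\in\CV$ and $U_0\in\CU\sse\CC^-$, yet $u_*\colon\BE(U,U_0)\to\BE(U,X)$ can fail to be surjective precisely because the obstruction lives in a possibly nonzero $\BE^2(U,V_0)$. So the bijectivity genuinely depends on how $X^-$ is built, and your sketch never isolates the feature of the construction that kills the obstruction.

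That feature is the following. One starts from the \emph{other} decomposition $X\xto{f}V^0\lra U^0$, writes $V^0$ as the cone of a conflation $V''\lra W\xto{p}V^0\overset{\rho}{\dra}$ with $W\in\CW$ (possible because $\CV$ is extension-closed), and lets $X^-$ be the cocone of the composite deflation $W\to V^0\to U^0$. The octahedral axiom then produces both conflations $V''\to X^-\xto{\alpha_X}X\overset{\delta}{\dra}$ and $X^-\to W\to U^0$ \emph{together with} the key relation $\delta=f^*\rho$. With this, $\partial(\theta)=\delta\cdot\theta=(f^*\rho)\cdot\theta=\rho\cdot(f_*\theta)=0$ because $f_*\theta\in\BE(U,V^0)=0$ --- a vanishing of a \emph{first} extension into $\CV$, not of a higher extension into $\CW$. (Your construction paragraph also starts from the wrong triangle: refining $V_0$ inside $V_0\to U_0\to X$ and pushing out merely reproduces another $\cone(\CV,\CU)$-decomposition of $X$; it is the middle term of $X\to V^0\to U^0$ that must be refined.) Finally, if you run the argument through higher extensions as you propose, you must also invoke exactness of the long exact sequence at $\BE(U,X)$, i.e.\ that $\Ker\partial=\Im(\alpha_X)_*$; this is available via \cite{GNP23} as recalled in \S\ref{subsec:higher_extensions}, whereas \cite{LN19} avoids higher extensions altogether by a direct diagram chase lifting a realized $\theta$.
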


Notice that the morphism $X^-\xto{\alpha_X}X$ in \eqref{diag:coreflection} is a right $(\CC^-)$-approximation of $X$.
The dual statement holds for \eqref{diag:reflection}.
Also, both assignments $X\mapsto X^+$ and $X\mapsto X^-$ give rise to adjoint functors of the canonical inclusions \cite[\S 2.1]{LN19}.
The cohomological functor $H$ is defined to be the composite of them.

\begin{lemma}\label{lem:composite_of_adjoint_functors}
(cf. \cite[Def.~2.21, 2.34]{LN19})
We have the following assertions:
\begin{enumerate}[label=\textup{(\arabic*)}]
\item 
The canonical inclusion $\underline{\CA}^+\hookrightarrow \underline{\CA}$ admits a right adjoint $\sigma^+$.
\item 
Dually, the canonical inclusion $\underline{\CA}^-\hookrightarrow \underline{\CA}$ admits a left adjoint $\sigma^-$.
\item 
Furthermore, there exists a natural isomorphism $\sigma^+\circ \sigma^-\cong \sigma^-\circ \sigma^+$ which permits us to put $H\deff\sigma^-\circ \sigma^+$.
\end{enumerate}
\end{lemma}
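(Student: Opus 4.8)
The plan is to give the classical, self-contained argument, building the functors $\sigma^{+}$ and $\sigma^{-}$ out of the $\fs$-triangles produced by \cref{lem:coreflection} and working in the ideal quotient $\underline{\CA}=\CA/[\CW]$ throughout. For (1), the first job is to turn the assignment sending $X$ to the appropriate term of a (co)reflection $\fs$-triangle into an additive functor $\underline{\CA}\to\underline{\CA}^{+}$: one needs (i) that this term is well defined up to a canonical isomorphism in $\underline{\CA}$ and (ii) that a morphism $X\to X'$ lifts, uniquely modulo $[\CW]$, to a morphism between the chosen terms. Both reduce to applying $\BE(-,\CV)$ or $\BE(\CU,-)$ to the defining $\fs$-triangles, invoking the bijectivities of \cref{lem:coreflection} together with $\BE(\CU,\CV)=0$, which forces all ambiguity and all obstructions into $[\CW]$. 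Statement (2) is then obtained dually.

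Next I would verify the adjunctions. That $\sigma^{+}$ is right adjoint to $\underline{\CA}^{+}\hookrightarrow\underline{\CA}$ (resp.\ that $\sigma^{-}$ is left adjoint to $\underline{\CA}^{-}\hookrightarrow\underline{\CA}$) I would check by testing the universal property of the counit (resp.\ unit) directly: apply $\underline{\CA}(Y,-)$ for $Y\in\CA^{+}$ (resp.\ $\underline{\CA}(-,Y)$ for $Y\in\CA^{-}$) to the relevant (co)reflection $\fs$-triangle, and observe that the two correction terms vanish in $\underline{\CA}$. This vanishing is exactly where the cotorsion pair enters: using the defining $\fs$-triangles of $\CA^{\pm}$ and the identity $\CW=\CU\cap\CV$, it comes down to $\BE(\CU,\CV)=0$ and to the fact that the relevant morphisms between $\CU$ and $\CA^{+}$, resp.\ between $\CA^{-}$ and $\CV$, factor through $\CW$ and hence die in $\underline{\CA}$.

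Finally, for (3) I would proceed in two moves. First, show that $\sigma^{+}$ carries $\underline{\CA}^{-}$ into $\underline{\CH}=\underline{\CA}^{+}\cap\underline{\CA}^{-}$ and, dually, that $\sigma^{-}$ carries $\underline{\CA}^{+}$ into $\underline{\CH}$: given $X\in\CA^{-}$, feed the (co)reflection $\fs$-triangle of $X$ and the defining $\fs$-triangle of its image into axiom \ref{ET4} together with the weak pushout/pullback of \cref{lem:wPO_wPB}, obtaining a $3\times3$ array of $\fs$-triangles that realizes $\sigma^{+}X$ as an extension involving an object of $\CW$ and an object of $\CA^{-}$; since $\CW\subseteq\CA^{-}$ and $\CA^{-}$ is closed under such extensions, $\sigma^{+}X\in\underline{\CA}^{-}$. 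Second, for each $X$ the units and counits of the two adjunctions assemble into a canonical morphism $\sigma^{-}\sigma^{+}X\to\sigma^{+}\sigma^{-}X$ in $\underline{\CH}$; I would realize both sides as the third term of one and the same diagram built by weak pushout/pullback from a reflection and a coreflection $\fs$-triangle of $X$, and deduce that the comparison map is invertible from the two bijectivity statements of \cref{lem:coreflection}. Naturality is automatic from the universal properties already established.

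The genuinely delicate point will be the second move of (3): arranging the \ref{ET4}-octahedron so that $\sigma^{+}\sigma^{-}X$ and $\sigma^{-}\sigma^{+}X$ surface as canonically identified corners, and then checking that the comparison is an isomorphism object by object; the fact that \cref{thm:heart} makes $\underline{\CH}$ abelian does not shorten this step. A pervasive but routine nuisance throughout is that every uniqueness claim must be read modulo $[\CW]$ and every construction re-checked for compatibility with composition. As an alternative outline, once \cref{thm:dg_subquotient} is available one can reinterpret $\underline{\CH}$ as the degree-zero part of an exact dg subquotient and extract (1)--(3) from its universal property, but I would keep the direct argument as the primary line since it requires no dg input.
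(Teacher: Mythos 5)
The paper does not actually prove this lemma: it is imported from \cite{LN19} (note the ``cf.''), so there is no in-paper argument to compare against. Your outline reconstructs the standard Liu--Nakaoka/Abe--Nakaoka proof and its overall architecture is right: build $\sigma^{\pm}$ from the (co)reflection $\fs$-triangles of \cref{lem:coreflection}, push all ambiguity in the construction into $[\CW]$ using $\BE(\CU,\CV)=0$, show the composites land in $\underline{\CH}$, and then compare $\sigma^{+}\sigma^{-}$ with $\sigma^{-}\sigma^{+}$ via \ref{ET4} and the two bijectivity statements of \cref{lem:coreflection} --- this is exactly the route of \cite[Thm.~5.7]{AN12} and \cite[\S 2]{LN19}.

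There is, however, one concrete point where your verification would fail as written, and it traces back to the handedness in the statement itself. The reflection morphism $\beta_X\colon X\to X^{+}$ points \emph{from} $X$ into $\CA^{+}$, so $X\mapsto X^{+}$ can only be a \emph{left} adjoint of $\underline{\CA}^{+}\hookrightarrow\underline{\CA}$ (with unit $\beta_X$); dually $\alpha_X\colon X^{-}\to X$ makes $X\mapsto X^{-}$ a \emph{right} adjoint of $\underline{\CA}^{-}\hookrightarrow\underline{\CA}$ (with counit $\alpha_X$). Your recipe of applying $\underline{\CA}(Y,-)$ for $Y\in\CA^{+}$ to the reflection triangle $X\xto{\beta_X}X^{+}\to U_X\dra$ produces morphisms in an unusable direction; what one must check is that $\underline{\CA}(X^{+},Y)\to\underline{\CA}(X,Y)$ is bijective, i.e.\ apply $\underline{\CA}(-,Y)$ here (with injectivity coming from the fact that every morphism $U_X\to Y$ factors through $\CW$ because $\BE(U_X,V)=0$ for the defining conflation $V\to W\to Y$ of $Y\in\cone(\CV,\CW)$), and dually apply $\underline{\CA}(Y,-)$ to the coreflection triangle. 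Two smaller remarks: for the restriction step in (3) no $3\times 3$ array is needed, since $X^{+}$ is an extension of $U_X\in\CU\subseteq\CA^{-}$ by $X\in\CA^{-}$ and $\CA^{-}$ is extension-closed; and you are right that the only genuinely delicate step is the isomorphism $\sigma^{+}\sigma^{-}\cong\sigma^{-}\sigma^{+}$, for which your plan coincides with the cited proof.
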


The above lemma is presented as the following commutative diagram up to isomorphisms.
This also says that both functors $\sigma^+$ and $\sigma^-$ restrict to $\underline{\CA}^-\xto{\sigma^+}\underline{\CH}$ and $\underline{\CA}^+\xto{\sigma^-}\underline{\CH}$, respectively.

\[
\begin{tikzcd}[row sep = 0.5cm, column sep = 1.0cm]
&\underline{\CA}^+\arrow[hook]{rd}{}\arrow[bend right]{ld}[swap]{\sigma^-}&\\
\underline{\CH}\arrow[hook]{ru}{}\arrow[hook]{rd}{}&&\underline{\CA}\arrow[bend right]{lu}[swap]{\sigma^+}\arrow[bend left]{ld}{\sigma^-}\\
&\underline{\CA}^-\arrow[hook]{ru}{}\arrow[bend left]{lu}{\sigma^+}&
\end{tikzcd}
\]

For later use, we make some investigations on the kernel of the cohomological functor $H$.

\begin{lemma}\label{lem:coreflection2}
(cf. \cite[Lem.~5.9]{Oga24})
Let $f\colon A\to B$ be a morphism in $\CC$ and consider reflection triangles (\ref{diag:reflection}) of $A$ and $B$.
If the induced morphism $f^+\colon A^+\to B^+$ factors through an object in $\CW$, then $f$ factors through an object in $\CU$.
\end{lemma}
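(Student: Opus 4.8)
The plan is to express ``$f$ factors through $\CU$'' as the vanishing of a single $\BE$-class and then to kill that class by pulling it back along $f^{+}$. First I would fix the reflection triangles of $A$ and $B$ from \cref{lem:coreflection}, say $A\xto{\beta_A}A^{+}\to U_A\dra$ and $B\xto{\beta_B}B^{+}\to U_B\dra$, chosen so that the induced morphism $f^{+}$ satisfies $\beta_B\circ f=f^{+}\circ\beta_A$ on the nose. Writing the hypothesis as $f^{+}=j\circ i$ with $i\colon A^{+}\to W$, $j\colon W\to B^{+}$ and $W\in\CW$, we get $\beta_B\circ f=j\circ(i\circ\beta_A)$; in particular $\beta_B\circ f$ factors through $W\in\CW\subseteq\CU$. (It is harmless to absorb the $[\CW]$-ambiguity in $f^{+}$ into $W$, since $\CW$ is closed under finite direct sums.)

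Next I would translate the goal into an extension problem. By the defining conditions of a cotorsion pair, $\cone(\CV,\CU)=\CA$, so there is an $\fs$-triangle $V_B\to U_B'\xto{p_B}B\overset{\epsilon_B}{\dra}$ with $V_B\in\CV$ and $U_B'\in\CU$. Applying $\Hom(U,-)$ for $U\in\CU$ and using $\BE(\CU,\CV)=0$ shows that $p_B$ is a $\CU$-precover; hence $f$ factors through $\CU$ if and only if $f$ factors through $p_B$, and applying $\Hom(A,-)$ to the same triangle, this holds if and only if $f^{*}\epsilon_B=0$ in $\BE(A,V_B)$.

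Then comes the transport step, which is where the reflection property is used. Since $V_B\in\CV$, \cref{lem:coreflection} gives a bijection $(\beta_B)^{*}\colon\BE(B^{+},V_B)\xto{\ \sim\ }\BE(B,V_B)$; let $\zeta\in\BE(B^{+},V_B)$ be the unique class with $(\beta_B)^{*}\zeta=\epsilon_B$. Then $f^{*}\epsilon_B=f^{*}(\beta_B)^{*}\zeta=(\beta_B\circ f)^{*}\zeta=(i\circ\beta_A)^{*}\bigl(j^{*}\zeta\bigr)$, and $j^{*}\zeta$ lies in $\BE(W,V_B)\subseteq\BE(\CU,\CV)=0$. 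Therefore $f^{*}\epsilon_B=0$, so by the previous paragraph $f$ factors through $U_B'\in\CU$, which is the assertion.

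The computation is short, so the main obstacle is bookkeeping rather than substance: one must be careful that ``the induced morphism $f^{+}$'' and the phrase ``factors through an object in $\CW$'' are only well defined up to the ideal $[\CW]$, so I would check at the outset that the identity $\beta_B\circ f=f^{+}\circ\beta_A$ can be made literal (using the functoriality of the reflection established in the cited work) and that ``factors through $\CU$'' is unaffected by this ambiguity in both hypothesis and conclusion; and one must keep the variances straight in $f^{*}(\beta_B)^{*}=(\beta_B\circ f)^{*}$ and in the connecting homomorphism $\Hom(A,B)\to\BE(A,V_B)$, $g\mapsto g^{*}\epsilon_B$, attached to the triangle $V_B\to U_B'\xto{p_B}B\overset{\epsilon_B}{\dra}$. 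Beyond that I do not anticipate any genuine difficulty.
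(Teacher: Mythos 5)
Your proof is correct and follows essentially the same route as the paper's: both reduce the claim to showing $f^*\eta=0$ for a resolving $\fs$-triangle $V_B\to U_B\to B\overset{\eta}{\dra}$, lift $\eta$ through the bijection $(\beta_B)^*\colon\BE(B^+,V_B)\to\BE(B,V_B)$ coming from the reflection triangle, and then kill the class via the commutativity $\beta_B\circ f=f^+\circ\beta_A$, the factorization of $f^+$ through $\CW$, and the orthogonality $\BE(\CW,\CV)=0$. The only cosmetic difference is that you insist on the uniqueness of the lift and spell out the precover characterization, whereas the paper only needs surjectivity of $(\beta_B)^*$ and the exact sequence $\CA(A,U_B)\to\CA(A,B)\to\BE(A,V_B)$.
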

\begin{proof}
Since $\beta_X$ is a left $(\CC^+)$-approximation of $X=A,B$, we get a commutative diagram below.
\[
\xymatrix{
A\ar[r]^{\beta_A}\ar[d]_{f}&A^+\ar[d]^{f^+}\\
B\ar[r]^{\beta_B}&B^+
}
\]
If we resolve $B$ into an $\fs$-triangle $V_B\overset{v}{\lra} U_B\overset{u}{\lra} B\overset{\eta}{\dra}$ by $B\in\cone(\CV,\CU)$, we can verify that $f$ factors through $U_B\in\CU$:
To show this, it suffices to check $f^*\eta=0$ in $\BE(A,V_B)$.
The composite $\beta_B\circ f$ gives rise to a sequence $\BE(B^+,V_B)\xto{(\beta_B)^*}\BE(B,V_B)\xto{f^*}\BE(A,V_B)$ with the first morphism $(\beta_B)^*$ surjective by the definition of \eqref{diag:reflection}.
Thus we get an element $\eta'\in\BE(B^+,V_B)$ such that $(\beta_B\circ f)^*\eta'=(f^+\circ \beta_A)^*\eta'=f^*\eta$.
Now we are supposing $f^+$ factors through an object in $\CW$ and has just verified $f^*\eta=0$ by the orthogonality $\BE(\CW,\CV)=0$.
Finally, the exact sequence $\CA(A,U_B)\xto{u\circ -} \CA(A,B)\to \BE(A,V_B)$ gives a desired morphism in $\CA(A,U_B)$.
\end{proof}

\begin{proposition}\label{prop:kernel_heart}
(cf. \cite[Prop.~5.10]{Oga24})
The following hold for the kernel $\CN:=\Ker H$.
\begin{enumerate}
\item[\textnormal{(1)}]
$\CN=\add(\CU*\CV)$ is true.
\item[\textnormal{(2)}]
A morphism $f$ in $\CC$ factors through an object in $\CN$ if and only if $H(f)=0$.
\end{enumerate}
\end{proposition}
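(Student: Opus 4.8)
The plan is to transplant the argument of \cite[Prop.~5.10]{Oga24} into the extriangulated category $\CA$, using: the presentation $H=\sigma^{-}\circ\sigma^{+}=\sigma^{+}\circ\sigma^{-}$ of \cref{lem:composite_of_adjoint_functors}; the reflection and coreflection $\fs$-triangles of \cref{lem:coreflection}, which compute $H$ in the form $HX\cong(X^{+})^{-}\cong(X^{-})^{+}$, an object of $\CH$ that is zero in $\underline{\CH}=\CH/[\CW]$ precisely when it lies in $\CW$; \cref{lem:coreflection2} and its dual (coreflection $\fs$-triangles and $\CV$ replacing reflection $\fs$-triangles and $\CU$); the extension-closedness of $\CU,\CV$ and the orthogonality $\BE(\CU,\CV)=0$, whence $\BE(\CU,\CW)=0$; and that $H$ is cohomological (\cref{thm:heart}). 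Throughout write $\CN=\Ker H$.

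\emph{Part (1).} Since $\CN$ is the kernel of an additive functor it is closed under direct summands, and since $H$ is cohomological an $\fs$-triangle $U\to X\to V$ with $U\in\CU$, $V\in\CV$ gives an exact sequence $HU\to HX\to HV$ in $\underline{\CH}$; hence the inclusion $\add(\CU*\CV)\subseteq\CN$ reduces to $\CU\subseteq\CN$ and $\CV\subseteq\CN$, which follow from a direct computation with the reflection/coreflection $\fs$-triangles together with $\BE(\CU,\CW)=0$ and extension-closedness (e.g.\ for $U\in\CU$ the reflection triangle gives $U^{+}\in\CU$, and running this through the coreflection triangle of $U^{+}$ forces $HU=\sigma^{-}(U^{+})\in\CW$). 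For the reverse inclusion, let $HX=0$; writing $X^{+}=\sigma^{+}X\in\CA^{+}$ and $HX=\sigma^{-}(X^{+})$, the hypothesis says the term $(X^{+})^{-}$ of the coreflection $\fs$-triangle $V_{X^{+}}\to(X^{+})^{-}\xto{\alpha}X^{+}\dra$ is zero in $\underline{\CH}$, hence lies in $\CW$ (as $\CW$ is closed under summands). Applying $\BE(\CU,-)$ to that triangle and using its defining bijectivity $\BE(\CU,(X^{+})^{-})\xto{\sim}\BE(\CU,X^{+})$ together with $\BE(\CU,(X^{+})^{-})\subseteq\BE(\CU,\CW)=0$ gives $\BE(\CU,X^{+})=0$, so $X^{+}\in\CV$ by the cotorsion-pair axioms. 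The reflection $\fs$-triangle $X\xto{\beta_{X}}X^{+}\to U_{X}\dra$ then has one term in $\CV$ and another in $\CU$, placing $X$ in $\CU*\CV\subseteq\add(\CU*\CV)$; equivalently one may run the dual argument through the coreflection triangle of $X$, obtaining an $\fs$-triangle $V_{X}\to X^{-}\to X$ with $X^{-}\in\CU$, $V_{X}\in\CV$.

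\emph{Part (2).} The implication ``$f$ factors through $\CN$ $\Rightarrow$ $Hf=0$'' is immediate from $\CN\subseteq\Ker H$. Conversely, let $Hf=0$ for $f\colon A\to B$. From $Hf=\sigma^{-}(\sigma^{+}f)=\sigma^{-}(f^{+})=0$, and since a morphism of $\CA^{-}$ is zero in $\underline{\CA}^{-}=\CA^{-}/[\CW]$ iff it factors through $\CW$, the dual of \cref{lem:coreflection2} applied to $f^{+}\colon A^{+}\to B^{+}$ yields that $f^{+}$ factors through an object of $\CV$. Hence $\beta_{B}\circ f=f^{+}\circ\beta_{A}$ factors through $\CV$; combining this with the reflection $\fs$-triangles $A\xto{\beta_{A}}A^{+}\to U_{A}\dra$ and $B\xto{\beta_{B}}B^{+}\to U_{B}\dra$, whose third terms lie in $\CU$, a diagram chase with the weak pushouts and pullbacks of \cref{lem:wPO_wPB} — lifting the $\CV$-factorization of $\beta_{B}f$ back along the inflation $\beta_{B}$ up to the $\CU$-object $U_{B}$, and absorbing the correction term through the cocone of $\beta_{A}$ — produces a factorization of $f$ through an object of $\CU*\CV=\CN$.

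\emph{Main obstacle.} The crux is the absence of $\fs$-triangle rotation in the extriangulated world: both reverse arguments (placing $X$ in $\CU*\CV$, and the factorization of $f$ in Part (2)) require splicing reflection and coreflection $\fs$-triangles via \ref{ET4}-type octahedra, and one must verify at each step — using $\BE(\CU,\CW)=0$ and the extension-closedness of $\CU,\CV$ — that the resulting objects land in the prescribed classes. The routine preliminaries one wants settled first are the compatibility $HX\cong(X^{+})^{-}$ with the reflection/coreflection $\fs$-triangles and the facts $\CU,\CV\subseteq\Ker H$, both of which are available from \cite{LN19}.
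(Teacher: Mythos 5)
For part (1) the paper offers no proof at all --- it simply cites \cite[Cor.~3.8]{LN19} --- so you are attempting strictly more, and the attempt breaks at the final step. Your forward inclusion and the deduction $X^{+}\in\CV$ from $HX=0$ are fine (the step ``$\BE(\CU,X^{+})=0\Rightarrow X^{+}\in\CV$'' uses the standard fact that $\cocone(\CV,\CU)=\CA$ forces $\CV=\CU^{\perp}$). But the conflation $X\xto{\beta_X}X^{+}\to U_X$ exhibits $X$ as a \emph{cocone}, i.e.\ as an element of $\cocone(\CV,\CU)=\CA$; it does not place $X$ in $\CU*\CV$, which consists of \emph{middle} terms of conflations $U\to X\to V$. (Your alternative $V_X\to X^{-}\to X$ with $X^{-}\in\CU$ fails identically: it shows $X\in\cone(\CV,\CU)=\CA$.) The missing step --- and the reason the statement carries an $\add$ --- is: take a conflation $V'\to U'\to X\overset{\epsilon}{\dra}$ from $\cone(\CV,\CU)=\CA$, write $\epsilon=(\beta_X)^{*}\epsilon'$ using the bijectivity of $(\beta_X)^{*}$ from \cref{lem:coreflection}, realize $\epsilon'$ by a conflation $V'\to Y\to X^{+}$ with $Y\in\CV$ by extension-closedness, and apply \cref{lem:wPO_wPB}(2) to obtain a conflation $U'\to X\oplus Y\to X^{+}$; only then is $X$ a direct summand of an object of $\CU*\CV$.

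For part (2) your reduction (via the dual of \cref{lem:coreflection2}) to ``$f^{+}$ factors through some $V\in\CV$'' is the exact mirror of the paper's reduction to ``$f^{-}$ factors through some $U\in\CU$'', and is correct. The gap is that the remaining step --- which is the entire content of the paper's proof --- is only gestured at, and the gesture points at the wrong objects. Knowing that $\beta_{B}f$ factors through $\CV$ gives no direct handle on $f$, because $\beta_{B}$ is an inflation and precomposition data does not descend along it; no diagram chase involving only the reflection triangles $A\to A^{+}\to U_{A}$ and $B\to B^{+}\to U_{B}$ will produce the factorization. What the paper actually does is resolve the source $A$ by a conflation $A\to V_{A}\to U_{A}\overset{\delta}{\dra}$ coming from $\CA=\cocone(\CV,\CU)$, transport $\delta$ through the bijections $(\alpha_{A})_{*},(\alpha_{B})_{*}$ of \cref{lem:coreflection}, feed in the factorization $f^{-}=h\circ g$ through $U\in\CU$ to get $\eta\in\BE(U_{A},U)$ with $f_{*}\delta=(\alpha_{B}h)_{*}\eta$, and then take a weak pullback to manufacture $K\in\CU*\CV$ together with a conflation $A\to K\to U'$ whose extension class is killed by $f_{*}$, so that $f$ extends over the inflation $A\to K$. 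In your mirrored setup the analogous step is to resolve the target $B$ by $V'\to U'\to B$ from $\cone(\CV,\CU)=\CA$ and transport through $(\beta_{A})^{*},(\beta_{B})^{*}$, producing $K'\in\CU*\CV$ with a deflation $K'\to B$ along which $f$ lifts; without writing this out the proof is incomplete.
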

\begin{proof}
We can find the item (1) in \cite[Cor. 3.8]{LN19}.
We have to prove only the `if' part in the item (2), so $H(f)=0$ is supposed for a morphism $A\xto{f}B$ in $\CC$.
By taking coreflection $\fs$-triangles of $A, B$ and reflection $\fs$-triangles of $A^-, B^-$ successively, we get the following commutative squares labeled $(-)$ and $(+)$,
\[
\begin{tikzcd}[row sep=1.0cm, column sep=1.0cm]
A\arrow[mysymbol]{rd}[description]{(-)}\arrow{d}[swap]{f} &A^-\arrow{r}{\beta_{A^-}}\arrow{l}[swap]{\alpha_A}\arrow{d}[description]{f^-} &A^\pm\arrow{d}{f^\pm} \\
B &B^-\arrow{r}{\beta_{B^-}}\arrow{l}[swap]{\alpha_B} &B^\pm\arrow[mysymbol]{lu}[description]{(+)} 
\end{tikzcd}
\]
where we abbreviate as $X^\pm\deff({X^-})^+$.
By definition of $H$, $f^\pm$ factors through an object in $\CW$.
Lemma \ref{lem:coreflection2} tells us that $f^-$ factors through an object $U\in\CU$, say $f^-\colon A^-\xto{g}U\xto{h}B^-$.
Let us consider an $\fs$-triangle $A\overset{a}{\lra} V_A\overset{b}{\lra} U_A\overset{\delta}{\dra}$ associated to $\CA=\cocone(\CV,\CU)$.
The functor $\BE(U_A,-)$ transfers $(-)$ to the following commutative diagram,
\[
\begin{tikzcd}
\BE(U_A,A)\arrow{dd}[swap]{f_*} &\BE(U_A,A^-)\arrow{l}[swap]{(\alpha_A)_*}\arrow{dd}{f^-_*}\arrow{rd}{g_*} &{}
\\
& &\BE(U_A,U)\arrow{ld}{h_*}
\\
\BE(U_A,B) &\BE(U_A,B^-)\arrow{l}[swap]{(\alpha_B)_*} &{}
\end{tikzcd}
\]
where the horizontal arrows are isomorphisms by the definition of coreflection $\fs$-triangles.
By the factorization $f^-=h\circ g$, we get an $\fs$-triangle $U\overset{u_2}{\lra} U'\overset{u_1}{\lra} U_A\overset{\eta}{\dra} $ which corresponds to $\eta=g_*((\alpha_A)_*^{-1}\delta)$.
Thus we see $f_*\delta=(\alpha_B\circ h)_*\eta$ in $\BE(U_A,B)$.
Now, taking a weak pullback of $b$ along $u_1$, we have the commutative diagram made of $\fs$-triangles:
\[
\begin{tikzcd}
{}
&U\arrow[equal]{r}{}\arrow{d}[swap]{x}
&U\arrow{d}{u_2}
&{}
\\
A\arrow{r}{}\arrow[equal]{d}{}
&K\arrow{r}{}\arrow{d}[swap]{y}
&U'\arrow[dashed]{r}{u_1^*\delta}\arrow{d}{u_1}
&{}
\\
A\arrow{r}{a}
&V_A\arrow{r}{b}\arrow[dashed]{d}[swap]{b^*\eta}
&U_A\arrow[dashed]{r}{\delta}\arrow[dashed]{d}{\eta}\wPB{lu}
&{}
\\
&{}&{}&
\end{tikzcd}
\]
We have to show that $f$ factors through $K\in\CU*\CV$.
This follows from $f_*(u_1^*\delta)=u_1^*(f_*\delta)=u_1^*((\alpha_B\circ h)_*\eta)=(\alpha_B\circ h)_*(u_1^*\eta)=0$.
\end{proof}

Now we are ready to realize the heart $\underline{\CH}$ as a localization.
It is based on the well-known fact that a right/left adjoint $\sigma$ of a canonical inclusion is the Gabriel-Zisman localization with respect to a suitable morphisms, see \cite{GZ67}.
Thus, we have a localization description of the cohomological functor $H\colon \CA\to\underline{\CH}$.

\begin{proposition}\label{prop:heart_is_the_Gabriel_Zisman_localization}
Let $\CS_H$ be the class of morphisms $s$ such that $H(s)$ is an isomorphism in $\underline{\CH}$.
Then, we have a canonical equivalence $H'\colon \CA[\CS_H^{-1}]\xto{\sim}\underline{\CH}$.
\end{proposition}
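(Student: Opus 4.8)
The plan is to deduce \cref{prop:heart_is_the_Gabriel_Zisman_localization} from the general principle that a reflective (or coreflective) localization coincides with a Gabriel--Zisman localization, combined with the factorization $H = \sigma^-\circ\sigma^+$ established in \cref{lem:composite_of_adjoint_functors}. First I would recall the abstract fact: if $i\colon \CB\hookrightarrow \CC$ is a fully faithful inclusion admitting a left adjoint $\sigma$ with unit $\eta$, and $\CS$ denotes the class of morphisms $s$ in $\CC$ with $\sigma(s)$ invertible, then $\sigma$ exhibits $\CB$ as the localization $\CC[\CS^{-1}]$; this is in \cite{GZ67}. The dual statement holds for a right adjoint. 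I would apply this twice. Since $\sigma^+\colon \underline{\CA}\to\underline{\CA}^+$ is right adjoint to a fully faithful inclusion, we get $\underline{\CA}^+\simeq \underline{\CA}[\CS_+^{-1}]$ where $\CS_+=\{s\mid \sigma^+(s)\text{ iso}\}$. Since $\sigma^-\colon \underline{\CA}^+\to\underline{\CH}$ is (the restriction of) a left adjoint to a fully faithful inclusion, we likewise get $\underline{\CH}\simeq \underline{\CA}^+[\CS_-^{-1}]$ with $\CS_-=\{s\in\underline{\CA}^+\mid \sigma^-(s)\text{ iso}\}$. Composing these two localizations and using that $\underline{\CA}=\CA/[\CW]$ is itself an ideal quotient of $\CA$, I obtain that $\underline{\CH}$ is a localization of $\CA$ at the preimage class of morphisms.

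The next step is to identify that composite class with $\CS_H$. A morphism $s$ lands in the composite localizing class precisely when its image in $\underline{\CA}$ lies in $\CS_+$ and then the image of $\sigma^+(s)$ in $\underline{\CA}^+$ lies in $\CS_-$, which unwinds to $\sigma^-\sigma^+(\bar s)=H(s)$ being an isomorphism — that is, exactly $s\in\CS_H$. Here I would use \cref{lem:composite_of_adjoint_functors}(3) to commute the two functors freely, and the fact that a composite of localizations $\CA\to \CA[\CS_1^{-1}]\to (\CA[\CS_1^{-1}])[\CS_2^{-1}]$ is the localization of $\CA$ at the saturation of $\CS_1\cup(\text{preimages of }\CS_2)$, which is contained in and generates the same localization as $\CS_H$ by \cref{prop:kernel_heart}. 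Concretely, one checks that $\CN=\Ker H$ is exactly the class of objects sent to zero, and that $\CS_H$ is saturated — a morphism inverted by $H$ together with the two-out-of-three and calculus properties inherited from the adjoint pairs forces $\CA[\CS_H^{-1}]$ to satisfy the same universal property as the double localization. This produces the canonical functor $H'\colon \CA[\CS_H^{-1}]\to\underline{\CH}$ and its quasi-inverse.

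Finally I would verify that $H'$ is an equivalence by exhibiting the quasi-inverse explicitly: the composite $\underline{\CH}\hookrightarrow\underline{\CA}^+\hookrightarrow\underline{\CA}\to\underline{\CA}[\CS_+^{-1}]$-style embedding realizes $\underline{\CH}$ back inside $\CA[\CS_H^{-1}]$, and the unit/counit identities of the two adjunctions assemble into the natural isomorphisms $H'\circ (\text{localization})\cong H$ and its converse. The main obstacle I anticipate is bookkeeping: making precise that the two successive Gabriel--Zisman localizations compose to a single one at $\CS_H$ rather than at a formally larger (but cofinal) class, and that passing through the ideal quotient $\CA\to\underline{\CA}=\CA/[\CW]$ — which is itself the localization at the $\CW$-trivial morphisms, and those are already in $\CS_H$ since $H$ kills $\CW$ by \cref{prop:kernel_heart}(1) — does not lose information. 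Once the saturation of $\CS_H$ is shown to coincide with the composite localizing class, the equivalence is formal; I do not expect to need any extriangulated input beyond \cref{lem:composite_of_adjoint_functors} and \cref{prop:kernel_heart}.
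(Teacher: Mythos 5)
Your proposal is correct and follows essentially the same route as the paper's proof: both rest on the fact that an adjoint to a fully faithful inclusion is a Gabriel--Zisman localization, that the ideal quotient $\CA\to\underline{\CA}=\CA/[\CW]$ is itself a localization, and the factorization $H=\sigma^-\circ\sigma^+$ from \cref{lem:composite_of_adjoint_functors}; your version merely spells out the saturation bookkeeping that the paper leaves implicit.
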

\begin{proof}
As stated, the functor $\sigma^+\colon \underline{\CA}\to\underline{\CA}^+$ is the Gabriel-Zisman localization with respect to the morphisms $s$ such that $\sigma^+(s)$ is isomorphism in $\underline{\CA}^+$.
Since the ideal quotient is a localization, e.g. \cite[Ex.~2.6]{Oga22}, the assertion follows from $H=\sigma^-\circ\sigma^+$.
\end{proof}

\subsection{Hearts as an extriangulated subquotient}
\label{subsec:Hearts_as_an_extriangulated_subquotient}
We keep the setup in the previous subsection, namely, a cotorsion pair $(\CU,\CV)$ over $(\CA,\BE,\fs)$ are given.
We will show that the associated cohomological functor $H\colon \CA\to\underline{\CH}$ can be considered as the extriangulated subquotient of $\CA$.
To this end, we have to additionally assume the ${\rm (WIC)}$ condition on $\CA$.
Since the kernel of $H$ coincides with $\CN=\add(\CU*\CV)$ by \cref{prop:kernel_heart}(1), it is natural to consider the  extriangulated subcategory $\CA_\CN=(\CA_\CN,\BE_\CN,\fs_\CN)$ determined by the extension-closed subcategory $\CN$, see \cref{prop:modified_Quillen_substructure}.

The first theorem in \S\ref{sec:enhanced_heart_construction} is stated as follows.

\begin{theorem}\label{thm:heart_is_the_subquotient}
There exists a natural exact equivalence $(F,\phi)$ from the extriangulated subquotient $\CA_\CN/\CN$ to the heart $\underline{\CH}$.
\end{theorem}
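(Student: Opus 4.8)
The plan is to identify both sides with a common localization and to check that the comparison functor is exact. The key input is \cref{prop:heart_is_the_Gabriel_Zisman_localization}, which already realizes $\underline{\CH}$ as the Gabriel--Zisman localization $\CA[\CS_H^{-1}]$, together with \cref{prop:kernel_heart}, which says that $\Ker H=\CN=\add(\CU*\CV)$ and that a morphism $f$ kills through $\CN$ exactly when $H(f)=0$. On the other hand, the extriangulated subquotient $\CA_\CN/\CN$ is, by \cref{thm:subquotient_of_extri_cat}, the extriangulated localization of $(\CA_\CN,\BE_\CN,\fs_\CN)$ at the multiplicative class $\Sn$, and by \cref{thm:subquotient_of_extri_cat}(3) this class is saturated: $s\in\Sn$ iff $Q(s)$ is invertible. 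So the task reduces to showing that the two saturated classes $\Sn$ and $\CS_H$ coincide (as classes of morphisms of the common underlying additive category $\CA$), after which the universal property of each localization produces mutually inverse functors $(F,\phi)\colon \CA_\CN/\CN\to\underline{\CH}$ and back, and one argues separately that $(F,\phi)$ is \emph{exact} and that $\phi$ is a natural isomorphism.

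First I would establish the equality $\Sn=\CS_H$. For $\supseteq$: given $s$ with $H(s)$ invertible, \cref{prop:kernel_heart}(2) lets one factor the relevant obstructions through $\CN$; using the adjunction functors $\sigma^{\pm}$ from \cref{lem:composite_of_adjoint_functors} and the (co)reflection $\fs$-triangles of \cref{lem:coreflection}, one writes $s$ as a composite of an $\fs$-inflation with cone in $\CV\subseteq\CN$ (coming from $\beta$) followed by an $\fs$-deflation with cocone in $\CU\subseteq\CN$ (coming from $\alpha$), hence $s\in\Sn$ by \cref{lem:factorization_of_Sn}; here one must check these (co)reflection morphisms become $\fs_\CN$-(de)flations, i.e.\ that the relevant extensions lie in $\BE_\CN$, which follows from the orthogonality $\BE(\CU,\CV)=0$ and the definition of $\BE_\CN^R,\BE_\CN^L$ in \cref{prop:modified_Quillen_substructure}. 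For $\subseteq$: it suffices to see $H$ sends $\Inf_\CN$ and $\Def_\CN$ to isomorphisms, and since $H$ is cohomological and kills $\CN$, any $\fs_\CN$-triangle $N_1\to B\to C\dra$ with $N_1\in\CN$ yields $H(B)\xrightarrow{\sim}H(C)$ from the long exact sequence, using $\BE_\CN^n(\CN,-)$-vanishing to control the higher terms; dually for $\Def_\CN$. Both classes being saturated, the induced $Q$-image of $\CS_H$ and $H$-image of $\Sn$ consist of isomorphisms, giving the canonical equivalence of underlying categories $F\colon \CA_\CN/\CN\xrightarrow{\sim}\underline{\CH}$ compatible with the quotient functors.

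Next I would upgrade $F$ to an exact equivalence by producing $\phi\colon \widetilde{\BE_\CN}\Rightarrow \underline{\BE}\circ(F^{\op}\times F)$. By \cref{cor:interaction_of_higher_extensions}, every element of $\widetilde{\BE_\CN}(C,A)$ is represented by a roof $s\backslash\delta$ with $s\in\Sn$ and $\delta\in\BE_\CN(C,A)$; one maps it to $H_*(\mu^{-1}$-type image$)$ by pushing $\fs_\CN(\delta)$ through $H$ and correcting by $H(s)^{-1}$, exactly as a morphism in $\underline\CH$ is built. Well-definedness on the $\sim$-classes of \eqref{diag:interaction_of_higher_extensions} and bi-additivity are formal from functoriality of $H$; compatibility with realizations follows since $H$ is exact as a functor out of $(\CA_\CN,\BE_\CN,\fs_\CN)$ (the heart's cohomological functor is exact on conflations, cf.\ \cref{thm:heart}). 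To see $\phi$ is an isomorphism I would exhibit an inverse: given an extension in $\underline\CH$, lift it along the (co)reflection triangles to an $\fs_\CN$-triangle in $\CA_\CN$ representing a preimage; injectivity comes from \cref{prop:kernel_heart}(2) plus the saturation of $\Sn$.

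The main obstacle I expect is the compatibility bookkeeping between the two different ``enhancement-free'' descriptions of the localized $\BE$-structure: on the subquotient side $\widetilde{\BE_\CN}$ comes from the Nakaoka--Ogawa--Sakai machinery (roofs over $\Sn$, \cref{lem:inf_from_inf}, \cref{cor:interaction_of_higher_extensions}), while on the heart side $\underline{\BE}$ is the ad hoc extriangulation of \cite{LN19}; matching their \emph{realizations} (not just the bifunctors) requires carefully tracking how an $\fs_\CN$-conflation in $\CA_\CN$ maps to a conflation of the heart through the pair of adjoints $\sigma^-\circ\sigma^+$, and verifying that the weak pushout/pullback squares used to define $\widetilde{\fs_\CN}$ are sent to the distinguished squares defining $\underline{\fs}$. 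Everything else — exactness of $F$, bijectivity of $\phi$, the equivalence on Hom-sets — is then a routine diagram chase using the results already cited. One could alternatively bypass some of this by invoking \cref{mainthm:C}/\cref{thm:dg_subquotient} to get an enhanced comparison and then apply $H^0$, but the localization-matching argument above is self-contained within \S\ref{sec:ET}.
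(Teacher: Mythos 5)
Your overall architecture coincides with the paper's: obtain $(F,\phi)$ from the universal property of the quotient, prove $\CS_{\CN}=\CS_H$ using \cref{prop:heart_is_the_Gabriel_Zisman_localization} so that $F$ is an additive equivalence, then upgrade to an exact equivalence. Two points, however, need repair. First, in the inclusion $\CS_H\subseteq\CS_{\CN}$ you claim that a morphism $s$ with $H(s)$ invertible factors as an $\fs$-inflation with cone in $\CV$ followed by a deflation with cocone in $\CU$, "coming from $\beta$ and $\alpha$". The reflection and coreflection triangles are attached to objects, not to $s$, and no such factorization with cones in $\CV$ and $\CU$ is available. The correct mechanism (which is what the paper does) is: resolve $A$ by the cotorsion pair into $A\to V\to U$, form the weak pushout along $s$ to get a folded $\fs$-triangle $A\to V\oplus A'\xto{h}B$, observe $H(h)=0$ so that $h$ factors through some $N\in\CN=\add(\CU*\CV)$ by \cref{prop:kernel_heart}(2), and then a weak pullback along that factorization splits $\begin{bsmallmatrix}f\\ s\end{bsmallmatrix}$ as $s''\circ s'$ with $s'\in\Inf_\CN$ (cone $N$, not in $\CV$) and $s''\in\Def_\CN^{\sp}$.

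Second, and more seriously, the exactness step is where the real content lies and your proposal does not supply it. Your plan builds $\phi$ roof-by-roof via \cref{cor:interaction_of_higher_extensions}, but that corollary lives in the dg-enhanced setting of \S 4 and would smuggle an algebraicity hypothesis into a theorem stated for an arbitrary {\rm (WIC)} extriangulated category; moreover the surjectivity of $\phi$ — "lift an extension in $\underline{\CH}$ to an $\fs_\CN$-triangle" — is exactly the hard point you defer to as the "main obstacle". The paper avoids all realization bookkeeping by a maximality argument: since $\underline{\CH}$ is abelian its extriangulated structure is maximal, so by \cref{lem:exact_equivalence} it suffices to show that $(\CA_\CN/\CN,\wtil{\BE_\CN},\wtil{\fs_\CN})$ is also the maximal structure, i.e.\ every short exact sequence of the abelian category $\CA_\CN/\CN$ is a $\wtil{\fs_\CN}$-triangle. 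This reduces (via the cotorsion pair and a weak pushout) to the key claim that an $\fs$-triangle $A\to B\to N\dra$ with $N\in\CN$ whose image under $Q$ is short exact already has its extension class in $\BE_\CN$, proved by factoring the inverted inflation as $\Def_\CN\circ\Inf_\CN^{\sp}$ and chasing the resulting {\rm (ET4)} diagram. Without this claim, or an equivalent substitute, your argument for $\phi$ being an isomorphism does not close.
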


Due to \cref{thm:subquotient_of_extri_cat}, we know the extriangulated subquotient $\CA_\CN\xto{(Q,\mu)}\CA_\CN/\CN$ always exists.
In addition, the universality of the quotient $(Q,\mu)$ shows the desired exact functor exists as below.

\begin{lemma}\label{lem:cohomological_to_exact}
The cohomological functor $H$ gives rise to an exact functor $(H,\psi)$ with respect to the substructure $(\CA_\CN,\BE_\CN,\fs_\CN)$.
In particular, we have a unique exact functor $(F,\phi)\colon \CA_\CN/\CN\to \underline{\CH}$ with the commutativity $(H,\psi)=(F,\phi)\circ (Q,\mu)$.
\end{lemma}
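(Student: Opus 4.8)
The plan is to proceed in two stages. First I would upgrade the cohomological functor $H$ to an \emph{exact} functor $(H,\psi)\colon (\CA_\CN,\BE_\CN,\fs_\CN)\to\underline{\CH}$, where the abelian heart $\underline{\CH}$ is regarded as an extriangulated category with $\Ext^1_{\underline{\CH}}$ as its extension bifunctor. Then, since $\Ker H=\CN$ by \cref{prop:kernel_heart}(1), the functor $(H,\psi)$ annihilates $\CN$; as \cref{thm:subquotient_of_extri_cat}(2) exhibits \eqref{seq:subquotient_of_extri_cat} as an exact sequence in $\ET$, which by \cref{def:exact_seq_ET} enjoys the universal property that every exact functor out of $\CA_\CN$ killing $\CN$ factors uniquely through $(Q,\mu)$, one obtains the desired unique $(F,\phi)\colon \CA_\CN/\CN\to\underline{\CH}$ with $(H,\psi)=(F,\phi)\circ(Q,\mu)$. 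So the substance lies entirely in the first stage.

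To define $\psi$, I would send $\delta\in\BE_\CN(C,A)$, realized by an $\fs_\CN$-triangle $A\overset{x}{\lra}B\overset{y}{\lra}C\overset{\delta}{\dra}$, to the class in $\Ext^1_{\underline{\CH}}(H(C),H(A))$ of the sequence $H(A)\overset{H(x)}{\lra}H(B)\overset{H(y)}{\lra}H(C)$; by \cref{thm:heart} this sequence is exact at $H(B)$, and the crucial point (treated below) is that it is in fact a short exact sequence $0\to H(A)\to H(B)\to H(C)\to 0$. Independence of the chosen realization follows because any two realizations of $\delta$ differ by a compatible isomorphism on the middle object, hence have equal image under $H$; additivity of $\psi$ and its naturality with respect to morphisms of $\BE_\CN$-extensions are then formal consequences of the functoriality of $H$ and the realization axioms. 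With these in place, the sequence $H(A)\to H(B)\to H(C)$ is by construction a realization of $\psi_{C,A}(\delta)$, which is precisely what makes $(H,\psi)$ an exact functor in the sense recalled before \cref{lem:exact_equivalence}.

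Hence the main obstacle is to show that $H$ carries every $\fs_\CN$-conflation $A\overset{x}{\lra}B\overset{y}{\lra}C$ to a short exact sequence, i.e.\ that $H(x)$ is monic and $H(y)$ is epic in $\underline{\CH}$. Because the condition $\BE_\CN=\BE^R_\CN\cap\BE^L_\CN$ is self-dual, it suffices to prove monicity of $H(x)$ from $\delta\in\BE^R_\CN$, the epicity of $H(y)$ following by the dual argument from $\delta\in\BE^L_\CN$. For this I would invoke the factorization criterion \cref{prop:kernel_heart}(2): $H(x)$ is monic exactly when any $g\colon X\to A$ for which $xg$ factors through $\CN$ must itself factor through $\CN=\add(\CU*\CV)$. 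To verify this I would adapt the diagram-chasing of \cite[\S 5]{Oga24}: using that $(\CA_\CN,\CN)$ satisfies \ref{spade} by \cref{thm:subquotient_of_extri_cat}(1), realize $\delta$ by an $\fs_\CN$-conflation controlled by a suitable morphism into $\CN$, and then combine the reflection and coreflection $\fs$-triangles of $A,B,C$ from \cref{lem:coreflection} with \cref{lem:coreflection2} to push $g$ through an object of $\CU*\CV$. Everything else — the residual naturality and additivity checks for $\psi$, and the compatibility with the data of \ref{ET4} — is routine and uses no idea beyond the functoriality of $H$ and the cohomological property in \cref{thm:heart}.
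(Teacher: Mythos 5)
Your overall skeleton is the same as the paper's: reduce everything to showing that $H$ sends $\fs_\CN$-conflations to short exact sequences (exactness at the middle being free from \cref{thm:heart}), and then obtain $(F,\phi)$ from the universal property of the exact sequence \eqref{seq:subquotient_of_extri_cat}. That part is fine. The problem is that the one step you correctly identify as "the main obstacle" is not actually proved: you reduce epicity/monicity to the claim that a morphism $g$ with $xg$ factoring through $\CN$ must itself factor through $\CN$, and then you dispose of that claim by saying you would "adapt the diagram-chasing of \cite[\S 5]{Oga24}" using \ref{spade}, \cref{lem:coreflection} and \cref{lem:coreflection2}. No actual argument is given, and it is precisely here that the hypothesis $\delta\in\BE_\CN$ has to enter in a concrete way. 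For comparison, the paper's proof is short and direct: resolve $A$ by an $\fs$-triangle $A\xto{a}V\to U$ with $V\in\CV$, $U\in\CU$; since $V\in\CN$ and $a_*\delta\in\BE_\CN(C,V)$, the definition of $\BE^R_\CN$ (via $\BE_{[\CN]}$) produces a morphism $c\colon C\to N$ and $\eta\in\BE(N,V)$ with $c^*\eta=a_*\delta$, hence a two-storey morphism of $\fs$-triangles with a weak pushout on top and a weak pullback at the bottom; applying the cohomological $H$ to the resulting column $B\to B'\to N'$ and to the folded triangle of the weak pullback, and using $H(\CN)=0$, gives $H(g)=H(g')H(b)$ epic. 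Your route never produces such a diagram, so the lemma is not established.

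Two further specific issues with your reduction. First, the "exactly when" in "$H(x)$ is monic exactly when any $g\colon X\to A$ for which $xg$ factors through $\CN$ must itself factor through $\CN$" is not automatic: monicity must be tested against \emph{all} morphisms $Y\to H(A)$ in $\underline\CH$, not only those of the form $H(g)$. This can be repaired using \cref{prop:heart_is_the_Gabriel_Zisman_localization} (every morphism in $\underline{\CH}$ is a roof $H(g)H(s)^{-1}$ with $s\in\CS_H$, and one precomposes with the isomorphism $H(s)$), but you do not say this, and without it the criterion from \cref{prop:kernel_heart}(2) does not by itself yield monicity. Second, your pairing of the two halves is backwards relative to how the definition of $\BE_\CN$ is actually used: the condition $\delta\in\BE^R_\CN$ (pushforwards of $\delta$ into $\CN$ lie in $\BE_{[\CN]}$) is what drives the proof that $H(y)$ is \emph{epic} (one pushes $\delta$ forward along $A\to V$), while the dual condition $\BE^L_\CN$ drives monicity of $H(x)$. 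This is a small point, but together with the missing chase it indicates that the decisive part of the argument has not been worked out.
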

\begin{proof}
As stated, we have to only show that $H$ sends any $\fs_\CN$-triangle $A\overset{f}{\lra} B\overset{g}{\lra} C\overset{\delta}{\dra} $ to the short exact sequence in $\underline{\CH}$.
Since $H$ is cohomological, we shall only check that $H(g)$ is epic.
The monomorphicity of $H(f)$ is just the dual.

By the definition of $(\CU,\CV)$, we can resolve $A$ in the sense of considering an $\fs$-triangle $A\overset{a}{\lra} V\lra U\dra$ with $U\in\CU$ and $V\in\CV$.
Obviously we get $a_*\delta\in\BE_\CN(C,V)$.
Due to the definition of the subbifunctor $\BE_\CN$, we obtain the following morphisms of $\fs_\CN$-triangles,
\begin{equation*}\label{diag:cohomological_to_exact}
\begin{tikzcd}[column sep=1.2cm]
A\arrow{d}{a}\arrow{r}{f}\wPO{rd} 
&B\arrow{d}{b}\arrow{r}{g} 
&C\arrow[equal]{d}{}\arrow[dashed]{r}{\delta} 
&{}
\\
V\arrow[equal]{d}{}\arrow{r}{} 
&B'\arrow{d}{b'}\arrow{r}{g'} 
&C\arrow{d}{c}\arrow[dashed]{r}{a_*\delta=c^*\eta} 
&{}
\\
V\arrow{r}{} 
&N'\arrow{r}{} 
&N\arrow[dashed]{r}{\eta}\wPB{ul} 
&{}
\end{tikzcd}
\end{equation*}
where $N,N'\in\CN$.
This tells us that $H(g)$ is an epimorphism.
Actually we get a factorization $g=g'\circ b$ and the middle column forces $H(b)$ to be an epimorphism by $H(N')=0$.
Also, we know the square labeled by ${\rm (wPB)}$ is folded as an $\fs$-triangle by \cref{lem:wPO_wPB} and $H(g')$ is an epimorphism.
\end{proof}

We notice that the underlying category of the subquotient $\CA_\CN/\CN$ is constituted as the Gabriel-Zisman localization at the class $\CS_\CN=\Def_\CN^\sp\circ\Inf_\CN$, see \cref{lem:factorization_of_Sn}.
As below, we confirm that $\CS_\CN$ is the class $\CS_H$ of all morphisms which become isomorphisms in the heart $\underline{\CH}$ and the additive equivalence $F\colon \CA_\CN/\CN\xto{\sim}\underline{\CH}$ exists.

\begin{lemma}\label{lem:heart_is_the_subquotient}
The above obtained functor $F\colon \CA_\CN/\CN\to\underline{\CH}$ is an equivalence of additive categories.
\end{lemma}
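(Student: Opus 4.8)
The plan is to prove that $F\colon \CA_\CN/\CN\to\underline{\CH}$ is an equivalence by establishing essential surjectivity, fullness, and faithfulness separately, using the localization description of both sides. Recall from \cref{prop:heart_is_the_Gabriel_Zisman_localization} that $H$ factors through the Gabriel-Zisman localization $\CA[\CS_H^{-1}]\xto{\sim}\underline{\CH}$, where $\CS_H$ consists of the morphisms inverted by $H$, and from \cref{thm:subquotient_of_extri_cat}(3) that the underlying category $\CA_\CN/\CN$ is the Gabriel-Zisman localization $\CA[\Sn^{-1}]$ with $\Sn=\Def_\CN^\sp\circ\Inf_\CN$. Since $\CA\xto{Q}\CA_\CN/\CN$ and $\CA\xto{H}\underline{\CH}$ agree after composing with $F$, it suffices to prove that the two classes of inverted morphisms coincide, namely $\Sn=\CS_H$; then $F$ is the canonical comparison functor between two localizations of $\CA$ at the same class and hence an equivalence by the universal property of localization.

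First I would show $\Sn\subseteq\CS_H$. By \cref{lem:cohomological_to_exact}, $H$ is exact with respect to $(\CA_\CN,\BE_\CN,\fs_\CN)$, so it sends $\fs_\CN$-conflations to short exact sequences in $\underline{\CH}$; in particular, for an $\fs_\CN$-inflation $f$ with $\cone(f)\in\CN$, the exactness together with $H(\cone f)=0$ and the cohomological long exact sequence gives that $H(f)$ is both monic and epic, hence an isomorphism. Thus $\Inf_\CN\subseteq\CS_H$, and dually (using the $\fs_\CN$-deflation part) $\Def_\CN\subseteq\CS_H$; since any section in $\Def_\CN$ lies in $\Def_\CN^\sp$, we certainly have $\Def_\CN^\sp\circ\Inf_\CN\subseteq\CS_H$ as $\CS_H$ is closed under composition. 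Here I would be careful that the relevant conflations are genuinely $\fs_\CN$-conflations and not merely $\fs$-conflations — but $\Inf_\CN$ and $\Def_\CN$ in \cref{thm:subquotient_of_extri_cat} are defined with respect to the substructure $\fs_\CN$, so this is built in.

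For the reverse inclusion $\CS_H\subseteq\Sn$, I would use the saturation statement of \cref{thm:subquotient_of_extri_cat}(3): a morphism $s$ lies in $\Sn$ if and only if $Q(s)$ is an isomorphism in $\CA_\CN/\CN$. So given $s\in\CS_H$, i.e. $H(s)$ an isomorphism, I must show $Q(s)$ is an isomorphism. This is where \cref{prop:kernel_heart}(2) and the interplay with \ref{spade} of \cref{setup:spade} enter: the key is that the kernel of $H$ on morphisms is precisely the ideal of maps factoring through $\CN=\add(\CU*\CV)$, and that $\CS_H$ is saturated for $H$ (being the class of morphisms inverted by the localization functor $\CA\to\underline{\CH}=\CA[\CS_H^{-1}]$). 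One then argues that $H$ and $Q$ have, in a suitable sense, the same kernel ideal and the same saturated class of weak equivalences. Concretely: since $\CN=\Ker H=\Ker Q$ (the latter by \cref{thm:mult_loc}(2), using $\ovl{\Sn}=p(\Sn)$ which holds by ${\rm (WIC)}$), and both $H$ and $Q$ kill exactly the maps through $\CN$ by \cref{prop:kernel_heart}(2), the induced functor $F$ is automatically faithful; combined with $\Sn\subseteq\CS_H$ it is then a faithful localization, and one deduces $\CS_H\subseteq\Sn$ from saturation of $\Sn$.

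The main obstacle I anticipate is the precise matching of the two saturated classes — proving $\CS_H\subseteq\Sn$ rather than just the easy containment $\Sn\subseteq\CS_H$. The subtlety is that a priori $\underline{\CH}$ could be a \emph{further} localization of $\CA_\CN/\CN$ (if $\CS_H$ were strictly larger than $\Sn$), and ruling this out requires showing that every map inverted by $H$ is already inverted by $Q$. The cleanest route is probably to prove faithfulness of $F$ directly from \cref{prop:kernel_heart}(2) (a morphism in $\CA_\CN/\CN$ is zero iff the representing roof has numerator factoring through $\CN$ iff $H$ kills it), then use that a faithful functor out of a localization $\CA[\Sn^{-1}]$ which inverts a class $\CS_H\supseteq\Sn$ and is itself a localization at $\CS_H$ forces $\Sn=\CS_H$; essential surjectivity of $F$ is then immediate since $H$ is essentially surjective onto $\underline{\CH}$ (every object of $\CH$ is in the image of $\CA$, hence of $Q$), giving the additive equivalence. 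I would close by invoking \cref{lem:exact_equivalence} together with \cref{lem:cohomological_to_exact} to upgrade this additive equivalence to the exact equivalence $(F,\phi)$ claimed in \cref{thm:heart_is_the_subquotient}, checking that $\phi$ is a natural isomorphism because $H$ and $Q$ both realize $\BE_\CN$-extensions via the same $\fs_\CN$-conflations.
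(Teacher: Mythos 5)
Your overall architecture is the same as the paper's: reduce the lemma to the identity $\Sn=\CS_H$ of the two classes of inverted morphisms and conclude by the universal property of the Gabriel--Zisman localization, using \cref{prop:heart_is_the_Gabriel_Zisman_localization} and \cref{thm:subquotient_of_extri_cat}(3). The easy inclusion $\Sn\sse\CS_H$ (the paper gets it even more cheaply, from $H=F\circ Q$) and the faithfulness of $F$ via \cref{prop:kernel_heart}(2) are fine.

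The gap is in the reverse inclusion $\CS_H\sse\Sn$, which is the actual content of the lemma. You propose to deduce it from the principle that a faithful functor out of $\CA[\Sn^{-1}]$ which inverts $\CS_H\supseteq\Sn$ and is itself a localization at $\CS_H$ forces $\Sn=\CS_H$. That principle is false: a faithful localization functor need not reflect isomorphisms. For example, localizing the additive category of finitely generated free abelian groups at the maps that become invertible after $-\otimes_{\BZ}\BQ$ gives the category of finite-dimensional $\BQ$-vector spaces; the localization functor is faithful, yet multiplication by $2$ on $\BZ$ is inverted without being invertible. What you would actually need is that $F$ reflects isomorphisms, i.e.\ full faithfulness --- and fullness is precisely the point you never address: a morphism of $\underline{\CH}$ is a roof with denominator in $\CS_H$, and rewriting it with denominator in $\Sn$ is essentially equivalent to the inclusion you are trying to establish, so nothing is gained. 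The paper closes this by a concrete construction: given $s\colon A\to A'$ with $H(s)$ invertible, resolve $A$ by the cotorsion pair into an $\fs$-triangle $A\xto{f}V\to U\dra$, take the weak pushout of $f$ along $s$ to obtain a conflation $A\xto{\begin{bsmallmatrix}f\\ s\end{bsmallmatrix}}V\oplus A'\xto{h}B$, observe $H(h)=0$ and invoke \cref{prop:kernel_heart}(2) to factor $h$ through an object of $\CN$, and then a weak pullback and one further weak pushout exhibit $\begin{bsmallmatrix}f\\ s\end{bsmallmatrix}$, hence $s$ itself, as a composite in $\Def_\CN^{\sp}\circ\Inf_\CN=\Sn$. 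Some such explicit factorization argument must be supplied; without it the proposal does not prove the lemma.
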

\begin{proof}
The inclusion $\CS_\CN\sse\CS_H$ is obvious by the commutativity $H=F\circ Q$ in \cref{lem:cohomological_to_exact}, we consider a morphism $A\xto{s}A'$ in $\CS_H$ to check the converse.
Using the cotorsion pair $(\CU,\CV)$, we resolve the object $A$ to the $\fs$-triangle $A\overset{f}{\lra} V\overset{g}{\lra} U\overset{\delta}{\dra} $ with $U\in\CU$ and $V\in\CV$.
By taking a weak pushout of $f$ along $s$, we have the folded $\fs$-triangle $A\overset{\begin{bsmallmatrix}f \\
s
\end{bsmallmatrix}}{\lra} V\oplus A'\overset{h}{\lra} B\dra $.
Since $H\begin{bsmallmatrix}f \\
s
\end{bsmallmatrix}$ is still an isomorphism in $\underline{\CH}$, we get $H(h)=0$ and that $h$ factors through an object $N\in\CN$ by \cref{prop:kernel_heart}(2), say $h\colon V\oplus A'\xto{h_1}N\xto{h_2}B$.
A weak pullback of $h$ along $h_2$ yields the following morphism of $\fs$-triangle,
\[
\begin{tikzcd}
A\arrow{r}{s'}\arrow[equal]{d}{} &A''\arrow{r}{}\arrow{d}[swap]{s''} &N\arrow[dashed]{r}{}\arrow{d}{h_2} &{}\\
A\arrow{r}{\begin{bsmallmatrix}f \\
s
\end{bsmallmatrix}} &V\oplus A'\arrow{r}{h} &B\arrow[dashed]{r}{}\wPB{ul} &{}
\end{tikzcd}
\]
where $s''$ is a retraction.
Since $H$ is cohomological, $H(s')$ is an epimorphism by $H(N)=0$.
By the factorization $\begin{bsmallmatrix}f \\
s
\end{bsmallmatrix}=s''\circ s'$ on the leftmost square, we have both $H(s')$ and $H(s'')$ are isomorphisms.
It is obvious the kernel of $s''$ belongs to $\CN=\Ker H$, which shows $s''\in\Def_\CN^{\sp}$.
Last, we have only to show $s'\in\Inf_\CN$.
Similarly to the above, we again consider a weak pushout of $f$ along $s'$ and get the following commutative diagram made of $\fs$-triangles.
\[
\begin{tikzcd}
A\arrow{r}{f}\arrow{d}[swap]{s'}\wPO{rd} 
&V\arrow{r}{g}\arrow{d}{} 
&U\arrow[dashed]{r}{}\arrow[equal]{d}{} 
&{}
\\
A''\arrow{r}{}\arrow{d}{}
&N'\arrow{r}{}\arrow{d}{} 
&U\arrow[dashed]{r}{} 
&{}
\\
N\arrow[equal]{r}{}\arrow[dashed]{d}{}&N\arrow[dashed]{d}{}&&
\\
{}&{}&{}&{}
\end{tikzcd}
\]
We notice that $N'\in\CN$ and the second column shows the first column is indeed an $\fs_\CN$-triangle.
We thus conclude $s\in\Sn$ and $\Sn=\CS_H$.
\end{proof}

\begin{proof}[Proof of \cref{thm:heart_is_the_subquotient}]
It remains to show that the exact functor $(F,\phi)\colon \CA_\CN/\CN\to\underline{\CH}$ is an exact equivalence.
The extriangulated structure on $\underline{\CH}$ corresponds to the abelian category and is obviously maximal (see \cite{Rum11, Eno18, Eno21} for more details).
Due to the additive equivalence $F\colon\CA_\CN/\CN\xto{\sim}\underline{\CH}$ and \cref{lem:exact_equivalence}, we only have to show that the extriangulated structure $(\CA_\CN/\CN,\wtil{\BE_\CN},\wtil{\fs_\CN})$ is maximal.
Let us consider a short exact sequence $\epsilon\colon 0\to A\xto{\alpha} B\xto{\beta} C\to 0$ in the abelian category $\CA_\CN/\CN$.
To show it is isomorphic to the image of an $\fs_\CN$-conflation under the quotient $\CA_\CN\xto{Q}\CA_\CN/\CN$, we consider a left fraction $B\xto{b}C'\xleftarrow{s}C$ with $s\in\Sn$ which represents $\beta$.
By the cotorsion pair $(\CU,\CV)$, there is an $\fs$-conflation $V\overset{v}{\lra} U\overset{u}{\lra} C'$ with $U\in\CU$ and $V\in\CV$.
Taking a weak pushout of $u$ along $b$, we have an (folded) $\fs$-conflation $A'\lra U\oplus B\overset{\begin{bsmallmatrix}u \\
b
\end{bsmallmatrix}}{\lra} C'$ which is isomorphic to the short exact sequence $\epsilon$ under the quotient.
In turn, we should show the following claim.

\begin{claim}\label{claim:heart_is_the_subquotient}
Let $A\overset{f}{\lra} B\overset{g}{\lra} N\overset{\delta}{\dra}$ be an $\fs$-triangle in $\CA$.
If its image under the quotient $Q$ forms a short exact sequence in $\CA_\CN/\CN$, then it is an $\fs_\CN$-triangle.
\end{claim}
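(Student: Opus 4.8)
The content of the claim is exactly that $\delta$ lies in $\BE_\CN(N,A)=\BE^R_\CN(N,A)\cap\BE^L_\CN(N,A)$ (the realization then automatically agrees with $\fs_\CN$, since $\fs_\CN$ is the restriction of $\fs$). The two defining conditions are formally dual, so the plan is to establish $\delta\in\BE^R_\CN(N,A)$ and obtain the other half by running the argument in $\CA^{\op}$ (which again is $\mathrm{(WIC)}$ and carries the opposite cotorsion pair). Thus, by \cref{prop:modified_Quillen_substructure}, it suffices to fix an arbitrary $a\colon A\to N_1$ with $N_1\in\CN$ and prove $a_*\delta\in\BE_{[\CN]}(N,N_1)$.

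First I would take the weak pushout of $A\xto{f}B\xto{g}N\dra$ along $a$: by \cref{lem:wPO_wPB} this yields an $\fs$-triangle $N_1\xto{f'}B'\xto{g'}N\dra$ realizing $a_*\delta$, together with a morphism of $\fs$-triangles whose middle component $b\colon B\to B'$ satisfies $g=g'\circ b$. Now apply the cohomological functor $H=F\circ Q$ of \cref{lem:cohomological_to_exact}. Since $N_1\in\CN=\Ker H$ by \cref{prop:kernel_heart}, exactness of $H$ on the new triangle forces $H(g')$ to be a monomorphism; on the other hand $H(g)$ is an epimorphism (the hypothesis says $H(A)\to H(B)\to H(N)$ is short exact), and $g=g'\circ b$ makes $H(g')$ an epimorphism. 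Hence $H(g')$ is an isomorphism, so $g'\in\CS_H$, and by \cref{lem:heart_is_the_subquotient} we conclude $g'\in\Sn$.

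Next I would use that $\CA_\CN$ satisfies \ref{spade} and hence, by \cref{thm:subquotient_of_extri_cat}, that $\Sn=\Def_\CN^{\sp}\circ\Inf_\CN$; write $g'=p'\circ i'$ with $i'\colon B'\to\tilde B$ an $\fs_\CN$-inflation whose cone $N_3$ lies in $\CN$ and $p'$ a retraction which is an $\fs_\CN$-deflation with cocone $N_4\in\CN$, so $\tilde B\simeq N\oplus N_4$. Feeding the two $\fs$-triangles $N_1\xto{f'}B'\xto{g'}N\dra$ and $B'\xto{i'}\tilde B\to N_3\dra$ into the octahedral axiom \ref{ET4} produces a morphism $d\colon N\to E$ with $E=\cone(i'f')$ and a class $\delta'\in\BE(E,N_1)$ with $d^*\delta'=a_*\delta$. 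The remaining task is to cut $E$ down to $\CN$: since $g'f'=0$, the inflation $i'f'$ has vanishing $N$-component under $\tilde B\simeq N\oplus N_4$, and combining this with the $\fs_\CN$-triangle carrying $N_3$ (via one more weak pushout/pullback, together with the section of $p'$) should re-express $a_*\delta$ as the pullback of an $\fs$-extension of $N_1$ along some morphism $N\to N''$ with $N''\in\CN$ — precisely the statement $a_*\delta\in\BE_{[\CN]}(N,N_1)$. The dual computation (pulling back $\delta$ along $c\colon N_1\to N$, using $\Sn=\Def_\CN\circ\Inf_\CN^{\sp}$ and the dual of \ref{ET4}) gives $\delta\in\BE^L_\CN(N,A)$, completing the proof that $A\xto{f}B\xto{g}N\dra$ is an $\fs_\CN$-triangle.

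The hard part is the final reduction. The third term $N$ is completely unconstrained (it need not lie in $\CN$), so the unrefined octahedral computation only presents $a_*\delta$ as pulled back from an extension of an object of $\CN$ \emph{by} $N$, which is not yet in $\BE_{[\CN]}$; the genuine work is to choose the order of the weak pushouts and pullbacks, and to exploit the splitting $\tilde B\simeq N\oplus N_4$ coming from the retraction $p'$, so that the whole "$N$-part" is absorbed into the comparison morphism $N\to N''$. This bookkeeping, done twice (once for $\BE^R_\CN$ and dually for $\BE^L_\CN$), is where all the care is needed.
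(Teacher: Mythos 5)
Your opening moves are sound and run parallel (up to duality) to the paper's: you reduce to the defining conditions of $\BE_\CN$, push out along $a\colon A\to N_1$, and use the cohomological functor to conclude $g'\in\CS_H=\Sn$, whereas the paper reduces to the case where the third term lies in $\CN$, gets $f\in\Sn$ from \cref{cor:Sn_is_saturated}, and factors the \emph{inflation} $f=f_2\circ f_1$ with $f_1\in\Inf_\CN^{\sp}$ and $f_2\in\Def_\CN$. The problem is the step you yourself flag as ``the hard part'': it is not bookkeeping but the entire content of the claim, and the octahedron you choose does not close. Applying \ref{ET4} to the composable inflations $f'$ and $i'$ gives $a_*\delta=d^*\delta''$ with $\delta''\in\BE(E,N_1)$ and $E=\cone(i'f')\notin\CN$. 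Your plan to absorb the ``$N$-part'' amounts to: $p'i'f'=g'f'=0$, so $i'f'=\begin{bsmallmatrix}0\\ c\end{bsmallmatrix}$ for some $c\colon N_1\to N_4$, which is an inflation by ${\rm (WIC)}$; comparing the two $\fs$-triangles on this common inflation yields $E\cong N\oplus\cone(c)$ and $a_*\delta=(\pi d)^*\eta$ with $\eta\in\BE(\cone(c),N_1)$. But this lands in $\BE_{[\CN]}(N,N_1)$ only if $\cone(c)\in\CN$, and $c$ is merely an $\fs$-inflation between two objects of $\CN$: since $\CN$ is extension-closed but not thick (and $\Ker H$ is not known to be closed under cones of inflations), this membership is not available. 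So the route, as proposed, stalls at exactly the decisive point.

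The repair is to apply the $3\times 3$ diagram lemma (the dual of \cite[Prop.~3.17]{NP19}, which is what the paper uses) to the factorization $g'=p'\circ i'$ itself rather than to the composite $i'f'$. Then the comparison object is forced to be $N_3=\cone(i')$, which lies in $\CN$ by the very definition of $\Inf_\CN$: one obtains an $\fs$-triangle $N_1\to N_4\to N_3\overset{\eta'}{\dra}$ with all terms in $\CN$ and, after composing with the section of $p'$, the identity $a_*\delta=h^*\eta'$ for some $h\colon N\to N_3$, i.e.\ precisely $a_*\delta\in\BE_{[\CN]}(N,N_1)$. This is the exact dual of the paper's computation, where $\delta$ is exhibited as $(\begin{bsmallmatrix}1&0\end{bsmallmatrix}\circ h)_*\eta$ with $\eta\in\BE(N,N'')$ and $N''=\cocone(f_2)\in\CN$ by the definition of $\Def_\CN$. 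With that replacement (and its dual for the $\BE^L_\CN$ half), your argument goes through.
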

\begin{proof}
By the definition of $\fs$-triangle, we are reduced to the case of $N\in\CN$.
Under such an assumption, we see $Q(f)$ is an isomorphism and $f\in\Sn$ by the saturatedness stated in \cref{cor:Sn_is_saturated}.
Also, $f$ is factorized as $f=f_2\circ f_1$ with $f_2\in\Def_\CN$ and $f_1\in\Inf_\CN^{\sp}$ by \cref{lem:factorization_of_Sn}.
We then apply \cite[dual of Prop.~3.17]{NP19} to obtain the following commutative diagram made of $\fs$-triangles,
\[
\begin{tikzcd}
&N''\arrow[equal]{r}{}\arrow{d}{h}&N''\arrow{d}{}&\\
A\arrow[equal]{d}{}\arrow{r}{f_1}&A\oplus N'\arrow{d}{f_2}\arrow{r}{}&N'\arrow[dashed]{r}{}\arrow{d}{}&{}\\
A\arrow{r}{f}&B\arrow{r}{g}\arrow[dashed]{d}{}&N\arrow[dashed]{d}{\eta}\arrow[dashed]{r}{\delta}&{}\\
&{}&{}&
\end{tikzcd}
\]
with $h_*\eta=(f_1)_*\delta$.
Since $f_1=\begin{bsmallmatrix}1 \\
0
\end{bsmallmatrix}$ is a section, taking a weak pushout along the corresponding retraction $\begin{bsmallmatrix}1 & 0
\end{bsmallmatrix}$ yields $\begin{bsmallmatrix}1 & 0
\end{bsmallmatrix}_*h_*\eta=\begin{bsmallmatrix}1 \\ 0
\end{bsmallmatrix}_*\begin{bsmallmatrix}1 & 0
\end{bsmallmatrix}_*\delta=\delta$.
More precisely, the induced morphism $(\begin{bsmallmatrix}1 \\ 0
\end{bsmallmatrix}\circ h)_*\colon \BE(N,N'')\to \BE(N,A)$ maps $\eta$ to $\delta$, which shows $\delta\in\BE_\CN(N,A)$.
Combining the dual argument, we have $\delta\in\BE_\CN(N,A)$.
\end{proof}
Therefore any short exact sequence in $\CA_\CN/\CN$ is isomorphic to an $\wtil{\fs_\CN}$-triangle, which shows $(F,\phi)$ is an exact equivalence.
\end{proof}

Immediately, we have the following commutative diagram up to isomorphisms,
\begin{equation}\label{diag:cohomological_functor_via_quotient}
\begin{tikzcd}
(\CA,\BE,\fs)\arrow{r}{H}&\underline{\CH}\\
(\CA_\CN,\BE_\CN,\fs_\CN)\arrow[hook]{u}{}\arrow{r}{(Q,\mu)}&(\CA_\CN/\CN,\wtil{\BE_\CN},\wtil{\fs_\CN})\arrow{u}{\sim}[swap]{(F,\phi)}
\end{tikzcd}
\end{equation}
where $H$ is cohomological and the others are exact.
In such a sense, Liu-Nakaoka's heart construction is an extriangulated quotient.

\subsection{Hearts as an exact dg subquotient}
\label{subsec:Hearts_as_an_exact_dg_subquotient}
In addition to the circumstance in the previous subsection, we assume that $(\CA,\BE,\fs)$ admits an exact dg enhancement $(\A,\SS)$ which is connective and cofibrant.
We also have a canonical exact dg enhancement $(\N,\SS)$ of the extension-closed subcategory $\CN=\add(\CU*\CV)$.
Let us consider the exact dg subquotient $(\N,\SS_\N)\lra (\A_\N^\coh,\SS_\N)\overset{Q}{\lra}(\A_\N^\coh/\N,\wtil{\SS_\N})$ as in \cref{thm:dg_subquotient}.

The following theorem is constituted as a combination of results in \S\ref{subsec:dg_quotient_via_cohomological_envelope} and \S\ref{subsec:Hearts_as_an_extriangulated_subquotient}.

\begin{theorem}\label{thm:enhanced_heart}
The induced dg functor $Q\colon(\A,\SS)\to(\A_\N^\coh/\N,\wtil{\SS_\N})$ enhances the cohomological functor $H\colon (\CA,\BE,\fs)\to\underline{\CH}$ associated to the cotorsion pair $(\CU,\CV)$.
\end{theorem}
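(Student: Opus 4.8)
The plan is to assemble \cref{thm:enhanced_heart} by gluing together the two enhancements already at our disposal. On the one hand, \cref{thm:dg_subquotient} supplies the exact dg subquotient
$(\N,\SS|_\N)\to(\A_\N^\coh,\SS_\N)\xto{Q}(\A_\N^\coh/\N,\wtil{\SS_\N})$
together with the commutative square \eqref{seq:dg_subquotient} saying that applying $H^0$ recovers the extriangulated subquotient $(Q,\mu)\colon(\CA_\CN,\BE_\CN,\fs_\CN)\to(\CA_\CN/\CN,\wtil{\BE_\CN},\wtil{\fs_\CN})$. On the other hand, \cref{thm:heart_is_the_subquotient} (together with \eqref{diag:cohomological_functor_via_quotient}) identifies that very extriangulated subquotient, up to exact equivalence, with the cohomological functor $H\colon(\CA,\BE,\fs)\to\underline{\CH}$. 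So the statement is essentially the composite of these two facts, once one checks that the identifications are compatible and that $\CN=\Ker H=\add(\CU*\CV)$ is indeed the relevant subcategory.

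First I would fix notation: by \cref{prop:kernel_heart}(1) we have $\CN=\add(\CU*\CV)$, an extension-closed subcategory closed under direct summands, and $(\CA,\BE,\fs)$ is assumed ${\rm (WIC)}$ and algebraic via the connective cofibrant enhancement $(\A,\SS)$. Via \cref{prop:bijection_between_substructures} the extriangulated substructure $(\CA_\CN,\BE_\CN,\fs_\CN)$ of \cref{prop:modified_Quillen_substructure} lifts to the exact dg substructure $(\A_\N,\SS_\N)$, and \cref{lem:dg_substructure} guarantees $(\N,\SS_\N|_\N)=(\N,\SS|_\N)$. Then \cref{thm:dg_subquotient} produces the exact dg subquotient and, crucially, the square \eqref{seq:dg_subquotient}, which says $H^0Q$ is (exact equivalent to) the extriangulated subquotient functor $(Q,\mu)\colon\CA_\CN\to\CA_\CN/\CN$. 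Taking $H^0$ of the dg functor $Q\colon(\A,\SS)\to(\A_\N^\coh/\N,\wtil{\SS_\N})$ — noting $H^0\A=H^0\A_\N^\coh=\CA_\CN$ as extriangulated categories, since the cohomological envelope and the connective cover leave homotopy categories unchanged — therefore yields an exact functor $H^0\A\to H^0(\A_\N^\coh/\N)$ which is exact equivalent to $(Q,\mu)$.

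The remaining step is to splice in \cref{thm:heart_is_the_subquotient}: there is a natural exact equivalence $(F,\phi)\colon(\CA_\CN/\CN,\wtil{\BE_\CN},\wtil{\fs_\CN})\xto{\sim}\underline{\CH}$ with $(H,\psi)=(F,\phi)\circ(Q,\mu)$ as in \eqref{diag:cohomological_functor_via_quotient}. Composing the identification $H^0Q\simeq(Q,\mu)$ from the previous paragraph with $(F,\phi)$ gives $H^0\bigl(Q\colon\A\to\A_\N^\coh/\N\bigr)\simeq (F,\phi)\circ(Q,\mu)=(H,\psi)=H$ in $\ET$, up to exact equivalence; that is precisely the assertion that the dg functor $Q$ enhances the cohomological functor $H$. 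One should also record that $(\A_\N^\coh/\N,\wtil{\SS_\N})$ is a \emph{cohomological} exact dg category (this is part of \cref{thm:dg_subquotient}(1)), so this is genuinely a cohomological exact dg enhancement of $\underline{\CH}$, consistent with the fact that $\underline{\CH}$ is abelian.

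The main obstacle is bookkeeping rather than depth: one must ensure that the exact equivalences provided by \cref{thm:dg_subquotient}(2) (the vertical $H^0$ in \eqref{seq:dg_subquotient}) and by \cref{thm:heart_is_the_subquotient} are compatible with the canonical morphisms $\A_\N^\coh\leftarrow\tau_{\leq 0}\A_\N\to\A_\N$ and with the universality of $(Q,\mu)$, so that the composite square commutes up to a coherent natural isomorphism and not merely objectwise. Here one invokes the uniqueness clause in the universal property of the extriangulated (sub)quotient (\cref{thm:subquotient_of_extri_cat}, \cref{cor:mult_loc}) and of the dg quotient (\cref{lem:universality_of_dg_quotient}, \cref{cor:restriction_of_dg_quotient2}) to pin down the comparison functors; since both $H^0Q$ and $(F,\phi)\circ(Q,\mu)$ annihilate $\CN$ and invert $\Sn$, they must agree. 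A secondary point to verify is that passing to the substructure $(\A_\N,\SS_\N)$ and then to its cohomological envelope does not disturb the identification $\CN=\Ker H$ — but this is immediate since none of these operations alters the homotopy category or the class $\CN$, and \ref{spade} may be assumed by \cref{prop:comparison_of_quotient_and_subquotient} exactly as in the proof of \cref{thm:dg_subquotient}.
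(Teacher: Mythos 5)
Your proposal is correct and follows essentially the same route as the paper: it combines \cref{thm:dg_subquotient} with \cref{thm:heart_is_the_subquotient} via the identification \eqref{diag:cohomological_functor_via_quotient}, with the kernel computation $\CN=\add(\CU*\CV)$ from \cref{prop:kernel_heart} and the substructure correspondence from \cref{prop:bijection_between_substructures}. The only slight imprecision is the assertion that $H^0\A=\CA_\CN$ \emph{as extriangulated categories} and that $H^0$ of $Q\colon(\A,\SS)\to(\A_\N^\coh/\N,\wtil{\SS_\N})$ is an exact functor --- the functor out of $(\A,\SS)$ is only cohomological under $H^0$, exactness holding after restriction to the substructure $(\A_\N,\SS_\N)$ --- but this does not affect the argument.
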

\begin{proof}
As the exact dg substructure $\A_\N$ is still connective, we have a natural exact morphism $\A_\N\to\A_\N^\coh$ to the cohomological envelope.
Also, since the exact structure $(\A_\N,\SS_\N)$ is a weaker one of $(\A,\SS)$, the identity $\id_\A$ gives rise to an exact functor $\id_\A\colon(\A_\N,\SS_\N)\to(\A,\SS)$.
The all exact morphisms obtained so far are displayed as follows.
\[
\begin{tikzcd}[row sep=0.4cm]
&(\A,\SS)&\\
&(\A_\N,\SS_\N)\arrow[hook]{u}{\id_\A}\arrow{d}{}&\\
(\N,\SS|_\N)\arrow[hook]{r}{}&(\A_\N^\coh,\SS_\N)\arrow{r}{Q}&(\A_\N^\coh/\N,\wtil{\SS_\N})
\end{tikzcd}
\]
By Theorems \ref{thm:heart_is_the_subquotient} and \ref{thm:dg_subquotient}, we know the bottom dg (sub)quotient enhances the extriangulated subquotient $(\CN,\BE|_\CN,\fs|_\CN)\hookrightarrow(\CA_\CN,\BE_\CN,\fs_\CN)\xto{H^0Q}(\CA_\CN/\CN,\wtil{\BE_\CN},\wtil{\fs_\CN})$.
\end{proof}

Then, by \eqref{diag:cohomological_functor_via_quotient}, we see the induced (non-exact) dg functor $(\A,\SS)\to (\A_\N^\coh/\N,\wtil{\SS_\N})$ enhances the cohomological functor $H\colon (\CA,\BE,\fs)\to\underline{\CH}$.

\medskip
\noindent
{\bf Acknowledgement.}
Y.\ O.\ is supported by JSPS KAKENHI (grant JP22K13893).
%

\end{document}